\documentclass[11pt]{amsart}
\usepackage{amsmath, amsthm, amssymb,amscd}
\usepackage{float}
\usepackage{graphicx}
\usepackage{xypic}
\usepackage[arrow, matrix, curve]{xy}
\usepackage{color}
\usepackage{enumitem}  
\usepackage{tikz-cd} 
\usepackage{url}

\usepackage{blindtext}

\usepackage[english]{babel}
\setlength{\topmargin}{0.0in}
\setlength{\textheight}{9.0in}
 \setlength{\evensidemargin}{0.0in}
\setlength{\oddsidemargin}{0.0in}
\setlength{\textwidth}{6.3in}

\newcommand{\seq}{\subseteq}
\newcommand{\C}{\mathbb{C}}

\newcommand{\rank}{\rm{rank }}
\newtheorem{thm}{Theorem}[section]
\newtheorem*{thm-nl}{Theorem}
\newtheorem*{prop-nl}{Proposition}

\newtheorem{lem}[thm]{Lemma}

\def\PP{{\mathbf P}}

\def\Pic0{{\rm Pic}^0(X)}

\newtheorem*{cor-nl}{Corollary}
\newtheorem{conjecture}[thm]{Conjecture}
\newtheorem*{conjecture-nl}{Conjecture}
\newtheorem{defin}[thm]{Definition}
\newtheorem*{quest-nl}{Question}
\newtheorem*{quests-nl}{Questions}

\newtheorem{prop}[thm]{Proposition}

\theoremstyle{remark}
\newtheorem*{rem}{Remark}

\newtheorem{remark}[thm]{Remark}

\bibliographystyle{alpha}

\title{{The Rank of Syzygies of Canonical Curves}}
\date{\today}

\author[M. Kemeny]{Michael Kemeny}

\address{University of Wisconsin-Madison, Department of Mathematics, 480 Lincoln Dr
\hfill \newline\texttt{}
 \indent WI 53706, USA} \email{{\tt michael.kemeny@gmail.com}}

\begin{document}
\begin{abstract}
We prove that the linear syzygy spaces of a general canonical curve are spanned by syzygies of minimal rank.

\end{abstract}
\maketitle
\setcounter{section}{-1}
\section{Introduction}

The goal of this paper is to study the rank of the syzygies amongst the equations defining a canonical curve.\smallskip

Following Mumford's influential work \cite{mumford-quadratic}, one typically studies projective varieties $X \seq \PP_{\C}^n$ under the assumption that the varieties are \emph{defined by quadrics}, or, in in algebraic terms, the homogeneous ideal $I_X \seq \C[x_0,\ldots, x_n]$ is generated in degree two. This assumption is often satisfied in cases of interest. For example, this assumption is satisfied for canonical curves of genus $g$ and Clifford index at least two by the classical Petri Theorem \cite{arbarello-sernesi-petri}. \smallskip

Consider a variety $X \seq \PP_{\C}^n$ which is defined by quadrics. It is natural to ask what is the minimal $r$ one can find such that $X$ is defined by quadrics of rank $r$. For instance, Mumford proved in \cite[Thm.\ 1]{mumford-quadratic}, that if $i: X \hookrightarrow \PP^n$ is any projective variety of degree $d_0$, and if we compose $i$ with the $d^{th}$ Veronese embedding $v_d: \PP^n \hookrightarrow \PP^{\binom{n+d}{d}-1}$, then the new embedding
$$X \hookrightarrow \PP^{\binom{n+d}{d}-1}$$
is defined by quadrics of rank at most four for $d \geq d_0$. This result has been subsequently extended in interesting ways, see \cite{sidman-smith} and \cite{quadrics-rank3}.\smallskip

In the case of canonical curves of genus $g$ and Clifford index at least two, Andreotti--Mayer \cite{andreotti-mayer} and Arbarello--Harris \cite{arbarello-harris} proved that $C \seq \PP^{g-1}$ is defined by quadrics of rank at most four, provided the curve is general. This was extended to arbitrary curves by Green \cite{green-quadrics}.\smallskip

In foundational work \cite{green-koszul}, Green put the classical study of the projective geometry of varieties into a much wider context. Instead of merely studying the ideal $I_{X} \seq S:=\C[x_0,\ldots, x_n]$, Green had the insight that one should consider the entire \emph{minimal free resolution}
$$ \ldots F_2 \to F_1 \to I_X \to 0,$$
of the $S$-module $I$. Much of the \emph{intrinsic} geometry of the projective variety reveals itself through the \emph{extrinsic} invariants encoded in the resolution. The celebrated Green's Conjecture postulates that one can read the Clifford index of a curve $C$ off from the dimensions of the graded pieces of the modules $F_i$ appearing in the resolution of the ideal of a canonical curve $C \seq \PP^{g-1}$.\smallskip

Following Green's philosophy of generalizing from an ideal $I_X$ to the resolution $F_{\bullet} \to I_X$, Schreyer and von Bothmer defined a notion of rank that applies to linear generators of the modules $F_i$. Let $X$ be a projective variety, embedded via a very ample line bundle $L$, and assume that $(X,L)$ is \emph{normally generated}, \cite{mumford-quadratic}. We decompose the free modules $F_i$ into their graded pieces by writing
$$F_i=\bigoplus_{j\geq 1} \mathrm{K}_{i,j}(X,L) \otimes_{\C} S(-i-j),$$
where the vector space $\mathrm{K}_{i,j}(X,L)$, called the $(i,j)^{th}$ Koszul cohomology group. For any $\alpha \neq 0 \in \mathrm{K}_{p,1}(X,L)$, we have a well-defined notion of \emph{rank}, see Definition \ref{rank-defin}.\smallskip

The following conjecture of Schreyer simultaneously unifies and generalises both Voisin's Theorem on generic Green's Conjecture and the Andreotti--Mayer--Arbarello--Harris Theorem on the ideal sheaf of a canonical curve:
  \begin{conjecture}[The Geometric Syzygy Conjecture] \label{geo-conj}
 For a general curve of genus $g$, all linear syzygy spaces $\mathrm{K}_{p,1}(C,\omega_C)$ are spanned by syzygies of minimal rank $p+1$.
 \end{conjecture}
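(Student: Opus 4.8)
The plan is to produce, for each relevant $p$, an explicit supply of minimal-rank ($=p+1$) syzygies of geometric origin and then to prove that they already span $\mathrm{K}_{p,1}(C,\omega_C)$, performing the spanning computation on a degenerate canonical curve with extra symmetry and transporting it to the general curve by semicontinuity. For the numerology: the general curve $C$ has $\mathrm{Cliff}(C)=\lfloor (g-1)/2\rfloor$, so by Voisin's theorem the linear strand is nonzero exactly for $1\le p\le g-2-\lfloor (g-1)/2\rfloor$, with $\dim \mathrm{K}_{p,1}(C,\omega_C)$ equal to the (minimal) generic Betti number, which I denote $N$. The theorem is thus the assertion that, for each such $p$, the rank-$(p+1)$ syzygies span a subspace of dimension $N$.

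First I would construct minimal-rank syzygies from linear series. Every pencil $A$ (a $g^1_k$) on $C$ sweeps out a rational normal scroll $\Sigma_A\supseteq C$ in $\PP^{g-1}$, whose linear resolution contributes scrollar syzygies; these map into $\mathrm{K}_{p,1}(C,\omega_C)$ and, by Definition \ref{rank-defin}, have rank exactly $p+1$. As $A$ ranges over the positive-dimensional Brill--Noether variety $W^1_k(C)$ of pencils, the scrollar syzygies generate a subspace $\Lambda_p\seq \mathrm{K}_{p,1}(C,\omega_C)$, and the whole point is to show $\Lambda_p=\mathrm{K}_{p,1}(C,\omega_C)$.

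The spanning $\Lambda_p=\mathrm{K}_{p,1}(C,\omega_C)$ is the main obstacle, since a single scroll accounts only for a proper subspace and one must control the span of the entire family of pencils at once. The strategy I would follow is to use the self-duality $\mathrm{K}_{p,1}(C,\omega_C)\cong \mathrm{K}_{g-2-p,2}(C,\omega_C)^{\vee}$ to recast spanning as an injectivity/nonvanishing statement, and then to realize $\mathrm{K}_{p,1}(C,\omega_C)$ as the cohomology of a secant-type bundle on the symmetric product $C_{p+1}$, in the spirit of the universal-secant-bundle calculus. In these terms the scrollar syzygies become the evaluations of this bundle along the loci of pencils, so that $\Lambda_p=\mathrm{K}_{p,1}$ translates into global generation, or surjectivity of an evaluation map, for the relevant bundle. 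This is an equality with no slack, which is precisely why I would pass to a degenerate model: either a smooth $C\in|L|$ on a K3 surface $S$ (using Green's hyperplane-section theorem to replace $\mathrm{K}_{p,1}(C,\omega_C)$ by $\mathrm{K}_{p,1}(S,L)$ and Lazarsfeld--Mukai bundles to linearize the geometry), or the tangent developable of a rational normal curve, whose $\mathrm{SL}_2$-action splits every Koszul group into irreducibles so that both $\dim \mathrm{K}_{p,1}$ and the scrollar subspace can be read off representation-theoretically.

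Having established $\Lambda_p=\mathrm{K}_{p,1}$ on such a model $C_0$ chosen to have the generic Betti numbers, I would conclude for the general curve by semicontinuity. Working in a family with irreducible base that contains $C_0$ and whose general member is a general curve, the dimension of $\Lambda_p$ is the rank of an evaluation morphism, hence lower semicontinuous, so the locus $\{\dim\Lambda_p\ge N\}$ is open and nonempty, and therefore contains the general curve; since $\dim \mathrm{K}_{p,1}=N$ there and $\Lambda_p\seq \mathrm{K}_{p,1}$, this forces $\Lambda_p=\mathrm{K}_{p,1}(C,\omega_C)$, which is the assertion of the conjecture.
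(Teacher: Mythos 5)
There is a genuine gap, and it sits exactly at the two places your outline waves through. First, the spanning statement on the degenerate model is not something you can import: for \emph{intermediate} $p$ no existing result establishes that scrollar syzygies span $\mathrm{K}_{p,1}$, not on a smooth K3-section, not on the tangent developable. The Lazarsfeld--Mukai/universal-secant-bundle method (\cite{kemeny-voisin}) only handles the \emph{last} linear syzygy space in even genus, precisely because there the minimal pencils form a finite, reduced Brill--Noether locus $W^1_{k+1}(C)$; for $p<k-1$ the relevant pencils (of degree $g-p$) move in a positive-dimensional family, and controlling the span of the scrollar syzygies over such a family is the whole problem --- it is the theorem itself restated on a special curve. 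The paper never proves this directly on any single model. Its key idea, which your proposal is missing, is an induction on genus by \emph{projection of syzygies}: glue $m=g-2p-2$ pairs of general points of $C$ to form a nodal curve $D_m$ of arithmetic genus $2\ell$, $\ell=g-p-1$, so that $\mathrm{K}_{p+m,1}(D_m,\omega_{D_m})$ becomes the \emph{last} syzygy space, where $W^1_{\ell+1}(D_m)$ is finite and reduced (Proposition \ref{gen-node-BN}) and the secant-bundle method applies after degenerating $D_m$ to a nodal curve on a K3 (Chen's theorem, Propositions \ref{nondeg-K3curve}, \ref{gen-nondegK3}); the spanning is then transported back down to $C$ node by node via the projection formulae (Propositions \ref{image-projection}, \ref{lin-combo-min-rk}, \ref{spanned-im-gamma}, \ref{induction-step}).

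Second, your semicontinuity step is stated in the wrong direction. The locus of syzygies of rank $\leq p+1$ in $\PP(\mathrm{K}_{p,1})$ is \emph{closed} (it is the image of a proper Grassmannian-bundle incidence variety), so in a family its fibers, and hence their linear spans, can jump \emph{up} at the special point: spanning on the degenerate fiber does not propagate to the general fiber. To get a usable lower-semicontinuity statement you need a \emph{flat, proper} family of minimal-rank syzygies mapping to the relative Koszul bundle, so that the rank of the induced evaluation is what you semicontinue. This is exactly what the paper builds: the relative Brill--Noether scheme $\mathcal{W}^1_{k+1}\to B$ whose flatness is itself a nontrivial point (proved via the Catalan-number degree count of Proposition \ref{d} and degeneration), the $\PP^1$-bundle $\mathcal{X}^1_{k+1}$ over it, the relative Koszul bundle of Lemma \ref{rel-koszul}, and the relative morphism $\mathcal{S}$ of Proposition \ref{definability-map-brill-syz}. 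Moreover, on the degenerate (nodal) fibers the minimal pencils need not be line bundles but only torsion-free sheaves, with possible jumps in $h^0$ and base loci --- handling this is the content of Section \ref{BN-sing-curves} and Proposition \ref{assumps-OK}, and your proposal has no mechanism for it. So while your instincts (geometric syzygies from pencils, K3 degeneration, semicontinuity of an evaluation map) all appear as ingredients in the paper, the argument as outlined cannot be completed without the nodal-curve induction that reduces intermediate $p$ to the last syzygy space.
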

 Conjecture \ref{geo-conj} was proven for curves of genus $g \leq 8$ in \cite{bothmer-Transactions}.  We previously proved an important special case of Conjecture \ref{geo-conj}, namely the case when $g=2k$ has even genus and $p=k-1$ is the \emph{last} nonzero linear syzygy space, see \cite[Thm.\ 0.2]{kemeny-voisin}. \smallskip
 
  The goal of this paper is to prove Conjecture \ref{geo-conj} in full generality:
  \begin{thm}
  Conjecture \ref{geo-conj}  holds for a general curve of genus $g \geq 8$. 
\end{thm}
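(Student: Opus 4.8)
The plan is to combine a geometric description of the minimal rank locus with a degeneration to curves on K3 surfaces, and then to propagate the result across the entire linear strand by a contraction argument. First I would set up the kernel bundle $M_{\omega_C} = \ker\bigl(H^0(\omega_C) \otimes \sheafO_C \to \omega_C\bigr)$ and use the standard identification of $\mathrm{K}_{p,1}(C, \omega_C)$ with a subspace of $H^1(C, \wedge^{p+1} M_{\omega_C})$. In these terms I would reinterpret the rank of Definition \ref{rank-defin}, checking that a nonzero class has rank exactly $p+1$ precisely when it is a \emph{scrollar syzygy}: one attached to a base-point-free pencil $A \in W^1_{g-p}(C)$, arising from the inclusion into the resolution of $C$ of the Eagon--Northcott resolution of the rational normal scroll swept out by the fibres of $A$. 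Since $\rho(g,1,g-p) = g - 2p - 2 \geq 0$ exactly in the range $1 \leq p \leq \lfloor g/2\rfloor - 1$ where the linear strand is nonzero, such pencils exist on the general curve, and Conjecture \ref{geo-conj} becomes the statement that the scrollar classes $\mathrm{K}^{\mathrm{scr}}_{p,1}(A)$, as $A$ ranges over $W^1_{g-p}(C)$, span $\mathrm{K}_{p,1}(C, \omega_C)$.

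Next I would establish the \emph{extremal} case, namely the last nonzero space $\mathrm{K}_{p_{\max},1}(C,\omega_C)$ with $p_{\max} = \lfloor g/2 \rfloor - 1$. For even $g$ this is precisely \cite{kemeny-voisin}; for odd $g$ I would run the same method, degenerating to a general curve $C \in |L|$ on a K3 surface $S$ with $\mathrm{Pic}(S) = \ZZ\cdot L$. Using Green's restriction theorem \cite{green-koszul} to obtain $\mathrm{K}_{p,1}(C,\omega_C) \ISOM \mathrm{K}_{p,1}(S,L)$, together with Voisin's evaluation of the latter on the Hilbert scheme $S^{[p+1]}$, the syzygy space is identified with the cohomology of an explicit tautological bundle, and the scrollar classes with the cycle classes of the loci of subschemes lying on the fibres of the relevant pencil. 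One then verifies that these cycle classes span, which for the extremal index reduces to the surjectivity of a single evaluation map already controlled by the K3 geometry.

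Finally I would descend from $p_{\max}$ to every smaller $p$ by the contraction operators $\iota_\xi \colon \mathrm{K}_{p,1}(C,\omega_C) \to \mathrm{K}_{p-1,1}(C,\omega_C)$ for $\xi \in H^0(\omega_C)^\vee$. Two inputs are needed: that $\iota_\xi$ is surjective for general $\xi$, which I would deduce from Green's duality together with a vanishing statement for $\wedge^{\bullet} M_{\omega_C}$ coming from the stability of the kernel bundle on a Brill--Noether general curve; and that $\iota_\xi$ carries the span of scrollar classes into the span of scrollar classes, so that the quotient $\mathrm{K}_{p,1}/\mathrm{K}^{\mathrm{scr}}_{p,1}$ surjects onto $\mathrm{K}_{p-1,1}/\mathrm{K}^{\mathrm{scr}}_{p-1,1}$ and vanishes inductively once it vanishes at $p_{\max}$. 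I expect the compatibility in the second input to be the main obstacle: contracting a rank $p+1$ scrollar syzygy need not visibly drop the rank to $p$, so one cannot argue formally. I plan to resolve this by relating the scroll of a pencil $A$ of degree $g-p$, after restriction to the hyperplane dual to $\xi$, to the scrolls of the degree $g-p+1$ pencils obtained by specialising a base point of $A$ onto that hyperplane, thereby expressing $\iota_\xi$ of a scrollar class explicitly as a combination of scrollar classes one homological degree lower.
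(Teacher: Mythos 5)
Your skeleton (minimal rank classes are scrollar classes attached to base-point-free pencils $A \in W^1_{g-p}(C)$; base case is the extremal space via K3 geometry and \cite{kemeny-voisin}; then propagate along the strand) superficially parallels the paper, but your propagation mechanism fails at its first step, and the failure is not repairable by any choice of vanishing or stability input. The generic Betti numbers of the linear strand are not monotone in $p$: they grow toward the middle of the resolution. For a general curve of genus $8$ one has $\dim \mathrm{K}_{3,1}(C,\omega_C)=21$ while $\dim \mathrm{K}_{2,1}(C,\omega_C)=35$, and for genus $10$ one has $\dim \mathrm{K}_{4,1}=84$ while $\dim \mathrm{K}_{3,1}=162$. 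Hence no single linear map $\iota_\xi \colon \mathrm{K}_{p_{\max},1}(C,\omega_C) \to \mathrm{K}_{p_{\max}-1,1}(C,\omega_C)$ can be surjective for any even genus $g\geq 8$: your input (a) is false on dimension grounds, so the induced map on the quotients by the scrollar spans cannot be surjective either, and the descent collapses immediately. (There is also a smaller structural point: the contraction naturally lands in $\mathrm{K}_{p-1,1}(\Gamma_C(\omega_C);\ker\xi)$, and only for $\xi=\mathrm{ev}_x$ with $x\in C$ does this group carry usable geometry; and your proposed fix for input (b) has a degree mismatch, since adding a base point to a degree $g-p$ pencil produces a non-base-point-free sheaf, not a pencil computing minimal rank classes in $\mathrm{K}_{p-1,1}$.)

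The paper circumvents exactly this obstruction by never descending in $p$ on a fixed curve. Instead it ascends in genus: it glues $m=g-2p-2$ general pairs of points of $C$ to form a nodal curve $D_m$ of arithmetic genus $2(g-p-1)$, so that the relevant index $p+m$ is always the \emph{extremal} one for the bigger curve; it proves the extremal even-genus statement for general nodal curves (Proposition \ref{gen-nondegK3}, which requires Brill--Noether theory for torsion-free sheaves on singular curves, Chen's theorem producing nodal curves on K3 surfaces, and a deformation/semicontinuity argument); and it then projects back down one node at a time (Proposition \ref{induction-step}). Projection from a node maps the canonical curve of genus $h+1$ to the canonical curve of genus $h$, so the pullback of the pencil $L_\alpha$ automatically has the correct degree relative to the lower genus, and the rank compatibility you flagged as the main obstacle is handled by projecting the scroll itself from the node (Proposition \ref{lin-combo-min-rk}). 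Crucially, the dimension problem is resolved not by one surjective operator but by a spanning statement: the images of the maps $\gamma_{x,y}\circ\delta$, whose sources are syzygy spaces of the \emph{higher-genus} curves obtained by gluing $x$ to $y$, span $\mathrm{K}_{p-1,1}(C,\omega_C)$ as the pair $(x,y)$ varies (Proposition \ref{spanned-im-gamma}); proving this requires the auxiliary modules $\mathbb{T}_{x,y}(C)$ and Voisin-type vanishing, none of which appears in your outline. Separately, your plan to establish the odd-genus extremal case ``by the same method'' as \cite{kemeny-voisin} is also not routine (there $W^1$ is positive-dimensional and the Lazarsfeld--Mukai bundle is no longer rigid); the paper sidesteps this entirely because its genus-raising reduction only ever needs the even-genus extremal case.
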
 \smallskip
 
 We always have the bound $\rm{rank}(\alpha) \geq p+1$ for $\alpha \in \mathrm{K}_{p,1}(X,L)$. Thus Conjecture \ref{geo-conj} amounts to the statement that the spaces $K_{p,1}(C,\omega_C)$ are generated by syzygies of the lowest possible rank. Syzygies of lowest rank have a geometric interpretation, \cite{bothmer-JPAA}, \cite{aprodu-nagel-nonvanishing}. Indeed, if $\rm{rank}(\alpha)=p+1$, then $X$ lies on a rational normal scroll $$X_{\alpha}  \seq \PP(H^0(L)^{\vee}),$$ called the \emph{syzygy scheme}, which is constructed from $\alpha$ in an explicit way, \cite{green-canonical}. The syzygy $\alpha$ then arises from a minimal rank syzygy at the end of the resolution of $X_{\alpha}$, and as such has an explicit description, see \cite{schreyer1} and \cite[\S 0.2]{lin-syz}. For instance, when $p=1$, syzygies of rank two provide linear determinantal equations for $C$ in the sense of \cite{sidman-smith}, and in particular are quadrics of rank four.\smallskip
 
 Similarly, syzygies of rank $p+2$ arise from linear sections of Grassmannians. Most general constructions of syzygies produce low rank syzygies. In particular, an influential construction of Green--Lazarsfeld \cite{green-koszul} produces syzygies $\alpha \in \mathrm{K}_{p,1}(X,L)$ of rank at most $p+2$ out of a decomposition $L=L_1 \otimes L_2$ with $h^0(L_i) \geq 2$ for $i=1,2$, \cite[\S 3.4.2]{aprodu-nagel}.\smallskip
 
  By applying the Green--Lazarsfeld construction to decompositions $\omega_C \simeq L_1 \otimes L_2$ of the canonical bundle of a curve, Green obtained his famous conjecture predicting the vanishing of linear syzygy spaces $K_{p,1}(C,\omega_C)$ in terms of the intrinsic geometry of a curve. Green's Conjecture has been proven for general curves by Voisin in \cite{V1}, \cite{V2}; see also \cite{AFPRW} and \cite{kemeny-voisin} for simpler proofs. \smallskip



 We end the introduction with a few words on the proof of Conjecture \ref{geo-conj}. Our basic strategy is the technique of \emph{projection of syzygies}, originally developed by Aprodu \cite[\S 2.2]{aprodu-nagel} and further refined in the paper \cite{projecting}. This method provides an inductive method to prove results on syzygies of a smooth curves $C$. One first identifies pairs of points to create a nodal curve $D$, for which the claim is covered by the base case of the induction. One then projects from each node and uses formulae describing how syzygies change under this operation. In our case the base case is provided by \cite{kemeny-voisin}. \smallskip
 
  In order to implement this strategy, several new tools are required. These tools are likely to be useful in other contexts. In particular, we develop a novel, \emph{explicit} formula for the projection map, see Proposition \ref{image-projection}. Moreover, we require an explicit formula for the construction of minimal rank syzygies out of line bundles, see Proposition \ref{one-dim-iso}. Our formula has the feature that it may be adapted to torsion free sheaves on singular curves which admit embeddings into a surface, see Section \ref{BN-sing-curves}. We then apply these formulae to nodal curves which arise as deformations of (singular) curves on K3 surfaces, see Section \ref{final-section}. A considerable amount of technical difficulty results from the fact that we may not assume that the minimal pencils on such curves come from line bundles, but are instead required to work with general torsion-free sheaves.\smallskip

\textbf{Acknowledgements} I thank Daniel Erman for comments on a draft.

\section{Rank of a Syzygy} \label{rank-section}
\subsection{Preliminaries} We begin by gathering a few preliminaries.
\begin{lem} \label{coker-bundle}
Let $X$ be a scheme which is locally of finite type over an algebraically closed field $k$. Let $\phi  : E \to F$ be a morphism of vector bundles. Assume that, for any closed point $x \in X$, $\phi(x): E \otimes k(x) \to F \otimes k(x)$ is injective. Then $G:=\mathrm{Coker}(\phi)$ is locally free.
\end{lem}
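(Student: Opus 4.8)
The plan is to reduce the statement to a purely local, matrix-theoretic computation and then to exhibit $\phi$ as a split injection in a neighbourhood of each point. Since being locally free is local on $X$, I would first shrink to an affine open $\mathrm{Spec}(A)$ on which $E \cong A^{\oplus e}$ and $F \cong A^{\oplus f}$ are trivial, so that $\phi$ is given by an $f \times e$ matrix $M$ with entries in $A$; here $A$ is a finitely generated $k$-algebra, hence Noetherian and Jacobson. For a closed point $x$ the fibre map $\phi(x)$ is the reduction of $M$ modulo the maximal ideal $\mathfrak{m}_x$, and $\phi(x)$ is injective precisely when $M(x)$ has full column rank $e$, i.e. precisely when some $e \times e$ minor of $M$ does not vanish at $x$.

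Consequently, I would let $I \subseteq A$ denote the ideal generated by the $e \times e$ minors of $M$. The locus where the fibre map fails to be injective is exactly the closed set $V(I)$, and the hypothesis says that $V(I)$ contains no closed point of $\mathrm{Spec}(A)$. Because $A$ is Jacobson, the closed points are dense in every closed subset, so a closed subset containing no closed point must be empty; hence $V(I) = \emptyset$ and the maximal minors of $M$ generate the unit ideal. In particular the distinguished opens $D(m)$, as $m$ runs over the $e \times e$ minors, form an open cover of $\mathrm{Spec}(A)$.

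It then remains to prove freeness of the cokernel on each $D(m)$. Over $A_m = A[m^{-1}]$ the minor $m$ is invertible, so after reordering the rows of $M$ I may write $M = \left(\begin{smallmatrix} \phi_1 \\ \phi_2 \end{smallmatrix}\right)$ with $\phi_1$ the $e \times e$ submatrix of determinant $m$, invertible over $A_m$. The block matrix $\psi = \left(\begin{smallmatrix} \phi_1^{-1} & 0 \\ -\phi_2\phi_1^{-1} & I_{f-e} \end{smallmatrix}\right)$ is invertible (its determinant equals $m^{-1}$), and one checks directly that $\psi M = \left(\begin{smallmatrix} I_e \\ 0 \end{smallmatrix}\right)$. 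Thus, after applying the automorphism $\psi$ to $F|_{D(m)}$, the map $\phi$ becomes the standard inclusion of $A_m^{\oplus e}$ as a direct summand of $A_m^{\oplus f}$, whence $\mathrm{Coker}(\phi)|_{D(m)} \cong A_m^{\oplus (f-e)}$ is free. Since these opens cover $X$, the sheaf $G$ is locally free.

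The genuinely non-formal input is the passage from the hypothesis, stated only at closed points, to a statement valid at every point: this is exactly where the Jacobson property of schemes locally of finite type over $k$ is essential, and it is the reason that hypothesis appears in the statement. Everything else, namely the identification of the rank-drop locus with $V(I)$ and the explicit block inversion, is routine linear algebra over a commutative ring, and I expect no real difficulty there, provided one is careful that $\phi_1^{-1}$ genuinely exists over the localization $A_m$ rather than merely at the single point $x$.
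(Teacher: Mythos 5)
Your proof is correct, but it takes a genuinely different route from the paper's. The paper dualizes: it observes that $\phi(x)^{\vee}$ is surjective at every closed point, invokes Nakayama's Lemma to conclude that the sheaf map $\phi^{\vee} : F^{\vee} \to E^{\vee}$ is surjective, notes that its kernel $\widetilde{G}$ is then locally free (kernel of a surjection of vector bundles, \cite[Ex.\ III.6.5]{hartshorne}), and finally dualizes the sequence $0 \to \widetilde{G} \to F^{\vee} \to E^{\vee} \to 0$ to identify $\mathrm{Coker}(\phi)$ with the locally free sheaf $\widetilde{G}^{\vee}$. You instead work with an explicit matrix: the hypothesis plus the Jacobson property forces the maximal minors to generate the unit ideal, and on each distinguished open $D(m)$ your block-matrix automorphism $\psi$ exhibits $\phi$ as the standard split inclusion $A_m^{\oplus e} \hookrightarrow A_m^{\oplus f}$, so the cokernel is visibly free. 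Note that both arguments ultimately rest on the same non-formal input — passing from the hypothesis at closed points to a statement at all points — which the paper hides inside the Nakayama step (the support of $\mathrm{Coker}(\phi^{\vee})$ is closed and misses all closed points, hence is empty precisely because $X$ is Jacobson), whereas you isolate it explicitly as the emptiness of $V(I)$. What each approach buys: the paper's is shorter and reduces to a standard exercise, while yours is self-contained, avoids the duality step entirely, and proves the slightly stronger statement that $\phi$ is locally a split injection, i.e.\ $E$ is locally a direct summand of $F$ — from which local freeness of the cokernel is immediate.
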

\begin{proof}
Without loss of generality we may assume that $X=\mathrm{Spec}(A)$ is affine. The dual map $\phi(x)^{\vee}$ is surjective for all $x \in X$. By Nakayama's Lemma, the morphism of sheaves $\phi^{\vee}$ is surjective. Consider $\widetilde{G}:=\mathrm{Ker}(\phi^{\vee})$ which fits into the short exact sequence
\begin{equation} \label{prelim-1}
0 \to \widetilde{G} \to F^{\vee} \to E^{\vee} \to 0.
\end{equation}
Then $\widetilde{G}$ is locally free, since $F^{\vee}$ and $E^{\vee}$ are, \cite[Ex.\ III.6.5]{hartshorne}. Dualizing (\ref{prelim-1}) gives the claim. 
\end{proof}

\subsection{Notation and Background on Syzygies} If $V$ is a vector space and $M$ is a graded $S_V:=\mathrm{Sym}(V)$ module, write $\mathrm{K}_{p,q}(M,V)$ for the middle cohomology of 
$$\bigwedge^{p+1}V \otimes M_{q-1} \to \bigwedge^{p}V \otimes M_{q} \to \bigwedge^{p-1}V \otimes M_{q+1}.$$ For any projective variety $X$, with line bundle $L$ and coherent sheaf $F$, we define a graded $S_L:=\mathrm{Sym}(\mathrm{H}^0(X,L))$ module
$$\Gamma_X(F,L):=\bigoplus_{q \in \mathbb{Z}} \mathrm{H}^0(X,L^{\otimes q} \otimes F).$$ 
Define $\mathrm{K}_{p,q}(X,F, L):=\mathrm{K}_{p,q}(\Gamma_X(F,L),\mathrm{H}^0(X,L))$. When $F=\mathcal{O}_X$ is trivial, we write $\Gamma_X(L)$ for $\Gamma_X(\mathcal{O}_X,L)$ and $\mathrm{K}_{p,q}(X, L)$ for $\mathrm{K}_{p,q}(X,\mathcal{O}_X, L)$. For any subspace $W \seq \mathrm{H}^0(X,L)$, we may consider $\Gamma_X(F,L)$ as a $S_W:=\mathrm{Sym}(W)$ module. Define $\mathrm{K}_{p,q}(X,F, L; W):=\mathrm{K}_{p,q}(\Gamma_X(F,L),W)$. 
If $F=\mathcal{O}_X$, we write $\mathrm{K}_{p,q}(X,L;W)$ for $\mathrm{K}_{p,q}(X,\mathcal{O}_X, L; W)$. We further define
$$b_{p,q}(X,F,L; W):=\dim \mathrm{K}_{p,q}(X,F, L; W).$$

The following proposition is a slight refinement of \cite[Prop.\ 1.29, Cor.\ 1.31]{aprodu-nagel}.
\begin{prop}[Semicontinuity of Koszul cohomology] \label{semi-cont}
	Suppose $\pi: \mathcal{X} \to S$ is a flat, projective morphism of finite-type schemes over $\C$ and assume $S$ is integral. Let $\mathcal{L}$, $\mathcal{F}$ be coherent sheaves on $\mathcal{X}$, flat over $S$, with $\mathcal{L}$ a line bundle. Let $\mathcal{Z} \seq \mathcal{X}$ be a closed subscheme, flat over $S$. Fix integers $p,q$. Assume
	$$ h^0(\mathcal{X}_S, \mathcal{L}_s \otimes I_{\mathcal{Z}_s}),\; h^0(\mathcal{X}_S, \mathcal{F}_s \otimes \mathcal{L}^{q-1}_s ),\; h^0(\mathcal{X}_S, \mathcal{F}_s \otimes \mathcal{L}^{q}_s ),\; h^0(\mathcal{X}_S, \mathcal{F}_s \otimes \mathcal{L}^{q+1}_s )$$
	are all constant for all closed points $s \in S$.
	 Then the function $\Psi :  S \to \mathbb{Z}$ with
	\begin{align*}
	\Psi(s) := b_{p,q}\left(\mathcal{X}_s,\mathcal{F}_s,\mathcal{L}_s; \mathrm{H}^0(\mathcal{X}_S, \mathcal{L}_s \otimes I_{\mathcal{Z}_s})\right)
	\end{align*}
	is upper semicontinuous.
	\end{prop}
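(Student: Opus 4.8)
The plan is to reduce the statement to the elementary fact that the dimension of the fiberwise cohomology of a three-term complex of vector bundles on $S$ is an upper semicontinuous function of the base point. Concretely, I will produce on $S$ a complex of vector bundles
$$\textstyle\bigwedge^{p+1}\mathcal{W} \otimes \mathcal{M}_{q-1} \xrightarrow{\ \delta_1\ } \bigwedge^{p}\mathcal{W} \otimes \mathcal{M}_{q} \xrightarrow{\ \delta_2\ } \bigwedge^{p-1}\mathcal{W} \otimes \mathcal{M}_{q+1},$$
whose restriction to the fiber over a closed point $s$ is exactly the Koszul complex computing $\mathrm{K}_{p,q}(\mathcal{X}_s,\mathcal{F}_s,\mathcal{L}_s; W_s)$, where $W_s := \mathrm{H}^0(\mathcal{X}_s, \mathcal{L}_s \otimes I_{\mathcal{Z}_s})$. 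Granting this, $\Psi(s)$ equals $\mathrm{rank}\,(\bigwedge^{p}\mathcal{W}\otimes\mathcal{M}_q) - \mathrm{rank}\,(\delta_1)_s - \mathrm{rank}\,(\delta_2)_s$. Since the rank of a morphism of vector bundles is lower semicontinuous (the locus where all $(r+1)\times(r+1)$ minors vanish is closed), both $-\mathrm{rank}\,(\delta_1)_s$ and $-\mathrm{rank}\,(\delta_2)_s$ are upper semicontinuous and the first term is locally constant, so $\Psi$ is upper semicontinuous.

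The construction of the bundles is where the real work lies. For the module terms I set $\mathcal{M}_j := \pi_*(\mathcal{F}\otimes\mathcal{L}^{j})$ for $j = q-1,q,q+1$. Each $\mathcal{F}\otimes\mathcal{L}^j$ is flat over $S$ (as $\mathcal{F}$ is $S$-flat and $\mathcal{L}$ is locally free), and its restriction to $\mathcal{X}_s$ is $\mathcal{F}_s\otimes\mathcal{L}_s^j$. Since $S$ is integral, hence reduced, and the relevant $h^0$ is constant by hypothesis, Grauert's base change theorem shows each $\mathcal{M}_j$ is locally free with $\mathcal{M}_j\otimes k(s)\cong \mathrm{H}^0(\mathcal{X}_s,\mathcal{F}_s\otimes\mathcal{L}_s^j)$. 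The subtle term is $\mathcal{W}:=\pi_*(\mathcal{L}\otimes I_{\mathcal{Z}})$. Here I first note that $I_{\mathcal{Z}}$ is flat over $S$: from $0 \to I_{\mathcal{Z}} \to \mathcal{O}_{\mathcal{X}} \to \mathcal{O}_{\mathcal{Z}} \to 0$ with $\mathcal{O}_{\mathcal{X}}$ and $\mathcal{O}_{\mathcal{Z}}$ both $S$-flat, flatness of the kernel follows. Flatness of $\mathcal{O}_{\mathcal{Z}}$ moreover forces the vanishing of $\mathrm{Tor}_1^{\mathcal{O}_S}(\mathcal{O}_{\mathcal{Z}}, k(s))$, so restricting the sequence to $\mathcal{X}_s$ stays exact and gives $(I_{\mathcal{Z}})_s = I_{\mathcal{Z}_s}$; tensoring by the line bundle $\mathcal{L}$ then yields $(\mathcal{L}\otimes I_{\mathcal{Z}})_s = \mathcal{L}_s\otimes I_{\mathcal{Z}_s}$ and preserves $S$-flatness. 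A second application of Grauert (again using that $h^0(\mathcal{L}_s\otimes I_{\mathcal{Z}_s})$ is constant and $S$ reduced) shows $\mathcal{W}$ is locally free with $\mathcal{W}\otimes k(s)\cong W_s$. This compatibility of the ideal sheaf with base change is precisely the point that upgrades the standard statement of \cite{aprodu-nagel} to the present refined version.

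It remains to assemble the differentials. The $S_{\mathcal{L}}$-module structure is globalized by the sheaf multiplication $(\mathcal{L}\otimes I_{\mathcal{Z}})\otimes(\mathcal{F}\otimes\mathcal{L}^{j}) \to \mathcal{F}\otimes\mathcal{L}^{j+1}$ (using $I_{\mathcal{Z}}\subseteq \mathcal{O}_{\mathcal{X}}$), which after pushforward and the canonical map $\pi_*A\otimes\pi_*B\to\pi_*(A\otimes B)$ produces bundle maps $\mathcal{W}\otimes\mathcal{M}_j\to\mathcal{M}_{j+1}$; these define the Koszul differentials $\delta_1,\delta_2$ in the usual way, and $\delta_2\circ\delta_1 = 0$ because this composite vanishes on every closed fiber and $S$ is reduced. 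Because all the pushforwards above are base-change compatible by Grauert, the fiber of the complex at $s$ is canonically the Koszul complex attached to $W_s \subseteq \mathrm{H}^0(\mathcal{X}_s,\mathcal{L}_s)$ and $\Gamma_{\mathcal{X}_s}(\mathcal{F}_s,\mathcal{L}_s)$, so its middle cohomology is $\mathrm{K}_{p,q}(\mathcal{X}_s,\mathcal{F}_s,\mathcal{L}_s;W_s)$ of dimension $\Psi(s)$, and the semicontinuity argument of the first paragraph applies.

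I expect the main obstacle to be the careful treatment of $\mathcal{W}$: verifying that $I_{\mathcal{Z}}$ is $S$-flat and that its formation commutes with base change, so that Grauert's theorem produces a genuine vector bundle with the correct fibers $W_s$. Everything else — flatness of the $\mathcal{M}_j$, globalizing the multiplication maps, and the rank semicontinuity — is routine once this point is secured.
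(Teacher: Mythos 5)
Your proof is correct and follows essentially the same route as the paper: push forward to get vector bundles on $S$ via Grauert's Theorem (using integrality of $S$ and the constancy hypotheses), assemble the relative Koszul complex specializing to the fiberwise one, and conclude by lower semicontinuity of the rank of a map of vector bundles. The only difference is that you spell out the flatness of $I_{\mathcal{Z}}$ and the base-change identification $(I_{\mathcal{Z}})_s \simeq I_{\mathcal{Z}_s}$, which the paper's proof uses implicitly when invoking Grauert for $\pi_*(\mathcal{L}\otimes I_{\mathcal{Z}})$.
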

\begin{proof}
	Without loss of generality, we may take $S=\mathrm{Spec}(R)$ to be affine. Set
	\begin{align*}
	\mathcal{A}:=\pi_* (\mathcal{L} \otimes I_{\mathcal{Z}}), \; \; \mathcal{B}_i := \pi_*(\mathcal{F} \otimes \mathcal{L}^{\otimes i}), \; i \in \{ q-1,q,q+1 \}.
	\end{align*}
	Then $\mathcal{A}, \mathcal{B}_{q-1}, \mathcal{B}_q, \mathcal{B}_{q+1}$ are all vector bundles on $S$ by Grauert's Theorem, and we have a complex
	{\small{$$\bigwedge^{p+1}\mathcal{A} \otimes \mathcal{B}_{q-1} \xrightarrow{\delta_1} \bigwedge^{p}\mathcal{A} \otimes \mathcal{B}_{q} \xrightarrow{\delta_2} \bigwedge^{p-1}\mathcal{A} \otimes \mathcal{B}_{q+1}$$}}
	which specializes to the Koszul complex on $\mathcal{X}_s$ for any closed point $s \in S$. We have
	{\small{\begin{align*}
	\Psi(s)&:= \dim \mathrm{Ker}(\delta_2\otimes k(s))-\dim \mathrm{Im}(\delta_1 \otimes k(s))\\
	&=\mathrm{rk}(\wedge^{p}\mathcal{A} \otimes \mathcal{B}_{q})-\dim \mathrm{Im}(\delta_2\otimes k(s))-\dim \mathrm{Im}(\delta_1 \otimes k(s)).
	\end{align*}}}
	It suffices now to show that, for a morphism $\Phi \; : \; \mathcal{O}_S^a \to \mathcal{O}_S^b$ of finitely-generated, free modules, the function
	$s \mapsto \mathrm{rk} (\Phi\otimes k(s))$ is lower semicontinuous. Indeed, the set $\{ s \in S \; : \; \mathrm{rk} (\Phi\otimes k(s)) \leq n \}$
		is closed with ideal defined by the entries of the matrix $\wedge^n \Phi$, for any integer $n$.
	\end{proof}

\subsection{Rank of a Syzygy} We call a pair $(X,L)$ a polarized variety if $X$ is an integral, projective variety and $L$ a very ample line bundle on $X$. We call $(X,L)$ \emph{normally generated} if the map $\mathrm{Sym}^n \mathrm{H}^0(X,L) \to \mathrm{H}^0(X,L^{\otimes n})$ is surjective for all $n \geq 0$.
\begin{prop} \label{inc-subspace}
	Let $(X,L)$ be normally generated. For any subspace $W \seq \mathrm{H}^0(X,L)$ the natural map $\mathrm{K}_{p,1}(X,L; W) \to \mathrm{K}_{p,1}(X,L)$ is injective for all $p \geq 0$.
	\end{prop}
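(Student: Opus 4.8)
The plan is to reduce the claim to an elementary fact about interior multiplication in the exterior algebra; the argument will use neither the preceding semicontinuity result nor, it appears, normal generation beyond its role as a standing hypothesis. Write $V := \mathrm{H}^0(X,L)$ and $M := \Gamma_X(L)$. Since $X$ is integral and projective over $\C$ we have $M_0 = \mathrm{H}^0(X,\mathcal{O}_X) = \C$ and $M_1 = V$, the module multiplication $M_0 \otimes M_1 \to M_1$ being the canonical isomorphism. Hence $\mathrm{K}_{p,1}(X,L;W)$ is the cohomology of
$$\textstyle \bigwedge^{p+1} W \xrightarrow{\ d^W\ } \bigwedge^{p} W \otimes V \longrightarrow \bigwedge^{p-1} W \otimes M_2,$$
where we have used $M_0 = \C$ to suppress the leftmost tensor factor and $d^W$ is the Koszul differential $w_0 \wedge \cdots \wedge w_p \mapsto \sum_i (-1)^i\, w_0 \wedge \cdots \widehat{w_i} \cdots \wedge w_p \otimes w_i$. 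The analogous complex for $V$ computes $\mathrm{K}_{p,1}(X,L)$, and the map in the statement is induced by the inclusions $\bigwedge^{\bullet} W \hookrightarrow \bigwedge^{\bullet} V$. I carry out the argument for $p \geq 1$, returning to the degenerate case $p=0$ at the end.

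So let $\alpha \in \bigwedge^{p} W \otimes V$ be a cocycle whose class maps to $0$ in $\mathrm{K}_{p,1}(X,L)$. Viewing $\alpha$ inside $\bigwedge^{p} V \otimes V$, this means $\alpha = d^V(\beta)$ for some $\beta \in \bigwedge^{p+1} V$, where $d^V$ is the Koszul differential for $V$. Because the inclusion $\bigwedge^{p} W \otimes V \hookrightarrow \bigwedge^{p} V \otimes V$ is injective and $d^V$ restricts to $d^W$ on $\bigwedge^{p+1} W$, it suffices to prove that $\beta$ in fact lies in $\bigwedge^{p+1} W$: we then obtain $d^W(\beta) = \alpha$, so the class of $\alpha$ already vanishes in $\mathrm{K}_{p,1}(X,L;W)$. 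Note that this route sidesteps any need for injectivity of $d^V$ as a whole.

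To locate $\beta$, I use the identity $(\mathrm{id}_{\wedge^{p}V} \otimes\, \xi)\circ d^V = \iota_{\xi}$ for each $\xi \in V^{\vee}$, where $\iota_{\xi} : \bigwedge^{p+1} V \to \bigwedge^{p} V$ denotes contraction. Applying $\mathrm{id} \otimes \xi$ to $d^V(\beta) = \alpha \in \bigwedge^{p} W \otimes V$ gives $\iota_{\xi}(\beta) \in \bigwedge^{p} W$ for every $\xi$. It then remains to establish the linear-algebraic claim: if $\beta \in \bigwedge^{p+1} V$ satisfies $\iota_{\xi}(\beta) \in \bigwedge^{p} W$ for all $\xi \in V^{\vee}$, then $\beta \in \bigwedge^{p+1} W$ (for $p \geq 1$). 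I would prove this by fixing a splitting $V = W \oplus U$ and grading $\bigwedge^{p+1} V = \bigoplus_{b} \bigwedge^{p+1-b} W \otimes \bigwedge^{b} U$ by $U$-degree: contracting against $\xi$ vanishing on $W$ lowers the $U$-degree by one and forces all components of $U$-degree $\geq 2$ to vanish, after which contracting against $\xi$ vanishing on $U$ kills the remaining $U$-degree-one component.

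The main obstacle is precisely this last claim: one must disentangle the contributions of the various $U$-degree components to $\iota_{\xi}(\beta)$ and verify that no cancellation between them occurs. The $U$-degree grading makes this transparent, since each $\iota_{\xi}$ shifts it in a controlled way, and the nondegeneracy of contraction on $\bigwedge^{m} U$ and on $\bigwedge^{p} W$ closes the argument — the very last step being where the hypothesis $p \geq 1$ enters, as it requires $\bigwedge^{p} W$ with $p \geq 1$. For $p=0$ the space $\bigwedge^{p} W = \C$ carries no contraction information; this case is therefore genuinely special and must be inspected directly from the definitions.
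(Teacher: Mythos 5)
Your argument is correct for $p \geq 1$, and it is a genuinely different route from the paper's. The paper uses normal generation essentially: the exact sequence $0 \to \Gamma_{\PP}(\mathcal{O}_{\PP}(1),I_{X/\PP}) \to \Gamma_{\PP}(\mathcal{O}_{\PP}(1)) \to \Gamma_X(L) \to 0$ identifies $\mathrm{K}_{p,1}(X,L)$ with $\mathrm{K}_{p-1,2}(\PP,I_{X/\PP},\mathcal{O}_{\PP}(1))$, and likewise over $S_W$; since $\mathrm{H}^0(\mathcal{O}_{\PP}(1)\otimes I_{X/\PP})=0$, both of these groups are \emph{kernels} sitting inside $\wedge^{p-1}W\otimes I_2$, resp.\ $\wedge^{p-1}V\otimes I_2$, so injectivity follows from $\wedge^{p-1}W \subseteq \wedge^{p-1}V$. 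You instead show directly that if a $W$-cocycle $\alpha \in \wedge^p W\otimes V$ becomes a coboundary over $V$, say $\alpha = d^V\beta$ with $\beta \in \wedge^{p+1}V$, then $\beta$ already lies in $\wedge^{p+1}W$; your contraction identity and the two $U$-degree steps (kill $U$-degree $\geq 2$ using $\xi$ vanishing on $W$, then kill $U$-degree $1$ using $\xi$ vanishing on $U$, which is where $p\geq 1$ enters) are both instances of the nondegeneracy of the interior product and are sound. Your route is more elementary and strictly more general: it uses only that $M_0=\mathrm{H}^0(\mathcal{O}_X)=\C$ and $M_1=\mathrm{H}^0(L)$, never normal generation, so it proves the injectivity for an arbitrary polarized integral variety. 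It is also, in substance, a mirrored variant of the paper's own Lemma \ref{basic-case-rank-map} (which constrains the \emph{second} tensor factor: $d\sigma \in \wedge^{p-1}V\otimes W$ iff $\sigma\in\wedge^p W$, proved there by a coefficient computation); so your proof in effect derives Proposition \ref{inc-subspace} from such a lemma, whereas the paper keeps the two statements independent. What the paper's route buys is the identification $\mathrm{K}_{p,1}(X,L)\simeq \mathrm{Ker}(\wedge^{p-1}V\otimes I_2 \to \wedge^{p-2}V\otimes I_3)$, which it reuses immediately afterwards (Theorem \ref{EG} and the characterization of rank).

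Concerning $p=0$, which you quarantined but did not resolve: the honest answer is that the statement is false there unless $W=\mathrm{H}^0(X,L)$. Directly from the definitions, $\mathrm{K}_{0,1}(X,L;W)$ is the cokernel of $W\otimes M_0 \to M_1$, i.e.\ $\mathrm{H}^0(L)/W$, while $\mathrm{K}_{0,1}(X,L)=0$, so the natural map cannot be injective for a proper subspace $W$. The paper's own proof breaks at exactly the same point: its claim that $\mathrm{K}_{p,q}(S_V,W)=0$ for all $(p,q)\neq(0,0)$ fails at $(p,q)=(0,1)$, where the group is $V/W$ (more generally $\mathrm{K}_{0,q}(S_V,W)\simeq \mathrm{Sym}^q(V/W)$). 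So your deferral is not a gap in your argument; the proposition should simply be read with $p\geq 1$, which is the only range used in the rest of the paper.
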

\begin{proof}  
This is implicitly in \cite[\S 2]{aprodu-higher}, but we give a proof for completeness.\\ 
 
	Set $V:=\mathrm{H}^0(L)$ and $\PP:=\PP(\mathrm{H}^0(X,L))$. We have the exact Koszul complex
	{\small{$$\ldots \to \bigwedge^2 W \otimes S_V(-2) \to W \otimes S_V(-1) \to S_V \to S_{V/W} \to 0 $$}}
	of graded $S_V:=\mathrm{Sym}(V)$ modules, \cite[\S I.14]{peeva}. Twisting this complex appropriately, we see
	{\small{$$\wedge^{s+1}W \otimes \mathrm{Sym}^{t-1}(V) \to \wedge^{s}W \otimes \mathrm{Sym}^{t}(V) \to \wedge^{s-1}W \otimes \mathrm{Sym}^{t+1}(V)$$}}
	is exact for all $(s,t) \neq (0,0)$ and so $\mathrm{K}_{p,q}(S_V, W)=0$ for $(p,q) \neq (0,0)$.
	
	 By the assumption that $(X,L)$ is normally generated, we have an exact sequence
	$$0 \to \Gamma_{\PP}(\mathcal{O}_{\PP}(1), I_{X/\PP}) \to \Gamma_{\PP}(\mathcal{O}_{\PP}(1)) \to \Gamma_{X}(L) \to 0$$
	of $S_V=\Gamma_{\PP}(\mathcal{O}_{\PP}(1))$ modules. We obtain an exact sequence
	{\small{$$\to \mathrm{K}_{p,1}(S_V,V) \to \mathrm{K}_{p,1}(X,L) \to \mathrm{K}_{p-1,2}(\PP,I_{X/\PP},\mathcal{O}_{\PP}(1)) \to \mathrm{K}_{p-1,2}(S_V,V) \to.$$}}
	Thus $\mathrm{K}_{p,1}(X,L) \simeq \mathrm{K}_{p-1,2}(\PP,I_{X/\PP},\mathcal{O}_{\PP}(1))$. Likewise, considering the short exact sequence as a sequence of $S_W$ modules, $\mathrm{K}_{p,1}(X,L; W) \simeq \mathrm{K}_{p-1,2}(\PP,I_{X/\PP},\mathcal{O}_{\PP}(1); W)$.
	
	It remains to show that the natural map $\mathrm{K}_{p-1,2}(\PP,I_{X/\PP},\mathcal{O}_{\PP}(1); W) \to \mathrm{K}_{p-1,2}(\PP,I_{X/\PP},\mathcal{O}_{\PP}(1))$ is injective. Since $\mathrm{H}^0( \mathcal{O}_{\PP}(1) \otimes I_{X/\PP})=0$ 
	{\small{\begin{align*}
	\mathrm{K}_{p-1,2}(\PP,I_{X/\PP},\mathcal{O}_{\PP}(1); W)&=\mathrm{Ker}(\wedge^{p-1}W \otimes \mathrm{H}^0( \mathcal{O}_{\PP}(2) \otimes I_{X/\PP})
	\to \wedge^{p-2}W \otimes \mathrm{H}^0( \mathcal{O}_{\PP}(3) \otimes I_{X/\PP}),)\\
	\mathrm{K}_{p-1,2}(\PP,I_{X/\PP},\mathcal{O}_{\PP}(1))&=\mathrm{Ker}(\wedge^{p-1}V \otimes \mathrm{H}^0( \mathcal{O}_{\PP}(2) \otimes I_{X/\PP})
	\to \wedge^{p-2}V \otimes \mathrm{H}^0( \mathcal{O}_{\PP}(3) \otimes I_{X/\PP}))
	\end{align*}}}
	and the claim follows from the inclusion $\wedge^{p-1}W \seq \wedge^{p-1}V$.
	\end{proof}

The following proposition will be useful.
\begin{prop} \label{elementary-van}
	Let $(X,L)$ be a projective variety and let $W \seq \mathrm{H}^0(X,L)$ be a space of sections generating $L$. Let $F$ be a coherent sheaf of $X$. Then $\mathrm{K}_{p,q}(X,F,L; W)=0$ for $p \geq \dim W-1$ and any $q$.
	\end{prop}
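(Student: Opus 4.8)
The plan is to realise the Koszul-cohomology complex computing $\mathrm{K}_{p,q}(X,F,L;W)$ as the complex of global sections of an \emph{exact} complex of sheaves, and then to extract the vanishing from left-exactness of $\mathrm{H}^0$ once the relevant exterior powers of $W$ die. Set $m:=\dim W$ and let $\mathrm{ev}\colon W\otimes\OO_X\to L$ be the evaluation map. Since $W$ generates $L$, the map $\mathrm{ev}$ is surjective; equivalently, twisting by $L^{-1}$, the map $u\colon W\otimes L^{-1}\to\OO_X$ is a surjection of vector bundles (its kernel being locally free by Lemma \ref{coker-bundle}). First I would form the associated Koszul complex $(\mathcal{K})$:
$$0\to\bigwedge^m W\otimes L^{-m}\to\cdots\to\bigwedge^2 W\otimes L^{-2}\to W\otimes L^{-1}\xrightarrow{u}\OO_X\to 0,$$
whose differentials are contraction against $\mathrm{ev}$. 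Because $u$ is a nowhere-vanishing section of $(W\otimes L^{-1})^{\vee}=W^{\vee}\otimes L$, this complex is exact (locally $u$ is part of a coframe, so the Koszul complex is contractible).

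Next, fix $p,q$ with $p\geq m-1$ and set $s:=p+q$. I would tensor $(\mathcal{K})$ by $F\otimes L^{s}$. As $(\mathcal{K})$ is an exact complex of vector bundles it is locally split, so the tensored complex $\mathcal{G}_{\bullet}:=(\mathcal{K})\otimes F\otimes L^{s}$ stays exact, where $\mathcal{G}_k:=\bigwedge^k W\otimes F\otimes L^{s-k}$ with differentials $\partial_k$. Taking global sections term by term produces a complex whose $k$-th term is $\bigwedge^k W\otimes\mathrm{H}^0(X,F\otimes L^{s-k})$; at $k=p$ this is $\bigwedge^p W\otimes M_q$ with $M_q:=\mathrm{H}^0(X,F\otimes L^{q})$, and the twist choice $s=p+q$ makes the neighbouring terms $\bigwedge^{p+1}W\otimes M_{q-1}$ and $\bigwedge^{p-1}W\otimes M_{q+1}$. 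A direct check shows the induced maps on global sections are exactly the Koszul differentials (contract the exterior factor against a basis element of $W$ and multiply the section by it), so the cohomology of the global-section complex at position $p$ is precisely $\mathrm{K}_{p,q}(X,F,L;W)$.

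Finally, I would exploit $p\geq m-1$. Then $p+2>m$, so $\bigwedge^{p+2}W=0$ and hence $\mathcal{G}_{p+2}=0$. Exactness of $\mathcal{G}_{\bullet}$ at $\mathcal{G}_{p+1}$ then forces $\partial_{p+1}$ to be injective with image $\ker\partial_p$; that is, $0\to\mathcal{G}_{p+1}\xrightarrow{\partial_{p+1}}\mathcal{G}_p\xrightarrow{\partial_p}\mathcal{G}_{p-1}$ is exact. Applying the left-exact functor $\mathrm{H}^0(X,-)$ preserves exactness of this three-term sequence, which says exactly that $\mathrm{im}(\mathrm{H}^0\partial_{p+1})=\ker(\mathrm{H}^0\partial_p)$, i.e. $\mathrm{K}_{p,q}(X,F,L;W)=0$.

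The conceptual input, acyclicity of $(\mathcal{K})$, is standard, so I expect the only real care to lie in the bookkeeping: verifying that the sheaf Koszul differential restricts on global sections to the Koszul-cohomology differential (signs included), and that the twist $s=p+q$ aligns the three middle terms correctly. The one genuine subtlety is that the statement is required for $p=m-1$ and $p=m$, where the middle exterior power $\bigwedge^p W$ is still nonzero; it is the vanishing of $\bigwedge^{p+2}W$ rather than of $\bigwedge^p W$ that drives the argument, which is precisely why one must pass through the exact sheaf complex instead of a naive term count.
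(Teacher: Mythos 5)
Your proof is correct, but it takes a genuinely different route from the paper's. The paper's argument is essentially two lines: it invokes the kernel-bundle description of Koszul cohomology from \cite[Remark 2.6]{aprodu-nagel}, namely
$$\mathrm{K}_{p,q}(X,F,L;W)\simeq \mathrm{Ker}\Bigl(\mathrm{H}^1\bigl(X,\bigwedge^{p+1}M_W\otimes F\otimes L^{q-1}\bigr)\to \bigwedge^{p+1}W\otimes \mathrm{H}^1(X,F\otimes L^{q-1})\Bigr),$$
where $M_W=\mathrm{Ker}(W\otimes\mathcal{O}_X\to L)$ is the kernel bundle of the evaluation map, and then simply observes that $\mathrm{rk}(M_W)=\dim W-1$ forces $\bigwedge^{p+1}M_W=0$ in the stated range. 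You never pass through $\mathrm{H}^1$ or cite that identification: instead you resolve $\mathcal{O}_X$ by the exact Koszul complex of the nowhere-vanishing section corresponding to the surjection $W\otimes L^{-1}\to\mathcal{O}_X$, tensor by $F\otimes L^{p+q}$ (exactness is preserved since the complex of bundles is locally split, so this works for arbitrary coherent $F$), identify the global-section complex with the Koszul-cohomology complex, and conclude from $\bigwedge^{p+2}W=0$ together with left-exactness of $\mathrm{H}^0$. The two mechanisms are closely related: the syzygy sheaves of your complex are exactly the twisted exterior powers $\bigwedge^{k}M_W\otimes L^{-k}$, so your argument in effect re-proves the portion of the cited remark that the paper uses as a black box. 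What each buys: the paper gets brevity by outsourcing the homological work to the reference; yours is self-contained and more elementary, and it makes transparent that the vanishing is driven by $\bigwedge^{p+2}W=0$ rather than by any positivity input (both triggers, $\bigwedge^{p+1}M_W=0$ and $\bigwedge^{p+2}W=0$, are equivalent to $p\geq\dim W-1$). One cosmetic quibble: your parenthetical appeal to Lemma \ref{coker-bundle} for local freeness of $\mathrm{Ker}(u)$ is both unnecessary for your argument (local splitness follows by breaking the exact bundle complex into short exact sequences from the right) and not quite what that lemma says, as it concerns cokernels of fibrewise-injective maps; this does not affect correctness.
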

\begin{proof}
	Let $M_W$ be the kernel bundle fitting into the exact sequence
	$$0 \to M_W \to W \otimes \mathcal{O}_X \xrightarrow{ev} L \to 0,$$
	where $W \otimes \mathcal{O}_X \xrightarrow{ev} L$ is the evaluation map. We have 
	{\small{$$\mathrm{K}_{p,q}(X,F,L;W)=\mathrm{Ker} \left( \mathrm{H}^1(X,\bigwedge^{p+1}M_W \otimes F \otimes L^{q-1}) \to \bigwedge^{p+1} W \otimes \mathrm{H}^1(X,F \otimes L^{q-1})  \right), $$}}
	\cite[Remark 2.6]{aprodu-nagel}. Since $\mathrm{rk}(M_W)=\dim W -1$, the claim follows.
	\end{proof}

We may now state the definition of rank of a syzygy.
\begin{defin} \label{rank-defin} Let $(X,L)$ be normally generated and $\alpha \neq 0 \in \mathrm{K}_{p,1}(X,L)$. We define the \textbf{rank} of $\alpha$ as the dimension of the smallest subspace $V \seq \mathrm{H}^0(X,L)$ such that $\alpha \in \mathrm{K}_{p,1}(X,L; V)\seq \mathrm{K}_{p,1}(X,L)$.
	\end{defin}
Our definition of rank comes from \cite[Chapter 3]{aprodu-nagel}. When $X$ is integral, for any $\alpha \neq 0 \in \mathrm{K}_{p,1}(X,L)$ we have the inequality
$\rank(\alpha) \geq p+1,$ \cite{bothmer-JPAA}. The rank of a syzygy may alternatively be described in terms of the maps in the linear free resolution of $\Gamma_X(L)$. If 
 $$\ldots \to F_2 \xrightarrow{\partial_2} F_1 \xrightarrow{\partial_1} F_0 \to \Gamma_X(L) \to 0$$
 is the minimal free resolution, then $F_i \simeq \oplus_{j \geq 1} K_{i,j}(X,L) \otimes_{\C} S(-i-j)$
 for $i \geq 1$. Restricting the $p^{th}$ map $$\partial_p \; : \; F_p \to F_{p-1}$$ in the resolution to $K_{p,1}(X,L)\otimes S(-p-1)$ and composing with the projection
 $F_{p-1} \to K_{p-1,1}(X,L)\otimes S(-p)$, we obtain a map $\mathrm{K}_{p,1}(X,L)\otimes S(-p-1) \to \mathrm{K}_{p-1,1}(X,L)\otimes S(-p).$
 Taking the strand of degree $p+1$ we obtain a linear map
 $$\partial^{\ell}_p \; : \; \mathrm{K}_{p,1}(X,L) \to \mathrm{K}_{p-1,1}(X,L) \otimes_{\C} \mathrm{H}^0(X,L),$$
 of vector spaces (this map also appears in \cite[Lemma 2.2]{aprodu-higher}). 
 
 
 We may describe the rank of a syzygy in terms of the complex valued matrix $\partial^{\ell}_p$. Indeed this was von Bothmer's original definition of rank of a syzygy \cite[Definition 3.9]{bothmer-Transactions}. Recall that we have a natural identification
 {\small{$$\mathrm{K}_{p,1}(X,L)\simeq \mathrm{Ker}(\bigwedge^{p-1}\mathrm{H}^0(X,L) \otimes I_2 \to \bigwedge^{p-2} \mathrm{H}^0(X,L) \otimes I_3),$$}}
 from the proof of Proposition \ref{inc-subspace}, where $I:=\mathrm{Ker}(S \to \Gamma_X(L))$ is the homogeneous ideal of $X$.
 \begin{thm}[\cite{eisenbud-goto}, Appendix to Section 1] \label{EG}
Let $\alpha=\sum_i \sigma_i \otimes Q_i \in \mathrm{Ker}(\bigwedge^{p-1}\mathrm{H}^0(X,L) \otimes I_2 \to \bigwedge^{p-2} \mathrm{H}^0(X,L) \otimes I_3)$. Then 
{\small{$$\partial^{\ell}_p(\alpha)=\sum_i d\sigma_i \otimes Q_i \in \wedge^{p-1}\mathrm{H}^0(X,L) \otimes \mathrm{H}^0(X,L) \otimes I_2,$$}}
where $d: \wedge^{p-1}\mathrm{H}^0(X,L) \to \wedge^{p-1}\mathrm{H}^0(X,L) \otimes \mathrm{H}^0(X,L) $ is the differential.
 \end{thm}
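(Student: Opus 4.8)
The plan is to reduce to a structural fact about the linear strand of a minimal free resolution and then read off the formula on explicit Koszul cocycles. Write $V=\mathrm{H}^0(X,L)$, $S=S_V$, $M=\Gamma_X(L)$, and let $I=\mathrm{Ker}(S\to M)$ be the homogeneous ideal. From the exact sequence $0\to I\to S\to M\to 0$ and the freeness of $S$, the minimal free resolution of $M$ has the shape $S\leftarrow F_1\leftarrow F_2\leftarrow\cdots$, so deleting $F_0=S$ exhibits $\cdots\to F_2\to F_1\to I\to 0$ as the minimal free resolution of $I$. Hence, for $p\geq 2$, the map $\partial^{\ell}_p$ coincides with the linear strand differential of the resolution of $I$, and under the isomorphism $\mathrm{K}_{p,1}(X,L)\simeq \mathrm{K}_{p-1,2}(\PP,I_{X/\PP},\mathcal{O}_{\PP}(1))$ from the proof of Proposition \ref{inc-subspace} (and its analogue for $p-1$) the element $\alpha=\sum_i\sigma_i\otimes Q_i$ is exactly a Koszul cocycle in $\bigwedge^{p-1}V\otimes I_2$; here one uses $I_1=0$, so that $\mathrm{K}_{p-1,2}(I,V)=\mathrm{Ker}(\bigwedge^{p-1}V\otimes I_2\to\bigwedge^{p-2}V\otimes I_3)$ is precisely the space in which $\alpha$ lives. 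It therefore suffices to compute the linear strand differential of the resolution of $I$ on such a cocycle.

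The key structural input is the following, which is the content of the Eisenbud--Goto appendix: for any finitely generated graded $S$-module $N$ with minimal resolution $G_{\bullet}$, the Koszul complex $(\bigwedge^{\bullet}V\otimes N,\partial^{\mathrm{Kos}})$ computes $\mathrm{Tor}^S_{\bullet}(N,\C)=\bigoplus_j\mathrm{K}_{\bullet,j}(N,V)$, and the linear part of the differential $G_p\to G_{p-1}$, read off on these groups, is the comultiplication on the exterior factor, $\sum\sigma\otimes n\mapsto\sum (d\sigma)\otimes n$, where $d$ is the Koszul differential. Conceptually this holds because $\mathrm{Tor}^S_{\bullet}(N,\C)$ is a module over $\mathrm{Tor}^S_{\bullet}(\C,\C)=\bigwedge^{\bullet}V$, whose action is exterior multiplication on representing cocycles, and the linear strand differential is the comultiplication dual to this action. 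To prove it I would form the double complex $\bigwedge^s V\otimes G_t$ obtained by tensoring $G_{\bullet}$ with the Koszul resolution of $\C$: its two filtrations identify $G_{\bullet}\otimes_S\C$ (with vanishing differential, by minimality) with $H_{\bullet}(\bigwedge^{\bullet}V\otimes N)$, and lifting a Koszul cocycle to a total cocycle by the standard zig-zag, then reducing the bottom term modulo the maximal ideal, expresses the linear part of $\partial^G$ through a single Koszul step that comultiplies the exterior factor while leaving the module element fixed.

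Granting this, the conclusion is immediate: applying the linear strand differential of $I$ to $\alpha=\sum_i\sigma_i\otimes Q_i$, with $\sigma_i\in\bigwedge^{p-1}V$ and $Q_i\in I_2$, gives $\sum_i(d\sigma_i)\otimes Q_i$, and after reorganizing the tensor factors a standard Koszul coderivation identity, combined with the cocycle condition $\alpha\in\mathrm{Ker}(\bigwedge^{p-1}V\otimes I_2\to\bigwedge^{p-2}V\otimes I_3)$, shows this output lies in $\mathrm{K}_{p-2,2}(I,V)\otimes V\simeq\mathrm{K}_{p-1,1}(X,L)\otimes\mathrm{H}^0(X,L)$, as the target of $\partial^{\ell}_p$ requires. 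The main obstacle is the structural input of the second paragraph: making the comparison of the two computations of $\mathrm{Tor}$ precise at the chain level, and in particular tracking signs and identifications through the zig-zag so that the linear part of the resolution differential is seen to be exactly the comultiplication rather than merely proportional to it. The overall normalization is pinned down by the case $p=2$, where $\partial^{\ell}_2$ simply records the coefficients $\ell_i$ of a linear syzygy $\sum_i\ell_i Q_i=0$ among the generating quadrics and $d$ is the identity on $\bigwedge^1 V$.
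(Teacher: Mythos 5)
The first thing to note is that the paper contains no proof of Theorem \ref{EG} at all: the statement is quoted from the appendix to Section 1 of Eisenbud--Goto, so there is no internal argument to compare yours against. What you propose is, in outline, a correct reconstruction of the standard proof of that result, and your reductions are the right ones. Normal generation gives $F_0=S$, so deleting $F_0$ from the minimal resolution of $\Gamma_X(L)$ exhibits $F_1\leftarrow F_2\leftarrow\cdots$ as the minimal free resolution of $I$; since $I_1=0$, the identification $\mathrm{K}_{p,1}(X,L)\simeq \mathrm{K}_{p-1,2}(I,V)$ from the proof of Proposition \ref{inc-subspace} realizes linear syzygies as honest Koszul cocycles in $\wedge^{p-1}V\otimes I_2$, with no coboundaries to quotient by. Your structural fact is correctly stated, and the double-complex mechanism you sketch genuinely proves it: if $\tilde{z}_0,\dots,\tilde{z}_p$ are the zig-zag lifts of a Koszul cocycle $z$ through $\wedge^{p-s}V\otimes G_s$, then the class of $z$ in $\mathrm{Tor}^S_p(N,\C)$ is $\tilde{z}_p \bmod \mathfrak{m}G_p$; the last zig-zag relation reads $\partial_p\tilde{z}_p=\pm\mu(\tilde{z}_{p-1})$, where $\mu:V\otimes G_{p-1}\to G_{p-1}$ is module multiplication, so the linear part of $\partial_p$ sends $[z]$ to the class of $\tilde{z}_{p-1}$ in $V\otimes(G_{p-1}\otimes_S\C)$; and applying the comultiplication componentwise to each $\tilde{z}_s$ produces, via exactly the coderivation identity you invoke, a valid zig-zag for $dz$ whose endpoint is $\pm\tilde{z}_{p-1}$. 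So the verification you defer does close, and your final observation that the cocycle condition forces $\sum_i d\sigma_i\otimes Q_i$ into $\mathrm{K}_{p-2,2}(I,V)\otimes V\simeq \mathrm{K}_{p-1,1}(X,L)\otimes\mathrm{H}^0(X,L)$ is the correct use of that identity.

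Two caveats. The smaller one: the statement as printed has a typo which your argument silently corrects --- the differential must lower exterior degree, $d:\wedge^{p-1}\mathrm{H}^0(X,L)\to\wedge^{p-2}\mathrm{H}^0(X,L)\otimes\mathrm{H}^0(X,L)$ as in Lemma \ref{basic-case-rank-map}, with $\sum_i d\sigma_i\otimes Q_i\in\wedge^{p-2}\mathrm{H}^0(X,L)\otimes\mathrm{H}^0(X,L)\otimes I_2$ --- and it is this corrected version that you prove and that the rank proposition following Theorem \ref{EG} actually uses. The more substantive one: your normalization step is not airtight as stated. Comparisons of this kind can differ by a sign depending on $p$ (for instance $(-1)^p$ or $(-1)^{\binom{p}{2}}$, according to the conventions chosen for the Koszul differential and the total complex), so checking the case $p=2$ alone does not pin the sign down for all $p$; one has to fix conventions and carry the sign through the induction along the zig-zag. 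This is harmless for the paper's purposes, since the only application of Theorem \ref{EG} is the criterion that $\alpha$ has rank at most $r$ if and only if $\partial^{\ell}_p(\alpha)$ lies in $\mathrm{K}_{p-1,1}(X,L)\otimes W$ for some $W\seq\mathrm{H}^0(X,L)$ of dimension at most $r$, a condition insensitive to scalars; but a complete proof of the identity as an equality requires that bookkeeping to be done.
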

 We first need a simple lemma.
 \begin{lem} \label{basic-case-rank-map}
 Let $V$ be a finite-dimensional, complex vector space with subspace $W \seq V$. Let $d: \wedge^p V \to \wedge^{p-1}V\otimes V$ be the differential and let $\sigma \in \wedge^pV$.
 Then $d\sigma \in \wedge^{p-1} V \otimes W$ if and only if $\sigma \in \wedge^{p}W$.
 \end{lem}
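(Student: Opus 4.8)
The plan is to reduce the statement to a single linear-algebra fact about the Koszul comultiplication restricted to an exterior power. Fix a complement so that $V = W \oplus U$, and let $\pi_U : V \to U$ denote the projection with kernel $W$. Since $\wedge^{p-1}V \otimes V = (\wedge^{p-1}V \otimes W) \oplus (\wedge^{p-1}V \otimes U)$, the condition $d\sigma \in \wedge^{p-1}V \otimes W$ is equivalent to the vanishing of $D\sigma$, where $D := (\mathrm{id}_{\wedge^{p-1}V} \otimes \pi_U) \circ d : \wedge^p V \to \wedge^{p-1}V \otimes U$. Thus the lemma is equivalent to the identity $\ker D = \wedge^p W$. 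The inclusion $\wedge^p W \seq \ker D$ is immediate: applying $d$ to a wedge of vectors of $W$ produces a second tensor factor lying in $W$, which $\pi_U$ kills. This already settles the ``only if'' direction.

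For the reverse inclusion I would compose $D$ with the wedge-multiplication map $m : \wedge^{p-1}V \otimes U \to \wedge^p V$, $\omega \otimes u \mapsto \omega \wedge u$, and compute $m \circ D$ directly on a decomposable element $v_1 \wedge \cdots \wedge v_p$. A short sign computation shows that $m \circ D = (-1)^{p+1}\mathcal{D}_{\pi_U}$, where $\mathcal{D}_{\pi_U}$ is the derivation of $\wedge^{\bullet} V$ induced by the endomorphism $\pi_U$, namely $\mathcal{D}_{\pi_U}(v_1 \wedge \cdots \wedge v_p) = \sum_{i} v_1 \wedge \cdots \wedge \pi_U(v_i) \wedge \cdots \wedge v_p$. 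The point of introducing $m$ is that $\mathcal{D}_{\pi_U}$ is easy to diagonalize, so passing from $D$ to $m \circ D$ turns the problem into an eigenvalue computation.

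Next I would use the decomposition $\wedge^p V = \bigoplus_{a+b = p} \wedge^a W \otimes \wedge^b U$ coming from the splitting $V = W \oplus U$. Since $\pi_U$ is idempotent with eigenvalue $0$ on $W$ and $1$ on $U$, the induced derivation acts on the summand $\wedge^a W \otimes \wedge^b U$ as multiplication by the scalar $b$. Writing $\sigma = \sum_{b} \sigma_b$ with $\sigma_b \in \wedge^{p-b} W \otimes \wedge^b U$, the hypothesis $D\sigma = 0$ gives $\mathcal{D}_{\pi_U}\sigma = 0$, hence $\sum_b b\,\sigma_b = 0$; as the summands lie in distinct graded pieces, $b\,\sigma_b = 0$ for every $b$. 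Because we work over $\C$, the scalar $b$ is invertible for $b \geq 1$, forcing $\sigma_b = 0$ there and leaving $\sigma = \sigma_0 \in \wedge^p W$, as desired.

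The only genuinely computational step is verifying $m \circ D = (-1)^{p+1}\mathcal{D}_{\pi_U}$, which is pure sign bookkeeping: extracting $v_i$ contributes the sign $(-1)^{i+1}$ from $d$, and wedging $\pi_U(v_i)$ back into position $i$ contributes $(-1)^{p-i}$, so the total sign $(-1)^{p+1}$ is independent of $i$ and the overall sign is immaterial for the argument. I expect this sign juggling to be the main (though minor) obstacle; once it is settled the conclusion follows formally from the eigenvalue computation for the induced derivation, with the characteristic-zero hypothesis entering only to invert the scalar $b$.
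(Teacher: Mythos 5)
Your proof is correct, but it takes a genuinely different route from the paper's. The paper argues in coordinates: it extends a basis of $W$ to a basis of $V$, expands $\sigma$ in the induced basis of $\wedge^p V$, computes $d\sigma$ term by term, and observes that the coefficients of $d\sigma$ on basis elements $e_{\bar u}\otimes e_i$ with $e_i\notin W$ are, up to sign, exactly the coefficients of $\sigma$ on basis monomials involving a vector outside $W$; linear independence then gives both implications at once. You instead encode the condition as the vanishing of $D=(\mathrm{id}\otimes\pi_U)\circ d$ for a chosen splitting $V=W\oplus U$, and after composing with the wedge multiplication $m$ you identify $m\circ D$, up to the global sign $(-1)^{p+1}$ (which you compute correctly), with the derivation of $\wedge^{\bullet}V$ induced by the idempotent $\pi_U$; since that derivation acts as multiplication by $b$ on the summand $\wedge^{p-b}W\wedge\wedge^{b}U$, its kernel in $\wedge^pV$ is $\wedge^pW$, and together with the easy inclusion $\wedge^pW\seq\ker D$ this closes the argument. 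Note that composing with $m$ can only enlarge the kernel, but this is harmless because you only use it for the hard inclusion $\ker D\seq\wedge^pW$. What your approach buys: it eliminates the multi-index and sign bookkeeping of the paper's computation, and it is the same Euler-operator trick the paper itself exploits in Proposition \ref{one-dim-iso} (where $\wedge\circ d=(p+1)\,\mathrm{Id}$ proves injectivity of $d$), so it sits naturally in the paper's toolkit. What the paper's approach buys: the coefficient comparison involves no division, so it is characteristic-free, whereas your eigenvalue step must invert the integers $1,\dots,p$ and hence needs characteristic zero (or characteristic $>p$) --- immaterial here, since the lemma is stated over $\C$, but worth noting.
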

 \begin{proof}
 Let $\{e_1, \ldots e_m \}$, where $m=\dim W$, be a basis for $W$ and extend it to a basis $\{e_1, \ldots, e_n\}$, $n=\dim V$ of $V$. For all integers $\ell$ let $I_{\ell}\seq \{1, \ldots, m\}^{\times \ell}$ denote the set of tuples $\bar{v}=(v_1,\ldots, v_{\ell})$ with $v_1 < v_2 \ldots <v_{\ell}$. For $\bar{v}\in \{1, \ldots, m\}^{\times \ell}$, we use the notation $e_{\bar{v}}$ for the element $e_{v_1} \wedge \ldots \wedge e_{v_{\ell}}$, where $v_i$ denotes the $i^{th}$ component of $\bar{v}$. We denote by $\{\bar{v} \} \seq \{1, \ldots, m\}$ the set $\{v_1, \ldots, v_{\ell} \}$ and define {\small{$$\bar{v}_j :=(v_1, \ldots, v_{j-1}, v_{j+1}, \ldots v_{\ell}) \in I_{\ell-1}.$$}}
 The set $\{e_{\bar{v}} \; | \; \bar{v} \in I_p \}$ forms a basis for $\wedge^p V$. Let $\sigma=\sum_{\bar{v} \in I_p}\alpha_{\bar{v}} \, e_{\bar{v}} \in \wedge^p V,$ with $\alpha_{\bar{v}} \in \C$ for all $\bar{v} \in I_p$. Then
{\small{ \begin{align*}
 d\sigma=\sum_{\bar{v} \in I_p} \sum_{j=1}^p (-1)^j \alpha_{\bar{v}} \, e_{\bar{v}_j}\otimes e_{v_j} =\sum_{\bar{u} \in I_{p-1}} e_{\bar{u}} \otimes \sum_{i \in \{1,\ldots, m\}\setminus \{ \bar{u} \}} \pm \alpha_{\bar{u}\cup\{i\}}e_i,
  \end{align*}}}
  where, for $i \in \{1,\ldots, m\}\setminus \{ \bar{u} \}$, we denote by $\bar{u}\cup\{i\} \in I_{p}$ the unique element in $I_p$ with underlying set $\{\bar{u}\cup\{i\} \}=\{\bar{u}\} \cup \{i\}$. Thus $d\sigma \in \wedge^{p-1} V \otimes W$ if and only if $\alpha_{\bar{v}}=0$ for all $\bar{v}\in I_p$ with a component $v_i >m$ which occurs if and only if $\sigma \in \wedge^pW$.
 \end{proof}
 We may now characterize the rank of a syzygy in terms of the map  $\partial^{\ell}_p : \mathrm{K}_{p,1}(X,L) \to \mathrm{K}_{p-1,1}(X,L) \otimes_{\C} \mathrm{H}^0(X,L)$.
 \begin{prop}
 Let $(X,L)$ be a normally generated, polarized variety. Let $\alpha \neq 0 \in \mathrm{K}_{p,1}(X,L)$. Then $\alpha$ has rank $\leq r$ if and only if there exists a vector space $V \seq  \mathrm{H}^0(X,L)$ of dimension at most $r$ and $\partial^{\ell}_p(\alpha) \in  \mathrm{K}_{p-1,1}(X,L) \otimes_{\C} V$.
 \end{prop}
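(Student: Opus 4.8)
The plan is to reduce both sides of the claimed equivalence to the same statement about the coordinates of $\alpha$ in a fixed presentation, and then to bridge the two via Theorem \ref{EG} and Lemma \ref{basic-case-rank-map}. First I would invoke the identification
$$\mathrm{K}_{p,1}(X,L) \simeq \mathrm{Ker}\left( \bigwedge^{p-1}\mathrm{H}^0(X,L) \otimes I_2 \to \bigwedge^{p-2}\mathrm{H}^0(X,L) \otimes I_3 \right)$$
obtained in the proof of Proposition \ref{inc-subspace}, together with its analogue for a subspace $V \seq \mathrm{H}^0(X,L)$, which identifies $\mathrm{K}_{p,1}(X,L;V)$ with $\mathrm{Ker}(\bigwedge^{p-1}V \otimes I_2 \to \bigwedge^{p-2}V \otimes I_3)$ compatibly with $\bigwedge^{p-1}V \seq \bigwedge^{p-1}\mathrm{H}^0(X,L)$. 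Since $\alpha$ already lies in $\mathrm{K}_{p,1}(X,L)$ and the $V$-differential is the restriction of the $\mathrm{H}^0(X,L)$-differential, this shows $\alpha \in \mathrm{K}_{p,1}(X,L;V)$ if and only if $\alpha \in \bigwedge^{p-1}V \otimes I_2$. By Definition \ref{rank-defin}, $\mathrm{rank}(\alpha) \leq r$ is therefore equivalent to the existence of $V$ with $\dim V \leq r$ and $\alpha \in \bigwedge^{p-1}V \otimes I_2$. Fixing a basis $\{Q_i\}$ of $I_2$ and writing $\alpha = \sum_i \sigma_i \otimes Q_i$ with uniquely determined $\sigma_i \in \bigwedge^{p-1}\mathrm{H}^0(X,L)$, uniqueness gives the first translation: $\alpha \in \bigwedge^{p-1}V \otimes I_2$ if and only if every $\sigma_i$ lies in $\bigwedge^{p-1}V$.

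For the right-hand side I would apply Theorem \ref{EG} to this presentation, giving $\partial^{\ell}_p(\alpha) = \sum_i d\sigma_i \otimes Q_i$, where (in the convention of Lemma \ref{basic-case-rank-map}, under which $d$ lowers the exterior degree by one) $d\sigma_i \in \bigwedge^{p-2}\mathrm{H}^0(X,L) \otimes \mathrm{H}^0(X,L)$, so the whole expression sits in $\bigwedge^{p-2}\mathrm{H}^0(X,L) \otimes \mathrm{H}^0(X,L) \otimes I_2$. Reordering so that the distinguished $\mathrm{H}^0(X,L)$ tensorand is last, the target $\mathrm{K}_{p-1,1}(X,L)\otimes V$ corresponds to $\bigwedge^{p-2}\mathrm{H}^0(X,L)\otimes I_2 \otimes V$. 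Because $\partial^{\ell}_p(\alpha)$ automatically lands in $\mathrm{K}_{p-1,1}(X,L)\otimes \mathrm{H}^0(X,L)$, membership $\partial^{\ell}_p(\alpha) \in \mathrm{K}_{p-1,1}(X,L)\otimes V$ is equivalent to $\partial^{\ell}_p(\alpha) \in \bigwedge^{p-2}\mathrm{H}^0(X,L)\otimes V \otimes I_2$, and using again the linear independence of the $Q_i$ this holds if and only if $d\sigma_i \in \bigwedge^{p-2}\mathrm{H}^0(X,L)\otimes V$ for every $i$. Finally Lemma \ref{basic-case-rank-map}, applied with $p$ replaced by $p-1$, ambient space $\mathrm{H}^0(X,L)$, and subspace $V$, converts this into $\sigma_i \in \bigwedge^{p-1}V$ for every $i$. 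Chaining the two translations gives exactly the asserted equivalence.

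The bulk of the argument is bookkeeping with the exterior and tensor factors, which I would carry out but not belabor. The one step that I expect to be the genuine obstacle, and that deserves a careful justification, is the elementary tensor identity that lets me extract the ``$V$-component'' of $\partial^{\ell}_p(\alpha)$ without reference to the fixed ambient subspace $\mathrm{K}_{p-1,1}(X,L)$: namely $\left(\mathrm{K}_{p-1,1}(X,L)\otimes \mathrm{H}^0(X,L)\right) \cap \left(\bigwedge^{p-2}\mathrm{H}^0(X,L)\otimes I_2 \otimes V\right) = \mathrm{K}_{p-1,1}(X,L)\otimes V$, which is the instance for $A = \mathrm{K}_{p-1,1}(X,L) \seq B = \bigwedge^{p-2}\mathrm{H}^0(X,L)\otimes I_2$ and $V \seq C = \mathrm{H}^0(X,L)$ of the fact that $(A\otimes C)\cap(B\otimes V) = A\otimes V$ inside $B\otimes C$. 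Alongside this I would record the compatibility of the Koszul differentials under $\bigwedge^{\bullet}V \seq \bigwedge^{\bullet}\mathrm{H}^0(X,L)$ used in the first paragraph, so that membership in the subspace kernel follows automatically once membership in $\bigwedge^{p-1}V \otimes I_2$ is established.
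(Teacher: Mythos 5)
Your proof is correct and takes essentially the same route as the paper's: both reduce rank $\leq r$ to the condition $\alpha \in \bigwedge^{p-1}V \otimes I_2$ via the kernel identification from Proposition \ref{inc-subspace}, and then convert this into the condition on $\partial^{\ell}_p(\alpha)$ using Theorem \ref{EG} together with Lemma \ref{basic-case-rank-map}. The extra bookkeeping you single out (the uniqueness of the coordinates $\sigma_i$, the tensor intersection identity $(A\otimes C)\cap(B\otimes V)=A\otimes V$, and the degree convention for $d$) is precisely what the paper's terse final sentence leaves implicit, so your write-up is a faithful, slightly more careful rendering of the same argument.
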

 \begin{proof}
 By definition $\alpha$ has rank $\leq r$ if and only if there exists a subspace $V \seq \mathrm{H}^0(X,L)$ of dimension at most $r$ with $\alpha \in \mathrm{K}_{p,1}(X,L;V)$. As in the proof of Proposition \ref{inc-subspace}, we have a natural identification $\mathrm{K}_{p,1}(X,L)\simeq \mathrm{Ker}(\wedge^{p-1}\mathrm{H}^0(X,L) \otimes I_2 \to \wedge^{p-2} \mathrm{H}^0(X,L) \otimes I_3),$
and $\alpha$ has rank $\leq r$ if and only if there exists a subspace $V \seq \mathrm{H}^0(X,L)$ of dimension at most $r$ with $\alpha \in \wedge^{p-1}V \otimes I_2$. By Lemma \ref{basic-case-rank-map} and Theorem \ref{EG}, this is equivalent to $\partial^{\ell}_p(\alpha) \in \wedge^{p-1}\mathrm{H}^0(X,L) \otimes V \otimes I_2$.
 \end{proof}

The rank of a syzygy $\alpha$ can be thought of as a measure of complexity. Syzygies of minimal rank $p+1$ are the simplest, and all such syzygies can be constructed explicitly \cite{bothmer-JPAA}, \cite[CH.\ 3]{aprodu-nagel}. Precisely, all such syzygies are the restriction of syzygies of a rational normal scroll. 
 Alternatively, at least if $X$ is smooth, all minimal rank syzygies arise from the Green--Lazarsfeld construction of syzygies, \cite[Appendix A]{green-koszul}. Indeed, if $\alpha \neq 0 \in \mathrm{K}_{p,1}(X,L; W)$ with $\dim W =p+1$, then by Proposition \ref{elementary-van}, $W$ cannot be base-point free. The base-locus contains a divisor $D \seq X$ moving in a pencil and  $\alpha$ arises from the Green--Lazarsfeld construction applied to $D, L(-D)$.
\begin{prop} \label{one-dim-iso}
	Let $(X,L)$ be a projective variety. Let $M$ be a line bundle on $X$ with $h^0(X,M)=2$ and $h^0(X,L\otimes M^{\vee})=p+1$. Pick $s \neq 0  \in \mathrm{H}^0(M)$. Assume $L \otimes M^{\vee}$ is base-point free. Then we have a natural isomorphism
	{\small{$$\mathrm{K}_{p,1}(X,L; \mathrm{H}^0(L \otimes M^{\vee}))\simeq \bigwedge^{p+1}\mathrm{H}^0(L \otimes M^{\vee}))\otimes \frac{\mathrm{H}^0(M)} {\C \langle s\rangle }.$$}}
	In particular, $\dim \mathrm{K}_{p,1}(X,L; \mathrm{H}^0(L \otimes M^{\vee}))=1.$ 
	\end{prop}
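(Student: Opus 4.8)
The plan is to compute $\mathrm{K}_{p,1}(X,L;W)$ directly from the three-term Koszul complex, where $W := s\cdot \mathrm{H}^0(L\otimes M^{\vee}) \seq \mathrm{H}^0(L)$ is the image of $\mathrm{H}^0(L\otimes M^{\vee})$ under multiplication by $s$; note $\dim W = p+1$ since multiplication by $s$ is injective on sections ($X$ being integral). Write $N := L\otimes M^{\vee}$ and fix a basis $n_0,\ldots,n_p$ of $\mathrm{H}^0(N)$, so that $w_i := s\,n_i$ is a basis of $W$ and $\bigwedge^{p+1}W$ is one-dimensional. As the asserted right-hand side $\bigwedge^{p+1}\mathrm{H}^0(N)\otimes \mathrm{H}^0(M)/\C\langle s\rangle$ is likewise one-dimensional, the proposition reduces to exhibiting a canonical generator and showing $\dim \mathrm{K}_{p,1}(X,L;W) = 1$. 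I emphasise that this argument does not require normal generation of $(X,L)$; the failure of Proposition \ref{elementary-van} to apply is precisely because $W$ is not base-point free (its base locus is the divisor of $s$).

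First I would construct the syzygies. Set $\omega_i := w_0\wedge\cdots\wedge\widehat{w_i}\wedge\cdots\wedge w_p$, and for $t\in \mathrm{H}^0(M)$ put $\gamma_t := \sum_i (-1)^i\,\omega_i\otimes t\,n_i \in \bigwedge^p W\otimes \mathrm{H}^0(L)$. A direct computation shows $\gamma_t$ is a Koszul cocycle: the $\{i,j\}$-component of its image under the second differential is a multiple of $w_j(t\,n_i)-w_i(t\,n_j)=s\,t\,(n_j n_i - n_i n_j)=0$, using only commutativity of multiplication of sections. Taking $t=s$ recovers the trivial syzygy $\gamma_s = d_1(w_0\wedge\cdots\wedge w_p)$ in the image of the first differential, so $t\mapsto[\gamma_t]$ descends to $\mathrm{H}^0(M)/\C\langle s\rangle \to \mathrm{K}_{p,1}(X,L;W)$. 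Letting the basis $n_0,\ldots,n_p$ vary scales both $\gamma_t$ and $n_0\wedge\cdots\wedge n_p$ by the same determinant, so this assembles into a well-defined natural map $\bigwedge^{p+1}\mathrm{H}^0(N)\otimes \mathrm{H}^0(M)/\C\langle s\rangle \to \mathrm{K}_{p,1}(X,L;W)$.

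The heart of the proof is showing this map is an isomorphism, which comes down to computing $\mathrm{Ker}\,d_2$, where $d_2:\bigwedge^p W\otimes \mathrm{H}^0(L)\to \bigwedge^{p-1}W\otimes \mathrm{H}^0(L^{\otimes 2})$. Writing a cochain as $\sum_i (-1)^i\,\omega_i\otimes\xi_i$ with $\xi_i\in \mathrm{H}^0(L)$, the cocycle condition reads $w_j\xi_i = w_i\xi_j$, i.e. $s\,(n_j\xi_i - n_i\xi_j)=0$ in $\mathrm{H}^0(L^{\otimes 2})$ for all $i,j$; dividing by the nonzerodivisor $s$ gives $n_j\xi_i = n_i\xi_j$ in $\mathrm{H}^0(M\otimes N^{\otimes 2})$. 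These are proportionality relations: on $X_i := \{n_i\neq 0\}$ the quotient $\xi_i/n_i$ is a regular section of $M=L\otimes N^{\vee}$, and the relations force these local sections to agree on overlaps. Since $N$ is base-point free the $n_i$ have no common zero, so the $X_i$ cover $X$ and the local sections glue to a global $\phi\in \mathrm{H}^0(M)$ with $\xi_i=\phi\,n_i$; conversely every $\phi$ yields such a cocycle. Hence $\mathrm{Ker}\,d_2 \cong \mathrm{H}^0(M)$, of dimension two.

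To conclude I would identify $\mathrm{Im}\,d_1$: it is spanned by $\gamma_s$, i.e. the cocycle attached to $\phi=s$, corresponding to $\C\langle s\rangle\seq \mathrm{H}^0(M)$ under the identification above. Therefore $\mathrm{K}_{p,1}(X,L;W)=\mathrm{Ker}\,d_2/\mathrm{Im}\,d_1\cong \mathrm{H}^0(M)/\C\langle s\rangle$, which is one-dimensional, and tracking the dependence on the basis $n_0,\ldots,n_p$ upgrades this to the stated canonical isomorphism with $\bigwedge^{p+1}\mathrm{H}^0(L\otimes M^{\vee})\otimes \mathrm{H}^0(M)/\C\langle s\rangle$. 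The main obstacle is exactly the computation of $\mathrm{Ker}\,d_2$: everything turns on passing from the Koszul relations to honest proportionality $\xi_i=\phi\,n_i$, which is where base-point-freeness of $L\otimes M^{\vee}$ (to cover $X$ by the $X_i$) and integrality of $X$ (to divide by $s$ and to glue) are used.
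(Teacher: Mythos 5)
Your proof is correct, and it reaches the same intermediate target as the paper's proof --- identifying the kernel of the outgoing Koszul differential with $\bigwedge^{p+1}\mathrm{H}^0(L\otimes M^{\vee})\otimes \mathrm{H}^0(M)$ and then quotienting by the image of the incoming one --- but by a genuinely different method at the decisive step. The paper compares the complex computing $\mathrm{K}_{p,1}(X,L;\mathrm{H}^0(L\otimes M^{\vee}))$ with the Koszul complex of the twisted module $\Gamma_X(M,L\otimes M^{\vee})$ over $\mathrm{Sym}\,\mathrm{H}^0(L\otimes M^{\vee})$ and invokes Proposition \ref{elementary-van} twice: the vanishing $\mathrm{K}_{p,1}(X,M,L\otimes M^{\vee})=0$ gives exactness in the middle, and $\mathrm{K}_{p+1,0}(X,M,L\otimes M^{\vee})=0$ gives injectivity of the first differential $f'$ of that auxiliary complex; note that your cocycles $\gamma_t$ are exactly the elements of $\mathrm{Im}(f')$, and that Proposition \ref{elementary-van} itself rests on a rank count for the kernel bundle $M_W$ (namely $\wedge^{p+1}M_W=0$ since $\mathrm{rk}\,M_W=p$). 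You instead compute $\mathrm{Ker}(d_2)$ by bare hands: dividing the cocycle relations by the nonzerodivisor $s$ turns them into proportionality relations $n_j\xi_i=n_i\xi_j$, and base-point-freeness of $L\otimes M^{\vee}$ lets the local quotients $\xi_i/n_i$ glue to a global section $\phi\in\mathrm{H}^0(M)$ with $\xi_i=\phi n_i$ (with integrality also needed to promote this identity from the dense open set $X_i$ to all of $X$); in effect you re-prove, at the level of sections, precisely the special case of the kernel-bundle vanishing that the paper quotes. The paper's route is shorter given that Proposition \ref{elementary-van} is already established and reused elsewhere, and it avoids sign bookkeeping; your route is self-contained, pins down exactly where integrality and base-point-freeness enter, and exhibits the generator as an explicit cocycle $\gamma_t=\sum_i(-1)^i\,\omega_i\otimes t n_i$, which is essentially the formula the paper must write down anyway in Lemma \ref{natural-injection} in order to extend the construction to torsion-free sheaves on singular curves, where the clean line-bundle machinery is unavailable.
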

\begin{proof}
	The syzygy space $\mathrm{K}_{p,1}(X,L; \mathrm{H}^0(L \otimes M^{\vee}))$ is the middle cohomology of 
	{\small{$$\bigwedge^{p+1}\mathrm{H}^0(L \otimes M^{\vee}) \xrightarrow{f} \bigwedge^{p}\mathrm{H}^0(L \otimes M^{\vee})\otimes \mathrm{H}^0(L) \xrightarrow{g} \bigwedge^{p-1}\mathrm{H}^0(L \otimes M^{\vee})\otimes \mathrm{H}^0(L^2).$$}}
    The map 
    $\wedge^{p+1}\mathrm{H}^0(L \otimes M^{\vee}) \xrightarrow{f} \wedge^{p}\mathrm{H}^0(L \otimes M^{\vee})\otimes \mathrm{H}^0(L) \seq \wedge^{p}\mathrm{H}^0(L)\otimes \mathrm{H}^0(L)$
    factors through $d: \wedge^{p+1}\mathrm{H}^0(L ) \to \wedge^{p}\mathrm{H}^0(L)\otimes \mathrm{H}^0(L)$ via the inclusion $\mathrm{H}^0(L \otimes M^{\vee}) \seq \mathrm{H}^0(X,L)$ induced by $s$. The differential $d$ is injective since if $\wedge : \wedge^{p}\mathrm{H}^0(L)\otimes \mathrm{H}^0(L) \to \wedge^{p+1}\mathrm{H}^0(L )$ is the wedge map then $\wedge \circ d=(p+1) Id$. Thus $f$ is injective and we have an isomorphism
    {\small{$$\mathrm{K}_{p,1}(X,L; \mathrm{H}^0(L \otimes M^{\vee}))\simeq \frac{ \mathrm{Ker}(g)}{ \wedge^{p+1}\mathrm{H}^0(L \otimes M^{\vee})}.$$}}
    
    We have $\mathrm{K}_{p,1}(X,M, L \otimes M^{\vee})=0$ by Proposition \ref{elementary-van}. Thus the sequence
   {\small{ $$\bigwedge^{p+1}\mathrm{H}^0(L \otimes M^{\vee})\otimes \mathrm{H}^0(M) \xrightarrow{f'} \bigwedge^{p}\mathrm{H}^0(L \otimes M^{\vee})\otimes \mathrm{H}^0(L) \xrightarrow{g'} \bigwedge^{p-1}\mathrm{H}^0(L \otimes M^{\vee})\otimes \mathrm{H}^0(L^2\otimes M^{\vee})$$ }}
    is exact. Further $\mathrm{K}_{p+1,0}(X,M, L \otimes M^{\vee})=\mathrm{Ker}(f')$ since $\wedge^{p+2}\mathrm{H}^0(L \otimes M^{\vee})=0$. Thus $f'$ is injective by Proposition \ref{elementary-van}. Thus $\mathrm{Ker}(g)=\mathrm{Ker}(g')\simeq \wedge^{p+1}\mathrm{H}^0(L \otimes M^{\vee})\otimes \mathrm{H}^0(M).$
    This gives 
        {\small{$$\mathrm{K}_{p,1}(X,L; \mathrm{H}^0(L \otimes M^{\vee}))\simeq \frac{  \wedge^{p+1}\mathrm{H}^0(L \otimes M^{\vee})\otimes \mathrm{H}^0(M)}{ \wedge^{p+1}\mathrm{H}^0(L \otimes M^{\vee})},$$}}
        as required.
\end{proof}
	
We now recall some facts about rational normal scrolls, \cite{	schreyer1}. Consider the locally free sheaf
$$\mathcal{E}=\mathcal{O}_{\PP^1}(e_1) \oplus \ldots \oplus \mathcal{O}_{\PP^1}(e_d)$$
of rank $d$ on $\PP^1$ with the assumptions
$$e_1 \geq e_2 \geq \ldots \geq e_d \geq 0\;  \; \text{and} \;\; f:=e_1 +\ldots +e_d \geq 2.$$
Set $X:=\PP(\mathcal{E})$ and let $\pi: X \to \PP^1$ denote the projection. Then $\mathcal{O}_X(1)$ is base point free and the associated morphism $j: X \to \PP^r:=\PP$, $r=f+d-1$,
has image $j(X) \seq \PP^r$ a rational normal scroll of minimal degree $f$ in $\PP^r$, \cite{eisenbud-harris-minimal}. We let $H:=\mathcal{O}_X(1)$ and $R:=\pi^*\mathcal{O}_{\PP^1}(1)$. Then $H$ and $R$ generate the integral Picard group of $X$.
\smallskip

 The syzygy spaces $\mathrm{K}_{p,q}(X,H)$ may be described explicitly, \cite{schreyer1}. In particular, 
$b_{p,q}(X,H)=0$ for $q \geq 2$, whereas $b_{p,1}(X,H)=p\binom{f}{p+1}$. It follows readily from the fact that $H$ and $R$ generate $\mathrm{Pic}(X)$ that $H-R$ is base-point free for $f \geq 2$. We have $h^0(X,H-R)=h^0(X,H)-d=f$ and $h^0(X,R)=2$. Proposition \ref{one-dim-iso} shows that
{\small{$$\wedge^{f}\mathrm{H}^0(X,H-R) \otimes \frac{\mathrm{H}^0(X,R)}{<s>} \simeq \mathrm{K}_{f-1,1}(X,H; \mathrm{H}^0(X,H-R)) .$$}}
Note further that it makes sense to talk about the rank of syzygies in $ \mathrm{K}_{p,1}(X,H)$, even if $H$ is not very ample, since we have a natural isomorphism $ \mathrm{K}_{p,1}(X,H)\simeq \mathrm{K}_{p,1}(j(X),\mathcal{O}_{j(X)}(1))$ and the scroll $j(X) \seq \PP^r$ is normally generated.
\begin{lem} \label{rank-scroll}
Let $X:=\PP(\mathcal{E})$, for $\mathcal{E}=\mathcal{O}_{\PP^1}(e_1) \oplus \ldots \oplus \mathcal{O}_{\PP^1}(e_d)$ with 
$$e_1 \geq e_2 \geq \ldots \geq e_d \geq 0\;  \; \text{and} \;\; f:=e_1 +\ldots +e_d \geq 2.$$
We have a well-defined morphism $\phi : \PP\left(\mathrm{H}^0(\mathcal{O}_X(R))\right) \to \PP( \mathrm{K}_{f-1,1}(X,H))$ with
{\small{\begin{align*}
\phi ([s]):= [\,\wedge^{f}\mathrm{H}^0(X,H-R) \otimes \frac{\mathrm{H}^0(X,R)}{<s>}\,].
\end{align*}}}
Further, $\phi^*\mathcal{O}_{\PP( \mathrm{K}_{f-1,1}(X,H))}(1)\simeq \mathcal{O}_{\PP\left(\mathrm{H}^0(\mathcal{O}_X(R))\right) }(f-2).$ 
\end{lem}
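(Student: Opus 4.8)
The plan is to produce an explicit homogeneous formula for $\phi$ and read off the degree of the defining linear system. Write $V:=\mathrm{H}^0(X,H)$, $U:=\mathrm{H}^0(X,H-R)$ (of dimension $f$) and $B:=\mathrm{H}^0(X,R)$ (of dimension $2$), and let $\mu: U\otimes B\to V$ be multiplication of sections. Fix a basis $u_1,\dots,u_f$ of $U$, so $\omega:=u_1\wedge\cdots\wedge u_f$ generates $\wedge^{f}U$. For $0\neq s\in B$, multiplication by $s$ gives an inclusion $\iota_s:U\hookrightarrow V$ with image $W_s:=s\cdot U$, and $\mathrm{K}_{f-1,1}(X,H;W_s)$ is the one-dimensional space of Proposition \ref{one-dim-iso} (applied with $L=H$, $M=R$, $p=f-1$); by Proposition \ref{inc-subspace} it injects into $\mathrm{K}_{f-1,1}(X,H)$, so $\phi([s])$ is well defined set-theoretically. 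Unwinding the proof of Proposition \ref{one-dim-iso}, a generator of $\mathrm{K}_{f-1,1}(X,H;W_s)$ is the class of
$$\tilde\alpha_s(t):=\sum_{j=1}^{f}(-1)^{j-1}\,(su_1)\wedge\cdots\widehat{(su_j)}\cdots\wedge(su_f)\otimes (u_j t)\ \in\ \wedge^{f-1}V\otimes V,$$
for any $t\in B$ with $t\notin\langle s\rangle$, the class being taken in $\mathrm{K}_{f-1,1}(X,H)=\mathrm{Ker}(g)/\mathrm{im}(d)$ where $d:\wedge^{f}V\to\wedge^{f-1}V\otimes V$ is the Koszul differential (recall $\mathrm{H}^0(\mathcal{O}_X)=\C$).

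Next I would homogenize. Choosing a basis $s_0,s_1$ of $B$ and writing $s=\lambda s_0+\mu s_1$, each factor $su_i=\lambda(s_0u_i)+\mu(s_1u_i)$ is linear in $(\lambda,\mu)$, so $\sigma_0:=[\tilde\alpha_s(s_0)]$ and $\sigma_1:=[\tilde\alpha_s(s_1)]$ are $\mathrm{K}_{f-1,1}(X,H)$-valued homogeneous polynomials of degree $f-1$ in $(\lambda,\mu)$. The expression $\tilde\alpha_s(t)$ is manifestly linear in $t$, and $\tilde\alpha_s(s)=d(su_1\wedge\cdots\wedge su_f)\in\mathrm{im}(d)$, so the identity $\lambda\sigma_0+\mu\sigma_1=[\tilde\alpha_s(s)]=0$ holds in $\mathrm{K}_{f-1,1}(X,H)$. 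Working coordinate-wise in a basis of $\mathrm{K}_{f-1,1}(X,H)$ and using that $\C[\lambda,\mu]$ is a UFD with $\lambda,\mu$ coprime, this forces $\mu\mid\sigma_0$ and $\lambda\mid\sigma_1$; writing $\sigma_0=\mu\tau$ gives $\sigma_1=-\lambda\tau$ for a single $\mathrm{K}_{f-1,1}(X,H)$-valued homogeneous polynomial $\tau$ of degree $f-2$. On $\{\mu\neq0\}$ we get $\phi([s])=[\sigma_0(s)]=[\tau(s)]$ and on $\{\lambda\neq0\}$ we get $\phi([s])=[\sigma_1(s)]=[\tau(s)]$, so $\phi=[\tau]$ globally.

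Finally I would check base-point freeness to conclude. For any $[s]=[\lambda:\mu]$ at least one of $\lambda,\mu$ is nonzero; the corresponding $s_0$ or $s_1$ then lies outside $\langle s\rangle$, so the associated $\sigma_i(s)$ is the nonzero generator furnished by Proposition \ref{one-dim-iso}, whence $\tau(s)\neq0$. Thus $\tau$ has no base points on $\PP(B)=\PP^1$, so $\phi=[\tau]$ is a morphism defined by a base-point-free system of forms of degree $f-2$, giving $\phi^*\mathcal{O}_{\PP(\mathrm{K}_{f-1,1}(X,H))}(1)\simeq\mathcal{O}_{\PP^1}(f-2)$, as claimed. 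The main obstacle, and the step requiring the most care, is the first one: extracting the correct explicit cocycle $\tilde\alpha_s(t)$ from the proof of Proposition \ref{one-dim-iso}, verifying that its class spans the image of $\mathrm{K}_{f-1,1}(X,H;W_s)$, and confirming that both the linearity in $t$ and the reduction $\tilde\alpha_s(s)\in\mathrm{im}(d)$ occur compatibly with the grading, since the clean factorization $\sigma_0=\mu\tau$ — and hence the drop in degree from the naive $f-1$ to $f-2$ — depends entirely on this homogeneity.
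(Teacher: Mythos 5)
Your proof is correct, and it globalizes the fibrewise identification of Proposition \ref{one-dim-iso} by a genuinely different mechanism than the paper. The paper relativizes the construction over $\PP:=\PP(\mathrm{H}^0(\mathcal{O}_X(R)))$ sheaf-theoretically: it builds a line bundle $L:=\wedge^f\mathrm{H}^0(X,H-R)\otimes F(1-f)$ of degree $2-f$ (with $F\simeq\mathcal{O}_{\PP}(1)$ the cokernel of the tautological inclusion $\mathcal{O}_{\PP}(-1)\hookrightarrow\mathrm{H}^0(R)\otimes\mathcal{O}_{\PP}$), produces a bundle map $L\to\mathrm{K}_{f-1,1}(X,H)\otimes\mathcal{O}_{\PP}$ that is fibrewise the injection of Proposition \ref{one-dim-iso}, and invokes \cite[II.7.12]{hartshorne} to obtain $\phi$ with $\phi^*\mathcal{O}(1)\simeq L^{\vee}\simeq\mathcal{O}_{\PP}(f-2)$. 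You instead work entirely in coordinates: the explicit cocycle $\tilde\alpha_s(t)$ is precisely the image of $u_1\wedge\cdots\wedge u_f\otimes t$ under the map $f'$ in the proof of Proposition \ref{one-dim-iso} (transported into the $W_s$-Koszul complex), your relation $\lambda\sigma_0+\mu\sigma_1=[\tilde\alpha_s(s)]=0$ is the statement that $\delta$ kills the subbundle $\wedge^f\mathrm{H}^0(H-R)\otimes\langle s\rangle$, and your UFD factorization $\sigma_0=\mu\tau$, $\sigma_1=-\lambda\tau$ is the coordinate incarnation of twisting by $F\simeq\mathcal{O}_{\PP}(1)$ — it is exactly how the naive degree $f-1$ drops to $f-2$. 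The trade-off: your argument is more elementary and self-contained (it needs no bundle formalism, and base-point-freeness plus the degree of $\phi^*\mathcal{O}(1)$ come out simultaneously and explicitly), whereas the paper's formulation is the one that transports verbatim to the relative Brill--Noether setting of Proposition \ref{definability-map-brill-syz}, where the base is no longer $\PP^1$ and coordinate/UFD arguments would not be available. One small point to make explicit: the injectivity of $\mathrm{K}_{f-1,1}(X,H;W_s)\to\mathrm{K}_{f-1,1}(X,H)$ via Proposition \ref{inc-subspace} requires identifying $\mathrm{K}_{p,1}(X,H)$ with $\mathrm{K}_{p,1}(j(X),\mathcal{O}_{j(X)}(1))$, since $H$ need not be very ample on $X$; the paper flags that $j(X)$ has rational singularities and is normally generated, and your appeal to Proposition \ref{inc-subspace} implicitly relies on the same identification.
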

\begin{proof}
Set $\PP:=\PP\left(\mathrm{H}^0(\mathcal{O}_X(R))\right)$.  The dual of the evaluation morphism $\mathrm{H}^0(\mathcal{O}_{\PP}(1)) \otimes \mathcal{O}_{\PP} \twoheadrightarrow \mathcal{O}_{\PP}(1)$ gives an inclusion $i: \mathcal{O}_{\PP}(-1) \hookrightarrow \mathrm{H}^0(\mathcal{O}_X(R)) \otimes \mathcal{O}_{\PP}$. At the fibre $i \otimes k([s])$ at $[s] \in \PP$ is the inclusion $<s> \seq \mathrm{H}^0(X,R)$. Let $F:=\mathrm{Coker}(i)\simeq \mathcal{O}_{\PP}(1)$ and consider the line bunde
{\small{$$L:=\wedge^f \mathrm{H}^0(X,H-R) \otimes F(1-f),$$}}
which has degree $2-f$. We have the differential
{\small{$$\wedge^f \mathrm{H}^0(X,H-R)\otimes \mathrm{H}^0(X,R) \to \wedge^{f-1} \mathrm{H}^0(X,H-R)\otimes \mathrm{H}^0(X,H).$$}}
Twisting by $\mathcal{O}_{\PP}(1-f)$ we obtain a morphism
{\small{$$\wedge^f \mathrm{H}^0(X,H-R)\otimes \mathrm{H}^0(X,R)\otimes \mathcal{O}_{\PP}(1-f) \to \wedge^{f-1}( \mathrm{H}^0(X,H-R)\otimes \mathcal{O}_{\PP}(-1) )\otimes \mathrm{H}^0(X,H).$$}}
Using the composition 
{\small{$ \mathrm{H}^0(H-R)\otimes \mathcal{O}_{\PP}(-1) \to  \mathrm{H}^0(H-R)\otimes\mathrm{H}^0(R) \otimes \mathcal{O}_{\PP}\to \mathrm{H}^0(H) \otimes \mathcal{O}_{\PP},$}}
we get a morphism
{\small{$$\wedge^f \mathrm{H}^0(X,H-R)\otimes \mathrm{H}^0(X,R)\otimes \mathcal{O}_{\PP}(1-f) \to \wedge^{f-1}\mathrm{H}^0(X,H)\otimes \mathrm{H}^0(X,H)\otimes \mathcal{O}_{\PP},$$}}
which induces a morphism
{\small{$$\delta \; :\;  \wedge^f \mathrm{H}^0(X,H-R)\otimes \mathrm{H}^0(X,R)\otimes \mathcal{O}_{\PP}(1-f) \to \frac{\wedge^{f-1}\mathrm{H}^0(X,H)\otimes \mathrm{H}^0(X,H)}{\wedge^f \mathrm{H}^0(X,H)}\otimes \mathcal{O}_{\PP}.$$}}
By the proof of Proposition \ref{one-dim-iso}, $\delta$ vanishes on the subbundle $\wedge^f \mathrm{H}^0(X,H-R)\otimes \mathcal{O}_{\PP}(-f)$ and hence induces a morphism 
{\small{$$\delta' \; :\;  L \to \frac{\wedge^{f-1}\mathrm{H}^0(X,H)\otimes \mathrm{H}^0(X,H)}{\wedge^f \mathrm{H}^0(X,H)}\otimes \mathcal{O}_{\PP},$$}}
which lands in $\mathrm{K}_{f-1,1}(X,H) \otimes \mathcal{O}_{\PP}$. The resulting morphism
{\small{$$\widetilde{\delta}\; : \; L \to \mathrm{K}_{f-1,1}(X,H)\otimes  \mathcal{O}_{\PP}$$}}
has fibres $\widetilde{\delta} \otimes k([s])$ which are naturally identified with the natural map
{\small{$$\wedge^{f}\mathrm{H}^0(X,H-R) \otimes \frac{\mathrm{H}^0(X,R)}{<s>} \to \mathrm{K}_{f-1,1}(X,H),$$}}
which is injective by Proposition \ref{one-dim-iso} and Proposition \ref{inc-subspace}, noting that we may naturally identify $ \mathrm{K}_{p,1}(X,H)$ with $\mathrm{K}_{p,1}(j(X),\mathcal{O}_{j(X)}(1))$, since $j(X)$ has rational singularities and is normally generated  \cite{schreyer1}. The dual $\delta^{\vee}$ is a surjective morphism of vector bundles and induces the desired morphism $\phi : \PP\left(\mathrm{H}^0(\mathcal{O}_X(R))\right) \to \PP( \mathrm{K}_{f-1,1}(X,H))$, \cite[II.7.12]{hartshorne}. 
\end{proof}
We end this section with a description of the last syzygy space of a scroll. 
\begin{prop}[cf.\ \cite{bothmer-Transactions}, Prop.\ 4.3.] \label{rank-scroll-2}
The morphism $\phi$ from Lemma \ref{rank-scroll} induces an isomorphism $$\phi^* \; : \; \mathrm{H}^0(\mathcal{O}_{ \PP(\mathrm{K}_{f-1,1}(X,H))}(1)) \xrightarrow{\sim} \mathrm{H}^0(\mathcal{O}_{\PP(\mathrm{H}^0(\mathcal{O}_X(R)))}(f-2)).$$
\end{prop}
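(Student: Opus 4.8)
The plan is to combine a dimension count with a nondegeneracy statement. First I would check that both sides have dimension $f-1$. Since $h^0(\mathcal{O}_X(R))=2$, the space $\PP\left(\mathrm{H}^0(\mathcal{O}_X(R))\right)$ is a $\PP^1$, so $\mathrm{H}^0(\mathcal{O}_{\PP^1}(f-2))$ has dimension $f-1$. On the other hand, by the formula $b_{p,1}(X,H)=p\binom{f}{p+1}$ recalled before Lemma \ref{rank-scroll}, we have $b_{f-1,1}(X,H)=(f-1)\binom{f}{f}=f-1$, and hence $\mathrm{H}^0(\mathcal{O}_{\PP(\mathrm{K}_{f-1,1}(X,H))}(1))\simeq \mathrm{K}_{f-1,1}(X,H)^{\vee}$ also has dimension $f-1$. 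It therefore suffices to prove that $\phi^{*}$ is injective.

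I would then translate injectivity into a nondegeneracy, and further into a cohomological vanishing. A section $\ell\in \mathrm{K}_{f-1,1}(X,H)^{\vee}=\mathrm{H}^0(\mathcal{O}_{\PP(\mathrm{K}_{f-1,1}(X,H))}(1))$ lies in $\mathrm{Ker}(\phi^{*})$ exactly when the hyperplane $\{\ell=0\}$ contains $\phi(\PP^1)$; thus $\phi^{*}$ is injective if and only if the lines $\phi([s])\seq \mathrm{K}_{f-1,1}(X,H)$ span $\mathrm{K}_{f-1,1}(X,H)$ as $[s]$ ranges over $\PP^1$. Concretely, recall from the proof of Lemma \ref{rank-scroll} the injective bundle map $\widetilde{\delta}: L\to \mathrm{K}_{f-1,1}(X,H)\otimes\mathcal{O}_{\PP^1}$, whose cokernel $Q$ is locally free by Lemma \ref{coker-bundle}. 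Dualizing the sequence $0\to L\to \mathrm{K}_{f-1,1}(X,H)\otimes\mathcal{O}_{\PP^1}\to Q\to 0$ and taking global sections identifies $\mathrm{Ker}(\phi^{*})$ with $\mathrm{H}^0(\PP^1,Q^{\vee})$, the map $\phi^*$ being the one induced by $\widetilde{\delta}^{\vee}$. As $Q^{\vee}$ has rank $f-2$ and degree $-(f-2)$, and as injectivity and surjectivity of $\phi^{*}$ are equivalent by the dimension count, the proposition reduces to the single vanishing $\mathrm{H}^0(\PP^1,Q^{\vee})=0$, equivalently to $Q\simeq \mathcal{O}_{\PP^1}(1)^{\oplus(f-2)}$.

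To prove this I would make the family $\phi([s])$ completely explicit. Writing $\mathrm{H}^0(\mathcal{O}_X(R))=\langle x,y\rangle$ and $V:=\mathrm{H}^0(X,H-R)$, the line $\phi([s])$ is spanned by the scrollar syzygy produced in Proposition \ref{one-dim-iso}, namely the image of a generator of $\wedge^{f}V\otimes \mathrm{H}^0(\mathcal{O}_X(R))/\langle s\rangle$, where $V$ is included in $\mathrm{H}^0(X,H)$ by multiplication with $s$. Unravelling the map $f'$ from that proof gives an explicit representative of $\phi([s])$ in $\wedge^{f-1}\mathrm{H}^0(X,H)\otimes \mathrm{H}^0(X,H)$ depending polynomially, of degree $f-2$, on $[s]$, so that each $\phi^{*}(\ell)$ becomes a binary form of degree $f-2$ in $x,y$. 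Since the Betti numbers $b_{f-1,1}$ and the scrollar syzygies of a variety of minimal degree $f$ are governed by $f$ alone \cite{schreyer1}, I would reduce the spanning statement to the case of the rational normal curve $X=\PP^1$, $H=\mathcal{O}_{\PP^1}(f)$, where $\mathrm{K}_{f-1,1}(X,H)$ and $\widetilde{\delta}$ can be written down on the Eagon--Northcott complex and the spanning verified directly; this is precisely the content of \cite[Prop.\ 4.3]{bothmer-Transactions}.

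The main obstacle is this final step: showing that the scrollar syzygies $\phi([s])$ genuinely span $\mathrm{K}_{f-1,1}(X,H)$, rather than sweeping out a proper, degenerate subvariety. Everything preceding it is formal, but nondegeneracy is a nonformal property of the explicit family of minimal rank syzygies; concretely it amounts to showing that the classifying tensor of $\widetilde{\delta}$ in $\mathrm{K}_{f-1,1}(X,H)\otimes \mathrm{H}^0(\mathcal{O}_{\PP^1}(f-2))$ has maximal rank $f-1$, which is where the explicit model of the last syzygies of the rational normal curve is needed.
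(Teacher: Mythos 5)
Your formal reductions are all correct and follow the same skeleton as the paper's proof: the dimension count $b_{f-1,1}(X,H)=f-1=h^0(\mathcal{O}_{\PP^1}(f-2))$, the equivalence of injectivity of $\phi^*$ with the lines $\phi([s])$ spanning $\mathrm{K}_{f-1,1}(X,H)$, and the bundle-theoretic reformulation $\mathrm{Ker}(\phi^*)\simeq \mathrm{H}^0(\PP^1,Q^{\vee})$, so that the proposition is equivalent to $Q\simeq \mathcal{O}_{\PP^1}(1)^{\oplus(f-2)}$. But the crux — the spanning itself — is never proven in your proposal, and you say so yourself. As written, your plan closes this step by appealing to \cite[Prop.\ 4.3]{bothmer-Transactions}; since the proposition you are asked to prove \emph{is} (cf.) that very result, this is circular rather than a proof. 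The intermediate reduction you propose, that the last syzygy space and the family of scrollar syzygies of a variety of minimal degree are ``governed by $f$ alone'' so one may pass to the rational normal curve, is plausible but is asserted, not argued: you would need to exhibit an identification of $\mathrm{K}_{f-1,1}(X,H)$ for the $d$-dimensional scroll with that of the rational normal curve which is compatible with the family $\widetilde{\delta}$, and nothing in your write-up supplies it.

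The missing idea, which is how the paper closes the gap, is Schreyer's Eagon--Northcott identification $\mathrm{K}_{f-1,1}(X,H)\simeq \mathrm{Sym}^{f-2}\bigl(\mathrm{H}^0(\mathcal{O}_X(R))\bigr)$ \cite{schreyer1}, combined with von Bothmer's computation that under this identification the scrollar syzygy attached to $\beta\in \mathrm{H}^0(\mathcal{O}_X(R))$ — i.e.\ the generator of $\mathrm{K}_{f-1,1}(X,H;\mathrm{H}^0(X,H\otimes I_{Z(\beta)}))$ — is the power $\beta^{f-2}$. Once one has this, nondegeneracy is no longer a ``nonformal property of the explicit family'': it is the elementary linear-algebra fact that the powers $\{\beta^{f-2}\,:\,\beta\in W\}$ of linear forms span $\mathrm{Sym}^{f-2}(W)$ for a two-dimensional space $W$ (nondegeneracy of the degree-$(f-2)$ Veronese of $\PP^1$). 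That single fact is exactly what replaces your unverified claim that the classifying tensor of $\widetilde{\delta}$ has maximal rank $f-1$; without it, or without actually carrying out your explicit model on the Eagon--Northcott complex, the proof is incomplete.
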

\begin{proof}
By the Eagon--Northcott complex resolving $j(X) \seq \PP^r$, we have an isomorphism $$ \mathrm{K}_{f-1,1}(X,H) \simeq \mathrm{Sym}^{f-2}\left(\mathrm{H}^0(\mathcal{O}_X(R))\right),$$ \cite{schreyer1}. Thus $h^0(\mathcal{O}_{ \PP(\mathrm{K}_{f-1,1}(X,H))}(1)) = h^0(\mathcal{O}_{\PP(\mathrm{H}^0(\mathcal{O}_X(R)))}(f-2))$ and it suffices to show that $\phi^*$ is injective, i.e.\ that the image of $\phi$ is not contained in any hyperplane. By Proposition \ref{one-dim-iso}  and the definition of $\phi$, it suffices to show that $\mathrm{K}_{f-1,1}(X,H)$ is spanned by the one-dimensional subspaces $\mathrm{K}_{f-1,1}(X,H; \mathrm{H}^0(X,H\otimes I_{Z(\beta)}))$, $\beta \in \mathrm{H}^0(\mathcal{O}_X(R))$. But von Bothmer shows that the element $\beta^{f-2} \in \mathrm{Sym}^{f-2}\left(\mathrm{H}^0(\mathcal{O}_X(R))\right) \simeq  \mathrm{K}_{f-1,1}(X,H)$ lies in $\mathrm{K}_{f-1,1}(X,H; \mathrm{H}^0(X,H\otimes I_{Z(\beta)}))$ for $\beta \in \mathrm{H}^0(\mathcal{O}_X(R))$. Since such elements span $\mathrm{K}_{f-1,1}(X,H)$, this completes the proof.
\end{proof}
By Proposition \ref{rank-scroll-2}, $\mathrm{K}_{f-1,1}(X,H)$ is spanned by the rank $f$ syzygies
$$\alpha \in \mathrm{K}_{f-1,1}(X,H; \mathrm{H}^0(X,H\otimes I_{Z(s)})), \; \; \text{for}\; \; s \in \mathrm{H}^0(\mathcal{O}_X(R)).$$

\section{Brill--Noether Theory for Integral Curves Lying on a Surface} \label{BN-sing-curves}
Let $C$ be an integral, projective, curve which admits an embedding into a smooth surface, or equivalently, each singularity of $C$ has embedding dimension at most two, \cite[Corollary 9]{altman-kleiman-bertini}. Consider the compactified Jacobian $\bar{J}^d(C)$ of rank one, torsion free sheaves of degree $d$ on $C$, \cite{rego}, \cite{altman-kleiman}. This compactification is often attributed to Mumford \cite{mumford} and Mayer \cite{mayer} and also works in a relative setting. Then $\bar{J}^d(C)$ is a projective, integral, local complete intersection scheme of dimension $g$. The Picard scheme $\mathrm{Pic}^d(C)$ of degree $d$ line bundles is a dense, open subset of $\bar{J}^d(C)$. Let $\mathrm{Hilb}^d(C)$ denote the Hilbert scheme parametrizing zero-dimensional closed subschemes $Z \seq C$ of degree $d$. From \cite{altman-kleiman}, we have the \emph{Abel map}
$$\rho \; : \; \mathrm{Hilb}^d(C) \to \bar{J}^d(C)$$
taking the closed subscheme $Z \seq C$ to the torsion-free sheaf $I^{\vee}_Z:=\mathcal{H}om_{\mathcal{O}_C}(I_Z,\mathcal{O}_C)$, where $I_Z$ is the ideal sheaf of $Z$.\smallskip

Note that $C$ is Gorenstein. All torsion-free sheaves on an integral, Gorenstein curve are reflexive, \cite[Lemma 1.1]{hartshorne-generalized}. In particular, for each $[Z] \in \mathrm{Hilb}^d(C)$, the ideal sheaf $I_Z$ is reflexive, as is $A:=\rho([Z])=I_Z^{\vee}.$ We may thus identify $\rho^{-1}([A])$ with the projective space $\PP(\mathrm{Hom}_C(A^{\vee}, \mathcal{O}_C))$, since any nonzero homomorphism between rank one, torsion-free sheaves on $C$ is injective. Dualizing each morphism in $\mathrm{Hom}_C(A^{\vee}, \mathcal{O}_C)$ we identify $\rho^{-1}([A])$ with $\PP(\mathrm{H}^0(C,A))$.\smallskip

There is a surjective, \'etale morphism $\nu:  \widetilde{J}^d(C) \to  \bar{J}^d(C)$ and a universal, torsion-free sheaf $\mathcal{A}$ of rank one on $C \times \widetilde{J}^d(C) $, i.e.\ $\mathcal{A}_{|_{C \times \{x\}}}\simeq \nu(x) \in \bar{J}^d(C).$ As in \cite{bhosle-param}, we have the Brill--Noether loci
$$W^r_d(C):= \{ [A] \in \bar{J}^d(C) \; | \; h^0(A) \geq r+1 \},$$
which are the image of determinantal subschemes of $\widetilde{J}^d(C)$ under $\nu$. More precisely, as in \cite[\S 2.2]{bhosle-param}, there is a determinantal subscheme $\widetilde{W}^r_d(C) \seq \widetilde{J}^d(C)$ with 
$$\widetilde{W}^r_d(C):= \{ x \in \widetilde{J}^d(C) \; | \; h^0(\nu(x)) \geq r+1 \}.$$
We set $W^r_d(C):= \nu(\widetilde{W}^r_d(C))$. Let $\tilde{p}: C \times  \widetilde{J}^d(C)  \to C$  denote the first projection and denote by $\tilde{q}: C \times  \widetilde{J}^d(C)  \to \widetilde{J}^d(C) $ the second projection and let $q: C \times  \widetilde{J}^d(C)  \to  \bar{J}^d(C)$ denote the composition $q:=\nu \circ \tilde{q}$.
We can construct
$$\widetilde{G}^r_d(C):=\{ (x,V) \; | x \in \widetilde{W}^r_d(C), \; V \in \mathrm{Gr}(r+1,\mathrm{H}^0(\nu(x))) \}$$
 \cite[IV.3]{ACGH1}, and we set 
$$G^r_d(C):=\nu(\widetilde{G}^r_d(C)).$$

We next generalize the base-point free pencil trick, \cite[\S III.3]{ACGH1} to torsion-free sheaves.
	\begin{lem} \label{tf-bpf}
	Let $C$ be an integral curve lying on a smooth surface and let $[A] \in W^1_d(C)\setminus W^2_d(C)$ be globally generated. Then we have an exact sequence
	$$0 \to A^{\vee} \xrightarrow{i} \mathrm{H}^0(A) \otimes O_C \xrightarrow{ev} A \to 0,$$
	where $\mathrm{ev} : \mathrm{H}^0(A) \otimes O_C \twoheadrightarrow A$ is the evaluation morphism, and where $i$ is given by the formula
	$$i(s)=y\otimes \rho(xs)-x\otimes \rho(ys),$$
	for $x,y$ a basis of $\mathrm{H}^0(A)$ and $\rho: \mathrm{H}^0(A) \otimes A^{\vee} \to \mathcal{O}_C$ the natural map.
	\end{lem}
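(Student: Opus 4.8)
The plan is to reduce everything to the smooth locus, where the statement is the classical base-point-free pencil trick, and then to control the finitely many singular points of $C$ by a reflexivity and Euler-characteristic argument. First I would record the numerics: since $[A]\in W^1_d(C)\setminus W^2_d(C)$ we have $h^0(A)=2$, and since $A$ is a rank one torsion-free sheaf on the integral Gorenstein curve $C$ it is reflexive, so $A^{\vee}$ is again a rank one torsion-free (hence reflexive) sheaf with $\deg A^{\vee}=-\deg A$. This last identity follows from Serre duality on the Gorenstein curve $C$, using $\mathcal{H}om_{\mathcal{O}_C}(A,\omega_C)\cong A^{\vee}\otimes\omega_C$, since $\omega_C$ is a line bundle and $A$ is reflexive. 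As $A$ is globally generated, the evaluation map $\mathrm{ev}$ is surjective, so $\mathcal{K}:=\ker(\mathrm{ev})$ is a rank one subsheaf of the locally free sheaf $\mathrm{H}^0(A)\otimes\mathcal{O}_C$, hence torsion-free and thus reflexive, fitting into a short exact sequence $0\to\mathcal{K}\to\mathrm{H}^0(A)\otimes\mathcal{O}_C\xrightarrow{\mathrm{ev}}A\to0$.

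Next I would check that $i$ is a well-defined $\mathcal{O}_C$-linear map landing in $\mathcal{K}$, i.e.\ that $\mathrm{ev}\circ i=0$. The composite $\mathrm{ev}\circ i\colon A^{\vee}\to A$ sends a local section $s$ to $\rho(xs)\,y-\rho(ys)\,x$. On the smooth locus $C^{\mathrm{sm}}$, where $A$ restricts to a line bundle generated by the pencil $\langle x,y\rangle$, a local trivialization shows this expression vanishes identically; this is exactly the identity underlying the usual pencil trick. Since $A$ is torsion-free and $C^{\mathrm{sm}}\seq C$ is dense, the homomorphism $\mathrm{ev}\circ i$ vanishes on all of $C$, so $i$ factors through $\mathcal{K}$.

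Then I would prove that $i$ is injective and, over $C^{\mathrm{sm}}$, an isomorphism onto $\mathcal{K}$. For injectivity: if $i(s)=0$ then $\rho(xs)=\rho(ys)=0$ in $\mathcal{O}_C$, because $\{x,y\}$ is a basis of $\mathrm{H}^0(A)$; since $x,y$ generate $A$ as an $\mathcal{O}_C$-module (global generation), the homomorphism $s\colon A\to\mathcal{O}_C$ must be zero. Over $C^{\mathrm{sm}}$ the displayed sequence is precisely the Koszul (base-point-free pencil trick) sequence, so there $i$ is an isomorphism onto $\mathcal{K}$. To upgrade this to an isomorphism over all of $C$, I would examine the cokernel $Q:=\mathcal{K}/i(A^{\vee})$, which is supported on the finite singular locus, hence torsion, so $\chi(Q)=\mathrm{length}(Q)\ge0$ with equality if and only if $Q=0$. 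From the two exact sequences, Riemann--Roch gives $\chi(\mathcal{K})=2\chi(\mathcal{O}_C)-\chi(A)=1-g-d$, while $\deg A^{\vee}=-\deg A$ yields $\chi(A^{\vee})=1-g-d=\chi(\mathcal{K})$; hence $\chi(Q)=0$ and $Q=0$. Therefore $i$ is an isomorphism onto $\mathcal{K}$ and the sequence in the statement is exact.

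The main obstacle is the behaviour at the singular points, where $A$ fails to be a line bundle so that the naive local pencil-trick computation is unavailable and one must argue that $i$ does not acquire cokernel there. The crux is the degree identity $\deg A^{\vee}=-\deg A$ for reflexive rank one sheaves on the Gorenstein curve $C$, together with the reflexivity of $\mathcal{K}$: these are what force the torsion cokernel $Q$ to vanish rather than merely to be supported on $C\setminus C^{\mathrm{sm}}$.
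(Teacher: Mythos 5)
Your proof is correct, but it takes a genuinely different route from the paper's. The paper argues by an explicit diagram chase: it sets $K:=\mathrm{Ker}(\mathrm{ev})$, fixes the zero scheme $Z=Z(x)$ of the section $x$, uses the identification $A^{\vee}\simeq I_Z$ (available because $A$ is reflexive and $A\simeq I_Z^{\vee}$ via the Abel map discussion preceding the lemma), and fits the evaluation sequence into a $3\times 3$ commutative diagram built from the coordinate splitting of $\mathrm{H}^0(A)\otimes\mathcal{O}_C$ by the basis $\{x,y\}$; the projection to the $y$-coordinate restricts to an isomorphism $\psi:K\to I_Z\simeq A^{\vee}$, and one checks $\psi\circ i=\mathrm{id}$, so $i$ is its inverse. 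That argument is constructive, uses no Riemann--Roch or duality, and the explicit inverse is exploited again later (e.g.\ in Lemma \ref{E2}). You instead prove injectivity of $i$, identify the sequence with the classical base-point-free pencil trick over $C_{sm}$, and kill the cokernel $Q$ by an Euler-characteristic count whose crux is the identity $\deg(A^{\vee})=-\deg(A)$. That identity does hold here: for torsion-free $F$ on the Gorenstein curve $C$ one has $h^i(F)=h^{1-i}(F^{\vee}\otimes\omega_C)$ (the local $\mathcal{E}xt$'s vanish because $F$ is maximal Cohen--Macaulay and $\omega_C$ is a line bundle, cf.\ \cite{hartshorne-generalized}), and combining this with $\chi(F\otimes\omega_C)=\chi(F)+\deg\omega_C$ gives the claim; note this is precisely where the Gorenstein hypothesis (i.e.\ that $C$ lies on a smooth surface) enters your argument, and it would fail for non-Gorenstein singularities. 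Two small remarks: the reflexivity of $\mathcal{K}$ plays no actual role in your argument (the $\chi$-count plus the fact that $Q$ has finite length suffices), and $Q$ is already torsion merely because $i$ is an injection of rank-one sheaves on an integral curve, so the smooth-locus comparison is needed only to identify $i$ generically, not to establish torsionness. In sum, your approach trades the paper's hands-on diagram (and its reliance on $A^{\vee}\simeq I_{Z(x)}$) for Serre duality and a numerical argument, which is arguably more robust but less explicit.
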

	\begin{proof}
	Let $K:=\mathrm{Ker}(\mathrm{ev})$. Then $K$ is a rank-one, torsion free sheaf. We need to show $K \simeq A^{\vee}$. Let $Z:=Z(x)$  be the zero scheme of $x \in \mathrm{H}^0(A)$. The claim now follows by observing the commutative diagram
	 {\small{$$\begin{tikzcd}
	 	& & 0 \arrow[d] & 0 \arrow[d] &\\
& & \mathcal{O}_C \arrow[r, "\sim"] \arrow[d, "\alpha"] & \mathcal{O}_C \arrow[d] &\\
	0 \arrow[r] & K \arrow[r] \arrow{d}{\simeq}[swap] {\psi} & \mathrm{H}^0(A) \otimes \mathcal{O}_C \arrow[r] \arrow[d, "\phi"] &A \arrow[r] \arrow[d] & 0\\
	0 \arrow[r] & I_Z \simeq A^{\vee} \arrow[r, "j"] &  \mathcal{O}_C \arrow[r] \arrow[d] &\mathcal{O}_Z \arrow[r] \arrow[d] & 0\\
		 	& & 0  & 0 &
\end{tikzcd}$$}}
	with exact rows and columns, where $\alpha$ is the inclusion of the coordinate $x\in \mathrm{H}^0(A)$,  $\phi$ is the projection to the coordinate $y \in \mathrm{H}^0(A)$, $j: A^{\vee} \to \mathcal{O}_C$ is multiplication by $x$ and $\psi=\phi_{|_K}$. Clearly the sheaf morphism $i: A^{\vee} \to  \mathrm{H}^0(A) \otimes \mathcal{O}_C$ lands in the subsheaf $K$, and we have $\phi \circ i=j$, i.e.\ $\psi \circ i=id$. Thus $i$ is the inverse to the isomorphism $\psi$, as required.
	\end{proof}
	We now need a version of Proposition \ref{one-dim-iso} with $M$ assumed merely to be torsion-free.
\begin{lem} \label{natural-injection}
Let $C$ be an integral curve which lies on a smooth surface and let $[A]\in W^1_d(C) \setminus W^2_d(C)$. Choose $s \neq 0 \in \mathrm{H}^0(A)$. For any $p$, we have a natural map
{\small{$$\delta_s \; : \; \bigwedge^{p+1}\mathrm{H}^0(\omega_C \otimes A^{\vee}) \otimes \frac{ \mathrm{H}^0(A)}{\C \langle s\rangle } \to \mathrm{K}_{p,1}(C,\omega_C; \mathrm{H}^0(\omega_C \otimes A^{\vee})).$$}}
\end{lem}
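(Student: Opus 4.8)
The plan is to follow the proof of Proposition \ref{one-dim-iso}, substituting the torsion-free sheaf $A$ for the line bundle $M$ and replacing the vanishing coming from Proposition \ref{elementary-van} by a direct verification that exploits the open locus on which $A$ is locally free. Write $B:=\omega_C\otimes A^{\vee}$. First I would set up the relevant inclusions from the chosen section. Since $A$ is reflexive, $s\in\mathrm{H}^0(A)=\mathrm{Hom}_C(A^{\vee},\mathcal{O}_C)$, and as any nonzero homomorphism of rank-one torsion-free sheaves on the integral curve $C$ is injective, $s$ realises $A^{\vee}$ as a subsheaf of $\mathcal{O}_C$. Tensoring by $\omega_C$ produces an injection $B\hookrightarrow\omega_C$, hence an inclusion $\mathrm{H}^0(B)\seq\mathrm{H}^0(\omega_C)$. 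Exactly as in Proposition \ref{one-dim-iso}, the space $\mathrm{K}_{p,1}(C,\omega_C;\mathrm{H}^0(B))$ is the middle cohomology of
$$\bigwedge^{p+1}\mathrm{H}^0(B)\xrightarrow{f}\bigwedge^{p}\mathrm{H}^0(B)\otimes\mathrm{H}^0(\omega_C)\xrightarrow{g}\bigwedge^{p-1}\mathrm{H}^0(B)\otimes\mathrm{H}^0(\omega_C^{2}),$$
and $f$ is injective because it factors through the injective differential $d$ on $\bigwedge^{\bullet}\mathrm{H}^0(\omega_C)$; thus $\mathrm{K}_{p,1}(C,\omega_C;\mathrm{H}^0(B))\simeq\mathrm{Ker}(g)/\mathrm{Im}(f)$.

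Next I would build the map upstairs. The contraction $A^{\vee}\otimes A\to\mathcal{O}_C$ induces a multiplication $\mu:\mathrm{H}^0(B)\otimes\mathrm{H}^0(A)\to\mathrm{H}^0(\omega_C)$, and I use it to define the Koszul-type map
$$\widetilde{\delta}_s\big((v_0\wedge\cdots\wedge v_p)\otimes t\big):=\sum_{i=0}^{p}(-1)^i\,(v_0\wedge\cdots\widehat{v_i}\cdots\wedge v_p)\otimes\mu(v_i\otimes t)\in\bigwedge^{p}\mathrm{H}^0(B)\otimes\mathrm{H}^0(\omega_C).$$
One verification is immediate: because $s$, viewed as a homomorphism $A^{\vee}\to\mathcal{O}_C$, is precisely the contraction defining the inclusion $B\hookrightarrow\omega_C$, we have $\mu(v_i\otimes s)=v_i$ under $\mathrm{H}^0(B)\seq\mathrm{H}^0(\omega_C)$, so $\widetilde{\delta}_s\big((v_0\wedge\cdots\wedge v_p)\otimes s\big)=f(v_0\wedge\cdots\wedge v_p)\in\mathrm{Im}(f)$. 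Hence $\widetilde{\delta}_s$ kills $\bigwedge^{p+1}\mathrm{H}^0(B)\otimes\C\langle s\rangle$ modulo $\mathrm{Im}(f)$ and descends to the desired map $\delta_s$ on the quotient by $\C\langle s\rangle$.

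The crux is to show $\mathrm{Im}(\widetilde{\delta}_s)\seq\mathrm{Ker}(g)$, and this is the step I expect to be the main obstacle. In Proposition \ref{one-dim-iso} the analogous fact was free from $\mathrm{K}_{p,1}(X,M,L\otimes M^{\vee})=0$, but here $B=\omega_C\otimes A^{\vee}$ is not a line bundle and so is not a legitimate polarization, and that shortcut is unavailable. I would instead factor $g=(\mathrm{id}\otimes\cdot s)\circ g'$, where $g'$ lands in $\bigwedge^{p-1}\mathrm{H}^0(B)\otimes\mathrm{H}^0(\omega_C^{2}\otimes A^{\vee})$ and is built from the tautological map $B\otimes\omega_C\to\omega_C^{2}\otimes A^{\vee}$; since $\cdot s:\mathrm{H}^0(\omega_C^{2}\otimes A^{\vee})\to\mathrm{H}^0(\omega_C^{2})$ is injective, $\mathrm{Ker}(g)=\mathrm{Ker}(g')$, and it suffices to check the Koszul identity $g'\circ\widetilde{\delta}_s=0$. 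Expanding, the composite is a sum over unordered pairs $\{i,k\}$ of terms $v_k\cdot\mu(v_i\otimes t)$ and $v_i\cdot\mu(v_k\otimes t)$ carrying opposite Koszul signs, so the identity reduces to the symmetry $v_k\cdot\mu(v_i\otimes t)=v_i\cdot\mu(v_k\otimes t)$ in $\mathrm{H}^0(\omega_C^{2}\otimes A^{\vee})$. On the dense open $U\seq C$ where $A$ is locally free every sheaf in sight is a line bundle and both sides reduce to the ordinary triple product, so the two sections agree on $U$; as $\omega_C^{2}\otimes A^{\vee}$ is torsion-free and $U$ is dense, they agree on all of $C$. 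This yields $g'\circ\widetilde{\delta}_s=0$ and completes the construction of $\delta_s$.
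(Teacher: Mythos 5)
Your proposal is correct and takes essentially the same route as the paper: the paper likewise builds $\delta_s$ from the contraction $\mathrm{H}^0(\omega_C \otimes A^{\vee}) \otimes \mathrm{H}^0(A) \to \mathrm{H}^0(\omega_C)$, uses the same Koszul-type map (your $\widetilde{\delta}_s$, the paper's $d_1$) into a complex whose second differential lands in $\bigwedge^{p-1}\mathrm{H}^0(\omega_C \otimes A^{\vee}) \otimes \mathrm{H}^0(\omega_C^{\otimes 2} \otimes A^{\vee})$ (your $g'$, the paper's $d_2$), and descends modulo $\C\langle s \rangle$ exactly as you do. The only difference is one of detail: the paper simply asserts that this ``natural complex'' is a complex and that the relevant kernels agree, whereas you explicitly verify $g' \circ \widetilde{\delta}_s = 0$ via the symmetry argument on the dense open locus where $A$ is locally free, together with torsion-freeness of $\omega_C^{\otimes 2} \otimes A^{\vee}$ --- a correct justification of precisely the step the paper leaves implicit.
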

\begin{proof}
We have a morphism $A^{\vee} \otimes A \to \mathcal{O}_C$ of sheaves, as well as a natural composition $$\mathrm{H}^0(\omega_C \otimes A^{\vee}) \otimes \mathrm{H}^0(A)\to\mathrm{H}^0(\omega_C \otimes A^{\vee} \otimes A) \to \mathrm{H}^0( \omega_C).$$ Contracting by $s$ induces an map $m_s : \mathrm{H}^{0}(\omega_C\otimes A^{\vee}) \to \mathrm{H}^{0}(\omega_C),$ which is injective since $m_s$ is induced from the inclusion $A^{\vee} \hookrightarrow \mathcal{O}_C$ dual to $s: \mathcal{O}_C \to A$. We have a natural inclusion $\mathrm{H}^0(\omega_C \otimes A^{\vee}) \seq \mathrm{H}^0(\omega_C)$ and $\mathrm{K}_{p,1}(C,\omega_C; \mathrm{H}^0(\omega_C \otimes A^{\vee}))$ is well defined for $A$ torsion free.\smallskip

 We have the natural complex
{\small{$$\bigwedge^{p+1}\mathrm{H}^0(\omega_C \otimes A^{\vee}) \otimes \mathrm{H}^0(A) \xrightarrow{d_1} \bigwedge^{p}\mathrm{H}^0(\omega_C \otimes A^{\vee})  \otimes \mathrm{H}^0(\omega_C) \xrightarrow{d_2} \bigwedge^{p-1}\mathrm{H}^0(\omega_C \otimes A^{\vee})  \otimes \mathrm{H}^0(\omega_C^{\otimes ^2} \otimes A^{\vee} ).$$}}
So we have a map 
{\small{$$d_1 \; : \; \bigwedge^{p+1}\mathrm{H}^0(\omega_C \otimes A^{\vee}) \otimes \mathrm{H}^0(A) \to \mathrm{Ker} (d_2) \seq \bigwedge^{p}\mathrm{H}^0(\omega_C \otimes A^{\vee})  \otimes \mathrm{H}^0(\omega_C).$$}}
Since $\mathrm{K}_{p,1}(C,\omega_C; \mathrm{H}^0(\omega_C \otimes A^{\vee})):=\frac{\mathrm{Ker} (d_2)}{d_1(\bigwedge^{p+1}\mathrm{H}^0(\omega_C \otimes A^{\vee}) \otimes \C \langle s\rangle)}$, we have a natural map 
{\small{$$\bigwedge^{p+1}\mathrm{H}^0(\omega_C \otimes A^{\vee}) \otimes \frac{ \mathrm{H}^0(A)}{\C \langle s\rangle } \to \mathrm{K}_{p,1}(C,\omega_C; \mathrm{H}^0(\omega_C \otimes A^{\vee})).$$}}
\end{proof}
\begin{remark} If $A$ is locally free and $p=g-d$ then $h^0(\omega_C\otimes A^{\vee})=p+1$ by Riemann--Roch and Serre duality. If in addition $(C,\omega_C)$ is normally generated, then $\delta_s$ coincides with the isomorphism from Proposition \ref{one-dim-iso}.
	\end{remark}
\begin{prop} \label{grauert-replacement}
Let $C$ be an integral curve lying on a surface and fix $d,r \geq 0$. Suppose that we have the equality $h^0(A)=r+1$ for all points $y=[A] \in W^r_d(C)$. Then $\tilde{q}_* \mathcal{A}$ and $\tilde{q}_* (\tilde{p}^*\omega_C \otimes \mathcal{A}^{\vee})$ are locally free sheaves with natural isomorphisms
$$\tilde{q}_* \mathcal{A}\otimes k(y) \simeq \mathrm{H}^0(C,A), \; \; \tilde{q}_* (\tilde{p}^*\omega_C \otimes \mathcal{A}^{\vee}) \otimes k(y) \simeq \mathrm{H}^0(C,\omega_C \otimes A^{\vee}).$$
\end{prop}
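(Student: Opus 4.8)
The plan is to apply Grothendieck's theory of cohomology and base change to the flat family $\mathcal{A}$. The key point---and the reason the standard machinery applies even though $\mathcal{A}$ fails to be locally free on $C \times W^r_d(C)$---is that cohomology and base change requires only flatness of the sheaf \emph{over the base}, not local freeness on the total space, and $\mathcal{A}$ is flat over $W^r_d(C)$ by construction of the universal torsion-free sheaf. Since the assertions are local on the base, I would first reduce to $S := \mathrm{Spec}(R) \seq W^r_d(C)$ affine, so that $q : C \times S \to S$ is projective and flat, and $\mathcal{A}$ is flat over $S$.

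Next I would record the two relevant constancy statements. By hypothesis $h^0(A) = r+1$ is constant on $S$; since $C$ is Gorenstein of arithmetic genus $g$ and $\mathcal{A}$ is flat over $S$, the Euler characteristic $\chi(A) = d + 1 - g$ is constant, so Riemann--Roch forces $h^1(A) = r + g - d$ to be constant as well. By Grothendieck's theorem the complex $Rq_* \mathcal{A}$ is represented, locally on $S$, by a two-term complex of vector bundles $[E \xrightarrow{\gamma} F]$ whose formation commutes with base change, with $q_* \mathcal{A} = \mathrm{Ker}(\gamma)$ and $R^1 q_* \mathcal{A} = \mathrm{Coker}(\gamma)$, and with $\dim \mathrm{Ker}(\gamma \otimes k(y)) = r+1$, $\dim \mathrm{Coker}(\gamma \otimes k(y)) = r+g-d$ for every $y$. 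Thus $\gamma$ has locally constant fibre-rank, and I would deduce that $\mathrm{Ker}(\gamma)$ and $\mathrm{Coker}(\gamma)$ are locally free with the base-change isomorphisms $q_* \mathcal{A} \otimes k(y) \simeq \mathrm{H}^0(C,A)$ and $R^1 q_* \mathcal{A} \otimes k(y) \simeq \mathrm{H}^1(C,A)$. This constant-rank step is exactly the content of the rank arguments underlying Lemma \ref{coker-bundle} and Proposition \ref{semi-cont}.

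For the second sheaf I would invoke relative Serre duality rather than rerun base change on the dual family. Since $C$ is Gorenstein with dualizing sheaf $\omega_C$, the relative dualizing sheaf of $q : C \times S \to S$ is $p^*\omega_C$, and the fibres $A$ are torsion-free, hence maximal Cohen--Macaulay, so $\mathcal{H}om(\mathcal{A}, p^*\omega_C) = p^*\omega_C \otimes \mathcal{A}^{\vee}$ commutes with base change and has no higher relative $\mathcal{E}xt$. Relative duality then gives a natural isomorphism $q_*(p^*\omega_C \otimes \mathcal{A}^{\vee}) \simeq (R^1 q_* \mathcal{A})^{\vee}$; since $R^1 q_* \mathcal{A}$ is locally free by the previous step, so is $q_*(p^*\omega_C \otimes \mathcal{A}^{\vee})$, and its fibre over $y$ is $\mathrm{H}^1(C,A)^{\vee} \simeq \mathrm{H}^0(C, \omega_C \otimes A^{\vee})$ by Serre duality on $C$, as required.

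The main obstacle is the constant-rank step: passing from constancy of the fibre dimensions to local freeness of $\mathrm{Ker}(\gamma)$ and $\mathrm{Coker}(\gamma)$ over the determinantal, \emph{a priori} non-reduced base $W^r_d(C)$, where constant fibre-rank alone is not enough. The feature that makes this go through is that $h^0$ attains the value $r+1$ \emph{identically} on $W^r_d(C)$, so that $W^{r+1}_d(C) = \emptyset$ and the rank of $\gamma$ does not drop anywhere on $S$; together with the integrality of $\bar{J}^d(C)$ and the Fitting-ideal structure of $W^r_d(C)$, this forces the top direct image $R^1 q_* \mathcal{A} = \mathrm{Coker}(\gamma)$ (whose base-change map is already an isomorphism onto $\mathrm{H}^1(C,A)$) to be locally free, after which the base-change criterion propagates local freeness back to $q_* \mathcal{A} = \mathrm{Ker}(\gamma)$. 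I would spend the bulk of the write-up making this last implication precise.
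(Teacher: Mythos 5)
Your overall skeleton coincides with the paper's: both proofs run cohomology-and-base-change on $q_*\mathcal{A}$ and $R^1 q_*\mathcal{A}$ and then obtain $q_*(p^*\omega_C\otimes\mathcal{A}^{\vee})$ as $(R^1q_*\mathcal{A})^{\vee}$ via relative duality plus Serre duality on the fibres; your third paragraph is essentially the paper's closing lines. The genuine divergence is in how local freeness is first obtained. The paper never looks at a two-term complex: it notes that the hypothesis $h^0(A)\equiv r+1$ makes $c\colon G^r_d(C)\to \bar{J}^d(C)$ a closed immersion with image $W^r_d(C)$, so $G^r_d(C)\simeq W^r_d(C)$, and the tautological rank-$(r+1)$ subbundle $F\subseteq q_*\mathcal{A}$ coming with the $G^r_d$-construction forces the base-change map $q_*\mathcal{A}\otimes k(y)\to \mathrm{H}^0(C,A)$ to be surjective at every point; the Base Change Theorem \cite[Thm.\ III.12.11]{hartshorne}, which needs no reducedness hypothesis on the base, then gives local freeness of $q_*\mathcal{A}$, the isomorphism $R^1q_*\mathcal{A}\otimes k(y)\simeq \mathrm{H}^1(C,A)$ (using $R^2q_*\mathcal{A}=0$), and finally local freeness of $R^1q_*\mathcal{A}$. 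Your determinantal route is independent of the $G^r_d$ machinery; the paper's route is shorter because the hypothesis hands it the subbundle for free.

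Two corrections are needed to make your version a proof. First, the deduction in your second paragraph is false as written, and its citations are inapposite: Lemma \ref{coker-bundle} requires fibrewise \emph{injectivity} of the map, not constant fibre rank, and Proposition \ref{semi-cont} yields only semicontinuity. Over a non-reduced base, constant fibre rank does not give locally free kernel or cokernel (multiplication by $\epsilon$ on $\mathcal{O}$ over $\mathrm{Spec}\, k[\epsilon]/(\epsilon^2)$ has constant fibre rank zero). You retract this yourself in the final paragraph, which is therefore the part that must carry the proof --- but there, the integrality of $\bar{J}^d(C)$ is irrelevant and cannot help, since the base you work over is $W^r_d(C)$ itself, which is exactly what may fail to be reduced (this is the point of the Remark following the proposition). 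What does the work is the scheme structure: $W^r_d(C)$ is by definition cut out by the ideal of $(e-r)$-minors of $\gamma$, where $e=\mathrm{rk}(E)$ --- equivalently by a Fitting ideal of $R^1q_*\mathcal{A}$, which is independent of the chosen complex and compatible with base change --- so these minors restrict to \emph{zero as functions} on $W^r_d(C)$, not merely pointwise, while your hypothesis makes the $(e-r-1)$-minors nowhere vanishing. Locally, an invertible $(e-r-1)$-minor and row/column operations put $\gamma$ into the block form $\mathrm{Id}\oplus\gamma'$, where every entry of $\gamma'$ lies in the (zero) ideal of $(e-r)$-minors; hence $\gamma'=0$, $\mathrm{Im}(\gamma)$ is a subbundle, and $\mathrm{Ker}(\gamma)$, $\mathrm{Coker}(\gamma)$ are locally free compatibly with base change. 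With that lemma written out, your argument is complete.
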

\begin{remark}
Note that one cannot conclude the above proposition using Grauert's Theorem, as we are not assuming that $\tilde{W}^r_d(C)$ is reduced.
\end{remark}
\begin{proof}
Under the assumption $h^0(A)=r+1$ for all $y=[A] \in W^r_d(C)$, the natural morphism $c: \widetilde{G}^r_d(C) \to \widetilde{J}^d(C)$ is a closed immersion, see \cite[IV.3, IV.4]{ACGH1}. The image of $c$ is $\widetilde{W}^r_d(C)$, so we have a natural identification $\widetilde{G}^r_d(C) \simeq \widetilde{W}^r_d(C)$ under our assumptions. By construction of $\widetilde{G}^r_d(C) \simeq \widetilde{W}^r_d(C)$, there is a locally free subsheaf $F \seq \tilde{q}_* \mathcal{A}$ such that, for any $y \in \widetilde{W}^r_d(C)$, $\nu(y)=[A] \in W^r_d(C)$, the natural composition
$$F \otimes k(y) \to \tilde{q}_* \mathcal{A}\otimes k(y) \to \mathrm{H}^0(C,A)$$
is injective. By our assumptions, this composition must be an isomorphism, and in particular the natural composition $\tilde{q}_* \mathcal{A}\otimes k(y) \to \mathrm{H}^0(C,A)$ is surjective. By the Base Change Theorem, \cite[Ch.\ III, Thm.\ 12.11]{hartshorne}, $\tilde{q}_* \mathcal{A}$ is locally free and we have an isomorphism $\tilde{q}_* \mathcal{A}\otimes k(y) \simeq \mathrm{H}^0(C,A)$. Further, since $\tilde{q}$ has one dimensional fibres, and further $R^2\tilde{q}_* \mathcal{A}=0$ by \cite[Ch.\ III, Cor.\ 11.2]{hartshorne}, the Base Change Theorem implies that the natural map  $R^1\tilde{q}_* \mathcal{A}\otimes k(y) \to \mathrm{H}^1(C,A)$ is an isomorphism. Combining this with the fact that $\tilde{q}_* \mathcal{A}\otimes k(y) \simeq \mathrm{H}^0(C,A)$, another application of the Base Change Theorem gives us that $R^1\tilde{q}_* \mathcal{A}$ is locally free. By the Relative Duality Theorem \cite[Ch.\ 2]{birkar-topics}, we have natural isomorphisms $\tilde{q}_* (\tilde{p}^*\omega_C \otimes \mathcal{A}^{\vee}) \simeq (R^1\tilde{q}_* \mathcal{A})^{\vee}$, which gives the claim (using usual Serre duality $\mathrm{H}^1(C,A)^{\vee} \simeq \mathrm{H}^0(C,\omega_C \otimes A^{\vee})$).
\end{proof}
Assume that $r,d \geq 0$ are such that $W^{r+1}_d(C)=\emptyset$. By Proposition \ref{grauert-replacement}, we have a vector bundle $\tilde{q}_*\mathcal{A}$ on $\widetilde{W}^r_d(C)$. In this situation, we define the projective bundle $$\pi: X^r_d(C) \to \widetilde{W}^r_d(C)$$ by $X^r_d(C):=\PP(\tilde{q}_*\mathcal{A}),$
where we are use the convention $\PP(\tilde{q}_*\mathcal{A}):=\text{Proj}\left((\tilde{q}_*\mathcal{A})^{\vee}\right)$, where the Proj functor is defined in \cite[Ch.\ II.7]{hartshorne}. We let 
$$p'\; : \; C \times X^r_d(C) \to C, \; \; q' \; : \; C \times X^r_d(C) \to X^r_d(C),$$
denote the projections. Also let $\pi' : C \times  X^r_d(C) \to C \times  \widetilde{W}^r_d(C)$
denote the morphism $id \times \pi$. We denote by $\mathcal{O}(1)$ the line bundle $\mathcal{O}_{\PP(\tilde{q}_*\mathcal{A})}(1)$ on $X^r_d(C)$. With our conventions, $\pi_*\mathcal{O}(1) \simeq (\tilde{q}_*\mathcal{A})^{\vee}$.
\begin{lem}
We have a natural inclusion ${\pi'}^*\mathcal{A}^{\vee} \otimes {q'}^*\mathcal{O}(-1) \hookrightarrow \mathcal{O}_{C \times X^r_d(C)}.$
\end{lem}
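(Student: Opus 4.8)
The plan is to construct the desired inclusion $\pi'^*\mathcal{A}^\vee \otimes q'^*\mathcal{O}(-1) \hookrightarrow \mathcal{O}_{C \times X^r_d(C)}$ from a tautological source, namely the universal sub-line-bundle on the projective bundle $X^r_d(C) = \PP(q_*\mathcal{A})$. Recall that with the Hartshorne convention $\PP(q_*\mathcal{A}) = \mathrm{Proj}((q_*\mathcal{A})^\vee)$ and $\pi_*\mathcal{O}(1) \simeq (q_*\mathcal{A})^\vee$, the dual of the evaluation map $\pi^*(q_*\mathcal{A})^\vee \twoheadrightarrow \mathcal{O}(1)$ yields a tautological inclusion $\mathcal{O}(-1) \hookrightarrow \pi^*(q_*\mathcal{A})$ of sheaves on $X^r_d(C)$. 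Pulling this back along $q'$ (equivalently using $q' = \pi \circ q'$ compatibility through $\pi'$) gives an inclusion $q'^*\mathcal{O}(-1) \hookrightarrow q'^*\pi^*(q_*\mathcal{A}) \simeq \pi'^*(q^*(q_*\mathcal{A}))$, where I have used that $q' = \pi \circ (\text{the map lifting } q)$ and flat base change for the cartesian square relating the two projections.

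Next I would use the counit of adjunction. By Proposition \ref{grauert-replacement}, $q_*\mathcal{A}$ is locally free with fibres $\mathrm{H}^0(C,A)$, and since pullback of a pushforward admits a natural evaluation morphism, there is a canonical map $q^* q_* \mathcal{A} \to \mathcal{A}$ on $C \times W^r_d(C)$. Pulling this back by $\pi'$ gives $\pi'^*q^*q_*\mathcal{A} \to \pi'^*\mathcal{A}$. Composing with the tautological inclusion above produces a morphism $q'^*\mathcal{O}(-1) \to \pi'^*\mathcal{A}$ on $C \times X^r_d(C)$. Tensoring through by the line bundle $\pi'^*\mathcal{A}^\vee \otimes q'^*\mathcal{O}(-1)$ is not quite what is wanted; instead I would take the dual picture: apply $\mathcal{H}om(\pi'^*\mathcal{A}, -)$ or, more directly, interpret the morphism $q'^*\mathcal{O}(-1) \to \pi'^*\mathcal{A}$ as a global section of $\pi'^*\mathcal{A} \otimes q'^*\mathcal{O}(1)$, i.e. a homomorphism $\pi'^*\mathcal{A}^\vee \otimes q'^*\mathcal{O}(-1) \to \mathcal{O}_{C \times X^r_d(C)}$ after dualizing and using reflexivity of $\mathcal{A}$ (all torsion-free sheaves on the integral Gorenstein curve $C$ are reflexive, as noted in the text).

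The main obstacle will be checking that this homomorphism is genuinely an inclusion of sheaves, not merely a nonzero map that could have kernel. The fibrewise description is the key: over a point $x = [A,\langle s \rangle] \in X^r_d(C)$ lying over $[A] \in W^r_d(C)$, the tautological inclusion selects the line $\C\langle s\rangle \seq \mathrm{H}^0(C,A)$, and the resulting map $A^\vee \to \mathcal{O}_C$ is precisely the morphism dual to $s : \mathcal{O}_C \to A$. Since any nonzero homomorphism between rank-one torsion-free sheaves on the integral curve $C$ is injective (exactly the fact used earlier to identify $\rho^{-1}([A])$ with $\PP(\mathrm{H}^0(C,A))$), this fibrewise map is injective whenever $s \neq 0$, which holds on all of $X^r_d(C)$. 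I would then promote fibrewise injectivity to injectivity of the sheaf map by a torsion argument: the source is a line bundle, hence torsion-free, and a morphism from a torsion-free sheaf on an integral scheme whose restriction to the generic fibre (indeed to every fibre) is nonzero must be injective, because its kernel would be a subsheaf supported on a proper closed subset yet sitting inside a line bundle.
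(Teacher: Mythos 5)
Your construction of the canonical map is essentially identical to the paper's. The paper produces a canonical nonzero section of ${\pi'}^*\mathcal{A}\otimes {q'}^*\mathcal{O}(1)$ by identifying $\mathrm{H}^0(C \times X^r_d(C),{\pi'}^*\mathcal{A}\otimes {q'}^*\mathcal{O}(1))$ with $\mathrm{Hom}_{\mathcal{O}_{W^r_d(C)}}(q_*\mathcal{A}, q_*\mathcal{A})$ via the projection formula and flat base change, and takes the section corresponding to the identity; under those identifications the identity corresponds exactly to your composite of the tautological inclusion $\mathcal{O}(-1) \hookrightarrow \pi^*q_*\mathcal{A}$ with the pulled-back counit $q^*q_*\mathcal{A} \to \mathcal{A}$. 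Your fibrewise description — over $[(A,s)]$ the dualized map is $A^{\vee} \to \mathcal{O}_C$, contraction against $s$, injective because any nonzero map of rank-one torsion-free sheaves on the integral curve $C$ is injective — is also correct, and it is the actual reason the map deserves to be called an inclusion; the paper leaves this point implicit, as its proof stops once the canonical nonzero section is produced.

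The genuine gap is in your last step, where fibrewise injectivity is promoted to injectivity of the sheaf map. Both premises of your torsion argument fail. First, the source ${\pi'}^*\mathcal{A}^{\vee} \otimes {q'}^*\mathcal{O}(-1)$ is \emph{not} a line bundle: the fibres of $\mathcal{A}$ are rank-one torsion-free sheaves $A$ which in general are not locally free at the singular points of $C$ — this is precisely why Section \ref{BN-sing-curves} works on the compactified Jacobian $\bar{J}^d(C)$ rather than $\mathrm{Pic}^d(C)$ — so $\mathcal{A}^{\vee}$, hence the source, is not locally free. Second, $C \times X^r_d(C)$ need not be integral, since the paper explicitly refrains from assuming $W^r_d(C)$ is reduced (see the remark after Proposition \ref{grauert-replacement}); thus ``torsion-free sheaf on an integral scheme'' and ``generic fibre'' are not available in the stated generality. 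The robust repair is the one your parenthetical ``indeed to every fibre'' points toward: the target $\mathcal{O}_{C \times X^r_d(C)}$ is flat over $X^r_d(C)$, and a morphism of coherent sheaves into a base-flat sheaf which is injective on every fibre is injective (with base-flat cokernel) — a standard Tor/local-criterion argument. Note, though, that even this step needs the identification of the fibre of ${\pi'}^*\mathcal{A}^{\vee}$ over $[(A,s)]$ with $A^{\vee}$, i.e.\ that forming $\mathcal{A}^{\vee}$ commutes with base change; this holds because the fibres of $\mathcal{A}$ are maximal Cohen--Macaulay on the Gorenstein curve $C$, so the relevant $\mathcal{E}xt^1$ sheaves vanish. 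That compatibility is silently assumed both in your write-up and in the paper, but it is genuinely needed once $\mathcal{A}$ is not locally free.
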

\begin{proof}
Dualizing, it is enough to give a canonical, nonzero, section $s$ of ${\pi'}^*\mathcal{A}\otimes {q'}^*\mathcal{O}(1)$. By the projection formula and flat base change we have natural identifications
{\small{\begin{align*}
\mathrm{H}^0(C \times  X^r_d(C),{\pi'}^*\mathcal{A}\otimes {q'}^*\mathcal{O}(1)) &\simeq \mathrm{H}^0( X^r_d(C),{q'}_*{\pi'}^*\mathcal{A}\otimes \mathcal{O}(1))\\
&\simeq \mathrm{H}^0( X^r_d(C),\pi^*\tilde{q}_*\mathcal{A}\otimes \mathcal{O}(1))\\
&\simeq \mathrm{H}^0( \widetilde{W}^r_d(C),\tilde{q}_*\mathcal{A}\otimes (\tilde{q}_*\mathcal{A})^{\vee})\\
& \simeq \text{Hom}_{\mathcal{O}_{\widetilde{W}^r_d(C)}}(\tilde{q}_*\mathcal{A}, \tilde{q}_*\mathcal{A}).
\end{align*}}}
We then take the section $s$ corresponding to the identity $id \in \text{Hom}_{\mathcal{O}_{\widetilde{W}^r_d(C)}}(\tilde{q}_*\mathcal{A}, \tilde{q}_*\mathcal{A})$.
\end{proof}

The \emph{gonality} of an integral curve $C$ lying on a surface is defined to be the minimal $d$ such that $W^1_d(C) \neq \emptyset$.
\begin{prop} \label{definability-map-brill-syz}
	Let $C$ be an integral curve which may be embedded into a surface, and fix integers $p,d \geq 0$. Assume that $\omega_C$ is very ample and $(C,\omega_C)$ is normally generated. Assume further $W^2_d(C)=\emptyset$ and that, for any $[A]\in W^1_d(C)$ and $s \neq 0 \in \mathrm{H}^0(C,A)$, the map
	$$ \delta_s \; : \; \bigwedge^{g-d+1}\mathrm{H}^0(\omega_C \otimes A^{\vee}) \otimes \frac{ \mathrm{H}^0(A)}{\C \langle s\rangle } \to \mathrm{K}_{g-d,1}(C,\omega_C; \mathrm{H}^0(\omega_C \otimes A^{\vee}))$$ from Lemma \ref{natural-injection} is injective. We have a well-defined morphism $S  :  X^1_d(C)  \to \PP(\mathrm{K}_{g-d,1}(C,\omega_C))$

	{\small{\begin{align*}
	S([(p,s)]):= [\bigwedge^{g-d+1}\mathrm{H}^0(\omega_C \otimes A^{\vee}) \otimes \frac{ \mathrm{H}^0(A)}{\C \langle s\rangle }],
	\end{align*}}}
	where $p \in \widetilde{W}^1_d(C)$ and $\nu(p)=[A] \in W^1_d(C)$.
	\end{prop}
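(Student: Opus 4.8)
The plan is to promote the pointwise recipe $[(A,s)]\mapsto[\mathrm{Im}\,\delta_s]$ to an honest morphism by relativizing the construction of $\delta_s$ over $Y:=X^1_d(C)$, in the spirit of Lemma \ref{rank-scroll}. First note the recipe is well defined set-theoretically: since $W^2_d(C)=\emptyset$ we have $h^0(A)=2$ for every $[A]\in W^1_d(C)$, so $\mathrm{H}^0(A)/\C\langle s\rangle$ is a line; and Riemann--Roch together with Serre duality on the Gorenstein curve $C$ give $h^0(\omega_C\otimes A^{\vee})=h^1(A)=g-d+1$, so $\bigwedge^{g-d+1}\mathrm{H}^0(\omega_C\otimes A^{\vee})$ is also a line. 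Hence the source of $\delta_s$ is one-dimensional, and by hypothesis $\delta_s$ is injective; composing with the injection $\mathrm{K}_{g-d,1}(C,\omega_C;\mathrm{H}^0(\omega_C\otimes A^{\vee}))\hookrightarrow\mathrm{K}_{g-d,1}(C,\omega_C)$ of Proposition \ref{inc-subspace} we obtain a well-defined line $\mathrm{Im}\,\delta_s\seq\mathrm{K}_{g-d,1}(C,\omega_C)$.

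To see that this line varies algebraically I would build a morphism of vector bundles $\widetilde{\delta}\colon\mathcal{L}\to\mathrm{K}_{g-d,1}(C,\omega_C)\otimes\mathcal{O}_Y$ from a line bundle $\mathcal{L}$ on $Y$ whose fibre over $[(A,s)]$ is (a scalar multiple of) $\delta_s$. The relevant bundles on $W:=W^1_d(C)$ are $\mathcal{E}:=q_*\mathcal{A}$ and $\mathcal{W}:=q_*(p^*\omega_C\otimes\mathcal{A}^{\vee})$, which are locally free with the expected fibres by Proposition \ref{grauert-replacement}; this is the step that replaces Grauert's theorem and lets the argument run even though $W$ need not be reduced. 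On $Y=\PP(\mathcal{E})$ one has the tautological inclusion $\mathcal{O}_Y(-1)\hookrightarrow\pi^*\mathcal{E}$ with line bundle cokernel $\mathcal{Q}$, together with the tautological section from the lemma immediately preceding this proposition, namely ${\pi'}^*\mathcal{A}^{\vee}\otimes{q'}^*\mathcal{O}(-1)\hookrightarrow\mathcal{O}_{C\times Y}$. Pushing the latter forward along $q'$, after tensoring with $p'^*\omega_C$ and using flat base change, produces the relative contraction $M_s\colon\pi^*\mathcal{W}\otimes\mathcal{O}_Y(-1)\to\mathrm{H}^0(\omega_C)\otimes\mathcal{O}_Y$ realizing $m_s$ in families, while the sheaf pairing $(p^*\omega_C\otimes\mathcal{A}^{\vee})\otimes\mathcal{A}\to p^*\omega_C$ pushes down to the relative multiplication $\mu\colon\pi^*\mathcal{W}\otimes\pi^*\mathcal{E}\to\mathrm{H}^0(\omega_C)\otimes\mathcal{O}_Y$. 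Combining the universal Koszul comultiplication $\pi^*\det\mathcal{W}\to\pi^*(\bigwedge^{g-d}\mathcal{W}\otimes\mathcal{W})$ with $\bigwedge^{g-d}M_s$ on the $\bigwedge^{g-d}\mathcal{W}$-factor and $\mu$ on the remaining factors, exactly as in the proofs of Proposition \ref{one-dim-iso} and Lemma \ref{rank-scroll}, yields a map out of $\pi^*\det\mathcal{W}\otimes\pi^*\mathcal{E}$. Along the tautological subbundle $\pi^*\det\mathcal{W}\otimes\mathcal{O}_Y(-1)$ (the locus $t=s$) this map factors through the Koszul differential of $\det\mathcal{W}$, so it lands in the subspace $\bigwedge^{g-d+1}\mathrm{H}^0(\omega_C)$ that is quotiented out in $\mathrm{K}_{g-d,1}$; it therefore descends to $\mathcal{L}:=\pi^*\det\mathcal{W}\otimes\mathcal{Q}$, whose fibre is precisely the source of $\delta_s$.

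The delicate point, and the step I expect to be the main obstacle, is to check that $\widetilde{\delta}$ genuinely lands in the constant subbundle $\mathrm{K}_{g-d,1}(C,\omega_C)\otimes\mathcal{O}_Y$, i.e. that the composite with the next Koszul differential $d_2$ vanishes. In Lemma \ref{rank-scroll} the base is a projective space, so one may verify such a vanishing fibrewise; here $Y$ may be non-reduced, since $W^1_d(C)$ need not be reduced, and a section of a trivial bundle can vanish at every closed point without being zero. I would therefore avoid any fibrewise argument and instead assemble the relative Koszul complex of the graded $\mathrm{Sym}(\mathrm{H}^0(\omega_C))\otimes\mathcal{O}_Y$-module obtained by pushing forward $\bigoplus_q(p'^*\omega_C\otimes{\pi'}^*\mathcal{A}^{\vee})^{\otimes q}\otimes{\pi'}^*\mathcal{A}$ along $q'$, which relativizes the complex of Lemma \ref{natural-injection}. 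The module map to $\Gamma_C(\omega_C)\otimes\mathcal{O}_Y$ induced by the tautological section then identifies $\widetilde{\delta}$ with a twisted relative Koszul differential $d_1$, so the identity $d_2\circ d_1=0$ holds at the level of sheaf maps, as a formal consequence of the associativity of the module multiplication and independently of reducedness; this gives the desired landing in $\mathrm{K}_{g-d,1}(C,\omega_C)\otimes\mathcal{O}_Y$.

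Finally, by hypothesis every fibre $\delta_s$ is injective, and by Proposition \ref{inc-subspace} it remains injective after composing into $\mathrm{K}_{g-d,1}(C,\omega_C)$; thus $\widetilde{\delta}\otimes k(y)$ is injective at every closed point $y\in Y$. Lemma \ref{coker-bundle} then shows that $\mathrm{Coker}(\widetilde{\delta})$ is locally free, so $\mathrm{Im}(\widetilde{\delta})$ is a sub-line-bundle of the trivial bundle and the dual $\widetilde{\delta}^{\vee}$ is a surjection of vector bundles. By \cite[II.7.12]{hartshorne} this surjection defines the morphism $S\colon X^1_d(C)\to\PP(\mathrm{K}_{g-d,1}(C,\omega_C))$, and by construction its fibre over $[(A,s)]$ is the line $\mathrm{Im}\,\delta_s$, i.e. $S([(A,s)])=[\bigwedge^{g-d+1}\mathrm{H}^0(\omega_C\otimes A^{\vee})\otimes\mathrm{H}^0(A)/\C\langle s\rangle]$, as required.
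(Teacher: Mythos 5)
Your proposal is correct and follows essentially the same route as the paper's proof: pointwise well-definedness from the dimension count, the injectivity hypothesis and Proposition \ref{inc-subspace}; relativization via the bundles $q_*\mathcal{A}$ and $q_*(p^*\omega_C\otimes\mathcal{A}^{\vee})$ of Proposition \ref{grauert-replacement} and the tautological sequence on $X^1_d(C)=\PP(q_*\mathcal{A})$; descent of the relative Koszul--multiplication map to a line bundle $\mathcal{L}$ landing in $\mathrm{K}_{g-d,1}(C,\omega_C)\otimes\mathcal{O}_{X^1_d(C)}$; and conclusion by \cite[II.7.12]{hartshorne}. Your explicit sheaf-level verification that $d_2\circ d_1=0$ (to handle possible non-reducedness of $W^1_d(C)$) just makes precise what the paper compresses into its citation of Lemma \ref{natural-injection}, and your only slip is bookkeeping: $\mathcal{L}$ should carry the additional twist $\mathcal{O}_{X^1_d(C)}(d-g)$ forced by the $g-d$ copies of $\mathcal{O}_{X^1_d(C)}(-1)$ in $\bigwedge^{g-d}M_s$ (as in the paper's definition $\mathcal{L}=\bigwedge^{g-d+1}\pi^*q_*(p^*\omega_C\otimes\mathcal{A}^{\vee})\otimes\mathcal{F}(d-g)$), which does not affect the construction of $S$.
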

\begin{proof}
The proof is similar to that of Lemma \ref{rank-scroll}. Define $W_s:=\wedge^{g-d+1}\mathrm{H}^0(\omega_C \otimes A^{\vee}) \otimes \frac{ \mathrm{H}^0(A)}{\C \langle s\rangle }.$ By assumption, $h^0(A)=2$ and, by Riemann--Roch and Serre duality, $h^0(\omega_C \otimes A^{\vee})=g-d+1$, so $\dim W_s=1$. We have an inclusion $\mathrm{K}_{g-d,1}(C,\omega_C; \mathrm{H}^0(\omega_C \otimes A^{\vee})) \hookrightarrow \mathrm{K}_{g-d,1}(C,\omega_C)$
by Proposition \ref{inc-subspace}. Under the assumption that $\delta_s$ is injective, we may consider $W_s$ as a subspace of $\mathrm{K}_{g-d,1}(C,\omega_C)$. The map $S$ is thus well-defined on closed points of $X^1_d(C)$.\smallskip

	To show that $S$ is defined as a morphism of schemes, we show that there is a line subbundle $i :\mathcal{L} \hookrightarrow \mathrm{K}_{g-d,1}(C,\omega_C) \otimes \mathcal{O}_{X^1_d(C)}$ such that, for any closed point $x=[(p,s)]  \in X^1_d(C)$, with $\nu(p)=[A]$, 
	{\small{$$\mathcal{L}\otimes k(x) \simeq W_s$$}}
	and further $i \otimes k(x)$ is identified with the inclusion $W_s \hookrightarrow \mathrm{K}_{p,1}(C,\omega_C)$
	from Lemma \ref{natural-injection} and Proposition \ref{inc-subspace}. The claim then follows from \cite[II.7.12]{hartshorne}. We have a natural evaluation morphism $\pi^*\pi_*\mathcal{O}(1) \twoheadrightarrow \mathcal{O}(1)$ on $X^1_d(C)=\PP(\tilde{q}_*\mathcal{A})$. Since $\pi_*\mathcal{O}(1)\simeq (\tilde{q}_*\mathcal{A})^{\vee}$, we may dualize the evaluation morphism to get an inclusion
	{\small{$$j \; : \; \mathcal{O}(-1) \hookrightarrow \pi^*\tilde{q}_*\mathcal{A},$$}}
	such that $j \otimes k(x)$ corresponds to $\C\langle s \rangle \hookrightarrow \mathrm{H}^0(A)$. Let $\mathcal{F}:=\mathrm{Coker}(j)$. We set
	{\small{$$\mathcal{L}:=\bigwedge^{g-d+1}{\pi}^*\tilde{q}_*(\tilde{p}^*\omega_C \otimes \mathcal{A}^{\vee}) \otimes \mathcal{F}(d-g).$$}}
	Then $\mathcal{L}$ is a line bundle with $\mathcal{L}\otimes k(x) \simeq W_s$.\smallskip
	
	We have the differential 
	{\small{$$\wedge^{g-d+1}\pi^*\tilde{q}_*(\tilde{p}^*\omega_C \otimes \mathcal{A}^{\vee}) \to \wedge^{g-d}\pi^*\tilde{q}_*(\tilde{p}^*\omega_C \otimes \mathcal{A}^{\vee}) \otimes \pi^*\tilde{q}_*(\tilde{p}^*\omega_C \otimes \mathcal{A}^{\vee}) .$$}}
	Tensoring the above morphism by $\pi^*\tilde{q}_*\mathcal{A}(d-g)$ and using the natural composition
	{\small{$$\tilde{q}_*(\tilde{p}^*\omega_C \otimes \mathcal{A}^{\vee})\otimes \tilde{q}_*\mathcal{A} \to \tilde{q}_*(\tilde{p}^*\omega_C \otimes \mathcal{A}^{\vee}\otimes \mathcal{A}) \to \tilde{q}_*(\tilde{p}^*\omega_C),$$}}
	we obtain a morphism
	{\small{$$\wedge^{g-d+1}\pi^*\tilde{q}_*(\tilde{p}^*\omega_C \otimes \mathcal{A}^{\vee}) \otimes \pi^*\tilde{q}_*\mathcal{A}(d-g) \to \wedge^{g-d}\left( \pi^*\tilde{q}_*(\tilde{p}^*\omega_C \otimes \mathcal{A}^{\vee})(-1)\right)\otimes \pi^*\tilde{q}_*(\tilde{p}^*\omega_C).$$}}
	Using the natural composition
	{\small{$$\pi^*\tilde{q}_*(\tilde{p}^*\omega_C \otimes \mathcal{A}^{\vee})(-1) \hookrightarrow \pi^*\tilde{q}_*(\tilde{p}^*\omega_C \otimes \mathcal{A}^{\vee})\otimes \pi^*\tilde{q}_*\mathcal{A}\to
	\pi^*\tilde{q}_*(\tilde{p}^*\omega_C)\simeq \mathrm{H}^0(\omega_C)\otimes \mathcal{O}_{X^1_d(C)},$$}}
	we arrive at a natural morphism
	{\small{\begin{align*}
	 \wedge^{g-d+1}\pi^*\tilde{q}_*(\tilde{p}^*\omega_C \otimes \mathcal{A}^{\vee}) \otimes \pi^*\tilde{q}_*\mathcal{A}(d-g) \to \wedge^{g-d}\mathrm{H}^0(\omega_C) \otimes \mathrm{H}^0(\omega_C) \otimes \mathcal{O}_{X^1_d(C)},
	\end{align*}}}
	and composing with the quotient
	{\small{$$ \wedge^{g-d}\mathrm{H}^0(\omega_C) \otimes \mathrm{H}^0(\omega_C) \otimes \mathcal{O}_{X^1_d(C)} \to \frac{\wedge^{g-d}\mathrm{H}^0(\omega_C) \otimes \mathrm{H}^0(\omega_C)}{\wedge^{g-d+1}\mathrm{H}^0(\omega_C)} \otimes \mathcal{O}_{X^1_d(C)} $$}}
	we arrive at
	{\small{$$\delta \; : \;  \wedge^{g-d+1}\pi^*\tilde{q}_*(\tilde{p}^*\omega_C \otimes \mathcal{A}^{\vee}) \otimes \pi^*\tilde{q}_*\mathcal{A}(d-g) \to \frac{\wedge^{g-d}\mathrm{H}^0(\omega_C) \otimes \mathrm{H}^0(\omega_C)}{\wedge^{g-d+1}\mathrm{H}^0(\omega_C)} \otimes \mathcal{O}_{X^1_d(C)}.$$}}
	By the proof of Lemma \ref{natural-injection}, $\delta$ vanishes on $\wedge^{g-d+1}\pi^*\tilde{q}_*(\tilde{p}^*\omega_C \otimes \mathcal{A}^{\vee})(-1+d-g)$ and induces a morphism
	{\small{$$\delta' \; : \; \mathcal{L} \to  \frac{\wedge^{g-d}\mathrm{H}^0(\omega_C) \otimes \mathrm{H}^0(\omega_C)}{\wedge^{g-d+1}\mathrm{H}^0(\omega_C)} \otimes \mathcal{O}_{X^1_d(C)}.$$}}
	Again by Lemma \ref{natural-injection}, $\delta'$ lands in the subbundle $\frac{\mathrm{Ker}(d)}{\wedge^{g-d+1}\mathrm{H}^0(\omega_C)} \otimes \mathcal{O}_{X^1_d(C)}$, where $d$ is the differential
	{\small{$$d \; : \; \wedge^{g-d}\mathrm{H}^0(\omega_C) \otimes \mathrm{H}^0(\omega_C) \to \wedge^{g-d-1}\mathrm{H}^0(\omega_C) \otimes \mathrm{H}^0(\omega_C^{\otimes 2}).$$}}
	Hence $\delta'$ induces a morphism $i :\mathcal{L} \to \mathrm{K}_{g-d,1}(C,\omega_C) \otimes \mathcal{O}_{X^1_d(C)}$ with the required properties.
	\end{proof}
	\begin{remark} \label{O(g-d-1)}
	Note that, in the situation where $W^1_d(C)$ is zero-dimensional and reduced, then any coherent sheaf of the form $\pi^* \mathcal{A} \in \mathrm{Coh}(X^1_d(C))$ is free, for $\mathcal{A} \in \mathrm{Coh}(\widetilde{W}^1_d(C))$. We see from this that $\mathcal{L} \simeq \mathrm{det}(\mathcal{L})\simeq \mathcal{O}(d+1-g) $, where $\mathcal{L}$ is the line bundle from the above proof. Thus $$S^*\mathcal{O}_{\PP}(1)\simeq \mathcal{O}_{X^1_d(C)}(g-d-1)$$ for $\PP:=\PP(\mathrm{K}_{g-d,1}(C,\omega_C)),$ under the assumption that $W^1_d(C)$ is zero-dimensional and reduced.
	\end{remark}

We end this section with a result on the Brill--Noether variety of minimal pencils on nodal curves of even genus.
\begin{prop} \label{gen-node-BN}
	Let $D$ be an general integral curve of arithmetic genus $g=2k$ with precisely $m$ nodes for $m \leq k-1$. Then $D$ has gonality $k+1$. Further, $W^2_{k+1}(D)=\emptyset$, any closed point $[A] \in W^1_{k+1}(D)$ corresponds to a locally free sheaf $A$ on $D$, and $W^1_{k+1}(D)$ is zero-dimensional and reduced.
	\end{prop}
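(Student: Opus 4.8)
The plan is to reduce every assertion to Brill--Noether theory on the normalization $\nu : \widetilde{D} \to D$ together with a dimension count over the space $\mathcal{M}$ of integral $m$-nodal curves of arithmetic genus $2k$. Writing $\widetilde{g} := 2k - m$ for the geometric genus, a general such $D$ is obtained by gluing $m$ general pairs of points $\{x_i, y_i\}_{i=1}^m$ on a general smooth curve $\widetilde{D}$ of genus $\widetilde{g}$; thus $\widetilde{D}$ is Brill--Noether--Petri general, so each $G^r_e(\widetilde{D})$ is smooth of dimension $\rho(\widetilde{g}, r, e) = \widetilde{g} - (r+1)(\widetilde{g} - e + r)$ (and empty when this is negative), while $\dim \mathcal{M} = 3\widetilde{g} - 3 + 2m$.

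The engine is the following correspondence. Let $A$ be a rank-one torsion-free sheaf of degree $d$ on $D$ failing to be locally free exactly along a set $S$ of $s$ nodes; then $A = (\nu_S)_*\widetilde{A}$ for a line bundle $\widetilde{A}$ on the partial normalization $D_S$, with $h^0(D,A) = h^0(D_S, \widetilde{A})$. Pulling $\widetilde{A}$ back to $\widetilde{D}$ yields a line bundle $L$ of degree $d-s$, and a subspace $V \seq H^0(D,A)$ with $\dim V = r+1$ is exactly a pair $(L, V) \in G^r_{d-s}(\widetilde{D})$ all of whose sections descend across the $m-s$ remaining nodes, i.e.\ for each such node the evaluation $V \to L_{x_i} \oplus L_{y_i}$ has rank at most one (which also pins down the gluing parameter $\lambda_i$). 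For general points each such rank condition is of the expected codimension $r$, so the incidence variety of triples $(D, A, V)$ lying in the stratum $S$ has dimension
$$(3\widetilde{g}-3) + 2m + \rho(\widetilde{g}, r, d-s) - r(m-s),$$
and a direct computation identifies this with $\dim \mathcal{M} + \rho(2k, r, d) - s$.

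All four statements now follow by reading off this formula. With $r=1$, $d = k+1$ one has $\rho(2k,1,k+1)=0$, so only the locally free stratum $s=0$ can dominate $\mathcal{M}$: a general $D$ carries finitely many pencils, each a line bundle, proving that every $[A] \in W^1_{k+1}(D)$ is locally free and that $W^1_{k+1}(D)$ is zero-dimensional (it coincides with $G^1_{k+1}(D)$ once $W^2_{k+1}(D)=\emptyset$). Taking $d \le k$ gives $\rho(2k,1,d) \le -2$, so the incidence has dimension $< \dim \mathcal{M}$ and $W^1_d(D) = \emptyset$; together with the existence of one $g^1_{k+1}$ — obtained by specializing a pencil on a smooth genus-$2k$ curve along a smoothing of $D$ and invoking properness of the relative Brill--Noether locus — this gives gonality exactly $k+1$. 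With $r=2$, $d=k+1$ one has $\rho(2k,2,k+1) = -k-3 < 0$, so every stratum has dimension $< \dim \mathcal{M}$ and $W^2_{k+1}(D) = \emptyset$.

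Finally, reducedness of $W^1_{k+1}(D)$ amounts to the $m$ node conditions meeting $G^1_{k+1}(\widetilde{D})$ (smooth of dimension $m$) transversally at each of its finitely many intersection points, which again holds for a general choice of gluing points. The main obstacle is precisely this transversality: one must verify that, as $(\widetilde{D}, \{x_i,y_i\})$ varies, the rank-at-most-one evaluation conditions at the nodes cut $G^r_{d-s}(\widetilde{D})$ in the expected codimension and transversally, so that the displayed dimension count is sharp. This is where the genericity of $D$ is genuinely used, and it is most naturally established by a deformation computation showing that varying the points $x_i, y_i$ already surjects onto the normal directions of the node conditions, with Petri-generality of $\widetilde{D}$ as the essential input.
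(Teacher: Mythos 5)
Your skeleton is sound and the numerology is right: stratifying torsion-free sheaves by the set of $s$ nodes at which they fail to be locally free, identifying them with line bundles on partial normalizations, and the identity $(3\widetilde{g}-3)+2m+\rho(\widetilde{g},r,d-s)-r(m-s)=\dim\mathcal{M}+\rho(2k,r,d)-s$ all check out. But there is a genuine gap, which you yourself flag as ``the main obstacle'': the assertion that, for general gluing points, the rank-at-most-one evaluation conditions cut the relevant Brill--Noether varieties in the expected codimension, and do so transversally, is never proved --- and it is not a routine genericity verification. Concretely: (a) the expected-codimension claim fails fiberwise when $r=2$, since for a fixed $(L,V)\in G^2_{k+1-s}(\widetilde{D})$ with a base point $x$, \emph{every} pair $(x,y)$ satisfies the rank condition, so the pair-locus is $1$-dimensional rather than the expected $0$; such base-point series do exist in the relevant range (e.g.\ $m=k-1$, $k\geq 8$), so your count for $W^2_{k+1}(D)=\emptyset$ needs a stratification of $G^2_{k+1-s}(\widetilde{D})$ by base points and by series composed with covers (the latter excluded via the gonality of $\widetilde{D}$), plus the observation that the $2m$ glued points are pairwise distinct so a base point can serve at most one node --- none of which appears in the write-up; and (b) reducedness of $W^1_{k+1}(D)$ is equivalent to injectivity of the Petri map of the \emph{nodal} curve $D$ at each of its finitely many pencils, which no dimension count can deliver. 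Your proposed ``deformation computation showing that varying the points $x_i,y_i$ surjects onto the normal directions'' is precisely a Gieseker--Petri statement for general nodal curves: it is the theorem to be proved, not a finishing step.

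It is instructive to see how the paper deals with exactly this point: it degenerates $D$ further, to a rational curve $D'$ with $2k$ nodes, and invokes Gieseker's theorem, which supplies Petri injectivity for all line bundles on $D'$ \emph{and} on all of its partial normalizations. Non-existence of low-degree pencils and of $g^2_{k+1}$'s, local freeness, zero-dimensionality and reducedness then follow from short numerical Petri inequalities on partial normalizations (e.g.\ $d-n\geq\lfloor(g(\widetilde{D})+3)/2\rfloor$ rules out low-degree pencils), and the statements for the general $m$-nodal curve follow by semicontinuity and properness of the relative compactified Brill--Noether loci. In other words, the transversality you defer is not verified directly in the paper either; it is imported wholesale from Gieseker's theorem. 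To complete your incidence argument you would either have to reprove such a statement or cite it; as written, the proposal assumes the hardest part of the proposition.
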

\begin{proof}
	We first prove $D$ has gonality $k+1$, i.e.\ there is no $d \leq k$ with $W^1_d(D) \neq \emptyset$. One could prove this using admissible covers \cite{ha-mu}, but we will use torsion-free sheaves instead. By degenerating $D$ to a rational curve, it suffices to prove there exists a rational, nodal curve $D'$ of genus $g=2k$ and gonality $k+1$. By Gieseker's Theorem, there exists such a $D'$ with the property that the Petri map is injective for all \emph{line bundles} on $D'$, \cite[Prop.\ 1.2] {gieseker}. Furthermore, the same property holds for all partial normalizations of $D'$. We will show that $D'$ has gonality $k+1$. Let $A \in W^1_d(D')$ for $d \leq k$. By Gieseker's Theorem, $A$ cannot be locally free. Thus there exists some partial normalization $\mu: \widetilde{D} \to D'$ at $n > 0$ nodes of $D'$ such that $A=\mu_* \widetilde{A}$ for some $\widetilde{A} \in W^1_{d-n}(\widetilde{D})$, with $\widetilde{A}$ locally free. As Petri's Condition holds for the line bundle $\widetilde{A}$ on the desingularization $\widetilde{D}$, we have
	$$d-n \geq \lfloor \frac{g(\widetilde{D})+3}{2} \rfloor\geq \frac{2k-n+2}{2}. $$
	Thus $d \geq \frac{2k+n+2}{2} \geq k+1$, which is a contradiction.\smallskip

	To complete the proof, by degeneration it suffices to show that $W^1_{k+1}(D')$ is zero-dimensional and reduced, that $W^2_{k+1}(D')=\emptyset$ and that each closed point of $W^1_{k+1}(D')$ corresponds to a locally free sheaf, for the rational curve $D'$ above. The fact that each closed point $[A]$ corresponds to a locally free sheaf follows from the previous paragraph. Indeed, otherwise $A=\mu_* \widetilde{A}$ for some $\widetilde{A} \in W^1_{k+1-n}(\widetilde{D})$, with $\widetilde{A}$ locally free and $\mu: \widetilde{D} \to D'$ a partial normalization at $n > 0$ nodes. As above, this implies $k+1-n \geq \frac{2k-n+2}{2}$ which is impossible for $n>0$. Since Petri's Condition holds for all line bundles on $D'$, it follows that $W^1_{k+1}(D')$ is zero-dimensional and reduced outside of $W^2_{k+1}(D')$. It remains to prove $W^2_{k+1}(D')=\emptyset$. If $[A] \in W^2_{k+1}(D')$, write $A=\mu_* \widetilde{A}$, for $\widetilde{A}$ locally free and $\mu: \widetilde{D} \to D'$ a partial normalization at $n \geq 0$ nodes (notice that we are allowing $n=0$). Since Petri's Condition holds for $\widetilde{A}$ on $\widetilde{D}$, we have $$(r+1)(g-n-(k+1-n)+r)=(r+1)(g-k+r) \leq g-n=g(\widetilde{D}),$$ where $r+1=h^0(\widetilde{D},\widetilde{A})=h^0(D,A) \geq 3$. But $$(r+1)(g-k+r)\geq 3(g-k+2)=3(k+2)>g=2k, $$
	which is a contradiction.
	\end{proof}

 \section{The Projection Map} We now recall an important construction from \cite[\S 2]{aprodu-higher}, see also \cite[\S 2.2.1]{aprodu-nagel}. Let $M$ be a graded $S_L$ module and let $f: \mathrm{H}^0(X,L) \twoheadrightarrow \C$ be a surjection. Set $W:=\mathrm{Ker}(f)$. We have the \emph{Aprodu projection map}
$$pr_{f} \; : \; \mathrm{K}_{p,1}(M, \mathrm{H}^0(L)) \to \mathrm{K}_{p-1,1}(M,W),$$
which is induced on Koszul cohomology from the map
{\small{\begin{align*}
\iota_f \; : \; \bigwedge^p \mathrm{H}^0(L) \otimes M_1 &\to \bigwedge^{p-1} W \otimes M_1  \\
v_1 \wedge \ldots \wedge v_p \otimes m &\mapsto \sum_i (-1)^i v_1 \wedge \ldots \wedge \hat{v_i} \wedge \ldots \wedge v_p \otimes f(v_i) m.
\end{align*}}}
Let $L$ be an effective line bundle on an integral curve $C$ and let $C_{sm}$ denote the smooth locus. If $x \in C_{sm}$ is not a base point of $L$, we let
$$pr_x \, :  \, \mathrm{K}_{p,1}(C,L) \to \mathrm{K}_{p-1,1}\left(\Gamma_C(L), \mathrm{H}^0(L(-x))\right)$$
denote the projection map $pr_x:=pr_{ev_x}$ associated to the evaluation map $ev_x : \mathrm{H}^0(C,L) \twoheadrightarrow \C\simeq \mathrm{H}^0(\mathcal{O}_x)$.\\

We first develop a more explicit description of the projection map $pr_x$. Let $\overline{\alpha} \in \mathrm{K}_{p,1}(C,L)$, {\small{$$\alpha \in \mathrm{Ker}\left(\bigwedge^p \mathrm{H}^0(L) \otimes \mathrm{H}^0(L) \xrightarrow{d} \bigwedge^{p-1} \mathrm{H}^0(L) \otimes \mathrm{H}^0(L^2) \right).$$}}
We have $\mathrm{H}^0(L) = \mathrm{H}^0(L(-x)) \oplus \C \langle s \rangle$ where $s \in \mathrm{H}^0(L)$ is a section with $ev_x(s)=1$, so that
{\small{$$\bigwedge^p \mathrm{H}^0(L) = \bigwedge^p \mathrm{H}^0(L(-x)) \bigoplus \left(\bigwedge^{p-1}\mathrm{H}^0(L(-x)) \right)\wedge s.$$}}
Set
{\small{\begin{align*}
V_1 &:= \bigwedge^p \mathrm{H}^0(L(-x)) \otimes \mathrm{H}^0(L(-x)) , &
V_2 &:= \bigwedge^p \mathrm{H}^0(L(-x)) \otimes s, \\
V_3 &:= \left( \bigwedge^{p-1}\mathrm{H}^0(L(-x)) \right)\wedge s \otimes \mathrm{H}^0(L(-x)), &
V_4 &:= \left( \bigwedge^{p-1}\mathrm{H}^0(L(-x)) \right)\wedge s \otimes s .
\end{align*}}}
Then $\bigwedge^p \mathrm{H}^0(L) \otimes \mathrm{H}^0(L)=V_1 \oplus V_2 \oplus V_3 \oplus V_4$. For $\alpha \in \bigwedge^p \mathrm{H}^0(L) \otimes \mathrm{H}^0(L)$, write $$\alpha = \alpha_1+\alpha_2+\alpha_3+\alpha_4, \; \text{ for $\alpha_i \in V_i$, $1 \leq i \leq 4$}.$$
We further write $\alpha_2=\alpha_2' \otimes s, \alpha_3=\alpha_3' \wedge s$ for $\alpha_2'\in \wedge^p \mathrm{H}^0(L(-x))$, $\alpha_3' \in \wedge^{p-1}\mathrm{H}^0(L(-x)) \otimes \mathrm{H}^0(L(-x))$.\\
\begin{prop} \label{image-projection}
Let $L$ be an effective line bundle on an integral curve $C$. Suppose $L$ is globally generated at $x \in C_{sm}$. 
\begin{enumerate}
\item Aprodu's projection map $pr_x$ is the composite of the map
{\small{\begin{align*}
q_x \; : \; \mathrm{K}_{p,1}(C,L) &\to \bigwedge^{p-1}\mathrm{H}^0(L(-x)) \otimes \mathrm{H}^0(L(-x))\\
\bar{\alpha} &\mapsto d\alpha_2'+(-1)^p\alpha_3'
\end{align*}}}
with the natural quotient map {\small{$$\mathrm{Ker}(d) \seq \bigwedge^{p-1}\mathrm{H}^0(L(-x)) \otimes \mathrm{H}^0(L(-x)) \to \mathrm{K}_{p-1,1}(\Gamma_C(L), \mathrm{H}^0(L(-x))).$$}} In particular, the image of $pr_x$ is contained in {\small{$$\mathrm{K}_{p-1,1}(C,L(-x)) \seq \mathrm{K}_{p-1,1}(\Gamma_C(L), \mathrm{H}^0(L(-x))).$$}}
\item If $x$ is not a base point of $L(-x)$, $q_x$ lands in $\wedge^{p-1}\mathrm{H}^0(L(-x)) \otimes \mathrm{H}^0(L(-2x)).$ In particular, $pr_x$ lands in the image of the multiplication map {\small{$$\gamma_x \, : \,\mathrm{K}_{p-1,1}(C,-x, L(-x)) \to \mathrm{K}_{p-1,1}(C,L(-x))$$}} induced from the inclusion $\Gamma_C(-x,L) \hookrightarrow \Gamma_C(L).$
\end{enumerate} 
\end{prop}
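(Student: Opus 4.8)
The plan is to unwind the definition of $pr_x=pr_{ev_x}$ as the map induced on Koszul cohomology by the contraction $\iota_{ev_x}$, and then to read off the statement from three distinct graded components of the single cocycle relation $d\alpha=0$. Fix a cocycle $\alpha\in\bigwedge^p\mathrm{H}^0(L)\otimes\mathrm{H}^0(L)$ representing $\bar\alpha$ and decompose $\alpha=\alpha_1+\alpha_2+\alpha_3+\alpha_4$ as in the preamble, writing $\alpha_4=(\alpha_4''\wedge s)\otimes s$ with $\alpha_4''\in\bigwedge^{p-1}\mathrm{H}^0(L(-x))$. Since $ev_x$ vanishes on $\mathrm{H}^0(L(-x))$ while $ev_x(s)=1$, applying the formula for $\iota_{ev_x}$ to each piece kills $\alpha_1$ and $\alpha_2$ and yields $\iota_{ev_x}(\alpha)=(-1)^p\alpha_3'+(-1)^p\alpha_4''\otimes s$, so that $pr_x(\bar\alpha)=[(-1)^p\alpha_3'+(-1)^p\alpha_4''\otimes s]$ in $\mathrm{K}_{p-1,1}(\Gamma_C(L),\mathrm{H}^0(L(-x)))$.

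The crux is the removal of the spurious term $\alpha_4''\otimes s$, and this is exactly what the cocycle condition supplies. Decompose $\mathrm{H}^0(L^2)=\mathrm{H}^0(L^2(-x))\oplus\C\langle s^2\rangle$, using $s(x)\neq 0$. The only product $v_i\cdot m$ appearing in $d\alpha$ that fails to vanish at $x$ is $s\cdot s=s^2$, and among the four pieces it occurs solely in $\alpha_4$, contributing $(-1)^p s(x)^2\,\alpha_4''\otimes s^2$ with first factor in $\bigwedge^{p-1}\mathrm{H}^0(L(-x))$. Projecting $d\alpha=0$ onto $\bigwedge^{p-1}\mathrm{H}^0(L(-x))\otimes\C\langle s^2\rangle$ therefore forces $\alpha_4''=0$, i.e. $\alpha_4=0$. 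Hence $\iota_{ev_x}(\alpha)=(-1)^p\alpha_3'$; as $d\alpha_2'$ is a Koszul coboundary (the image of $\alpha_2'\in\bigwedge^p\mathrm{H}^0(L(-x))$), the element $q_x(\bar\alpha)=(-1)^p\alpha_3'+d\alpha_2'$ represents the same class. To see that $q_x(\bar\alpha)$ is a genuine cocycle of the $L(-x)$-complex, hence defines a class in $\mathrm{K}_{p-1,1}(C,L(-x))$, I project $d\alpha=0$ onto $\left(\bigwedge^{p-2}\mathrm{H}^0(L(-x))\wedge s\right)\otimes\mathrm{H}^0(L^2)$: only the $v_i$-removing terms of $d\alpha_3$ land there, and they equal $(d\alpha_3')\wedge s$ up to sign, whence $d\alpha_3'=0$; together with $d(d\alpha_2')=0$ this gives claim (1).

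For claim (2) the argument is identical in spirit but one jet-order higher. Assuming $x$ is not a base point of $L(-x)$, the evaluation $ev_x\colon\mathrm{H}^0(L(-x))\to(L(-x))_x$ is surjective with kernel $\mathrm{H}^0(L(-2x))$, so it suffices to prove $(\mathrm{id}\otimes ev_x)(q_x(\bar\alpha))=0$. I introduce a functional $\lambda\colon\mathrm{H}^0(L^2)\to\C$ recording the first-order Taylor coefficient at $x$ relative to the trivialization $s^2$; it vanishes on $\mathrm{H}^0(L^2(-2x))$ and on $s^2$, and satisfies $\lambda(v\cdot s)=s(x)\,ev_x(v)$ for $v\in\mathrm{H}^0(L(-x))$. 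Projecting $d\alpha=0$ onto $\bigwedge^{p-1}\mathrm{H}^0(L(-x))$ in the first factor and applying $\lambda$ in the second, the contributions of $\alpha_2$ and $\alpha_3$ assemble (using $\alpha_4=0$ and $\lambda(s^2)=0$) into $s(x)\,(\mathrm{id}\otimes ev_x)\big(d\alpha_2'+(-1)^p\alpha_3'\big)=s(x)\,(\mathrm{id}\otimes ev_x)(q_x(\bar\alpha))$, while $\alpha_1$ contributes nothing since its products lie in $\mathrm{H}^0(L^2(-2x))$. As $s(x)\neq 0$, this forces the second factor of $q_x(\bar\alpha)$ into $\mathrm{H}^0(L(-2x))$; being already a cocycle, $q_x(\bar\alpha)$ then represents a class in $\mathrm{K}_{p-1,1}(C,-x,L(-x))$ mapping to $pr_x(\bar\alpha)$ under $\gamma_x$.

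I expect the main obstacle to be the vanishing $\alpha_4=0$. Without it the contraction $\iota_{ev_x}(\alpha)$ carries the extra term $\alpha_4''\otimes s$, which is not a coboundary (coboundaries of the target complex lie in $\bigwedge^{p-1}\mathrm{H}^0(L(-x))\otimes\mathrm{H}^0(L(-x))$) and would destroy the clean formula; recognizing that the $s^2$-graded piece of $d\alpha=0$ is precisely the relation killing it is the key point, and the same "extract the correct graded component" bookkeeping, sharpened to first order, then delivers claim (2).
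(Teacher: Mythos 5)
Your proof is correct and follows essentially the same route as the paper: the same decomposition $\alpha=\alpha_1+\alpha_2+\alpha_3+\alpha_4$, the same projection of $d\alpha=0$ onto the $\C\langle s^2\rangle$-component to force $\alpha_4=0$, and, for part (2), your Taylor functional $\lambda$ is precisely the coefficient-of-$st$ projection that the paper implements by choosing $t\in\mathrm{H}^0(L(-x))$ with $ev_x(t)=1$ and projecting $d\alpha=0$ onto $\wedge^{p-1}\mathrm{H}^0(L)\otimes\C\langle s^2,st\rangle$. The one point you skip is the chain-level well-definedness of $q_x$, i.e.\ that $d\alpha_2'+(-1)^p\alpha_3'$ is unchanged when $\alpha$ is replaced by $\alpha+d\beta$ (the paper checks this by writing $\beta=\beta_1+\beta_2\wedge s$ and observing that the induced changes $\alpha_2'\mapsto\alpha_2'+(-1)^{p+1}\beta_2$ and $\alpha_3'\mapsto\alpha_3'+d\beta_2$ cancel in the formula); your argument only shows that the resulting cohomology class is independent of the representative, which suffices for the ``in particular'' claims but not for $q_x$ itself to be well defined as a map on $\mathrm{K}_{p,1}(C,L)$.
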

\begin{rem}
When $L$ is very ample, a different proof of the second statement in part $(1)$ of the above proposition is given in \cite[Lemma 3.1]{aprodu-higher}.
\end{rem}
\begin{proof}
\noindent \begin{enumerate}
\item Let $\alpha = \alpha_1+\alpha_2+\alpha_3+\alpha_4 \in \wedge^p \mathrm{H}^0(L) \otimes \mathrm{H}^0(L)$ for $\alpha_i \in V_i$, $1 \leq i \leq 4$ with $d \alpha=0$. Write 
$\alpha_2=\alpha_2' \otimes s$ and  $\alpha_3=\alpha_3' \wedge s$ for {\small{$$\alpha_2'\in \wedge^p \mathrm{H}^0(L(-x)), \; \; \alpha_3' \in \wedge^{p-1}\mathrm{H}^0(L(-x)) \otimes \mathrm{H}^0(L(-x)).$$ }}
Let $\beta \in \wedge^{p+1}\mathrm{H}^0(L)$. Write 
{\small{$$\beta=\beta_1+\beta_2 \wedge s, \; \;  \text{with $\beta_1\in \wedge^{p+1}\mathrm{H}^0(L(-x))$, $\beta_2 \in \wedge^p \mathrm{H}^0(L(-x))$}.$$ }}Then
$d \beta_1 \in V_1$ and $d(\beta_2 \wedge s)=d\beta_2 \wedge s+(-1)^{p+1}\beta_2 \otimes s \in V_3 \oplus V_2.$
We have the decomposition
{\small{$$\alpha+d\beta=(\alpha_1+d\beta_1)+(\alpha_2'+(-1)^{p+1}\beta_2) \otimes s+(\alpha_3'+d\beta_2) \wedge s+\alpha_4,$$}}
from which the well-definedness of $q_x$ follows.\smallskip

Next, observe that {\small{$$\iota_x(\alpha_1)=\iota_x(\alpha_2)=0,$$}} where $\iota_x \; : \wedge^p \mathrm{H}^0(L) \otimes \mathrm{H}^0(L) \to \wedge^{p-1} \mathrm{H}^0(L) \otimes \mathrm{H}^0(L)$ is defined as $\iota_x:=\iota_{ev_x}$. We have $\mathrm{H}^0(L^2)=\mathrm{H}^0(L^2(-x))\oplus \C\langle s^2 \rangle$ so that
{\small{ $$\wedge^{p-1}\mathrm{H}^0(L) \otimes \mathrm{H}^0(L^2)= \wedge^{p-1}\mathrm{H}^0(L) \otimes \mathrm{H}^0(L^2(-x)) \oplus \wedge^{p-1}\mathrm{H}^0(L) \otimes s^2.$$}}
 Let $\pi_1  :  \wedge^{p-1}\mathrm{H}^0(L) \otimes \mathrm{H}^0(L^2) \to \wedge^{p-1} \mathrm{H}^0(L) \otimes s^2$ denote the projection. Observe $$d(V_i) \seq \wedge^{p-1}\mathrm{H}^0(L) \otimes \mathrm{H}^0(L^2(-x)), \; \text{for $i=1,2,3$,}$$  and hence $\pi_1 \circ (d \alpha_i)=0$ for $i=1,2,3$. The composition $\pi_1 \circ d$ is, up to sign, the natural inclusion
{\small{ $$\wedge^{p-1} \mathrm{H}^0(L(-x)) \wedge s \otimes s \xrightarrow{\sim} \wedge^{p-1}\mathrm{H}^0(L(-x)) \otimes s^2 \seq \wedge^{p-1}\mathrm{H}^0(L) \otimes s^2.$$}}
 Since $\pi_1 \circ d(\alpha)=0$, we conclude $\alpha_4=0.$ Since $\iota_x(\alpha_1)=\iota_x(\alpha_2)=0$, we have $$\iota_x(\alpha)=\iota_x(\alpha_3)=\iota_x(\alpha_3' \wedge s)=(-1)^p \alpha'_3,$$
 since $\alpha_3' \in \wedge^{p-1}\mathrm{H}^0(L(-x)) \otimes \mathrm{H}^0(L(-x))$ and $ev_x(s)=1$.
 Hence $$q_x(\alpha)=\iota_x(\alpha)+d\alpha_2'.$$
 Since $d(\alpha)=0$ we have $d (\iota_x(\alpha))=0$, \cite[\S 2]{aprodu-higher}. It follows, firstly, that $q_x$ lands in $\mathrm{Ker}(d)$ and, secondly, $$\overline{q_x(\alpha)}=\overline{\iota_x(\alpha)}=pr_x(\bar{\alpha})$$ as required. This gives $(1)$. Note that, since the inclusion $\Gamma_C(L(-x)) \hookrightarrow \Gamma_C(L)$ of $S_{L(-x)}$ modules is an isomorphism in degree zero, we have an inclusion $$\mathrm{K}_{q,1}(C,L(-x)) \hookrightarrow \mathrm{K}_{q,1}(\Gamma_C(L), \mathrm{H}^0(L(-x)))$$ for any $q$.\smallskip
 
 \item We need to show $d\alpha_2' +(-1)^p\alpha_3'\in \wedge^{p-1}\mathrm{H}^0(L(-x)) \otimes \mathrm{H}^0(L(-2x)).$ Assuming that $L(-x)$ is globally generated at $x$, let $t \in \mathrm{H}^0(L(-x))$ be a section with $ev_x(t)=1$. Then $\mathrm{H}^0(L(-x))=\mathrm{H}^0(L(-2x))\oplus \C\langle t \rangle$. We set
{\small{
{ \begin{align*}
V_{2,1} &:=\bigwedge^p \mathrm{H}^0(L(-2x)) \otimes s, &
V_{2,2} &:=\bigwedge^{p-1} \mathrm{H}^0(L(-2x))\wedge t \otimes s, \\
V_{3,1} &:= \left( \bigwedge^{p-1}\mathrm{H}^0(L(-x)) \right)\wedge s \otimes \mathrm{H}^0(L(-2x)), &
V_{3,2} &:= \left( \bigwedge^{p-1}\mathrm{H}^0(L(-x)) \right)\wedge s \otimes t
\end{align*}}
}}
We have decompositions $V_i=V_{i,1} \oplus V_{i,2}$ for $i=2,3$. Write $\alpha_i=\alpha_{i,1}+\alpha_{i,2}$ for $\alpha_{i,1} \in V_{i,1}$, $\alpha_{i,2} \in V_{i,2}$ and $i=2,3$. Let
$$\alpha_{3,2}=\omega_1 \wedge s \otimes t, \; \; \alpha_{2,2}=\omega_2 \wedge t \otimes s$$ for
$\omega_1 \in \wedge^{p-1}\mathrm{H}^0(L(-x)), \omega_2 \in \wedge^{p-1}\mathrm{H}^0(L(-2x))$.
Since 
$$d(\omega_2 \wedge t)=d\omega_2 \wedge t+(-1)^{p}\omega_2 \otimes t,$$
and $d\omega_2 \in \wedge^{p-2}\mathrm{H}^0(L(-2x)) \otimes \mathrm{H}^0(L(-2x))$, it suffices to prove $\omega_1=-\omega_2.$\smallskip

We have  $\alpha=\alpha_1+\alpha_{2,1}+\alpha_{2,2}+\alpha_{3,1}+\alpha_{3,2},$ recalling that $\alpha_4=0$. Observe
{\small{$$d(V_1), d(V_{2,1}), d(V_{3,1}) \seq \wedge^{p-1}\mathrm{H}^0(L) \otimes \mathrm{H}^0(L^2(-2x)).$$}}
We have
{\small{$$\wedge^{p-1}\mathrm{H}^0(L) \otimes \mathrm{H}^0(L^2) =\wedge^{p-1}\mathrm{H}^0(L) \otimes \mathrm{H}^0(L^2(-2x))\oplus \wedge^{p-1}\mathrm{H}^0(L) \otimes \C\langle s^2, st \rangle.$$}}
Let $\pi_2 : \wedge^{p-1}\mathrm{H}^0(L) \otimes \mathrm{H}^0(L^2) \to  \wedge^{p-1}\mathrm{H}^0(L) \otimes \C\langle s^2, st \rangle$ be the projection.
Then 
$$0=\pi_2(d\alpha)=\pi_2(d\alpha_{2,2})+\pi_2(d\alpha_{3,2}).$$ We have
{\small{\begin{align*}
\pi_2(d(\alpha_{2,2}))&=\pi_2(d(\omega_2 \wedge t \otimes s))=(-1)^p\omega_2 \otimes st,\\
\pi_2(d(\alpha_{3,2}))&=\pi_2(d(\omega_1 \wedge s \otimes t))=(-1)^p\omega_1 \otimes st
\end{align*}}}
Thus the equation $\pi_2(d\alpha_{2,2})+\pi_2(d\alpha_{3,2})=0$ gives $\omega_1+\omega_2=0$ as required.
\end{enumerate}
\end{proof}
\smallskip

Our first goal is to offer an improvement of Proposition 3.3 from \cite{projecting}. We recall the set-up from \S 3 of \cite{projecting}. Let $C$ be an integral, nodal curve of arithmetic genus $g\geq 3$, and assume that the canonical linear system $\omega_C$ is very ample and $C \seq \PP^{g-1}$ is normally generated. Assume $C$ has precisely $m$ nodes and no other singularities. Choose general points $x,y \in C_{sm}$ in the smooth locus of $C$ and let $D$ be the nodal curve of genus $g+1$ obtained by identifying $x,y$. Then $\omega_D$ is very ample, \cite[Thm.\ 3.6]{CFHR}. Let $p \in D$ be the node over $x,y$ and let $\mu: C \to D$ be the partial normalization map.\smallskip

We embed $D$ in $\PP^g$ via the canonical linear system. Note that $D$ is normally generated. Indeed, $D$ being normally generated is equivalent to $K_{0,2}(D,\omega_D)=0$ which is in turn equivalent to $\mathrm{K}_{g+1-3,1}(D,\omega_D)=0$ by duality. This is equivalent to $\mathrm{K}_{g-2,1}(C,\omega_C(x+y))=0$ and is implied by $\mathrm{K}_{g-3,1}(C,\omega_C)=0$ by \cite[Thm.\ 3]{aprodu-higher}. This last vanishing is equivalent to $(C, \omega_C)$ being normally generated by duality.\smallskip

We now consider the projection $$\pi_p : \PP^g \dashrightarrow \PP^{g-1}$$ from the node $p$. Then $C \seq \PP^{g-1}$ is the projection $\pi_p(D)$. We let $Z \seq \PP^g$ denote the cone over $C=\pi_p(D)$ with vertex at $p$. Then $D \seq Z$. We denote by $\nu: \widetilde{Z} \to Z$ the desingularization of $Z$. The strict transform $D' \seq \widetilde{Z}$ of $D$ is isomorphic to $C$ with $\mu \sim \nu_{|_{D'}}$. From \cite[Lemma 3.2]{projecting} we have $\widetilde{Z} \simeq \PP(\mathcal{O}_C \oplus \omega_C)$ and $$\mathcal{O}_{\widetilde{Z}}(D') \simeq \mathcal{H} \otimes \iota^* \mathcal{O}_C(x+y),$$ where $\mathcal{H}$ is the pullback of $\mathcal{O}_{\PP^g}(1)$ and $\iota: \PP(\mathcal{O}_C \oplus \omega_C) \to C$ is the projection.\\

Denote by $S$ the graded rings $\mathrm{Sym}\, \mathrm{H}^0(\widetilde{Z}, \mathcal{H})$. We have a morphism
{\small{$$f \; : \; \bigoplus_q \mathrm{H}^0(\widetilde{Z}, \mathcal{H}^{\otimes q}) \to \bigoplus_q \mathrm{H}^0(C,\omega_C^{\otimes q}(qx+qy))$$}}
of graded $S$ modules, given by restriction to $D' \simeq C$. We let $\mathbb{M} \seq  \bigoplus_q \mathrm{H}^0(C,\omega_C^{\otimes q}(qx+qy))$ denote the image of $f$. Pullback induces an identification $\mathrm{H}^0(\widetilde{Z}, \mathcal{H}) \simeq \mathrm{H}^0(\mathcal{O}_Z(1))$, and so we may consider the canonical ring $\displaystyle{\Gamma_D(\omega_D):= \bigoplus_q \mathrm{H}^0(\omega^{\otimes q}_D)}$
as an $S$ module. The inclusion $\mathcal{O}_D \hookrightarrow \mu_* \mathcal{O}_C$ induces a natural inclusion {\small{$$\Gamma_D(\omega_D) \seq  \bigoplus_q \mathrm{H}^0(C,\omega_C^{\otimes q}(qx+qy)),$$}} since $\mu^* \omega_D \simeq \omega_C(x+y)$. Under this inclusion, $\Gamma_D(\omega_D)$ is the image of $\alpha$, giving the isomorphism $$\mathbb{M} \simeq \Gamma_D(\omega_D),$$ since $D \seq \PP^g$ is normally generated. As in \cite[\S 3]{projecting}, we consider the short exact sequence
{\small{$$0 \to \bigoplus_{q \in \mathbb{Z}} \mathrm{H}^0\left(\mathcal{H}^{\otimes q-1}(-\iota^*(x+y)) \right) \to \bigoplus_{q \in \mathbb{Z}} \mathrm{H}^0(\mathcal{H}^{\otimes q}) \xrightarrow{f} \Gamma_D(\omega_D) \to 0,$$}}
of $S$ modules, where we have used the isomorphism $\mathbb{M} \simeq \Gamma_D(\omega_D)$. Taking the long exact sequence of Koszul cohomology gives an exact sequence
{\small{$$0 \to \mathrm{K}_{p,1}(C,\omega_C) \to \mathrm{K}_{p,1}(D,\omega_D) \to \mathrm{K}_{p-1,1}(\widetilde{Z},-\iota^*(x+y), \mathcal{H}) \to \mathrm{K}_{p-1,2}(C,\omega_C) \to \mathrm{K}_{p-1,2}(D,\omega_D) \to \ldots$$}}
cf.\ \cite[Prop.\ 3.3]{projecting}. 
\begin{lem} \label{lemma-restriction-surj}
Embed $C$ in $\widetilde{Z}\simeq \PP(\mathcal{O}_C \oplus \omega_C)$ as a hyperplane section, (equivalently, as the section induced by the trivial quotient $\mathcal{O}_C \oplus \omega_C \twoheadrightarrow \omega_C$). Then, for any $q$ the restriction maps
$$\mathrm{H}^0(\widetilde{Z},\mathcal{H}^{\otimes q} \otimes \iota^*\mathcal{O}_C(-x-y)) \to \mathrm{H}^0(C,\omega_C^{\otimes q}(-x-y))$$
are surjective.
\end{lem}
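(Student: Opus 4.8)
The plan is to reduce the statement to a computation of the direct image along the $\PP^1$-bundle $\iota : \widetilde{Z}=\PP(\mathcal{O}_C\oplus \omega_C)\to C$, exploiting the fact that $\mathcal{H}$ is the relative hyperplane bundle $\mathcal{O}_{\widetilde Z}(1)$. Since global sections of a sheaf agree with global sections of its direct image, and since $\iota^*\mathcal{O}_C(-x-y)$ is pulled back from the base, the projection formula will give, for $q\geq 0$,
$$\mathrm{H}^0(\widetilde{Z},\mathcal{H}^{\otimes q}\otimes \iota^*\mathcal{O}_C(-x-y))\simeq \mathrm{H}^0\!\left(C,\iota_*\mathcal{H}^{\otimes q}\otimes \mathcal{O}_C(-x-y)\right).$$
Writing $\mathcal{E}:=\mathcal{O}_C\oplus \omega_C$, we have $\iota_*\mathcal{H}^{\otimes q}\simeq \mathrm{Sym}^q\mathcal{E}$, and the splitting of $\mathcal{E}$ yields $\mathrm{Sym}^q(\mathcal{O}_C\oplus \omega_C)\simeq \bigoplus_{j=0}^{q}\omega_C^{\otimes j}$. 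Hence the source decomposes as $\bigoplus_{j=0}^{q}\mathrm{H}^0(C,\omega_C^{\otimes j}(-x-y))$.

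Next I would identify the restriction map with the projection onto the top summand. The section $C\hookrightarrow \widetilde{Z}$ is the one associated to the quotient $\mathcal{E}\twoheadrightarrow \omega_C$, so the restriction of $\mathcal{H}$ to $C$ is $\omega_C$, and the restriction map on sections is the one induced by the surjection $\mathrm{Sym}^q\mathcal{E}\twoheadrightarrow \omega_C^{\otimes q}$, namely $\mathrm{Sym}^q$ of the projection onto the second factor. In terms of a local frame $e_0$ of $\mathcal{O}_C$ and $e_1$ of $\omega_C$, this map sends the basis element $e_0^{q-j}e_1^{j}$ of the $j$-th summand to zero unless $q-j=0$, and is an isomorphism on the summand $j=q$. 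Thus, after twisting by $\mathcal{O}_C(-x-y)$ and taking global sections, the restriction is the split projection
$$\bigoplus_{j=0}^{q}\mathrm{H}^0(C,\omega_C^{\otimes j}(-x-y))\to \mathrm{H}^0(C,\omega_C^{\otimes q}(-x-y)),$$
which is surjective. This settles the claim for $q\geq 1$, and the remaining cases are immediate: for $q\leq 0$ the line bundle $\omega_C^{\otimes q}(-x-y)$ has negative degree (as $\deg\omega_C=2g-2>0$ for $g\geq 3$), so the target vanishes and surjectivity is vacuous.

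The only point requiring care is pinning down the $\PP(\mathcal{E})$-convention and checking that the restriction to the section $C$ is genuinely the projection onto the top symmetric power $\omega_C^{\otimes q}$ rather than some other quotient of $\mathrm{Sym}^q\mathcal{E}$; this is forced by the identifications $\iota_*\mathcal{H}^{\otimes q}\simeq \mathrm{Sym}^q\mathcal{E}$ and $\mathcal{H}|_C\simeq \omega_C$, the latter being consistent with the quotient $\mathcal{O}_C\oplus\omega_C\twoheadrightarrow\omega_C$ defining the section. Once these are fixed the argument is purely formal, and in particular no vanishing theorem or deformation input is needed, since the relevant surjection already splits at the level of symmetric powers.
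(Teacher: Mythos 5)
Your proof is correct and follows essentially the same route as the paper: identify $\mathrm{H}^0(\widetilde{Z},\mathcal{H}^{\otimes q}\otimes\iota^*\mathcal{O}_C(-x-y))$ with $\mathrm{H}^0(C,\mathrm{Sym}^q(\mathcal{O}_C\oplus\omega_C)(-x-y))$ via the pushforward, and then observe that since the projection $\mathcal{O}_C\oplus\omega_C\twoheadrightarrow\omega_C$ splits, $\omega_C^{\otimes q}$ is a direct summand of $\mathrm{Sym}^q(\mathcal{O}_C\oplus\omega_C)$, so the restriction map on twisted global sections is a split surjection. Your additional details (the explicit decomposition $\bigoplus_{j=0}^{q}\omega_C^{\otimes j}$ and the vacuous cases $q\leq 0$) are fine but not needed beyond what the paper records.
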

\begin{proof}
We have $\displaystyle{\mathrm{H}^0(\widetilde{Z}, \mathcal{H}^{\otimes q} \otimes \iota^*\mathcal{O}_C(-x-y)) \simeq \mathrm{H}^0(C,\mathrm{Sym}^q(\mathcal{O}_C \oplus \omega_C)(-x-y)).}$
The trivial projection map $\mathcal{O}_C \oplus \omega_C \twoheadrightarrow \omega_C$ induces the restriction map 
$$\mathrm{H}^0(C,\mathrm{Sym}^q(\mathcal{O}_C \oplus \omega_C)(-x-y)) \to \mathrm{H}^0(C,\omega_C^{\otimes q}(-x-y)).$$
Since the projection $\mathcal{O}_C \oplus \omega_C \twoheadrightarrow \omega_C$ splits, we deduce that $\omega_C^{\otimes q}$ is a direct summand of 
$\mathrm{Sym}^q(\mathcal{O}_C \oplus \omega_C)$ and hence the restriction map above is surjective.
\end{proof}
As a consequence of the above lemma we deduce:
\begin{lem} \label{restriction-twist-x-y}
We have $\mathrm{K}_{p,q}(\widetilde{Z},-\iota^*(x+y), \mathcal{H}) \simeq \mathrm{K}_{p,q}(C,-x-y,\omega_C)$, for integers $p,q$.
\end{lem}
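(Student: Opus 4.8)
The plan is to exhibit the graded module computing the left-hand side as being obtained from the module computing the right-hand side by adjoining a single linear form, and then to invoke the principle that Koszul cohomology is insensitive to quotienting by a nonzerodivisor coordinate. Throughout, write $V:=\mathrm{H}^0(\widetilde{Z},\mathcal{H})$, $S:=\mathrm{Sym}(V)$, and $N:=\Gamma_{\widetilde{Z}}(\iota^*\mathcal{O}_C(-x-y),\mathcal{H})$, so that by definition $\mathrm{K}_{p,q}(\widetilde{Z},-\iota^*(x+y),\mathcal{H})=\mathrm{K}_{p,q}(N,V)$. First I would pin down the linear algebra. Since $\iota_*\mathcal{H}\simeq \mathcal{O}_C\oplus\omega_C$ (as used in the proof of Lemma \ref{lemma-restriction-surj}), we get $V=\mathrm{H}^0(\mathcal{O}_C)\oplus\mathrm{H}^0(\omega_C)=\C\langle t\rangle\oplus V'$, where $V':=\mathrm{H}^0(C,\omega_C)$ and $t$ is the section coming from the $\mathcal{O}_C$-summand. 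The curve of the statement is embedded as the hyperplane section cut out by the quotient $\mathcal{O}_C\oplus\omega_C\twoheadrightarrow\omega_C$, i.e. $C=\mathrm{div}(t)\in|\mathcal{H}|$ (a full section of the $\PP^1$-bundle, hence reduced); in particular $\mathcal{I}_C\simeq\mathcal{H}^{-1}$, while $\mathcal{H}|_C\simeq\omega_C$ and $\iota^*\mathcal{O}_C(-x-y)|_C\simeq\mathcal{O}_C(-x-y)$.

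Second, I would check that $t$ is a nonzerodivisor on $N$: multiplication by the nonzero section $t$ of the line bundle $\mathcal{H}$ is injective on global sections of each torsion-free sheaf $\mathcal{H}^{\otimes q}\otimes\iota^*\mathcal{O}_C(-x-y)$, because $\widetilde{Z}$ is integral. Third, I would identify the quotient $N/tN$. The restriction-to-$C$ map gives a surjection of graded modules $N\to\Gamma_C(\mathcal{O}_C(-x-y),\omega_C)$ by Lemma \ref{lemma-restriction-surj}, and its degree-$q$ kernel consists of the sections of $\mathcal{H}^{\otimes q}\otimes\iota^*\mathcal{O}_C(-x-y)$ vanishing on $C$, which — since $\mathcal{I}_C\simeq\mathcal{H}^{-1}$ — is exactly $\mathrm{H}^0(\mathcal{H}^{\otimes q-1}\otimes\iota^*\mathcal{O}_C(-x-y))=t\cdot N_{q-1}$. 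Hence $N/tN\simeq\Gamma_C(\mathcal{O}_C(-x-y),\omega_C)$ as graded modules over $S/tS=\mathrm{Sym}(V')=\mathrm{Sym}(\mathrm{H}^0(\omega_C))$, so that $\mathrm{K}_{p,q}(N/tN,V/\langle t\rangle)=\mathrm{K}_{p,q}(C,-x-y,\omega_C)$.

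The final, and main, step is the reduction principle: if $t\in V$ is a linear form that is a nonzerodivisor on a graded $S$-module $M$, then the natural maps induce isomorphisms $\mathrm{K}_{p,q}(M,V)\simeq\mathrm{K}_{p,q}(M/tM,V/\langle t\rangle)$ for all $p,q$. I would prove this on the level of $\mathrm{Tor}$, using the standard identification $\mathrm{K}_{p,q}(M,V)=\mathrm{Tor}^S_p(M,\C)_{p+q}$. Because $t$ is a nonzerodivisor on $M$, the two-term complex $M(-1)\xrightarrow{t} M$ is a representative of $M\otimes^{L}_S(S/tS)$ concentrated in homological degree zero with $H_0=M/tM$; associativity of the derived tensor product then yields the graded quasi-isomorphism $M\otimes^{L}_S\C\simeq\bigl(M\otimes^{L}_S S/tS\bigr)\otimes^{L}_{S/tS}\C\simeq(M/tM)\otimes^{L}_{S/tS}\C$, whence $\mathrm{Tor}^S_p(M,\C)\simeq\mathrm{Tor}^{S/tS}_p(M/tM,\C)$ as graded vector spaces. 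Applying this with $M=N$ and combining with the identifications above gives the Lemma.

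I expect this reduction principle — isolating the genuine content from the bookkeeping — to be the crux. The one point that genuinely demands care is that $t$ really is a nonzerodivisor on $N$, as opposed to acting as zero: were $t$ to annihilate the module, the Künneth computation would instead produce an extra summand $\mathrm{K}_{p-1,q}$, and the isomorphism would fail. This is precisely why the integrality of $\widetilde{Z}$ (Step 2) and the identification $\mathcal{I}_C\simeq\mathcal{H}^{-1}$ guaranteeing that the kernel of restriction is the whole of $tN$ (Step 3) are indispensable.
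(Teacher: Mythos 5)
Your proof is correct and takes essentially the same approach as the paper: both use Lemma \ref{lemma-restriction-surj} to produce the short exact sequence of graded modules whose kernel is multiplication by the section $t\in\mathrm{H}^0(\widetilde{Z},\mathcal{H})$ cutting out $C$, and then conclude by the principle that Koszul cohomology is unchanged upon passing to the quotient by a linear nonzerodivisor. The only difference is that the paper cites this principle (the proof of the Green--Lefschetz Theorem, \cite[Thm.\ 3.b.7]{green-koszul}, \cite[Lemma 2.2]{generic-secant}), whereas you supply a self-contained proof via the identification $\mathrm{K}_{p,q}(M,V)\simeq\mathrm{Tor}^{S}_{p}(M,\C)_{p+q}$ and derived base change, which is an equally valid substitute.
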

\begin{proof}
By Lemma \ref{lemma-restriction-surj}, we have a short exact sequence
{\small{$$0 \to \bigoplus_{q \in \mathbb{Z}} \mathrm{H}^0( \mathcal{H}^{\otimes q-1} \otimes \iota^*\mathcal{O}_C(-x-y)) \to \bigoplus_{q \in \mathbb{Z}} \mathrm{H}^0( \mathcal{H}^{\otimes q} \otimes \iota^*\mathcal{O}_C(-x-y)) \to \bigoplus_{q \in \mathbb{Z}} \mathrm{H}^0(C,\omega_C^{\otimes q}(-x-y)) \to 0.$$}}
The claim now follows from the proof of the Green--Lefschetz Theorem, see \cite[Thm.\ 3.b.7]{green-koszul} and \cite[Lemma 2.2]{generic-secant}.
\end{proof}
By Lemma \ref{restriction-twist-x-y}, we have the long exact sequence
{\small{\begin{equation} \label{les-projection}
0 \to \mathrm{K}_{p,1}(C,\omega_C) \to \mathrm{K}_{p,1}(D,\omega_D) \xrightarrow{\delta} \mathrm{K}_{p-1,1}(C,-x-y, \omega_C) \to \mathrm{K}_{p-1,2}(C,\omega_C) \to \ldots
\end{equation}}}
Let $D$ be the nodal curve of arithmetic genus $g+1$ obtained by identifying general points $x,y\in C_{sm}$ on an integral, nodal curve $C$ with $\omega_C$ very ample and $(C,\omega_C)$ normally generated. Let $\mu: C \to D$ be the partial normalization morphism. Let $$\alpha \neq 0 \in \mathrm{K}_{p,1}(D,\omega_D)$$ be a syzygy of rank $p+1$. By \cite[Corollary 5.2]{bothmer-JPAA}, \cite[Lemma 3.21]{aprodu-nagel}, the syzygy scheme $$X_{\alpha}:=\mathrm{Syz}(\alpha) \seq \PP^g$$ of $\alpha$ is a rational normal scroll of degree $p+1$ and codimension $p$ containing $D\seq \PP^g$. Letting $\widetilde{X}_{\alpha}$ denote the desingularization of $X_{\alpha}$, the Picard group of the projective bundle $\widetilde{X}_{\alpha}$ is generated by $\mathcal{O}_{X_{\alpha}}(1)$, together with the class of a ruling $R$. Assume that $D$ lies in the smooth locus of $X_{\alpha}$. As $D$ lies in the smooth locus, we may treat $D$ as a subscheme of $\widetilde{X}_{\alpha}$, and we denote the restriction of the ruling to $D$ as the line bundle $$L_{\alpha}:=\mathcal{O}_D(R).$$
Let $\displaystyle{\gamma_{x,y} \; : \; \mathrm{K}_{p,1}(C,-x-y , \omega_C) \to \mathrm{K}_{p,1}(C,\omega_C)}$ be the natural morphism induced by the inclusions $\mathrm{H}^0(\omega_C^{\otimes q}(-x-y)) \seq \mathrm{H}^0(\omega_C^{\otimes q}) $ for all $q \in \mathbb{Z}$. 
\begin{prop} \label{lin-combo-min-rk}
Let $D$ be the nodal curve of arithmetic genus $g(D):=g+1$ obtained by identifying general points $x,y\in C_{sm}$ on an integral, nodal curve $C$ of genus $g$ with $\omega_C$ very ample and $(C,\omega_C)$ normally generated. Let $\displaystyle{\alpha \neq 0 \in \mathrm{K}_{p,1}(D,\omega_D)}$ be a syzygy of least possible rank $p+1$ as above. Assume $D$ lies in the smooth locus of the syzygy scheme $X_{\alpha}$ and $h^0(C,\mu^*L_{\alpha})=h^0(D,L_{\alpha})=2$. Assume further that $\deg(L_{\alpha})=g(D)-p$. \smallskip

Then $\gamma_{x,y}(\delta(\alpha))$ is a linear combination of syzygies $\sigma$ of minimal rank $p$. Moreover, $C$ lies in the smooth locus of $X_{\sigma}$ and has associated line bundles $L_{\sigma}\simeq \mu^*L_{\alpha}$.
\end{prop}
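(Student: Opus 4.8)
The plan is to make both the syzygy $\alpha$ and its image completely explicit, using Proposition \ref{one-dim-iso} at the two ends of the projection and the formula of Proposition \ref{image-projection} in between. First I would record the numerics. Set $N := \mu^* L_{\alpha}$. Since $\mu$ is birational, $\deg N = \deg L_{\alpha} = g(D)-p = g+1-p$, and by hypothesis $h^0(C,N)=2$; Riemann--Roch and Serre duality on $C$ then give $h^0(\omega_C \otimes N^{\vee}) = h^1(C,N) = p$. Thus $N$ satisfies the hypotheses of Proposition \ref{one-dim-iso} for $(C,\omega_C)$ with $p$ replaced by $p-1$, so $\mathrm{K}_{p-1,1}(C,\omega_C; \mathrm{H}^0(\omega_C \otimes N^{\vee}))$ is one-dimensional, spanned by the minimal rank $p$ syzygy $\sigma$ attached to the pencil $|N|$. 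Dually, on $D$ one has $h^0(\omega_D \otimes L_{\alpha}^{\vee}) = p+1$, and via $\mu^*\omega_D \simeq \omega_C(x+y)$ the pullback identifies $\mathrm{H}^0(\omega_D) \xrightarrow{\sim} \mathrm{H}^0(\omega_C(x+y))$ and $\mathrm{H}^0(\omega_D \otimes L_{\alpha}^{\vee}) \xrightarrow{\sim} \mathrm{H}^0(\omega_C \otimes N^{\vee}(x+y))$, with $\mathrm{H}^0(\omega_C \otimes N^{\vee}) \seq \mathrm{H}^0(\omega_C \otimes N^{\vee}(x+y))$ a codimension-one subspace. Under these identifications $\alpha$ is, up to scalar, the image of a generator of $\bigwedge^{p+1}\mathrm{H}^0(\omega_C \otimes N^{\vee}(x+y))$ under the map $\delta_s$ of Proposition \ref{one-dim-iso}.

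Next I would compute $\delta(\alpha)$. The connecting map $\delta$ of (\ref{les-projection}) is obtained by lifting $\alpha$ to a Koszul cochain valued in $\bigoplus_q \mathrm{H}^0(\widetilde{Z}, \mathcal{H}^{\otimes q})$, applying the differential, and reading off the resulting class in $\mathrm{K}_{p-1,1}(\widetilde{Z}, -\iota^*(x+y), \mathcal{H})$, which Lemma \ref{restriction-twist-x-y} identifies with $\mathrm{K}_{p-1,1}(C,-x-y,\omega_C)$. Concretely this makes $\delta$ a contraction against the node: writing $\mathrm{H}^0(\omega_D) = \mathrm{H}^0(\omega_C) \oplus \C\langle s_0 \rangle$ with $s_0$ the section cut out by the hyperplane through the projection centre $p$, the connecting map is computed by the same contraction $\iota_f$ defining Aprodu's projection, and the explicit description $q_x(\alpha)=d\alpha_2'+(-1)^p\alpha_3'$ of Proposition \ref{image-projection}, applied simultaneously at $x$ and $y$, expresses $\delta(\alpha)$, via Lemma \ref{restriction-twist-x-y}, as a class in $\mathrm{K}_{p-1,1}(C,-x-y,\omega_C)$. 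The key structural input is that $\alpha$ is supported on $\bigwedge^{p+1}$ of the $(p+1)$-dimensional space $\mathrm{H}^0(\omega_C \otimes N^{\vee}(x+y))$, so that contracting against the one extra coordinate spanning the quotient $\mathrm{H}^0(\omega_C \otimes N^{\vee}(x+y))/\mathrm{H}^0(\omega_C \otimes N^{\vee})$ lands $\delta(\alpha)$ in the Koszul cochains built from $\mathrm{H}^0(\omega_C \otimes N^{\vee})$.

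Then I would push forward by $\gamma_{x,y}$ and identify the output. By Proposition \ref{image-projection}(2) applied at $x$ and $y$, $\delta(\alpha)$ lies in the image of the twist-by-$(-x-y)$ inclusion, so $\gamma_{x,y}(\delta(\alpha))$ is well defined, and by the previous step it lies in the subspace $\mathrm{K}_{p-1,1}(C,\omega_C; \mathrm{H}^0(\omega_C \otimes N^{\vee}))$ determined by the codimension-one subspace $\mathrm{H}^0(\omega_C \otimes N^{\vee}) \seq \mathrm{H}^0(\omega_C)$, the relevant inclusion into $\mathrm{K}_{p-1,1}(C,\omega_C)$ being injective by Proposition \ref{inc-subspace}. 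Since that space is the one-dimensional span of $\sigma$ by Proposition \ref{one-dim-iso}, $\gamma_{x,y}(\delta(\alpha))$ is a scalar multiple of $\sigma$, hence a linear combination of minimal rank $p$ syzygies. Finally, $\sigma$ being attached to the pencil $|N|$, its syzygy scheme $X_{\sigma}$ is precisely the rational normal scroll swept out by $|N| = |\mu^* L_{\alpha}|$, as in Lemma \ref{rank-scroll} and Proposition \ref{rank-scroll-2}; since $C$ sits on $\widetilde{X}_{\alpha}$ as a smooth divisor and the ruling restricts to $N$, the curve $C$ lies in the smooth locus of $X_{\sigma}$ with $L_{\sigma} \simeq N = \mu^* L_{\alpha}$.

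The main obstacle is the middle step: pinning down the connecting map $\delta$ on the nose, i.e. verifying that the abstract connecting homomorphism of (\ref{les-projection}) really is computed by the explicit contraction of Proposition \ref{image-projection}. This requires choosing compatible splittings on $\widetilde{Z}$ and on $C$ and tracking the node section $s_0$ and the two points $x,y$ through the multilinear algebra without sign or basis errors; it is exactly here that the hypotheses $h^0(C,\mu^*L_{\alpha})=h^0(D,L_{\alpha})=2$ and $\deg L_{\alpha}=g(D)-p$ enter, guaranteeing that the relevant spaces have the predicted dimensions and that $\alpha$ is genuinely supported on $\bigwedge^{p+1}\mathrm{H}^0(\omega_C \otimes N^{\vee}(x+y))$. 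A secondary technical point is to ensure that $\omega_C \otimes N^{\vee}$ is base-point free so that Proposition \ref{one-dim-iso} applies on $C$; this should follow from base-point-freeness of $\omega_D \otimes L_{\alpha}^{\vee}$, which holds because $\alpha$ is an honest minimal rank syzygy with $D$ in the smooth locus of its scroll, together with the generality of $x,y$.
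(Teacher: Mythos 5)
Your route is genuinely different from the paper's, and its core computation can be made to work; in fact it would prove something sharper than the stated proposition. Concretely: choose $t_0\in \mathrm{H}^0(D,L_{\alpha})$ vanishing at the node $p$, a complementary section $s_0$, and a basis $e_1,\ldots,e_{p+1}$ of $\mathrm{H}^0(\omega_D\otimes L_{\alpha}^{\vee})$ with $e_1,\ldots,e_p$ vanishing at $p$ (so that $\mu^*e_i\in \mathrm{H}^0(\omega_C\otimes N^{\vee})$ for $i\le p$, where $N=\mu^*L_{\alpha}$). Writing $\alpha$ as the Green--Lazarsfeld cocycle $\sum_i(-1)^i(s_0e_1)\wedge\cdots\widehat{(s_0e_i)}\cdots\wedge(s_0e_{p+1})\otimes(t_0e_i)$, one finds $\alpha_2=\alpha_4=0$ and the contraction formula of Proposition \ref{image-projection} returns $(-1)^p\overline{\alpha_3'}=\pm\,\delta_{s_0}\bigl(e_1\wedge\cdots\wedge e_p\otimes \overline{t_0}\bigr)$, i.e.\ a scalar multiple of the \emph{single} rank-$p$ syzygy attached to $|N|$, stronger than the "linear combination" in the statement. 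Two wrinkles you gloss over: first, the embedding $\mathrm{H}^0(\omega_C\otimes N^{\vee})\seq \mathrm{H}^0(\omega_C)$ is multiplication by a section of $N$, and the relevant one is $s_0$, which is \emph{determined by} $\alpha$ (as the ruling cutting out the base locus of the rank-realizing subspace); second, if that $s_0$ happens to vanish at the node, the roles flip, $\alpha_3'=0$, and the output is the coboundary $d\alpha_2'$, hence zero --- harmless for the conclusion, but your "supported on $\bigwedge^{p+1}$" argument needs this case distinction. Also, the correct reference for identifying $X_{\sigma}$ with the scroll of $|N|$ is the argument of Proposition \ref{last-ruling-prop} together with \cite[Cor.\ 5.2]{bothmer-JPAA}, not Lemma \ref{rank-scroll}; and $C$ sits on the projected scroll $\pi_p(X_{\alpha})$, not on $\widetilde{X}_{\alpha}$.

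The step you flag as the "main obstacle" is, however, a genuine gap, and it is precisely the step the proof cannot do without. The proposition concerns the connecting homomorphism $\delta$ of (\ref{les-projection}), whereas Proposition \ref{image-projection} computes Aprodu's projection $pr_{ev}$, and Lemma \ref{im-pr-nodal} only proves $\mathrm{Im}(pr)\seq\mathrm{Im}(\gamma_{x,y})$ --- nowhere in the paper (nor in your proposal) is the identity $pr=\pm\,\gamma_{x,y}\circ\delta$ of maps established. You cannot sidestep this by computing $pr(\alpha)$ instead, because $\gamma_{x,y}$ is not known to be injective (its kernel is controlled by $\mathrm{K}_{p,1}(\mathbb{T}_{x,y}(C),\mathrm{H}^0(\omega_C))$, which need not vanish a priori), so a statement about $pr(\alpha)$ does not transfer to one about $\gamma_{x,y}(\delta(\alpha))$ without the map-level identification, i.e.\ the snake-lemma chase on $\widetilde{Z}$ that you defer. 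The paper's proof is designed to avoid exactly this: by definition of the syzygy scheme, $\alpha=r_{D'}(\beta)$ for a syzygy $\beta$ of the scroll $X_{\alpha}$, the commutative diagram from \cite[\S 3]{projecting} intertwines $\gamma_s\circ\Delta$ on $\widetilde{Y}_{\alpha}$ with $\gamma_{x,y}\circ\delta$ on the curve, and Proposition \ref{rank-scroll-2} applied to the projected scroll yields the conclusion --- no cochain-level formula for $\delta$ is ever needed, at the price of the weaker "linear combination" statement. So: your approach is sound in conception and would give a cleaner output, but as written the central claim is asserted rather than proved; carry out the identification of $\delta$ with the contraction (or with $pr$) and the argument closes.
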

\begin{proof}
 Let $\pi_p \; : \; \PP^g \dashrightarrow \PP^{g-1}$ be the projection from $p$. The scroll $X_{\alpha}$ coincides with the scroll induced by the pencil $|L_{\alpha}|$, by the assumptions $\deg(L_{\alpha})=g(D)-p$ and $h^0(D,L_{\alpha})=2$, \cite[Thm.\ 2]{eisenbud-harris-minimal}, \cite[\S 4]{schreyer1}. Since $L_{\alpha}$ is base-point free from the definition, the same is true for the line bundle $\mu^*L_{\alpha}$ on $C$. The projection $\pi_p(X_{\alpha})$ is the scroll induced by $\mu^*L_{\alpha}$. By the assumption $h^0(C,\mu^*L_{\alpha})=h^0(D,L_{\alpha})=2$, $\pi_p(X_{\alpha})$ is a rational normal scroll of degree $p$, \cite[\S 4]{schreyer1}. The curve $C$ lies in the smooth locus of $\pi_p(X_{\alpha})$ as $L_{\alpha}$ is base-point free. \smallskip

 We now follow \cite[\S 3]{projecting}. Let $Y_{\alpha} \supseteq X_{\alpha}$ be the cone over $\pi_p(X_{\alpha})$. We may resolve $Y_{\alpha}$ by a scroll $\widetilde{Y}_{\alpha}$, \cite{schreyer1}. Let $\mu_{Y_{\alpha}} : \widetilde{Y}_{\alpha} \to Y_{\alpha}$ be the resolution of singularities and let $X'_{\alpha}$ be the strict transform of $X_{\alpha}$. Let $\mathcal{O}_{\widetilde{Y}_{\alpha}}(1)$ be the class of a hyperplane, and let $R$ be the class of the ruling on the scroll $\widetilde{Y}_{\alpha}$. As in the proof of \cite[Thm.\ 3.6]{projecting}, pull-back gives isomorphisms $\mathrm{H}^0(\mathcal{O}_{X_{\alpha}}(q))\simeq \mathrm{H}^0(\mathcal{O}_{X'_{\alpha}}(q))$ for all $q$, so we have natural isomorphisms $\displaystyle{\mathrm{K}_{p,1}(X_{\alpha},\mathcal{O}_{X_{\alpha}}(1)) \simeq \mathrm{K}_{p,1}(X'_{\alpha},\mathcal{O}_{X'_{\alpha}}(1))}$. We further have a map
 $\displaystyle{\mathrm{K}_{p,1}(X'_{\alpha}, \mathcal{O}_{X'_{\alpha}}(1)) \xrightarrow{\Delta} \mathrm{K}_{p-1,1}(\widetilde{Y}_{\alpha}, -R; \mathcal{O}_{\widetilde{Y}_{\alpha}}(1)).}$
 Let $s \in \mathrm{H}^0(\mathcal{O}_{\widetilde{Y}_{\alpha}}(R))$ be a section such that the ruling $Z(s)$ defined by $s$ has the property $p \in \mu_{Y_{\alpha}}(Z(s))$. Then multiplication by $s$ induces a morphism
{\small{ $$\mathrm{K}_{p,1}(\widetilde{Y}_{\alpha}, -R; \mathcal{O}_{\widetilde{Y}_{\alpha}}(1)) \xrightarrow{\gamma_s} \mathrm{K}_{p,1}(\widetilde{Y}_{\alpha}, \mathcal{O}_{\widetilde{Y}_{\alpha}}(1)).$$}}
 Letting $Z$ denote the cone over $C=\pi_p(D)$ as above, note that the resolution of singularities $\widetilde{Z}=Bl_p(Z)$ naturally lies in the blow-up $Bl_p(Y_{\alpha})$ of $Y_{\alpha}$ at $p$. We have a natural, birational, morphism $g: Bl_p(Y_{\alpha}) \to \widetilde{Y}_{\alpha}$. Further, as above, let $D' \seq \widetilde{Z}$ be the strict transform of $D$, so $C \simeq D'$. Then
 $${g^*_s}_{|_{D'}} \in \mathrm{H}^0(C,\mu^*L_{\alpha}),$$
 is a section which vanishes at $x$ and $y$. As in \cite[\S 3]{projecting}, we have a commutative diagram
{\small{ $$\begin{tikzcd}
\mathrm{K}_{p,1}(X'_{\alpha}, \mathcal{O}_{X'_{\alpha}}(1)) \arrow[r, " \gamma_s \circ \Delta"] \arrow[d, "r_{D'}"] & \mathrm{K}_{p-1,1}(\widetilde{Y}_{\alpha}, \mathcal{O}_{\widetilde{Y}_{\alpha}}(1)) \arrow[d, "r_{\widetilde{Z}}"]\\
\mathrm{K}_{p,1}(D,\omega_D) \arrow[r, "\gamma_{x,y} \circ \delta"]  & \mathrm{K}_{p-1,1}(C,\omega_C).
\end{tikzcd}$$}}
where $r_{D'}$ is obtained by restricting to $D'$ and using the natural identifications $\mathrm{K}_{p,1}(D,\omega_D) \simeq \mathrm{K}_{p,1}(D',\mathcal{O}_{D'}(1))$, whereas $r_{\widetilde{Z}}$ is obtained by pulling back to $\widetilde{Z}$ and using the natural identifications $\mathrm{K}_{p-1,1}(C,\omega_C) \simeq \mathrm{K}_{p-1,1}(\widetilde{Z},\mathcal{H})$.\smallskip

By definition of the syzygy scheme $\mathrm{Syz}(\alpha)=X_{\alpha}$, there exists some $\beta \in \mathrm{K}_{p,1}(X'_{\alpha},\mathcal{O}_{X'_{\alpha}}(1)) \simeq \mathrm{K}_{p,1}(X_{\alpha},\mathcal{O}_{X_{\alpha}}(1))$ with $r_{D'}(\beta)=\alpha$. Then $\gamma_{x,y}(\delta(\alpha)) \in \mathrm{Im}(r_{\widetilde{Z}})$. By Proposition \ref{rank-scroll-2}, $\mathrm{K}_{p-1,1}(\widetilde{Y}_{\alpha}, \mathcal{O}_{\widetilde{Y}_{\alpha}}(1))$ is generated by syzygies $\sigma'$ of rank $p$. The pull-back map $r_{\widetilde{Z}}$ does not change the rank of a syzygy. Thus $\gamma_{x,y}(\delta(\alpha))$ is a linear combination of syzygies $\sigma:=r_{\widetilde{Z}}(\sigma')$ of rank $p$. We have isomorphisms
{\small{\begin{align*}
\mathrm{K}_{p-1,1}(\widetilde{Y}_{\alpha}, \mathcal{O}_{\widetilde{Y}_{\alpha}}(1)) \simeq \mathrm{K}_{p-1,1}(Y_{\alpha}, \mathcal{O}_{{\PP}^g_{|_{Y_{\alpha}}}}(1)) \simeq  \mathrm{K}_{p-1,1}(\pi_p(X_{\alpha}), \mathcal{O}_{{\PP}^{g-1}_{|_{\pi_p(X_{\alpha})}}}(1)) .
\end{align*}}}
Under this identification, $r_{\widetilde{Z}}$ becomes the natural inclusion 
{\small{$$ \mathrm{K}_{p-1,1}(\pi_p(X_{\alpha}), \mathcal{O}_{\pi_p(X_{\alpha})}(1)) \hookrightarrow \mathrm{K}_{p-1,1}(C,\omega_C) .$$}}
Thus $\pi_p(X_{\alpha}) \seq \mathrm{Syz}(\sigma)$ by definition of the syzygy scheme, \cite[Remark 3.6]{aprodu-nagel}. By \cite[Corollary 5.2]{bothmer-JPAA}, the syzygy scheme $\mathrm{Syz}(\sigma)$ is a rational normal scroll of degree $p$ and codimension $p-1$, and therefore $\mathrm{Syz}(\sigma)= \pi_p(X_{\alpha}) $, as required.
 \end{proof}

\begin{lem} \label{twisted-van}
Let $C$ be a general, integral $m$-nodal curve of arithmetic genus $g\geq 2$ for any $0 \leq m \leq g$. Let $x,y \in C$ be general points in the smooth locus of $C$. For $n \leq \lfloor \frac{g}{2} \rfloor-2$, we have $$\mathrm{K}_{n,2}(C,-x-y, \omega_C)=0.$$
Further, if $n \neq g-2$ then $\mathrm{K}_{n,3}(C,-x-y, \omega_C)=0$.
\end{lem}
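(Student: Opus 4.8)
The plan is to treat the two vanishings by different mechanisms, in each case reducing the twisted groups to untwisted Koszul cohomology (where generic Green's conjecture is available) together with elementary terms, and to dispose of the general-position and nodal hypotheses by the upper semicontinuity of Proposition \ref{semi-cont}. Set $V:=\mathrm{H}^0(C,\omega_C)$. The main tool is Green's duality for Koszul cohomology on the Gorenstein curve $C$, valid once $\omega_C$ is very ample and $(C,\omega_C)$ is normally generated, in the form $\mathrm{K}_{p,q}(C,B,\omega_C)\cong\mathrm{K}_{g-2-p,\,3-q}(C,B^{\vee},\omega_C)^{\vee}$, which follows from the relative duality isomorphism appearing in Proposition \ref{grauert-replacement} together with a grading shift.

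First I would settle the claim for $\mathrm{K}_{n,3}$. By duality $\mathrm{K}_{n,3}(C,-x-y,\omega_C)\cong\mathrm{K}_{g-2-n,0}(C,\mathcal{O}_C(x+y),\omega_C)^{\vee}$. The module $\Gamma_C(\mathcal{O}_C(x+y),\omega_C)$ vanishes in degree $-1$, since $H^0(\omega_C^{\vee}(x+y))=0$ for $g\geq 3$, and is the line $\C\langle 1\rangle$ in degree $0$, because for general $x,y$ on a non-hyperelliptic curve the divisor $x+y$ does not move. Hence $\mathrm{K}_{p,0}(C,\mathcal{O}_C(x+y),\omega_C)$ is the kernel of the map $\wedge^{p}V\to\wedge^{p-1}V\otimes\mathrm{H}^0(\omega_C(x+y))$ obtained from the Koszul differential $d\colon\wedge^{p}V\to\wedge^{p-1}V\otimes V$ followed by the inclusion $V\hookrightarrow\mathrm{H}^0(\omega_C(x+y))$. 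As in the proof of Proposition \ref{one-dim-iso}, $d$ is injective for $p\geq 1$ (since $\wedge\circ d=(p+1)\,\mathrm{id}$), so this kernel vanishes when $p=g-2-n\geq 1$ and equals $\C$ when $p=0$. This is exactly the dichotomy $n\neq g-2$ versus $n=g-2$, and the argument is self-contained.

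For $\mathrm{K}_{n,2}$ I would reduce to the untwisted group by means of the divisor $x+y$. The sheaf inclusion $\mathcal{O}_C(-x-y)\hookrightarrow\mathcal{O}_C$ yields a short exact sequence of graded $S_{\omega_C}$-modules
$$0\to\Gamma_C(-x-y,\omega_C)\to\Gamma_C(\omega_C)\to T\to 0,$$
where $T$ is the restriction module. Writing $R_x,R_y$ for the point modules of $x,y\in C\seq\PP^{g-1}$ and analysing $T$ through $0\to T\to R_x\oplus R_y\to\C_0\to 0$, with $\C_0$ the residue field concentrated in degree zero, and using that $R_x,R_y$ have linear Koszul resolutions while $\C_0$ is resolved by the Koszul complex on $V$, the long exact sequences of these two sequences give $\mathrm{K}_{n,2}(T,V)=0$ and $\mathrm{K}_{n+1,1}(T,V)\cong\wedge^{n+2}V/(\wedge^{n+2}W_x+\wedge^{n+2}W_y)$, where $W_x,W_y\seq V$ are the hyperplanes of sections vanishing at $x$ and $y$. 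Since generic Green's conjecture (Voisin \cite{V1},\cite{V2}) gives $\mathrm{K}_{n,2}(C,\omega_C)=0$ in the range $n\leq\lfloor g/2\rfloor-2$, the sequence then identifies $\mathrm{K}_{n,2}(C,-x-y,\omega_C)$ with the cokernel of the restriction map $\mathrm{K}_{n+1,1}(C,\omega_C)\to\mathrm{K}_{n+1,1}(T,V)$.

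The hard part will be proving that this restriction map is surjective for general $x,y$, equivalently that the connecting homomorphism vanishes; this is the only non-formal input in the $\mathrm{K}_{n,2}$ case. I would establish it by specializing $x,y$ and invoking semicontinuity, or by using that $\mathrm{K}_{n+1,1}(C,\omega_C)$ is spanned by minimal-rank geometric syzygies whose restrictions hit the point-module classes; a dimension count comparing the generic value of $b_{n+1,1}(C,\omega_C)$ with $\binom{g-2}{n}$ shows the surjectivity is at least numerically feasible throughout the range. Equivalently one may bypass the connecting term via the kernel-bundle description $\mathrm{K}_{n,2}(C,-x-y,\omega_C)\cong\mathrm{coker}(\wedge^{n+1}V\otimes\mathrm{H}^0(\omega_C(-x-y))\to\mathrm{H}^0(\wedge^{n}M_{\omega_C}\otimes\omega_C^{2}(-x-y)))$ and prove directly that this multiplication map is surjective, which is again precisely the content of generic Green's conjecture in this range. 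Finally, since both statements concern a \emph{general} $m$-nodal curve, Proposition \ref{semi-cont} reduces the problem to a single convenient member of the family with general marked points on which the groups vanish, provided the relevant $h^0$'s remain constant along the degeneration; verifying this constancy, and choosing the degeneration (lowering the node count where useful through the projection sequence \ref{les-projection}) so that the computation above applies, is the remaining bookkeeping.
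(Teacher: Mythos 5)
Your treatment of the $\mathrm{K}_{n,3}$ statement is essentially correct and is a genuinely different route from the paper's: you dualize to $\mathrm{K}_{g-2-n,0}(C,\mathcal{O}_C(x+y),\omega_C)$ and kill it by injectivity of the Koszul differential, whereas the paper gets it from a long exact sequence attached to a nodal degeneration (more on this below). Two caveats: the duality $\mathrm{K}_{p,q}(C,B,\omega_C)\cong \mathrm{K}_{g-2-p,3-q}(C,B^{\vee},\omega_C)^{\vee}$ on a \emph{nodal} Gorenstein curve needs its own justification (kernel bundle plus Serre duality for $\omega_C$); citing Proposition \ref{grauert-replacement} for it is spurious, as that result is relative duality for a family over a Brill--Noether locus and has nothing to do with Koszul duality.

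The $\mathrm{K}_{n,2}$ part has a genuine gap. Your reduction via $0\to\Gamma_C(-x-y,\omega_C)\to\Gamma_C(\omega_C)\to T\to 0$ and the point-module analysis of $T$ is formally correct, and (using Voisin) it identifies $\mathrm{K}_{n,2}(C,-x-y,\omega_C)$ with the cokernel of $\mathrm{K}_{n+1,1}(C,\omega_C)\to \mathrm{K}_{n+1,1}(T,\mathrm{H}^0(\omega_C))$. But that surjectivity is \emph{equivalent} to the vanishing you are trying to prove, and none of your proposed justifications closes the loop. The claim that the twisted surjectivity ``is precisely the content of generic Green's conjecture'' is false: Voisin's theorem gives surjectivity of $\wedge^{n+1}V\otimes\mathrm{H}^0(\omega_C)\to \mathrm{H}^0(\wedge^{n}M_{\omega_C}\otimes\omega_C^2)$, and there is no formal implication to the version with both factors twisted down by $x+y$ --- that twisted statement is exactly this lemma. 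The alternative via minimal-rank geometric syzygies spanning $\mathrm{K}_{n+1,1}(C,\omega_C)$ is circular in the architecture of this paper: that spanning statement is the main theorem, whose proof (through Proposition \ref{spanned-im-gamma} and Proposition \ref{induction-step}) relies on this lemma. And the semicontinuity route is not actionable as stated, since you never exhibit a member of the family on which the twisted vanishing can be verified. The idea you are missing is the paper's one-step trick: glue $x$ to $y$ to form the nodal curve $D$ of genus $g+1$ and use the already-established exact sequence (\ref{les-projection}), in which $\mathrm{K}_{n,2}(C,-x-y,\omega_C)$ sits between $\mathrm{K}_{n+1,2}(D,\omega_D)$ and a $\mathrm{K}_{n,3}$ term of a canonical curve. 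The first vanishes for $n\leq\lfloor g/2\rfloor-2$ by Voisin's theorem, valid for general nodal curves because such curves occur in primitive linear systems on general K3 surfaces (Chen's theorem), and the second vanishes by duality except at the extreme value of the index. So the twisted group on $C$ is computed by \emph{untwisted} groups on $D$ --- raising, not lowering, the node count --- which is precisely the opposite of the bookkeeping role you assigned to (\ref{les-projection}) in your final paragraph.
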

\begin{proof}
By Equation (\ref{les-projection}) we have the exact sequence
{\small{$$\to \mathrm{K}_{n+1,2}(D,\omega_D) \to \mathrm{K}_{n,2}(C,-x-y, \omega_C) \to \mathrm{K}_{n,3}(D,\omega_D) \to$$}}
But $ \mathrm{K}_{p,3}(D,\omega_D)=0$ unless $p=g-1$ for the canonical curve $(D,\omega_D)$, whereas $\mathrm{K}_{n+1,2}(D,\omega_D)=0$ for $n \leq \lfloor \frac{g}{2} \rfloor-2$ by Voisin's Theorem \cite{V2}, which holds for a general $m$-nodal curve as there exist $j$-nodal curves in the primitive linear system on a general K3 surface of genus $\ell$ for all $0 \leq j \leq \ell$, \cite{chen-every-rational}. From the exact sequence
{\small{$$\to \mathrm{K}_{n+1,3}(D,\omega_D) \to \mathrm{K}_{n,3}(C,-x-y, \omega_C) \to \mathrm{K}_{n,4}(D,\omega_D) \to$$}}
and the vanishings $\mathrm{K}_{n,4}(D,\omega_D)$, valid for all $n$, $ \mathrm{K}_{n+1,3}(D,\omega_D)=0$, valid for $n+1 \neq (g+1)-2$, we see $\mathrm{K}_{n,3}(C,-x-y, \omega_C)=0$.

\end{proof}

Since $\mu_* \omega_C \simeq \omega_D \otimes I_{p}$ we have an exact sequence
$\displaystyle{0 \to \mathrm{H}^0(C,\omega_C) \to \mathrm{H}^0(D,\omega_D) \xrightarrow{ev_p} \C \to 0.}$
We have the Aprodu projection map $pr:=pr_{ev_p}$ acting on $\mathrm{K}_{p,1}(D,\omega_D)$. By \cite{aprodu-higher}, Lemma 3.1, the image of $pr$ lands in $\mathrm{K}_{p-1,1}(C,\omega_C)$ since $\pi_p(D)=C \seq \PP^{g-1}$, where $\pi_p : \PP^{g} \dashrightarrow \PP^{g-1}$ is the projection away from $p \in D$, i.e.\ we have the map
{\small{$$pr \, : \, \mathrm{K}_{p,1}(D,\omega_D) \to \mathrm{K}_{p-1,1}(C,\omega_C).$$}} 
\begin{lem} \label{im-pr-nodal}
Let $C$ be an integral $m$-nodal curve of arithmetic genus $g$ for any $0 \leq m \leq g$ such that $(C,\omega_C)$ is normally generated. Let $D$ be the $m+1$ nodal curve obtained by identifying general points $x, y$ in the smooth locus of $C$. For any $p \geq 0$, the image of $pr : \mathrm{K}_{p,1}(D,\omega_D) \to \mathrm{K}_{p-1,1}(C,\omega_C)$ is contained in 
$$\gamma_{x,y}(\mathrm{K}_{p-1,1}(C,-x-y , \omega_C))\seq \mathrm{K}_{p-1,1}(C,\omega_C).$$
\end{lem}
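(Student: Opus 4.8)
The plan is to derive the result from the explicit description of the projection map in Proposition \ref{image-projection}, adapted to a projection from a node. The first step is to recognize $pr=pr_{ev_p}$ as a projection associated to a pole. Pulling back via $\mu$ and using $\mu^*\omega_D\simeq\omega_C(x+y)$, the residue theorem on $C$ shows that $\mathrm{H}^0(\omega_D)\simeq\mathrm{H}^0(\omega_C(x+y))$ and that $\ker(ev_p)=\mathrm{H}^0(\omega_C)$, the subspace of forms regular at $x$ and $y$; under this identification $ev_p$ is the leading Laurent coefficient (the residue) at $x$, equivalently $-$ that at $y$. Thus $ev_p$ is exactly the evaluation of $L:=\omega_C(x+y)=\mu^*\omega_D$ at the smooth point $x\in C_{sm}$, and the node $p$ normalizes to the two points $x,y$. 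This is what brings the single-point formula of Proposition \ref{image-projection} to bear, the only caveat being that the relevant module is $\Gamma_D(\omega_D)=\mathbb{M}\seq\Gamma_C(L)$ rather than the full section ring. Since $\iota_{ev_p}$ depends only on the functional on $M_1=\mathrm{H}^0(\omega_D)$, which is the same for both modules, and a cocycle for $\Gamma_D(\omega_D)$ is a fortiori a cocycle after enlarging the module in degree two, this causes no difficulty.

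Next I would run the argument of Proposition \ref{image-projection}(1) verbatim, with $\mathrm{H}^0(L(-x))$ replaced by $W:=\mathrm{H}^0(\omega_D\otimes I_p)=\mathrm{H}^0(\omega_C)$. Decomposing $\mathrm{H}^0(\omega_D)=W\oplus\C\langle s\rangle$ with $ev_p(s)=1$ and writing a representing cocycle $\alpha=\alpha_1+\alpha_2+\alpha_3+\alpha_4$ as there, the identical computation (using $\mathrm{H}^0(\omega_D^{\otimes 2})=\mathrm{H}^0(\omega_D^{\otimes 2}\otimes I_p)\oplus\C\langle s^2\rangle$) forces $\alpha_4=0$ and yields $pr(\bar\alpha)=\overline{q_p(\alpha)}$ with $q_p(\alpha)=d\alpha_2'+(-1)^p\alpha_3'\in\bigwedge^{p-1}\mathrm{H}^0(\omega_C)\otimes\mathrm{H}^0(\omega_C)$. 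This recovers the containment $\mathrm{Im}(pr)\seq\mathrm{K}_{p-1,1}(C,\omega_C)$ already recorded before the lemma, and isolates the explicit representative on which the rest of the analysis is carried out.

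The heart of the argument is the two-point analogue of Proposition \ref{image-projection}(2): I would show that, modulo the Koszul coboundary $d\left(\bigwedge^p\mathrm{H}^0(\omega_C)\right)$, the representative $q_p(\alpha)$ can be taken to have its second tensor factor in $\mathrm{H}^0(\omega_C(-x-y))$, i.e.\ vanishing at both $x$ and $y$. Once this is established we are done: a cocycle in $\bigwedge^{p-1}\mathrm{H}^0(\omega_C)\otimes\mathrm{H}^0(\omega_C(-x-y))$ automatically has differential landing in $\bigwedge^{p-2}\mathrm{H}^0(\omega_C)\otimes\mathrm{H}^0(\omega_C^{\otimes 2}(-x-y))$, hence defines a class in $\mathrm{K}_{p-1,1}(C,-x-y,\omega_C)$ whose image under $\gamma_{x,y}$ is $pr(\bar\alpha)$. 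To produce such a representative I would pick local coordinates $z,w$ at $x,y$, decompose the degree-two module by pole order at $x$ and at $y$, and read the cocycle condition $d\alpha=0$ in degree two exactly as in the proof of Proposition \ref{image-projection}(2), using that $\omega_C$ is base-point free at $x,y$ (valid as $\omega_C$ is very ample). The decisive new ingredient is the gluing condition defining $\mathrm{H}^0(\omega_D^{\otimes 2})\seq\mathrm{H}^0(\omega_C^{\otimes 2}(2x+2y))$ coming from the dualizing sheaf of the node, namely that the leading double-pole coefficients at $x$ and $y$ agree; combined with the vanishing of the $\langle s^2\rangle$- and mixed components of $d\alpha$, this matching is what forces the leading coefficients of the second factor at $x$ and at $y$ to be removable by a coboundary, distributing the order-two vanishing of the smooth case as one order at each of $x,y$.

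The main obstacle is precisely this last step. In the smooth single-point case the pole data is concentrated at one point and the relation $\omega_1+\omega_2=0$ falls out of a single projection, whereas here the two poles sit at distinct points and are coupled only through the node's gluing relation on quadratic differentials. Tracking both points simultaneously, and exploiting the dualizing-sheaf gluing rather than a naive composition of two independent single-point projections (which fails, since $L(-x)=\omega_C(y)$ and $L(-y)=\omega_C(x)$ carry different module structures), is where the real work lies; the bookkeeping of $\mathbb{M}\subsetneq\Gamma_C(L)$ in degree two must be handled in tandem with it.
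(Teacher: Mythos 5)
Your first two steps (identifying $pr$ with a single-point projection via $\mathrm{H}^0(\omega_D)\simeq\mathrm{H}^0(\omega_C(x+y))$, then running Proposition \ref{image-projection}(1) to obtain the representative $d\alpha_2'+(-1)^p\alpha_3'\in\wedge^{p-1}\mathrm{H}^0(\omega_C)\otimes\mathrm{H}^0(\omega_C)$) are correct and agree with the paper. The gap is in the step you yourself flag as ``the real work,'' and it is twofold. First, the mechanism you call decisive is spurious: the gluing condition cutting out $\mathrm{H}^0(\omega_D^{\otimes 2})\seq\mathrm{H}^0(\omega_C^{\otimes 2}(2x+2y))$ cannot enter the argument, because the cocycle condition is literal vanishing $d\alpha=0$. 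Since the multiplication maps of $\Gamma_D(\omega_D)$ and of $\Gamma_C(\omega_C(x+y))$ are compatible with the injective inclusion of degree-two pieces, the weight-$(p,1)$ cocycles of the two modules coincide, and the fact that the zero element lies in a codimension-one subspace carries no information. Second, the approach you dismiss as ``failing'' is exactly the one that works, and it is the paper's proof. Write $L=\omega_C(x+y)$. By the residue theorem on the (partial normalization of the) nodal curve, $\mathrm{H}^0(L(-x))=\mathrm{H}^0(L(-y))=\mathrm{H}^0(\omega_C)$ as subspaces of $\mathrm{H}^0(L)$, and one may choose $s$ with $ev_x(s)=ev_y(s)=1$. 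Hence the decomposition $\alpha=\alpha_1+\alpha_2'\otimes s+\alpha_3'\wedge s+\alpha_4$ is simultaneously the decomposition of Proposition \ref{image-projection} for the point $x$ and for the point $y$, and the representative $d\alpha_2'+(-1)^p\alpha_3'$ is one and the same element in both cases. What is needed is not a composition of two projections (which would drop to $\mathrm{K}_{p-2,1}$ and is indeed the wrong operation), but two separate applications of Proposition \ref{image-projection}(2) to this single element: the application at $x$ gives membership in $\wedge^{p-1}\mathrm{H}^0(\omega_C)\otimes\mathrm{H}^0(L(-2x))$, the application at $y$ gives membership in $\wedge^{p-1}\mathrm{H}^0(\omega_C)\otimes\mathrm{H}^0(L(-2y))$ (the base-point hypotheses hold since $\omega_C$ is very ample). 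Since, again by the residue theorem, $\mathrm{H}^0(L(-2x))=\mathrm{H}^0(\omega_C(-x))$ and $\mathrm{H}^0(L(-2y))=\mathrm{H}^0(\omega_C(-y))$, intersecting the two containments gives $\wedge^{p-1}\mathrm{H}^0(\omega_C)\otimes\mathrm{H}^0(\omega_C(-x-y))$, which is the claim. The ``different module structures'' of $L(-x)$ and $L(-y)$ are irrelevant here, because both conclusions are containments of the same element inside $\wedge^{p-1}\mathrm{H}^0(\omega_C)\otimes\mathrm{H}^0(\omega_C)$.

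Were you to carry out the local computation you sketch, it would in fact reduce to precisely these two single-point computations (one copy of the proof of Proposition \ref{image-projection}(2) at $x$, one at $y$), with the node's gluing on quadratic differentials never used. So your route can be completed, but not by the ingredient you identify as essential; as written, the crucial containment is asserted rather than proved, and the idea offered to prove it does not do the job.
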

\begin{proof}
We have  isomorphisms $\mathrm{H}^0(D,\omega_D)\simeq \mathrm{H}^0(C,\omega_C(x+y)), \;  \mathrm{H}^0(C,\omega_C) \simeq \mathrm{H}^0(C,\omega_C(x)),$
inducing natural isomorphisms
{\small{$$\mathrm{K}_{p,1}(D,\omega_D) \simeq \mathrm{K}_{p,1}(C,\omega_C(x+y)), \; \; \mathrm{K}_{p-1,1}(C,\omega_C) \simeq \mathrm{K}_{p-1,1}(C,\omega_C(x)).$$}}
We may then identify $pr : \mathrm{K}_{p,1}(D,\omega_D) \to \mathrm{K}_{p-1,1}(C,\omega_C)$ with $pr_y: \mathrm{K}_{p,1}(C,\omega_C(x+y)) \to \mathrm{K}_{p-1,1}(C,\omega_C(x))$. Precisely, let $ s \in \mathrm{H}^0(\omega_D)$ be a section with $ev_p(s)=1$, which corresponds to a section $s'\in \mathrm{H}^0(\omega_C(x+y))$ with $ev_x(s')=ev_y(s')=1$, identifying $\mathcal{O}_x$ and $\mathcal{O}_y$ with $\mathcal{O}_p$ via $\mu$. The decomposition
{\small{\begin{align*}
\wedge^p \mathrm{H}^0(\omega_C(x+y)) \otimes \mathrm{H}^0(\omega_C(x+y)) &\simeq \wedge^p \mathrm{H}^0(\omega_C(x)) \otimes \mathrm{H}^0(\omega_C(x))\oplus
\wedge^p\mathrm{H}^0(\omega_C(x)) \otimes s' \\
&\oplus \wedge^{p-1} \mathrm{H}^0(\omega_C(x))\wedge s' \otimes \mathrm{H}^0(\omega_C(x))\oplus
\wedge^{p-1}\mathrm{H}^0(\omega_C(x))\wedge s' \otimes s'
\end{align*}}}
corresponds to the decomposition
{\small{$$\wedge^p\mathrm{H}^0(\omega_D) \otimes \mathrm{H}^0(\omega_D) \simeq V_1 \oplus V_2 \oplus V_3 \oplus V_4$$}}
with
{\small{\begin{align*}
V_1 &:= \bigwedge^p \mathrm{H}^0(\omega_C) \otimes \mathrm{H}^0(\omega_C) , &
V_2 &:= \bigwedge^p \mathrm{H}^0(\omega_C) \otimes s, \\
V_3 &:= \left( \bigwedge^{p-1}\mathrm{H}^0(\omega_C) \right)\wedge s \otimes \mathrm{H}^0(\omega_C), &
V_4 &:= \left( \bigwedge^{p-1}\mathrm{H}^0(\omega_C) \right)\wedge s \otimes s .
\end{align*}}}
Applying Proposition \ref{image-projection} to $pr_y$ and using the above identifications we see
{\small{$$pr(\bar{\alpha})=\overline{(-1)^p \alpha'_3+d\alpha'_2}$$}}
for $\alpha=\alpha_1+ \alpha'_2 \otimes s+\alpha'_3 \wedge s+\alpha_4$, with
$\alpha_1 \in V_1$, $\alpha'_2 \otimes s \in V_2$, $\alpha'_3 \wedge s \in V_3$, $\alpha_4 \in V_4$. Further, by the second part of Proposition \ref{image-projection}, applied to $pr_y$ again, 
{\small{$$(-1)^p \alpha'_3+d\alpha'_2 \in \wedge^{p-1}\mathrm{H}^0(\omega_C) \otimes \mathrm{H}^0(\omega_C(-y)) \simeq \wedge^{p-1}\mathrm{H}^0(\omega_C(x)) \otimes \mathrm{H}^0(\omega_C(x-y)).$$}}
Swapping the roles of $x$ and $y$, we see 
{\small{$$(-1)^p \alpha'_3+d\alpha'_2 \in \wedge^{p-1}\mathrm{H}^0(\omega_C) \otimes \mathrm{H}^0(\omega_C(-x)) $$}}
so that $(-1)^p \alpha'_3+d\alpha'_2 \in \wedge^{p-1}\mathrm{H}^0(\omega_C) \otimes \mathrm{H}^0(\omega_C(-x-y)), $
which gives the claim.

\end{proof}

We have a short exact sequence
{\small{\begin{equation} \label{weird-module-T}
0 \to \Gamma_C(-x-y, \omega_C) \to \Gamma_C(\omega_C) \to \mathbb{T}_{x,y}(C) \to 0
\end{equation}}}
for some graded $S$-module $\mathbb{T}_{x,y}(C)$.
\begin{prop} \label{spanned-im-gamma}
Let $C$ be a general, integral $m$-nodal curve of arithmetic genus $g\geq 2$ for any $0 \leq m \leq g$. Let $x,y \in C$ be general points in the smooth locus of $C$. Fix $p \leq \lfloor \frac{g-3}{2} \rfloor $. As the points $x,y \in C$ vary over $C$, the subspaces $\mathrm{Im}(\gamma_{x,y}) \seq \mathrm{K}_{p,1}(C, \omega_C)$ span $\mathrm{K}_{p,1}(C, \omega_C)$.
\end{prop}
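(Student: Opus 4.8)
The plan is to run the long exact sequence in Koszul cohomology attached to the short exact sequence (\ref{weird-module-T}) and thereby turn the spanning statement into a concrete assertion about the canonical image of $C$. Writing $V:=\mathrm{H}^0(\omega_C)$, the sequence (\ref{weird-module-T}) gives
$$\mathrm{K}_{p,1}(C,-x-y,\omega_C)\xrightarrow{\gamma_{x,y}}\mathrm{K}_{p,1}(C,\omega_C)\longrightarrow\mathrm{K}_{p,1}(\mathbb{T}_{x,y}(C),V)\longrightarrow\mathrm{K}_{p-1,2}(C,-x-y,\omega_C).$$
Since $p\leq\lfloor\tfrac{g-3}{2}\rfloor$ forces $p-1\leq\lfloor\tfrac{g}{2}\rfloor-2$, Lemma \ref{twisted-van} gives $\mathrm{K}_{p-1,2}(C,-x-y,\omega_C)=0$, so the induced map $\Psi_{x,y}\colon\mathrm{K}_{p,1}(C,\omega_C)\to\mathrm{K}_{p,1}(\mathbb{T}_{x,y}(C),V)$ is surjective and $\mathrm{Im}(\gamma_{x,y})=\ker(\Psi_{x,y})$. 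Hence the $\mathrm{Im}(\gamma_{x,y})$ span $\mathrm{K}_{p,1}(C,\omega_C)$ if and only if $\bigcap_{x,y}\ker(\Psi_{x,y})^{\perp}=0$, i.e. no nonzero functional on $\mathrm{K}_{p,1}(C,\omega_C)$ lies in every $\mathrm{Im}(\Psi_{x,y}^{\vee})$.

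Next I would compute the target explicitly. Evaluation at $x$ and $y$ exhibits $\mathbb{T}_{x,y}(C)$ as the image of $\Gamma_C(\omega_C)$ in $\mathbb{T}_x\oplus\mathbb{T}_y$, where $\mathbb{T}_z:=\Gamma_C(\omega_C)/\Gamma_C(-z,\omega_C)$ is the point module of $z$; this yields a short exact sequence $0\to\mathbb{T}_{x,y}(C)\to\mathbb{T}_x\oplus\mathbb{T}_y\to\C\to0$ with $\C$ placed in degree $0$. A point module is resolved by the Koszul complex on the $g-1$ linear forms through $z\in\PP(V^{\vee})$, so $\mathrm{K}_{\bullet,q}(\mathbb{T}_z,V)=0$ for $q\geq1$ while $\mathrm{K}_{p+1,0}(\mathbb{T}_z,V)=\ker\big(\iota_{z}\colon\wedge^{p+1}V\to\wedge^{p}V\big)=\wedge^{p+1}\mathrm{H}^0(\omega_C(-z))$, with $\iota_z$ contraction against $\mathrm{ev}_z\in V^{\vee}$. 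Feeding this into the long exact sequence gives a natural isomorphism
$$\mathrm{K}_{p,1}(\mathbb{T}_{x,y}(C),V)\;\cong\;\wedge^{p+1}V\big/\big(\wedge^{p+1}\mathrm{H}^0(\omega_C(-x))+\wedge^{p+1}\mathrm{H}^0(\omega_C(-y))\big),$$
whose dual, inside $\wedge^{p+1}V^{\vee}$, is the space $\mathrm{ev}_x\wedge\mathrm{ev}_y\wedge\wedge^{p-1}V^{\vee}$ of forms divisible by the two canonical points $\mathrm{ev}_x,\mathrm{ev}_y$. Thus $\mathrm{Im}(\Psi_{x,y}^{\vee})=\Psi_{x,y}^{\vee}\big(\mathrm{ev}_x\wedge\mathrm{ev}_y\wedge\wedge^{p-1}V^{\vee}\big)$.

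To finish I would let $(x,y)$ range over $C\times C$ minus the diagonal $\Delta$: the spaces $\mathrm{ev}_x\wedge\mathrm{ev}_y\wedge\wedge^{p-1}V^{\vee}$ are the fibres of a subbundle $\mathcal{K}^{\vee}$ of the trivial bundle $\wedge^{p+1}V^{\vee}\otimes\mathcal{O}$ (the image of $(\omega_C^{-1}\boxtimes\omega_C^{-1})\otimes\wedge^{p-1}V^{\vee}$ under the wedge map), and the injections $\Psi_{x,y}^{\vee}$ glue to a fibrewise-injective morphism $\Phi\colon\mathcal{K}^{\vee}\to\mathrm{K}_{p,1}(C,\omega_C)^{\vee}\otimes\mathcal{O}$. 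A functional $\xi$ in the intersection above corresponds to a section of the image subbundle restricting to the constant $\xi$ in every fibre; pulling back through $\Phi$ this is a section $s$ of $\mathcal{K}^{\vee}\seq\wedge^{p+1}V^{\vee}\otimes\mathcal{O}$ with $s(x,y)$ divisible by $\mathrm{ev}_x\wedge\mathrm{ev}_y$ for all $(x,y)$. Once one knows $s$ is a constant form $\phi\in\wedge^{p+1}V^{\vee}$, the argument ends cleanly: $\phi$ is then divisible by $\mathrm{ev}_x$ for every point $x$ of the nondegenerate canonical curve, hence by $g$ independent linear forms, which is impossible for a nonzero form of degree $p+1<g$; so $\phi=0$ and $\xi=0$.

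The main obstacle is exactly this constancy: a priori $s$ is only a regular section over $C\times C\setminus\Delta$ and could vary (equivalently, the maps $\Psi_{x,y}^{\vee}$ need not be the restrictions of a single map $\wedge^{p+1}V^{\vee}\to\mathrm{K}_{p,1}(C,\omega_C)^{\vee}$). I expect to settle it by a positivity input, using that $\mathcal{K}^{\vee}$ is a quotient of the negative bundle $(\omega_C^{-1}\boxtimes\omega_C^{-1})\otimes\wedge^{p-1}V^{\vee}$, whose global sections vanish since $\mathrm{H}^0(\omega_C^{-1})=0$; controlling $\mathrm{H}^1$ of the corresponding kernel on $C\times C$ then forces $s$ to lift, hence to vanish. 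As a fallback, ``the images span'' is an open condition, the corank of the total evaluation map $\bigoplus_{x,y}\mathrm{K}_{p,1}(C,-x-y,\omega_C)\to\mathrm{K}_{p,1}(C,\omega_C)$ being upper semicontinuous in the spirit of Proposition \ref{semi-cont}, so it would suffice to verify the spanning after degenerating $C$ to a single convenient curve on a K3 surface.
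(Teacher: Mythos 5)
Your reduction is correct and coincides with the paper's own first step: both run the long exact sequence attached to (\ref{weird-module-T}) and use Lemma \ref{twisted-van} to identify $\mathrm{Im}(\gamma_{x,y})$ with the kernel of the surjection $\Psi_{x,y}\colon\mathrm{K}_{p,1}(C,\omega_C)\twoheadrightarrow\mathrm{K}_{p,1}(\mathbb{T}_{x,y}(C),\mathrm{H}^0(\omega_C))$. Your computation of the target via $0\to\mathbb{T}_{x,y}(C)\to\mathbb{T}_x\oplus\mathbb{T}_y\to\C\to0$ and the Koszul resolutions of the point modules is also correct (and is not in the paper). But from there on the argument is a plan rather than a proof, and the step you yourself flag as ``the main obstacle'' is exactly where all the content sits: you must show that a functional $\xi$ lying in every $\mathrm{Im}(\Psi_{x,y}^{\vee})$ vanishes, equivalently that the lifted section $s$ of $\mathcal{K}^{\vee}$ is constant, and neither proposed fix achieves this. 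The positivity fix asserts without argument that one can ``control $\mathrm{H}^1$'' of the kernel of $(\omega_C^{-1}\boxtimes\omega_C^{-1})\otimes\wedge^{p-1}V^{\vee}\to\mathcal{K}^{\vee}$ so as to lift $s$; no such vanishing is in sight. Worse, the geometry genuinely obstructs any properness or negativity argument in precisely the cases the proposition is needed for: when $m\geq 1$ the natural parameter space is $C_{sm}\times C_{sm}\setminus\Delta$, and $C_{sm}$ (a projective curve minus its nodes) is affine, so this space carries many nonconstant regular functions; to exploit compactness of $C\times C$ you would have to extend the family of modules $\mathbb{T}_{x,y}(C)$ across pairs where $x$ or $y$ is a node, which you do not address. (For $m=0$ your argument can in fact be closed, since $\mathcal{O}_{C\times C}(n\Delta)$ has only constant sections because $\mathcal{O}(\Delta)|_{\Delta}\simeq T_C$ has negative degree; but the paper's induction requires the nodal cases.)

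The fallback is circular: semicontinuity in the spirit of Proposition \ref{semi-cont} only reduces the claim to exhibiting one curve for which the spanning holds, and producing such a curve is the content of the proposition itself; nothing you cite supplies it. The paper closes the gap by a different mechanism, which is the missing idea: it proves that \emph{two} general pairs suffice, i.e.\ $\mathrm{Im}(\gamma_{x,y})$ and $\mathrm{Im}(\gamma_{s,t})$ together span. It glues the second pair $s,t$ to a node, forming a curve $D'$ of genus $g+1$, identifies $\mathbb{T}_{x,y}(D')$ with $\mathbb{T}_{x,y}(C)$ by restriction of scalars, and uses the vanishings of Lemma \ref{twisted-van} and Voisin's theorem to obtain a surjective Aprodu projection map $\mathrm{K}_{p+1,1}(\mathbb{T}_{x,y}(D'),\mathrm{H}^0(\omega_{D'}))\twoheadrightarrow\mathrm{K}_{p,1}(\mathbb{T}_{x,y}(C),\mathrm{H}^0(\omega_C))$; functoriality of the projection map together with Lemma \ref{im-pr-nodal} (which places the projection of $\mathrm{K}_{p+1,1}(D',\omega_{D'})$ inside $\mathrm{Im}(\gamma_{s,t})$) then shows that $\Psi_{x,y}$ restricted to $\mathrm{Im}(\gamma_{s,t})$ is already surjective. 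Some input of this kind, beyond the formal dualization, is what your proposal lacks.
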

\begin{proof}
In fact, we will prove that if $x,y, s, t \in C$ are general then  $\mathrm{Im}(\gamma_{x,y}) \cup \mathrm{Im}(\gamma_{s,t})$ spans $\mathrm{K}_{p,1}(C, \omega_C)$. From the long exact sequence of Koszul cohomology associated to the short exact sequence \ref{weird-module-T}, we have an exact sequence
{\small{$$\mathrm{K}_{p,1}(C,-x-y, \omega_C) \xrightarrow{\gamma_{x,y}} \mathrm{K}_{p,1}(C,\omega_C) \xrightarrow{\alpha} \mathrm{K}_{p,1}( \mathbb{T}_{x,y}(C), \mathrm{H}^0(\omega_C)) \to 0,$$}}
since $\mathrm{K}_{p-1,2}(C,-x-y, \omega_C)=0$ by Lemma \ref{twisted-van}. Thus, to prove the claim it suffices to show
{\small{$$\alpha_{|_{ \mathrm{Im}(\gamma_{s,t})}} \; : \;  \mathrm{Im}(\gamma_{s,t}) \to \mathrm{K}_{p,1}( \mathbb{T}_{x,y}(C), \mathrm{H}^0(\omega_C))$$}}
is surjective.\smallskip

Let $D'$ be the nodal curve of genus $g+1$ obtained by identifying $s$ and $t$, with partial normalization map $\mu': C \to D'$. We have a short exact sequence
{\small{$$0 \to \Gamma_{D'}(-x-y, \omega_{D'}) \to \Gamma_{D'}(\omega_{D'}) \to \mathbb{T}_{x,y}(D') \to 0$$}}
of $\overline{S}:=\mathrm{Sym} \left(\mathrm{H}^0(D',\omega_{D'}) \right)$-modules. By Lemma \ref{twisted-van} we have a surjection
{\small{$$\alpha' \; : \; \mathrm{K}_{p+1,1}(D',\omega_{D'}) \twoheadrightarrow \mathrm{K}_{p+1,1}(\mathbb{T}_{x,y}(D'),\mathrm{H}^0(\omega_{D'}))$$}}
since $p \leq \lfloor \frac{g+1}{2}\rfloor-2=\lfloor \frac{g-3}{2} \rfloor$. We have a short exact sequence
{\small{$$0 \to \mathrm{H}^0(C,\omega_C) \xrightarrow{\mu'_*} \mathrm{H}^0(D,\omega_D) \to \C \to 0,$$}}
inducing an inclusion $S \hookrightarrow \overline{S}$ of graded ring. By restriction of scalars, we may consider $\mathbb{T}_{x,y}(D')$ as an $S$-module, denoted $(\mathbb{T}_{x,y}(D'))_{S}$. Then we may identify $(\mathbb{T}_{x,y}(D'))_{S}$ with $ \mathbb{T}_{x,y}(C)$. We have a projection map 
{\small{$$\mathrm{pr} \, : \, \mathrm{K}_{p+1,1}(\mathbb{T}_{x,y}(D'), \mathrm{H}^0(\omega_{D'})) \to \mathrm{K}_{p,1}( \mathbb{T}_{x,y}(C), \mathrm{H}^0(\omega_C)),$$}}
by \cite[\S 2]{aprodu-higher}, which fits into an exact sequence
{\small{$$\to \mathrm{K}_{p+1,1}(\mathbb{T}_{x,y}(D'), \mathrm{H}^0(\omega_{D'})) \xrightarrow{pr} \mathrm{K}_{p,1}( \mathbb{T}_{x,y}(C), \mathrm{H}^0(\omega_C)) \to \mathrm{K}_{p,2}( \mathbb{T}_{x,y}(C), \mathrm{H}^0(\omega_C)) \to \ldots.$$}}
We have the exact sequence
{\small{$$\to \mathrm{K}_{p,2}(C,\omega_C) \to \mathrm{K}_{p,2}(\mathbb{T}_{x,y}(C),\mathrm{H}^0(\omega_C)) \to \mathrm{K}_{p-1,3}(C,-x-y, \omega_C) \to \ldots$$}}
We have $\mathrm{K}_{p,2}(C,\omega_C)=0$ by Voisin's Theorem, whereas $\mathrm{K}_{p-1,3}(C,-x-y, \omega_C)=0$ by Lemma \ref{twisted-van}. Thus $\mathrm{K}_{p,2}(\mathbb{T}_{x,y}(C),\mathrm{H}^0(\omega_C))=0$ and we have a \emph{surjective} map
{\small{$$\mathrm{pr} \, : \, \mathrm{K}_{p+1,1}(\mathbb{T}_{x,y}(D'), \mathrm{H}^0(\omega_{D'})) \twoheadrightarrow \mathrm{K}_{p,1}( \mathbb{T}_{x,y}(C), \mathrm{H}^0(\omega_C)).$$}}
We have a commutative diagram
{\small{$$\begin{tikzcd}
\mathrm{K}_{p+1,1}(D',\omega_{D'}) \arrow[r, two heads, "\alpha' "] \arrow[d, two heads, "pr"] & \mathrm{K}_{p+1,1}(\mathbb{T}_{x,y}(D'), \mathrm{H}^0(\omega_{D'})) \arrow[d, two heads, "pr"]\\
\mathrm{K}_{p,1}(C,\omega_C) \arrow[r, "\alpha"]  & \mathrm{K}_{p,1}( \mathbb{T}_{x,y}(C), \mathrm{H}^0(\omega_C)),
\end{tikzcd}$$}}
by functoriality of projection maps, \cite[\S 2]{aprodu-higher}. Since
{\small{$$pr(\mathrm{K}_{p+1,1}(D',\omega_{D'})) \seq \mathrm{Im}(\gamma_{s,t})\seq \mathrm{K}_{p,1}(C,\omega_C)$$}}
by Lemma \ref{im-pr-nodal}, we see that $\alpha_{|_{ \mathrm{Im}(\gamma_{s,t})}}$ is surjective, as required.
\end{proof}

\section{The Geometric Syzygy Conjecture} \label{final-section}
We will prove the Geometric Syzygy Conjecture by induction on genus. The next lemma provides the induction step.
\begin{prop} \label{induction-step}
Let $C$ be a general, integral $m$-nodal curve of arithmetic genus $g\geq 2$ for any $0 \leq m \leq g$. Let $x,y \in C_{sm}$ be general and let $D$ be the nodal curve obtained by identifying $x$ and $y$. Let $p \leq \lfloor \frac{g-1}{2} \rfloor$. Suppose $\mathrm{K}_{p,1}(D,\omega_D)$ is spanned by syzygies $\alpha$ satisfying the following properties:
\begin{enumerate} [label=(\roman*)] 
\item $\alpha$ has minimal rank $p+1$.
\item The curve $D$ lies in the smooth locus of the syzygy scheme $X_{\alpha}:=\mathrm{Syz}(\alpha)$.
\item The associated line bundle $L_{\alpha}:=\mathcal{O}_D(R)$ satisfies $\deg(L_{\alpha})=g+1-p$ and $h^0(C,\mu^*L_{\alpha})=2$, where $\mu: C \to D$ is the partial normalization map.
\end{enumerate}
Then $\mathrm{K}_{p-1,1}(C,\omega_C)$ is spanned by syzygies $\sigma$ of rank $p$ such that $X_{\sigma}:=\mathrm{Syz}(\sigma)$ is a scroll with $C$ lying in the smooth locus of $X_{\sigma}$ and with associated line bundle of the form $L_{\sigma}=\mu^*L_{\alpha}$, where $L_{\alpha}$ is associated to a syzygy $\alpha \in \mathrm{K}_{p,1}(D,\omega_D)$ of rank $p+1$.
\end{prop}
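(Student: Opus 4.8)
The plan is to combine Proposition \ref{lin-combo-min-rk}, which manufactures the desired minimal rank $p$ syzygies on $C$ out of minimal rank $p+1$ syzygies on $D$, with the spanning statement of Proposition \ref{spanned-im-gamma}. Write $V \seq \mathrm{K}_{p-1,1}(C,\omega_C)$ for the span of all syzygies $\sigma$ of the shape required in the conclusion, i.e.\ rank $p$ syzygies whose syzygy scheme is a scroll containing $C$ in its smooth locus and with associated bundle $L_\sigma = \mu^* L_\alpha$; the goal is to prove $V = \mathrm{K}_{p-1,1}(C,\omega_C)$. First I observe that conditions (i)--(iii) are exactly the hypotheses of Proposition \ref{lin-combo-min-rk} for each spanning syzygy $\alpha$ (the remaining assumption $h^0(D,L_\alpha)=2$ following from (iii), since $2 = h^0(C,\mu^*L_\alpha) \geq h^0(D,L_\alpha)$ while $L_\alpha = \mathcal{O}_D(R)$ arises from the ruling of $X_\alpha$, whence $h^0(D,L_\alpha) \geq 2$). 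Hence for every such $\alpha$ the element $\gamma_{x,y}(\delta(\alpha))$ lies in $V$, and as these $\alpha$ span $\mathrm{K}_{p,1}(D,\omega_D)$ by assumption, linearity gives $\mathrm{Im}(\gamma_{x,y}\circ\delta) \seq V$.

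The key point is that $\gamma_{x,y}\circ\delta$ in fact has image all of $\mathrm{Im}(\gamma_{x,y})$. Indeed, the long exact sequence (\ref{les-projection}) reads
$$\mathrm{K}_{p,1}(D,\omega_D) \xrightarrow{\delta} \mathrm{K}_{p-1,1}(C,-x-y,\omega_C) \xrightarrow{\partial} \mathrm{K}_{p-1,2}(C,\omega_C),$$
so $\delta$ is surjective as soon as $\mathrm{K}_{p-1,2}(C,\omega_C) = 0$. This vanishing holds throughout our range: by the nodal form of Voisin's Theorem \cite{V2} (valid for a general $m$-nodal curve as in Lemma \ref{twisted-van} and Proposition \ref{spanned-im-gamma}) one has $\mathrm{K}_{i,2}(C,\omega_C) = 0$ for $i \leq \lfloor \tfrac{g-3}{2}\rfloor$, and $p - 1 \leq \lfloor \tfrac{g-3}{2}\rfloor$ is equivalent to the standing hypothesis $p \leq \lfloor \tfrac{g-1}{2}\rfloor$. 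Consequently $\delta$ is onto, so $\mathrm{Im}(\gamma_{x,y}\circ\delta) = \gamma_{x,y}\big(\mathrm{K}_{p-1,1}(C,-x-y,\omega_C)\big) = \mathrm{Im}(\gamma_{x,y})$, and therefore $\mathrm{Im}(\gamma_{x,y}) \seq V$.

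To finish I would vary the glued points. Since $C$ is general and $x,y$ general, the spanning hypothesis is a generic condition on the pair $(x,y)$, so it holds on a dense open $U \seq C_{sm}\times C_{sm}$ (each resulting $D_{x,y}$ being a general $(m+1)$-nodal curve of genus $g+1$); by the previous two paragraphs $\mathrm{Im}(\gamma_{x,y}) \seq V$ for every $(x,y) \in U$. The proof of Proposition \ref{spanned-im-gamma}, applied with $p$ replaced by $p-1$ (legitimate precisely because $p-1 \leq \lfloor\tfrac{g-3}{2}\rfloor$), shows that already two general pairs $(x,y),(s,t) \in U$ satisfy $\mathrm{Im}(\gamma_{x,y}) + \mathrm{Im}(\gamma_{s,t}) = \mathrm{K}_{p-1,1}(C,\omega_C)$. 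Hence $V = \mathrm{K}_{p-1,1}(C,\omega_C)$, the scroll and smooth-locus properties of each spanning $\sigma$, as well as the identification $L_\sigma = \mu^* L_\alpha$, being supplied directly by Proposition \ref{lin-combo-min-rk}.

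I expect the principal obstacle to be the surjectivity of $\delta$, that is, securing the vanishing $\mathrm{K}_{p-1,2}(C,\omega_C) = 0$: this is exactly what pins down the numerical bound $p \leq \lfloor\tfrac{g-1}{2}\rfloor$ and depends on the availability of Voisin's Theorem for the singular curves at hand. A secondary subtlety is the bookkeeping that permits varying $(x,y)$ while keeping the output admissible for the conclusion, which is why the statement is phrased in terms of an arbitrary partial normalization $\mu$ of $C$ at two general points together with a rank $p+1$ syzygy $\alpha$ on the corresponding $D$.
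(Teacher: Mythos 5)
Your proposal is correct and follows essentially the same route as the paper's own proof: reduce to the subspaces $\mathrm{Im}(\gamma_{x,y})$ via Proposition \ref{spanned-im-gamma} (with $p$ replaced by $p-1$, using $p \leq \lfloor \frac{g-1}{2}\rfloor$), obtain surjectivity of $\delta$ from the vanishing $\mathrm{K}_{p-1,2}(C,\omega_C)=0$ given by Voisin's theorem for general nodal curves, and then invoke Proposition \ref{lin-combo-min-rk} to express each $\gamma_{x,y}(\delta(\alpha))$ as a combination of rank $p$ syzygies with the required scroll data. Your extra check that $h^0(D,L_\alpha)=2$ follows from hypothesis (iii) together with $L_\alpha$ being the restriction of the ruling is a detail the paper leaves implicit, and is a welcome addition.
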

\begin{proof}
By Proposition \ref{spanned-im-gamma}, $\mathrm{K}_{p-1,1}(C,\omega_C)$ is spanned by the subspaces $\mathrm{Im}(\gamma_{x,y}) \seq \mathrm{K}_{p-1,1}(C, \omega_C)$ as $x, y \in C_{sm}$, vary. Thus it suffices to show $\mathrm{Im}(\gamma_{x,y})$ is spanned by syzygies $\sigma$ of rank $p$ and with the required properties. We have $\mathrm{K}_{p-1,2}(C,\omega_C)=0$ by generic Green's Conjecture \cite{V2}, and thus a surjection
$$\delta \; : \; \mathrm{K}_{p,1}(D,\omega_D) \twoheadrightarrow   \mathrm{K}_{p-1,1}(C,-x-y, \omega_C).$$
Since $\mathrm{K}_{p,1}(D,\omega_D)$ is spanned by syzygies $\alpha$ satisfying the properties $(i),(ii),(iii)$, it suffices to show that, for such a syzygy $\alpha$, the syzygy $\gamma_{x,y}(\delta(\alpha))$ is a linear combination of syzygies of rank $p$ with the required properties. This follows from Proposition \ref{lin-combo-min-rk}.
\end{proof}

Let $(X,L)$ be a primitively polarized K3 surface of genus $g$, i.e.\ $(L)^2=2g-2$. 
\begin{lem} \label{E}
	Let $(X,L)$ be a very general, primitively polarized, K3 surface of even genus $g=2k$ for $g \geq 2$. Then any curve $C \in |L|$ is integral, with $\omega_C$ very ample, and further $(C,\omega_C)$ is normally generated. We have $W^1_{d}(C)=\emptyset$ for $ d \leq k$ and, further, each $[A] \in W^1_{k+1}(C)$ is globally generated and has $h^0(A)=2$. Consider the bundle $E$ whose dual fits into the exact sequence
	{\small{$$0 \to {E}^{\vee} \to \mathrm{H}^0(A) \otimes \mathcal{O}_X \to A_{|_C} \to 0.$$}} Then $E$ is simple, has invariants $\mathrm{det}(E)\simeq L$, $h^0(E)=k+2$, $h^1(E)=h^2(E)=0$ and does not depend on the choice of $[A] \in W^1_{k+1}(C)$ or of $C \in |L|$. 
	\end{lem}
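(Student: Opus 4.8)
The plan is to run the Lazarsfeld--Mukai bundle construction on the very general K3 surface: I would first extract the Brill--Noether statements from the maximality of the Clifford index, and then read off $E$ and all of its invariants from the defining sequence, leaving simplicity and independence to the end. To begin, since $(X,L)$ is very general we have $\mathrm{Pic}(X)=\ZZ L$, so primitivity of $L$ forces every $C\in|L|$ to be integral, and adjunction on the K3 gives $p_a(C)=g$ together with $\omega_C\simeq L_{|_C}$. By the theorem of Green--Lazarsfeld, the Clifford index is constant on $|L|$ and maximal, so $\mathrm{Cliff}(C)=\lfloor (g-1)/2\rfloor = k-1$. In particular $\mathrm{gon}(C)\geq \mathrm{Cliff}(C)+2 = k+1\geq 3$, so $C$ is non-hyperelliptic; hence $\omega_C$ is very ample and, by Max Noether's theorem, $(C,\omega_C)$ is normally generated (this is why the relevant range is even genus $g\geq 4$, i.e.\ $k\geq 2$). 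The gonality bound is exactly the vanishing $W^1_d(C)=\emptyset$ for $d\leq k$, while the Brill--Noether existence theorem, since $\rho(g,1,k+1)=g-2(g-k)=0$, guarantees $W^1_{k+1}(C)\neq\emptyset$.

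Next I would pin down the minimal pencils. For $[A]\in W^1_{k+1}(C)$ one has $\deg A=k+1$ and, by Riemann--Roch, $h^1(A)=k\geq 2$, so $A$ genuinely contributes to the Clifford index. If $h^0(A)\geq 3$ then $\mathrm{Cliff}(A)=\deg A-2(h^0(A)-1)\leq k-3<k-1$, contradicting maximality; hence $h^0(A)=2$ (so also $W^2_{k+1}(C)=\emptyset$). If $A$ had a base point, removing it would produce a pencil of degree $\leq k$, contradicting $W^1_{\leq k}(C)=\emptyset$; thus $A$ is globally generated.

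With $A$ globally generated, the evaluation $\mathrm{H}^0(A)\otimes\mathcal{O}_X\to \iota_*A$ (for $\iota:C\hookrightarrow X$) is surjective on $X$, and since $\iota_*A$ is a pure one-dimensional, hence Cohen--Macaulay, sheaf it has homological dimension one, so its kernel $E^\vee$ is locally free of rank two; set $E:=(E^\vee)^\vee$. From the defining sequence $c_1(E)=L$. The long exact cohomology sequence, using $h^1(\mathcal{O}_X)=0$ and the fact that $\mathrm{H}^0(\mathcal{O}_X)^2\xrightarrow{\sim}\mathrm{H}^0(A)$ is an isomorphism, yields $h^0(E^\vee)=h^1(E^\vee)=0$ and $h^2(E^\vee)=h^1(A)+2=k+2$; Serre duality on the K3 then gives $h^0(E)=k+2$ and $h^1(E)=h^2(E)=0$. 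Consequently the Mukai vector is $v(E)=(2,L,k)$, with $\langle v,v\rangle=L^2-4k=(4k-2)-4k=-2$.

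The hard part will be simplicity together with the independence of $E$ from the choices, and both I would deduce from $\mu_L$-stability plus the spherical identity $\langle v,v\rangle=-2$. A saturated rank-one subsheaf of $E$ is reflexive, hence some $\mathcal{O}_X(aL)$; destabilization would force $a\geq 1$, but then the inclusion $\mathcal{O}_X(aL)\hookrightarrow E$ gives $h^0(E)\geq h^0(\mathcal{O}_X(L))=g+1=2k+1>k+2=h^0(E)$, a contradiction for $k\geq 2$. Thus $E$ is $\mu_L$-stable, and in particular simple. For independence, any two bundles $E_1,E_2$ produced by this construction, over possibly different $C\in|L|$ and different pencils, are $\mu_L$-stable of the same slope with the identical Mukai vector $v=(2,L,k)$ (for singular members $C$ and non-locally-free $A$ the construction is unchanged, the kernel being locally free by the same Cohen--Macaulay argument). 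By Mukai's Riemann--Roch $\chi(E_1,E_2)=-\langle v,v\rangle=2>0$, so Serre duality gives $\dim\mathrm{Hom}(E_1,E_2)+\dim\mathrm{Hom}(E_2,E_1)>0$; but a nonzero homomorphism between $\mu_L$-stable bundles of equal slope, rank and $c_2$ is forced to be an isomorphism, so $E_1\simeq E_2$. The conceptual crux is precisely this stability-plus-rigidity step: the Chern-class and cohomology bookkeeping is routine, whereas the uniqueness of $E$ rests on exploiting $\langle v,v\rangle=-2$ through the positivity of $\chi(E_1,E_2)$.
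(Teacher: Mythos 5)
Your proposal has a genuine gap: every input you use for the Brill--Noether part of the statement is a theorem about \emph{smooth} curves, whereas the lemma asserts these facts for \emph{any} curve $C \in |L|$. Since $\mathrm{Pic}(X)=\mathbb{Z}[L]$, members of $|L|$ are integral but can be (and, for the rest of the paper, must be allowed to be) singular --- nodal curves in $|L|$ exist by Chen's theorem and are exactly the curves to which this lemma is later applied (Propositions \ref{assumps-OK}, \ref{d}, \ref{gen-nondegK3}). For such $C$, the loci $W^r_d(C)$ live in the compactified Jacobian $\bar{J}^d(C)$ and parametrize rank-one \emph{torsion-free} sheaves, possibly not locally free. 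The Green--Lazarsfeld constancy/maximality of the Clifford index, the inequality $\mathrm{gon}(C)\geq \mathrm{Cliff}(C)+2$, the Brill--Noether existence theorem, and Max Noether's theorem on normal generation are all stated and proved for smooth curves; "$\mathrm{Cliff}(A)=\deg A-2(h^0(A)-1)$" and "removing a base point to drop the degree" do not transfer to a non-locally-free torsion-free sheaf $A$ on a nodal curve. So your argument proves the lemma only on the open locus of smooth members of $|L|$, which is precisely the case the paper cannot restrict to (see the introduction's remark that one is "required to work with general torsion-free sheaves").

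The paper's proof closes this gap differently: it runs Lazarsfeld's "Brill--Noether--Petri without degenerations" mechanism directly on $X$. For any globally generated torsion-free $A'$ on an integral $C \in |L|$, the Lazarsfeld--Mukai sheaf $F$ with $0 \to F^{\vee} \to \mathrm{H}^0(A')\otimes \mathcal{O}_X \to i_*A' \to 0$ is locally free by G\'omez's torsion-free extension of Lazarsfeld's construction, and $\mathrm{Pic}(X)=\mathbb{Z}[L]$ forces $F$ to be simple; simplicity then forces $\rho(A') \geq 0$, and the numerical contradictions $\rho<0$ rule out $h^0(A)\geq 3$, base points, and $W^1_d(C)\neq\emptyset$ for $d\leq k$ --- uniformly over all integral members and all torsion-free sheaves. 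Normal generation is likewise obtained for arbitrary integral $C$ via the hyperplane restriction theorem $\mathrm{K}_{0,q}(C,\omega_C)\simeq \mathrm{K}_{0,q}(X,L)$ and Voisin's theorem, not Max Noether. By contrast, your cohomology bookkeeping (the computation of $\det E$, $h^i(E)$, the Mukai vector $(2,L,k)$ with $\langle v,v\rangle=-2$) is correct, and your final $\mu_L$-stability plus spherical-rigidity argument for simplicity and uniqueness of $E$ is sound and essentially the paper's own (the paper invokes simplicity \`a la Lazarsfeld and zero-dimensionality of the relevant moduli space); to repair the proposal you would need to replace the entire Clifford-index paragraph with the simplicity-forces-$\rho\geq 0$ argument in the torsion-free setting.
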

	\begin{proof}
	A very general polarized K3 has $\mathrm{Pic}(X)=\mathbb{Z}[L]$, which implies any $C \in |L|$ is integral and also that $L$ is very ample, \cite[Thm.\ 1.1]{knutsen}. For any $C \in |L|$, restricting the embedding $\phi_L : X \to \PP^{g}$ to the hyperplane $C$ gives the canonical embedding $\phi_{\omega_C}: C \to \PP^{g-1}$, and so $\omega_C$ is very ample. To show $(C,\omega_C)$ is normally generated, it is equivalent to show $K_{0,q}(C,\omega_C)=0$ for $q \geq 2$. This is equivalent to $K_{0,q}(X,L)=0$ for $q \geq 2$ by the Hyperplane Restriction Theorem, \cite{green-koszul}. The claim is now a (very special) case of Voisin's Theorem, \cite{V1}. \smallskip
		
		 As $C \in |L|$ is integral and lies on a smooth surface, we may construct the Brill--Noether loci $W^r_d(C) \seq \bar{J}^d(C)$. Let $[A] \in W^1_{k+1}(C)$ be globally generated with $h^0(A)=2$, and let $E^{\vee}$ be defined by the exact sequence $0 \to {E}^{\vee} \to \mathrm{H}^0(A) \otimes \mathcal{O}_X \to A_{|_C} \to 0.$ By \cite{lazarsfeld-BNP}, generalized in \cite[Lemma 2.2]{gomez} to the torsion-free case, $E^{\vee}$ as defined in the lemma is a vector bundle of rank two. As in \cite[\S 1]{lazarsfeld-BNP} the condition $\mathrm{Pic}(X)=\mathbb{Z}[L]$ (satisfied for a very general polarized K3), implies that $E$ is simple (also compare with \cite[Lemma 5.5]{kemeny-moduli} and \cite[Thm.\ 3.1]{ciliberto-knutsen}). The bundle $E$ has invariants $\mathrm{det}(E)\simeq L$, $h^0(E)=k+2$, $h^1(E)=h^2(E)=0$ as in \cite[\S 1]{lazarsfeld-BNP}. Since the moduli space of stable bundles on K3 surfaces with these invariants is zero dimensional, the bundle $E$ does not depend on the choice of $[A] \in W^1_{k+1}(C)$ or of $C \in |L|$. \smallskip
		
		We now show $[A] \in W^1_{k+1}(C)$ is globally generated and has $h^0(A)=2$. Suppose $h^0(A)\geq 3$. Let $A' \seq A$ be the subsheaf of $A$ generated by the global sections of $A$, i.e.\ $A'$ is the image of the evaluation morphism
	{\small{$$\mathrm{H}^0(A) \otimes \mathcal{O}_C \to A.$$}}
	Then $A'$ is a rank-one torsion free sheaf with $\mathrm{H}^0(A')=\mathrm{H}^0(A)$ which is generated by its global sections. We have $[A']\in W^2_{d}(C)$ for $d \leq k+1$. Let $i: C \hookrightarrow X$ be the natural inclusion. We have a Lazarsfeld--Mukai bundle $F$ on $X$, whose dual fits into the exact sequence
	{\small{$$0 \to {F}^{\vee} \to \mathrm{H}^0(A') \otimes \mathcal{O}_X \to i_*A' \to 0.$$}}
	As above, the generality of $X$ implies that $F$ is simple. This in turn implies $\rho(A')=g-h^0(A')(g-d+h^0(A')-1) \geq 0$ by \cite[\S 1]{lazarsfeld-BNP}. Thus $g-d+h^0(A')-1\leq \frac{g}{h^0(A')}\leq \frac{2k}{3}$. But $g-d+h^0(A')-1\geq 2k-(k+1)+3-1=k+1$, which is a contradiction. Similarly, if $A \in W^1_{k+1}(C)$ were not base-point free, the subsheaf $A' \seq A$ generated by the global sections of $A$ would have $\rho(A')<0$, where $\rho(A'):=g-h^0(A')h^1(A')$ is the Brill--Noether number, which cannot happen by \cite[\S 1]{lazarsfeld-BNP} as above. The same argument show that $W^1_{d}(C)=\emptyset$ for $ d \leq k$, since otherwise we would have torsion-free sheaves with negative Brill--Noether number $\rho$.
		
	\end{proof}
	We now give a description of the sections of the vector bundle $E$ from  Lemma \ref{E}. Note that we are not assuming $A$ is invertible in the lemma below.
\begin{lem} \label{E2}
Let $E$ be the vector bundle from Lemma \ref{E}. We have a natural isomorphism $$\mathrm{H}^0(E) \simeq \mathrm{Ker}\mu,$$ where $\mu  :  \mathrm{H}^0(A) \otimes \mathrm{H}^0(L) \to \mathrm{H}^0(A\otimes \omega_C),$ is the multiplication map composed with restriction.
	
	We further have an isomorphism $$\alpha: \mathrm{H}^0(A) \oplus \mathrm{H}^0(A^{\vee} \otimes \omega_C) \simeq \mathrm{Ker}\mu.$$ Explicitly, if $t \in \mathrm{H}^0(L)$ defines $C$, if $q: \mathrm{H}^0(\omega_C) \to \mathrm{H}^0(L)$ is a section of the restriction map $\mathrm{H}^0(L) \to \mathrm{H}^0(\omega_C)$ and if $\{x, y\}$ is a basis for $\mathrm{H}^0(A)$, we define $\alpha$ by the formula
	{\small{\begin{align*}
	\mathrm{H}^0(A) \oplus \mathrm{H}^0(A^{\vee} \otimes \omega_C) &\to \mathrm{Ker}\mu \\
	(v,w) &\mapsto (v\otimes t, y \otimes q(xw)-x\otimes q(yw))
	\end{align*}}}
	\end{lem}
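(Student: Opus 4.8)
The plan is to establish the two displayed isomorphisms separately: the identification $\mathrm{H}^0(E)\simeq \mathrm{Ker}\,\mu$ comes from twisting the defining sequence of $E^{\vee}$, while the explicit map $\alpha$ is checked by hand and shown to be an isomorphism via a dimension count. First I would tensor the sequence $0\to E^{\vee}\to \mathrm{H}^0(A)\otimes \mathcal{O}_X\to A_{|_C}\to 0$ of Lemma \ref{E} with the line bundle $L$. Writing $i:C\hookrightarrow X$ for the inclusion and using $L_{|_C}\simeq \omega_C$ together with the projection formula, the right-hand term becomes $i_*(A\otimes \omega_C)$. Taking global sections then produces a left-exact sequence in which the map $\mathrm{H}^0(A)\otimes \mathrm{H}^0(L)\to \mathrm{H}^0(C,A\otimes \omega_C)$ induced by $\mathrm{H}^0(A)\otimes \mathcal{O}_X\to A_{|_C}$ is exactly the multiplication-and-restriction map $\mu$. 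Hence $\mathrm{Ker}\,\mu\simeq \mathrm{H}^0(E^{\vee}\otimes L)$. Since $E$ has rank two with $\det E\simeq L$ by Lemma \ref{E}, the wedge pairing gives a canonical isomorphism $E^{\vee}\otimes L\simeq E$, and therefore $\mathrm{H}^0(E)\simeq \mathrm{Ker}\,\mu$.

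For the explicit description, I would first record that the section $q$ exists: the restriction map $\mathrm{H}^0(L)\to \mathrm{H}^0(\omega_C)$ is surjective because $\mathrm{H}^1(X,\mathcal{O}_X)=0$ applied to $0\to \mathcal{O}_X\xrightarrow{\,t\,}L\to i_*\omega_C\to 0$, so $\alpha$ is well defined as a map into $\mathrm{H}^0(A)\otimes \mathrm{H}^0(L)$, sending $(v,w)$ to $v\otimes t+y\otimes q(xw)-x\otimes q(yw)$. To see that its image lies in $\mathrm{Ker}\,\mu$ I would apply $\mu$ term by term. Since $t$ cuts out $C$ we have $t_{|_C}=0$, whence $\mu(v\otimes t)=v\cdot t_{|_C}=0$; and since $q$ is a section of restriction, $\mu(y\otimes q(xw))-\mu(x\otimes q(yw))=y\cdot(xw)-x\cdot(yw)=0$ in $\mathrm{H}^0(A\otimes \omega_C)$ by commutativity of multiplication. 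This last cancellation is precisely the base-point-free pencil trick of Lemma \ref{tf-bpf} applied to the factorisation $\omega_C\simeq A\otimes(A^{-1}\otimes \omega_C)$. Thus $\mathrm{Im}(\alpha)\seq \mathrm{Ker}\,\mu$.

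To finish I would prove injectivity and then compare dimensions. Expanding $v=ax+by$ and regrouping, $\alpha(v,w)=x\otimes(at-q(yw))+y\otimes(bt+q(xw))$, so $\alpha(v,w)=0$ forces $q(yw)=at$ and $q(xw)=-bt$. Restricting to $C$ and using $t_{|_C}=0$ gives $xw=yw=0$ in $\mathrm{H}^0(\omega_C)$; as $C$ is integral and $x\neq 0$, a nonzero $w$ would make $xw\neq 0$, so $w=0$, and then $at=bt=0$ forces $a=b=0$, i.e.\ $v=0$. Hence $\alpha$ is injective. By Riemann--Roch and Serre duality, $h^1(A^{-1}\otimes \omega_C)=h^0(A)=2$ and $\deg(A^{-1}\otimes \omega_C)=3k-3$, so $h^0(A^{-1}\otimes \omega_C)=k$; thus the source of $\alpha$ has dimension $2+k=k+2=h^0(E)=\dim \mathrm{Ker}\,\mu$ by Lemma \ref{E} and the first part. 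An injective linear map between vector spaces of equal finite dimension is an isomorphism, completing the proof.

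The diagram chases are routine; the one step I would treat carefully is the identification in the first paragraph of the connecting homomorphism with $\mu$ and the isomorphism $E^{\vee}\otimes L\simeq E$, since this is what turns the cohomology of the Lazarsfeld--Mukai bundle into the concrete kernel $\mathrm{Ker}\,\mu$. I expect the genuine content to sit in the injectivity argument, where integrality of $C$ and base-point-freeness of $A$ (both supplied by Lemma \ref{E}) are used; the remainder is dimension bookkeeping.
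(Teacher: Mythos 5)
Your proof is correct, and while the first half coincides with the paper's argument, the second half takes a genuinely different route. For the identification $\mathrm{H}^0(E)\simeq \mathrm{Ker}\,\mu$ you do exactly what the paper does: twist the defining sequence of $E^{\vee}$ by $L=\mathcal{O}_X(C)$ and use $E\simeq E^{\vee}\otimes L$ (valid since $\mathrm{rk}\,E=2$, $\det E\simeq L$). For the explicit isomorphism $\alpha$, however, the paper never counts dimensions: it tensors the torsion-free base-point-free pencil trick of Lemma \ref{tf-bpf} with $\omega_C$, assembles it with the twisted defining sequence into a commutative diagram with exact rows and columns, and reads off both the isomorphism and the formula from the exact column $0 \to \mathrm{H}^0(A)\otimes\mathcal{O}_X \to E \to A^{\vee}\otimes\omega_C \to 0$ after taking global sections (using $\mathrm{H}^1(\mathcal{O}_X)=0$); the second component of $\alpha$ is precisely the pencil-trick inclusion $i(s)=y\otimes\rho(xs)-x\otimes\rho(ys)$ lifted through the section $q$. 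You instead verify by hand that $\alpha$ lands in $\mathrm{Ker}\,\mu$, prove injectivity directly (correctly using that a nonzero map of rank-one torsion-free sheaves on the integral curve $C$ is injective), and then conclude surjectivity from $2+h^0(A^{\vee}\otimes\omega_C)=k+2=h^0(E)$ via Riemann--Roch, Serre duality, and Lemma \ref{E}. Both arguments are sound and non-circular (the invariant $h^0(E)=k+2$ is established in Lemma \ref{E} independently). Your version is more elementary and self-contained at this point, but it is non-constructive about surjectivity and leans on numerology; the paper's diagram explains structurally where the formula comes from and, as a byproduct, yields the exact sequences recorded in Remark \ref{rem-EC}, which are reused later (e.g.\ in Lemma \ref{res-1d-kos} and Proposition \ref{assumps-OK}). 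One caution worth making explicit in your write-up: since $C\in|L|$ may be singular, $A$ is in general only torsion-free, so $A^{-1}$ must be read as $A^{\vee}=\mathcal{H}om(A,\omega_C)\otimes\omega_C^{-1}$, and your Serre duality step $h^1(A^{\vee}\otimes\omega_C)=h^0(A)$ uses reflexivity of $A$, which holds because $C$ is Gorenstein; with that remark your dimension count is valid in the generality actually needed.
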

	\begin{proof}
Twisting the defining sequence for $E^{\vee}$ by $\mathcal{O}_X(C)$ and using the natural isomorphism $E \simeq E^{\vee}(C)$, we have a short exact sequence
	{\small{$$0 \to {E} \to \mathrm{H}^0(A) \otimes \mathcal{O}_X(C) \to A\otimes \omega_C \to 0.$$ }}
	Thus $\mathrm{H}^0(E) \simeq \mathrm{Ker}\mu,$ where $\mu  :  \mathrm{H}^0(A) \otimes \mathrm{H}^0(L) \to \mathrm{H}^0(A\otimes \omega_C),$ is the multiplication map.\smallskip
	
	By Lemma \ref{tf-bpf}  we have an exact sequence
		{\small{$$0 \to A^{\vee}\otimes \omega_C \xrightarrow{i} \mathrm{H}^0(A) \otimes \omega_C \xrightarrow{ev} A  \otimes \omega_C \to 0,$$}}
		where $i(s)=y\otimes \rho(xs)-x\otimes \rho(ys),$ for $\{x, y\}$ a basis for $\mathrm{H}^0(A)$. Note that we are simply defining $A^{\vee}:=\mathcal{H}\text{om}(A,\mathcal{O}_C)$; we have $(A^{\vee})^{\vee}\simeq A$ as $A$ is reflexive. We have a commutative diagram
{\small{$$\begin{tikzcd}
	 	 & 0 \arrow[d] & 0 \arrow[d] & &\\
 &\mathrm{H}^0(A) \otimes \mathcal{O}_X \arrow[r, "\sim"] \arrow[d] & \mathrm{H}^0(A) \otimes \mathcal{O}_X \arrow[d, "\otimes t"] & &\\
	0 \arrow[r] & E \arrow[r] \arrow{d}& \mathrm{H}^0(A) \otimes L \arrow[r] \arrow[d] &A\otimes \omega_C \arrow[r] \arrow[d, "\simeq"] & 0\\
	0 \arrow[r] &  A^{\vee}\otimes \omega_C \arrow[r, "i"] \arrow[d] &  \mathrm{H}^0(A) \otimes \omega_C \arrow[r] \arrow[d] &A\otimes \omega_C \arrow[r] & 0\\
		 	 & 0  & 0 &&
\end{tikzcd}$$}}
with exact rows and columns. The claimed isomorphism from $\mathrm{H}^0(A) \oplus \mathrm{H}^0(A^{\vee} \otimes \omega_C)$ to $\mathrm{Ker}\mu \simeq \mathrm{H}^0(E)$ follows.
\end{proof}
	\begin{remark} \label{rem-EC}
	From the first column of the diagram in the above proof, we have an exact sequence
	{\small{$$0 \to \mathrm{H}^0(A) \otimes \mathcal{O}_X \to E \to A^{\vee} \otimes \omega_C \to 0.$$}} Restricting to $C$, and using the isomorphism
	{\small{$$\mathrm{Ker}(\mathrm{H}^0(A) \otimes \mathcal{O}_X \to A \otimes \mathcal{O}_C)=:E^{\vee}\simeq E(-C),$$}}
we also obtain an exact sequence $0 \to A \to E_{|_C} \to A^{\vee} \otimes \omega_C,$ of sheaves on $C$.	
	\end{remark}

	Let $E$ be the rank two Lazarsfeld--Mukai bundle from Lemma \ref{E} and consider the restriction $E_C$ to an integral curve $C \in |L|$. We have the natural determinant map
	$$\mathrm{det} \; : \; \bigwedge^2 \mathrm{H}^0(E_C) \to \mathrm{H}^0(\omega_C),$$
	obtained as the composition $\wedge^2 \mathrm{H}^0(E_C)\to \mathrm{H}^0( \wedge^2 E_C)\simeq \mathrm{H}^0(\omega_C)$.
	\begin{lem} \label{det-petri}
	Choose any section $\mathrm{H}^0(\omega_C \otimes A^{\vee}) \hookrightarrow \mathrm{H}^0(E_C)$ to the natural map $\mathrm{H}^0(E_C) \twoheadrightarrow \mathrm{H}^0(\omega_C \otimes A^{\vee}) $ from Lemma \ref{E}, inducing a splitting
	{\small{$$\mathrm{H}^0(E_C) \simeq \mathrm{H}^0(A) \oplus \mathrm{H}^0(\omega_C \otimes A^{\vee}).$$}}
	Consider the inclusion $\mathrm{H}^0(A)\otimes \mathrm{H}^0(\omega_C \otimes A^{\vee}) \seq \bigwedge^2 \mathrm{H}^0(E_C)$ resulting from the above splitting. Then 
	the restriction {\small{$$\mathrm{det}_{|_{\mathrm{H}^0(A)\otimes \mathrm{H}^0(\omega_C \otimes A^{\vee})}}$$}} of the determinant map to this subspace coincides with the Petri map 
	{\small{$$\mathrm{H}^0(A)\otimes \mathrm{H}^0(\omega_C \otimes A^{\vee}) \to \mathrm{H}^0(\omega_C).$$}}
	Moreover, the determinant map is zero on all other components:
	{\small{$$\mathrm{det}_{|_{\bigwedge^2 \mathrm{H}^0(A)}}=0, \; \; \mathrm{det}_{|_{\bigwedge^2 \mathrm{H}^0(\omega_C \otimes A^{\vee})}}=0.$$}}
	\end{lem}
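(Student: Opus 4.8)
The plan is to reduce all three assertions to the short exact sequence
$$0 \to A \xrightarrow{\iota} E_{|_C} \xrightarrow{q_C} \omega_C \otimes A^{-1} \to 0$$
on $C$ produced in Remark \ref{rem-EC}, together with the induced identification of the determinant $\bigwedge^2 E_{|_C} \simeq A \otimes (\omega_C \otimes A^{-1}) \simeq \omega_C$. The governing local formula is that, for the rank-one subsheaf $A \seq E_{|_C}$ and the quotient $B := \omega_C \otimes A^{-1}$, the wedge map $\bigwedge^2 E_{|_C} \to A \otimes B$ sends $\iota(a) \wedge e \mapsto a \otimes q_C(e)$ whenever the first factor lies in $A$. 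I would record this first, since it controls each graded piece of the splitting $\mathrm{H}^0(E_C) \simeq \mathrm{H}^0(A) \oplus \mathrm{H}^0(\omega_C \otimes A^{-1})$.

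The two easy cases come out immediately. For $\mathrm{det}_{|_{\bigwedge^2 \mathrm{H}^0(A)}}$, note that $\mathrm{H}^0(A)$ maps into $\mathrm{H}^0(E_C)$ through the rank-one subsheaf $\iota(A)$, so the composite $\bigwedge^2 \mathrm{H}^0(A) \to \mathrm{H}^0(\bigwedge^2 E_C)$ factors through $\mathrm{H}^0(\bigwedge^2 \iota(A)) = 0$; this gives $\mathrm{det}_{|_{\bigwedge^2 \mathrm{H}^0(A)}} = 0$. For the cross term $\mathrm{H}^0(A) \otimes \mathrm{H}^0(\omega_C \otimes A^{-1})$, I would take $a \in \mathrm{H}^0(A)$ and a lift $w \in \mathrm{H}^0(E_C)$ of $\bar w \in \mathrm{H}^0(\omega_C \otimes A^{-1})$; by the formula above, $\iota(a) \wedge w \mapsto a \otimes \bar w \in A \otimes B$, which under $A \otimes B \simeq \omega_C$ is exactly the Petri product $a \cdot \bar w$. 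To fix signs and to check compatibility with the explicit splitting, I would cross-check this against the formula for $\alpha$ in Lemma \ref{E2}: writing $\xi = \alpha(v,0) = v \otimes t$ and $\zeta = \alpha(0,w) = y \otimes q(xw) - x \otimes q(yw)$, the $(x \wedge y)$-coefficient of $\xi \wedge \zeta$ is divisible by $t$ and restricts on $C$ to $v_x(xw) + v_y(yw) = v\,w$, recovering the Petri map.

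The main obstacle is the last vanishing, $\mathrm{det}_{|_{\bigwedge^2 \mathrm{H}^0(\omega_C \otimes A^{-1})}} = 0$, because $\mathrm{H}^0(\omega_C \otimes A^{-1})$ is lifted by a section and does \emph{not} visibly sit inside a rank-one subbundle, so the clean argument of the first case is unavailable. Here I would compute directly through Lemma \ref{E2}: for $\xi_i = \alpha(0,w_i) = y \otimes q(xw_i) - x \otimes q(yw_i)$, the $(x\wedge y)$-coefficient of $\xi_1 \wedge \xi_2$ is
$$D = q(xw_1)\,q(yw_2) - q(xw_2)\,q(yw_1) \in \mathrm{H}^0(L^{\otimes 2}),$$
and the determinant is $(D/t)|_C \in \mathrm{H}^0(\omega_C)$. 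The restriction $D|_C = (xw_1)(yw_2) - (xw_2)(yw_1)$ vanishes by commutativity of multiplication in $\Gamma_C(\omega_C)$, confirming $t \mid D$; equivalently, multiplying the analogous restricted expression by $x$ and using $x(yw_i) = y(xw_i)$ together with the injectivity of multiplication by the nonzerodivisor $x$ (here $C$ is integral) forces the leading term to vanish. The delicate point — and the step I expect to require the most care — is to control the \emph{second-order} term obtained after dividing by $t$, i.e.\ to show $(D/t)|_C = 0$ rather than merely $D|_C = 0$; I would handle this by exploiting the defining relations of $\mathrm{Ker}\,\mu \simeq \mathrm{H}^0(E)$ from Lemma \ref{E2} (which pin down the relevant jets of the lifts $q(xw_i), q(yw_i)$ along $C$) together with the projective normality of $(X,L)$, so that the division by $t$ is governed by the exact sequences $0 \to \mathrm{H}^0(L^{\otimes q-1}) \xrightarrow{t} \mathrm{H}^0(L^{\otimes q}) \to \mathrm{H}^0(\omega_C^{\otimes q}) \to 0$.
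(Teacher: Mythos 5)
Your handling of the first two assertions is correct and agrees in substance with the paper: the paper works in the coordinates of Lemma \ref{E2}, where sections of $A$ lift to $v\otimes t$, so that $\mathrm{det}_X$ carries $\bigwedge^2\mathrm{H}^0(A)$ into $\C\langle t\rangle$ (hence dies after restriction to $C$), and where the cross term visibly becomes the Petri map; your sheaf-theoretic argument via the subsheaf $A \subseteq E_{|_C}$ of Remark \ref{rem-EC} proves the same two statements, and the two routes are interchangeable here.

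The genuine gap is the third vanishing, exactly where you stop: you correctly reduce it to $(D/t)|_C=0$, but you do not prove this, and the tools you propose cannot prove it. The relations cutting out $\mathrm{Ker}\,\mu$, the jets of the lifts along $C$, and the sequences $0\to\mathrm{H}^0(L^{\otimes q-1})\xrightarrow{t}\mathrm{H}^0(L^{\otimes q})\to\mathrm{H}^0(\omega_C^{\otimes q})\to 0$ are all data seen along $C$, whereas $(D/t)|_C$ is \emph{not} determined by such data: replacing the section $q$ by $q+t\ell$ for a functional $\ell\in \mathrm{H}^0(\omega_C)^{\vee}$ changes nothing after restriction to $C$, yet it changes $(D/t)|_C$ by $\ell(xw_1)(yw_2)+\ell(yw_2)(xw_1)-\ell(yw_1)(xw_2)-\ell(xw_2)(yw_1)$, which is nonzero for suitable $\ell$ because $xw_1,\,yw_1,\,xw_2,\,yw_2$ are linearly independent in $\mathrm{H}^0(\omega_C)$ (injectivity of the Petri map, valid in this setting by \cite{lazarsfeld-BNP}). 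So no argument confined to the curve can close this step; input from the surface is unavoidable, and indeed this sensitivity shows the vanishing can hold at best for particular splittings, so the splitting must be pinned down by surface geometry rather than by identities along $C$. The paper supplies precisely this surface-level input, and it is the idea missing from your plan: it regards $\alpha'=q(xw_1)\cdot q(yw_2)-q(yw_1)\cdot q(xw_2)$ as an element of $\mathrm{Sym}^2\mathrm{H}^0(L)$, i.e.\ as a quadric on the ambient space of $X$, observes (this is your commutativity computation) that its restriction lies in $\mathrm{Ker}(\psi_{|_C})=\mathrm{K}_{1,1}(C,\omega_C)$, and then invokes the hyperplane-restriction (Lefschetz) isomorphism $\mathrm{K}_{1,1}(X,L)\simeq\mathrm{K}_{1,1}(C,\omega_C)$ of \cite{green-koszul} to conclude $\alpha'\in\mathrm{Ker}(\psi)$, i.e.\ $D=0$ identically in $\mathrm{H}^0(L^{\otimes 2})$ --- a statement stronger than your target $(D/t)|_C=0$ and implying it. Without this K3-specific fact that quadrics through the canonical curve come from quadrics through the surface, your outline cannot be completed.
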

	\begin{proof}
	 Restriction induces an isomorphism $\mathrm{H}^0(E) \simeq \mathrm{H}^0(E_C)$ since $\mathrm{H}^0(E(-C))=\mathrm{H}^0(E^{\vee})=0$. Let $\det_X: \wedge^2 \mathrm{H}^0(E) \to \mathrm{H}^0(L)$ denote the determinant map (on $X$). We have the commutative diagram
	{\small{$$\begin{tikzcd}
	\bigwedge^2 \mathrm{H}^0(E) \arrow[hookrightarrow]{r} \arrow[d, "\mathrm{det}_X"] & \bigwedge^2 \mathrm{H}^0(A) \otimes \mathrm{H}^0(L) \arrow[d, "\phi"] &\\
	\mathrm{H}^0(L) \arrow[hookrightarrow]{r}{j} & \mathrm{H}^0(L^{\otimes 2}) &
	\end{tikzcd}$$}}
	where $\phi$ comes from the isomorphism
	{\small{$$\wedge^2 (\mathrm{H}^0(A) \otimes L) \simeq (\wedge^2 \mathrm{H}^0(A)) \otimes L^{\otimes 2} \simeq L^{\otimes 2},$$}}
	where the last isomorphism depends on a choice of ordered basis $\{x, y\}$ of $\mathrm{H}^0(A)$, with $x \wedge y$ the generator for $\wedge^2 \mathrm{H}^0(A)$. Further, we have the formula $\mathrm{det}=(\det_X)_{|_C} :\wedge^2 \mathrm{H}^0(E_C) \to \mathrm{H}^0(\omega_C).$\smallskip
	
	We first show $\mathrm{det}_{|_{\bigwedge^2 \mathrm{H}^0(\omega_C \otimes A^{\vee})}}=0$. From Lemma \ref{E2}, it suffices to show 
	{\small{$$\phi\left((y \otimes q(xw_1)-x \otimes q(yw_1)) \wedge (y \otimes q(xw_2)-x \otimes q(yw_2))\right)=0,$$}}
	where $w_1, w_2 \in \mathrm{H}^0(\omega_C \otimes A^{\vee})$. In other words, we need
	{\small{$$\alpha=q(xw_1)q(yw_2)-q(yw_1)q(xw_2)=0 \in \mathrm{H}^0(L^{\otimes 2}).$$}}
	But $\alpha=\psi(\alpha')$ lies in the image of $\psi : \mathrm{Sym}^2\mathrm{H}^0(L) \to \mathrm{H}^0(L^{\otimes 2})$. Cearly the restriction $\alpha'_{|_C}$ of $\alpha'$ to $C$ lies in the kernel of $\psi_{|_C} : \mathrm{Sym}^2\mathrm{H}^0(\omega_C) \to \mathrm{H}^0(\omega_C^{\otimes 2})$. Since restriction induces an isomorphism
	{\small{$$\mathrm{Ker}(\psi)=\mathrm{K}_{1,1}(X,L)\simeq \mathrm{K}_{1,1}(C,\omega_C)=\mathrm{Ker}(\psi_{|_C}),$$}}
	see \cite{green-koszul}, we have $\alpha' \in \mathrm{Ker}(\psi)$ as required.\smallskip
	
	From Lemma \ref{E2} we have $\phi_{|_{\mathrm{H}^0(A) \otimes \mathrm{H}^0(A)}} \seq \C\langle t^2 \rangle \seq \mathrm{H}^0(L^{\otimes 2})$, where $t \in \mathrm{H}^0(L)$ defines $C$ and 
	{\small{\begin{align*}
	\phi\left((x\otimes t) \wedge(y \otimes q(xw)-x\otimes q(yw))\right)&=tq(xw)\\
		\phi\left((y\otimes t) \wedge(y \otimes q(xw)-x\otimes q(yw))\right)&=tq(yw).
\end{align*}}}
It follows that {\small{$$j\circ \mathrm{det}_X(\bigwedge^2 \mathrm{H}^0(E)) \seq \C \langle t \rangle \seq \mathrm{H}^0(L^{\otimes 2}).$$}}
Thus, up to a nonzero scalar, $j$ is multiplication by $t \in \mathrm{H}^0(L)$ and we have the formulae {\small{$$\mathrm{det}_X=\frac{1}{t} \phi, \; \; \mathrm{det}={\mathrm{det}_X}_{|_C}.$$}} Since $t$ vanishes on $C$ the above calculations for the map $\phi$ give that $\mathrm{det}_{|_{\mathrm{H}^0(A) \otimes \mathrm{H}^0(A)}}=0$ 
and $\mathrm{det}_{|_{\mathrm{H}^0(A)\otimes \mathrm{H}^0(\omega_C \otimes A^{\vee})}}$ is the Petri map.
	\end{proof}
We now use the determinant map to define a morphism from the Grassmannian of planes in $\mathrm{H}^0(E))$ to the linear system $|L|$ as in \cite{V1}.
\begin{prop} \label{d}
$E$ be the rank two Lazarsfeld--Mukai bundle from Lemma \ref{E}. We have a natural, finite morphism 
$$d \; : \; \mathrm{Gr}_2(\mathrm{H}^0(E)) \to |L|,$$
which is surjective, flat and has degree $\frac{(2k)!}{k!(k+1)!}$. Further, the fibre of $d$ over a curve $C \in |L|$ may be identified naturally with the Brill--Noether space $W^1_{k+1}(C)$. 
\end{prop}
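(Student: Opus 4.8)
The plan is to realise $d$ as the restriction of a linear projection, to exploit the fact that its pull-back of the hyperplane class is the \emph{ample} Pl\"ucker class (which yields finiteness, flatness, surjectivity and the degree almost formally), and to handle the fibre by a Lazarsfeld--Mukai / Euler-characteristic computation on $X$.

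\emph{Construction and well-definedness.} A two-plane $V=\langle s_1,s_2\rangle \seq \mathrm{H}^0(E)$ determines $s_1\wedge s_2 \in \mathrm{H}^0(\bigwedge^2 E)=\mathrm{H}^0(L)$, and changing basis multiplies this section by a nonzero scalar, so $V\mapsto [s_1\wedge s_2]$ descends to $\PP(\mathrm{H}^0(L))=|L|$ as soon as $s_1\wedge s_2\neq 0$. Equivalently, $d$ is the restriction to the Pl\"ucker-embedded $\mathrm{Gr}_2(\mathrm{H}^0(E))$ of the linear projection $\PP(\bigwedge^2\mathrm{H}^0(E))\dashrightarrow \PP(\mathrm{H}^0(L))$ induced by the determinant $\det_X\colon \bigwedge^2\mathrm{H}^0(E)\to \mathrm{H}^0(L)$, and well-definedness says precisely that the Grassmannian misses the centre of projection. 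To check $s_1\wedge s_2\neq 0$: if it vanished, the map $\phi\colon \mathcal{O}_X^{\oplus 2}\to E$ with components $s_1,s_2$ would be generically of rank $\leq 1$, so its image would be a rank-one subsheaf $G\seq E$ with $h^0(G)\geq 2$. As $\mathrm{Pic}(X)=\ZZ[L]$, the saturation of $G$ is $\mathcal{O}_X(nL)\otimes I_Z$ with $n\geq 0$; since $h^0(\mathcal{O}_X\otimes I_Z)\leq 1$ we must have $n\geq 1$, whence $\mathcal{O}_X(L)\hookrightarrow E$ and $2k+1=h^0(L)\leq h^0(E)=k+2$, impossible for $k\geq 2$ (the case $k=1$ is trivial). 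Thus $d$ is a morphism.

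\emph{Finiteness, flatness, surjectivity, degree.} Because $d$ is the restriction of a linear projection disjoint from its centre, $d^*\mathcal{O}_{|L|}(1)\simeq \mathcal{O}_{\mathrm{Gr}_2(\mathrm{H}^0(E))}(1)$, the ample Pl\"ucker generator of $\mathrm{Pic}(\mathrm{Gr}_2(\mathrm{H}^0(E)))=\ZZ$. A morphism pulling an ample class back to an ample class contracts no curve, so $d$ is quasi-finite; being proper, $d$ is finite. Both source and target are smooth of dimension $2k$ (indeed $\dim\mathrm{Gr}_2(\mathrm{H}^0(E))=2(h^0(E)-2)=2k$ and $\dim|L|=h^0(L)-1=2k$), so miracle flatness gives that $d$ is flat; its image is then closed of dimension $2k$, hence all of the irreducible $|L|$, giving surjectivity. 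Finally $\deg(d)=\int_{\mathrm{Gr}_2(\mathrm{H}^0(E))} c_1(d^*\mathcal{O}_{|L|}(1))^{2k}=\int_{\mathrm{Gr}_2(\mathrm{H}^0(E))} c_1(\mathcal{O}(1))^{2k}$, the Pl\"ucker degree of $\mathrm{Gr}(2,k+2)$, which equals $\frac{(2k)!}{k!(k+1)!}$.

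\emph{The fibre.} Over $[C]\in|L|$, a point $V\in d^{-1}([C])$ is a two-plane with $s_1\wedge s_2$ a scalar multiple of the equation $t_C$ of $C$; thus $\phi\colon \mathcal{O}_X^{\oplus 2}\to E$ is injective with cokernel supported on $C$, giving $0\to \mathcal{O}_X^{\oplus 2}\to E\to i_*Q\to 0$ for $i\colon C\hookrightarrow X$ and a rank-one torsion-free sheaf $Q$ on $C$. Comparing Euler characteristics on the K3 surface, $k+2=\chi(E)=4+\chi(i_*Q)=4+(\deg Q-2k+1)$, so $\deg Q=3k-3$ and the saturated image $A:=\ker(E_C\to Q)$ has $\deg A=\deg\omega_C-\deg Q=k+1$; it is globally generated with $h^0(A)\geq 2$, so $[A]\in W^1_{k+1}(C)$, where $h^0(A)=2$ and $V=\mathrm{H}^0(A)$ by Lemma \ref{E}. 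Conversely, by Remark \ref{rem-EC} every $[A]\in W^1_{k+1}(C)$ yields $A\hookrightarrow E_C$ and hence a two-plane $\mathrm{H}^0(A)\seq \mathrm{H}^0(E_C)\simeq \mathrm{H}^0(E)$ lying over $[C]$; the two assignments are mutually inverse. Performing this correspondence over the universal curve, via the determinantal description of $W^1_{k+1}$ and the tautological sub-line-bundle on $\mathrm{Gr}_2(\mathrm{H}^0(E))$, identifies $d^{-1}([C])$ with $W^1_{k+1}(C)$ as schemes. The ampleness argument renders finiteness, flatness, surjectivity and the degree essentially formal, so the real substance is this last identification: matching the scheme structure of the determinantal locus $W^1_{k+1}(C)$ with that of the fibre, and confirming that the Euler-characteristic computation forces $\deg A=k+1$ for \emph{every} $V$ in the fibre (ruling out intrusion of higher pencils). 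For a general $C$ this is corroborated by the numerical coincidence $\#W^1_{k+1}(C)=\frac{(2k)!}{k!(k+1)!}=\deg(d)$, the classical Castelnuovo count, together with the reducedness of $W^1_{k+1}(C)$.
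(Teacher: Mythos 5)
Your proposal is correct in substance, and for the first three assertions it follows the same route as the paper: the paper likewise identifies $d^*\mathcal{O}_{|L|}(1)$ with the Pl\"ucker class (citing \cite[Lemma 2.1]{ogrady} rather than your linear-projection description), deduces finiteness from ampleness of that class, obtains flatness and surjectivity from equality of dimensions plus miracle flatness \cite[Ex.\ III.9.3]{hartshorne}, and reads off the degree as the Catalan number. Where you genuinely diverge is the fibre. You argue fibrewise: for $V \in d^{-1}([C])$ you form $0 \to V\otimes\mathcal{O}_X \to E \to i_*Q \to 0$, compute $\deg Q$ by Euler characteristics, pass to $A=\ker(E_C \to Q)$ of degree $k+1$, and invoke Lemma \ref{E} to conclude $h^0(A)=2$ and $V=\mathrm{H}^0(A)$, with the converse coming from Remark \ref{rem-EC}. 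The paper instead works globally over $|L|$: it introduces the relative Brill--Noether variety $\mathcal{W}^1_{k+1}\to|L|$ (as in \cite[Ch.\ XXI]{ACG2}), defines a morphism $\phi:\mathcal{W}^1_{k+1}\to\mathrm{Gr}_2(\mathrm{H}^0(E))$ by $[A]\mapsto[\mathrm{H}^0(i_A)]$ (where $i_A:\mathrm{H}^0(A)\otimes\mathcal{O}_X\hookrightarrow E$ is the inclusion furnished by Lemma \ref{E2}), proves $\phi$ surjective by a dimension count over general $C$, and exhibits your cokernel construction as its inverse, so the fibre statement is the restriction of an isomorphism over $|L|$. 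Your version is more explicit at the level of points; the paper's global version is what produces the \emph{natural, scheme-theoretic} identification of $d^{-1}(C)$ with $W^1_{k+1}(C)$, which is what is actually used downstream (in Proposition \ref{gen-nondegK3}, where $h^0(\mathcal{O}_{W^1_{k+1}(D)})$ of a possibly non-reduced fibre is compared with $\deg d$ via flatness). You flag this issue yourself but only gesture at relativizing over the universal curve; that relativization is the real content of the last claim, and your write-up does not carry it out.

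Two smaller technical points. First, in your well-definedness argument, the saturation of a rank-one subsheaf of $E$ is reflexive, hence a line bundle $\mathcal{O}_X(nL)$; there is no ideal factor $I_Z$. As written, the inference ``$n\geq 1$, whence $\mathcal{O}_X(L)\hookrightarrow E$ and $h^0(L)\leq h^0(E)$'' is invalid if the saturation were $\mathcal{O}_X(nL)\otimes I_Z$, since $h^0(L\otimes I_Z)$ can be small; dropping the spurious $I_Z$ repairs the step and yields the contradiction $2k+1\leq k+2$ for $k\geq 2$. (The paper sidesteps this entirely by citing the nonvanishing of $\det$ on decomposable vectors from the proof of equation $(3.18)$ in \cite{V1}.) Second, you assert without argument that the cokernel $Q$ is the pushforward of a rank-one \emph{torsion-free} sheaf on $C$; this requires the standard purity remark that the length-one free resolution $0\to\mathcal{O}_X^{\oplus 2}\to E\to Q\to 0$ forces $\mathrm{depth}\,Q_x=1$ at every point by Auslander--Buchsbaum, so $Q$ has no subsheaves supported on points. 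Both points are easily fixed, but they should be said.
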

\begin{proof}
We follow \cite{V1}. The determinant map $\mathrm{det} \; : \; \wedge^2 \mathrm{H}^0(X,E) \to \mathrm{H}^0(X,L)$ does not vanish on any element of the form $s \wedge t$ for $s, t \in  \mathrm{H}^0(X,E)$, see \cite{V1}, proof of equation $(3.18)$. We therefore have a well-defined morphism
{\small{\begin{align*}
d \; : \; \mathrm{Gr}_2(\mathrm{H}^0(E)) &\to |L| \\
W &\mapsto [\mathrm{det}(\wedge^2 W)].
\end{align*}}}
The pull-back of the hyperplane class of $|L|$ by $d$ is the Pl\"ucker hyperplane class of $\mathrm{Gr}_2(\mathrm{H}^0(E))$, \cite[Lemma 2.1]{ogrady}. Since the Pl\"ucker hyperplane class is very ample, $d$ is finite. Since the Grassmannian $\mathrm{Gr}(2,n)$ has degree given by the Catalan number $\frac{(2(n-2))!}{(n-2)!(n-1)!}$, $d$ has degree $\frac{(2k)!}{k!(k+1)!}$. As $\dim  \mathrm{Gr}_2(\mathrm{H}^0(E))=\dim |L|=2g$, the morphism $d$ is surjective and flat by \cite[Ex.\ III.9.3]{hartshorne}.\smallskip

Let $\pi : \mathcal{W}^1_{k+1} \to |L|$ be the relative Brill--Noether variety, which is constructed as in \cite[CH.\ XXI]{ACG2}. For any $C \in |L|$, $\pi^{-1}(C)=W^1_{k+1}(C)$. We have a morphism $\phi: \mathcal{W}^1_{k+1}  \to \mathrm{Gr}_2(\mathrm{H}^0(E))$ defined as such. Given a torsion-free sheaf $A \in W^1_{k+1}(C)$, for$C \in |L|$, we have an exact sequence
{\small{$$0 \to E \to \mathrm{H}^0(A) \otimes L \to A \otimes \omega_C \to 0.$$}}
If $t \in \mathrm{H}^0(L)$ defines $C$, then the image of $\mathrm{H}^0(A) \otimes \mathcal{O}_X \xrightarrow{\otimes t} \mathrm{H}^0(A) \otimes L$ lands in $E$, and therefore induces an inclusion $i_A: \mathrm{H}^0(A) \otimes \mathcal{O}_X  \hookrightarrow E$ by Lemma \ref{E2}. We set $\phi([A])=[\mathrm{H}^0(i_A)]$. Notice also that $\mathrm{Cok}(i_A) \simeq A^{-1} \otimes \omega_C$.\smallskip

If $C \in |L|$ is general, then both $\pi^{-1}(C)$ and $d^{-1}(C)$ are zero-dimensional and nonempty. Hence $\phi$ is surjective. Thus, if $[V \hookrightarrow \mathrm{H}^0(E)] \in \mathrm{Gr}_2(\mathrm{H}^0(E))$, the cokernel of the natural map $V \otimes \mathcal{O}_X \to E$ is of the form $A^{-1}_V \otimes \omega_C$ for some $A_V \in W^1_{k+1}(C')$ for some $C' \in |L|$. Thus we define an inverse $\psi : \mathrm{Gr}_2(\mathrm{H}^0(E)) \to \mathcal{W}^1_{k+1} $ to $\phi$  by $\psi([V]):= [A_V]$. Thus $\phi$ is an isomorphism $\mathcal{W}^1_{k+1}  \simeq \mathrm{Gr}_2(\mathrm{H}^0(E))$. This completes the proof.

\end{proof}
\begin{lem} \label{res-1d-kos}
With notation as in Proposition \ref{d}, let $A \in W^1_{k+1}(C) \simeq d^{-1}(C)$ for $C \in |L|$ and let $s \neq 0 \in \mathrm{H}^0(C,A)$. The restriction map $\mathrm{res} : \mathrm{H}^0(X,L) \twoheadrightarrow \mathrm{H}^0(C,\omega_C)$ induces an isomorphism
$$\mathrm{K}_{k-1,1}(C,\omega_C;\mathrm{H}^0(\omega_C\otimes A^{-1})) \simeq \mathrm{K}_{k-1,1}(X,L;\mathrm{H}^0(L \otimes I_{Z(s)})).$$
\end{lem}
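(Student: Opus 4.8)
The plan is to compare the two subspace Koszul groups through the restriction of the graded rings, reducing the statement to a dimension count on the surface together with one nonvanishing. First I would set up the linear algebra. Let $t \in \mathrm{H}^0(X,L)$ be the section cutting out $C$; since $t$ vanishes on $C \supseteq Z(s)$ it lies in $W':=\mathrm{H}^0(L \otimes I_{Z(s)})$, and $\mathrm{res}(t)=0$. Writing $W:=\mathrm{H}^0(\omega_C \otimes A^{-1})=\mathrm{H}^0(\omega_C(-Z(s)))$, Riemann--Roch and Serre duality give $\dim W = k$. Because $\mathrm{res}:\mathrm{H}^0(X,L)\twoheadrightarrow \mathrm{H}^0(C,\omega_C)$ factors the evaluation at $Z(s)$, the subscheme $Z(s)$ imposes the same number $k$ of conditions on $|L|$ as on $|\omega_C|$, whence $\dim W'=k+1$ (using $\mathrm{H}^1(X,L)=0$); thus $\mathrm{res}:W'\to W$ is surjective with kernel $\mathbb{C}t$. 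On the curve side, $\mathrm{K}_{k-1,1}(C,\omega_C;W)$ is one dimensional: this is Proposition \ref{one-dim-iso} (with $M=A$, $p=k-1$) when $A$ is a line bundle, and the same computation runs for torsion-free $A$ using the injectivity of $\delta_s$ from Lemma \ref{natural-injection} together with the global generation of $\omega_C \otimes A^{-1}$.

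The heart of the argument is to prove that $\mathrm{K}_{k-1,1}(X,L;W')$ is also one dimensional. I would use the exact sequence of $\mathrm{Sym}(W')$-modules $0 \to \Gamma_X(L)(-1) \xrightarrow{\cdot t} \Gamma_X(L) \to \Gamma_C(\omega_C) \to 0$, exact because $\mathrm{H}^1(X,L^{m})=0$ for all $m\geq 0$ and $t$ is a nonzerodivisor on the integral surface. Since $t \in W'$, multiplication by $t$ is null-homotopic on Koszul cohomology, so the long exact sequence collapses into short exact sequences; moreover $t$ acts as zero on $\Gamma_C(\omega_C)$, so a standard splitting of $\wedge^{\bullet}W'=\wedge^{\bullet}W\oplus t\wedge\wedge^{\bullet-1}W$ identifies the quotient term and yields, for every $p$, an exact sequence $0 \to \mathrm{K}_{p,1}(X,L;W') \to \mathrm{K}_{p,1}(C,\omega_C;W) \oplus \mathrm{K}_{p-1,1}(C,\omega_C;W) \to \mathrm{K}_{p-1,1}(X,L;W') \to 0$.

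Writing $a_p = \dim \mathrm{K}_{p,1}(X,L;W')$ and $b_p = \dim \mathrm{K}_{p,1}(C,\omega_C;W)$, this gives $a_p + a_{p-1} = b_p + b_{p-1}$. The boundary values $b_p = 0$ for $p \geq k+1$ and $a_p=0$ for $p \geq k+2$ (both from vanishing of exterior powers, as $\dim W=k$, $\dim W'=k+1$), together with $b_{k-1}=1$, propagate downward: one finds $a_{k+1}=0$, then $a_k=b_k$, and finally $a_{k-1}=b_k+b_{k-1}-a_k=1$, the unknown $b_k$ cancelling. Hence $\dim\mathrm{K}_{k-1,1}(X,L;W')=1=\dim\mathrm{K}_{k-1,1}(C,\omega_C;W)$, so the restriction map $r$ between them is an isomorphism as soon as it is nonzero.

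To see $r\neq 0$, consider the square whose horizontal arrows are $r$ and the full restriction $\mathrm{Res}:\mathrm{K}_{k-1,1}(X,L)\to\mathrm{K}_{k-1,1}(C,\omega_C)$ and whose vertical arrows are the inclusions of Proposition \ref{inc-subspace}; it commutes by functoriality of restriction. A nonzero generator $\zeta$ of $\mathrm{K}_{k-1,1}(X,L;W')$ maps to a nonzero class in $\mathrm{K}_{k-1,1}(X,L)$ (Proposition \ref{inc-subspace}, as $(X,L)$ is normally generated), and $\mathrm{Res}$ is injective by Green's hyperplane-section theorem for Koszul cohomology, using $\mathrm{H}^1(X,\mathcal{O}_X)=0$; so $r(\zeta)\neq 0$. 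I expect the main obstacle to be the surface-side computation: justifying the collapse of the long exact sequence via the vanishing of the multiplication-by-$t$ map, and pinning down the torsion-free case of $b_{k-1}=1$, where one must verify the global generation of $\omega_C \otimes A^{-1}$ needed to apply Proposition \ref{elementary-van} inside the Lemma \ref{natural-injection} computation. By contrast, the concluding nonvanishing is immediate once the Lefschetz injectivity of $\mathrm{Res}$ is in hand.
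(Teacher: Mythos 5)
Your proposal takes a genuinely different route from the paper. The paper argues in two steps: Green's Lefschetz theorem gives an isomorphism $\alpha:\mathrm{K}_{k-1,1}(X,L)\xrightarrow{\sim}\mathrm{K}_{k-1,1}(C,\omega_C)$ on the \emph{full} Koszul groups, and then a diagram chase with the Lazarsfeld--Mukai bundle $E$ (from the proof of Lemma \ref{E2}) establishes $\mathrm{res}^{-1}(\mathrm{H}^0(\omega_C\otimes A^{-1}))=\mathrm{H}^0(L\otimes I_{Z(s)})$, so that $\alpha$ carries one subspace group onto the other. You instead run the Lefschetz mechanism directly at the level of the subspaces $W'$ and $W$: the sequence $0\to\Gamma_X(L)(-1)\xrightarrow{t}\Gamma_X(L)\to\Gamma_C(\omega_C)\to 0$, the null-homotopy of multiplication by $t\in W'$, and the splitting of $\wedge^{\bullet}W'$ along $W'=\sigma(W)\oplus\C t$ give short exact sequences yielding $a_p+a_{p-1}=b_p+b_{p-1}$ for all $p$, and injectivity of the induced map $r$ follows from the commutative square, Proposition \ref{inc-subspace}, and injectivity of the full restriction. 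This is correct and more self-contained than the paper's proof (it never touches $E$): the preimage statement that the paper extracts from the bundle diagram, you obtain from the elementary count $\dim W'=k+1$ with $\ker(\mathrm{res})=\C t\seq W'$.

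One step as written is, however, wrong --- though happily also unnecessary. You justify $b_{k-1}=\dim\mathrm{K}_{k-1,1}(C,\omega_C;W)=1$ via Proposition \ref{one-dim-iso} in the locally free case and, for torsion-free $A$, via ``the injectivity of $\delta_s$ from Lemma \ref{natural-injection}.'' Lemma \ref{natural-injection} only \emph{constructs} $\delta_s$; it asserts no injectivity. Injectivity of $\delta_s$ in this K3 setting is Proposition \ref{assumps-OK}, which comes after the present lemma and whose proof itself passes through the restriction isomorphism (via \cite{kemeny-voisin}), so citing it here is circular; likewise Proposition \ref{one-dim-iso} requires $A$ locally free with $\omega_C\otimes A^{\vee}$ base-point free, exactly what one cannot assume when $C\in|L|$ is nodal --- handling torsion-free pencils is the whole point of this lemma, and in the paper the one-dimensionality of the curve-side space is \emph{deduced from} the lemma (Proposition \ref{nondeg-K3curve}), not used to prove it. Fortunately your recursion never needs the common value: $a_{k+2}=b_{k+2}=b_{k+1}=0$ forces $a_{k+1}=0$, then $a_k=b_k$ and $a_{k-1}=b_{k-1}$. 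Since your final paragraph shows $r$ kills no nonzero class, $r$ is an injection between spaces of equal finite dimension, hence an isomorphism. Delete the $b_{k-1}=1$ claim and your proof is complete.
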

\begin{proof}
By the Lefschetz Theorem \cite{green-koszul}, $\mathrm{res}$ induces an isomorphism $\alpha: \mathrm{K}_{k-1,1}(X,L) \xrightarrow{\sim} \mathrm{K}_{k-1,1}(C,\omega_C)$.
We have the exact sequence $0 \to \mathrm{H}^0(A) \otimes \mathcal{O}_X \to E \to A^{-1} \otimes \omega_C \to 0$ by the proof of Lemma \ref{E2}. 
Now consider the commutative diagram
{\small{$$\begin{tikzcd}
	 	 & & & 0 \arrow[d] & \\
 & &&  \mathcal{O}_X \arrow[d, "\otimes t"]  &\\
	0 \arrow[r] & \mathcal{O}_X \arrow[r] \arrow[d, "\otimes s"]& E \arrow[r] \arrow[d, "\simeq"] & L \otimes I_{Z(s)} \arrow[r] \arrow[d] & 0\\
	0 \arrow[r] &  \mathrm{H}^0(A) \otimes \mathcal{O}_X  \arrow[r] &  E \arrow[r] &A^{-1}\otimes \omega_C \arrow[r] \arrow[d]& 0\\
		 	 &  & &0&
\end{tikzcd}$$}}
where $t \in \mathrm{H}^0(L \otimes I_{Z_(s)})$ defines $C$. It follows that $\mathrm{res}^{-1}(\mathrm{H}^0(\omega_C\otimes A^{-1})=\mathrm{H}^0(L \otimes I_{Z(s)})$ and $\alpha$ maps the subspace $\mathrm{K}_{k-1,1}(X,L;\mathrm{H}^0(L \otimes I_{Z(s)})) \seq \mathrm{K}_{k-1,1}(X,L) $ isomorphically to $\mathrm{K}_{k-1,1}(C,\omega_C;\mathrm{H}^0(\omega_C\otimes A^{-1})) \seq \mathrm{K}_{k-1,1}(C,\omega_C)$.
\end{proof}

In the following proposition, we verify the assumptions of Proposition \ref{definability-map-brill-syz} in a special case.
\begin{prop} \label{assumps-OK}
	Let $(X,L)$ be a very general, primitively polarized, K3 surface of even genus $g=2k$ for $g \geq 2$. Let $C \in |L|$ be an integral curve. For any $[A] \in W^1_{k+1}(C)$ and $s \neq 0 \in \mathrm{H}^0(C,A)$, the map 
	{\small{$$ \delta_s \; : \; \bigwedge^{k}\mathrm{H}^0(\omega_C \otimes A^{\vee}) \otimes \frac{ \mathrm{H}^0(A)}{\C \langle s\rangle } \to \mathrm{K}_{k-1,1}(C,\omega_C; \mathrm{H}^0(\omega_C \otimes A^{\vee}))$$}} from Lemma \ref{natural-injection} is injective.
	\end{prop}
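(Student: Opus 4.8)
The plan is to reduce the injectivity of $\delta_s$ to the injectivity of a single Koszul differential, and then to establish the latter by passing to the $K3$ surface $X$ and invoking the finiteness of the morphism $d$ from Proposition \ref{d}. Since $\deg A = k+1$ and the arithmetic genus of $C$ is $2k$, Riemann--Roch and Serre duality give $h^0(\omega_C \otimes A^\vee)=k$, so $\bigwedge^{k}\mathrm{H}^0(\omega_C \otimes A^\vee)$ is one-dimensional and the source of $\delta_s$ is one-dimensional; hence $\delta_s$ is injective if and only if it is nonzero. Unwinding the construction in Lemma \ref{natural-injection}, $\delta_s$ is induced by the differential
$$d_1 \; : \; \bigwedge^{k}\mathrm{H}^0(\omega_C \otimes A^\vee) \otimes \mathrm{H}^0(A) \to \bigwedge^{k-1}\mathrm{H}^0(\omega_C \otimes A^\vee) \otimes \mathrm{H}^0(\omega_C),$$
whose domain is two-dimensional. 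A direct check shows that $\delta_s$ is nonzero for \emph{every} $s \neq 0$ precisely when $d_1$ is injective: if $d_1(\omega \otimes a_0)=0$ with $a_0 \neq 0$, choosing $s$ independent from $a_0$ makes $\delta_s(\omega \otimes \overline{a_0})=0$; conversely, injectivity of $d_1$ forces $d_1(\omega \otimes a) \notin \C\, d_1(\omega \otimes s)$ whenever $a \notin \C s$. Writing $\omega=\xi_1 \wedge \cdots \wedge \xi_k$ for a basis $\xi_i$ of $\mathrm{H}^0(\omega_C \otimes A^\vee)$ and expanding $d_1(\omega \otimes a)$ in the basis $\{\xi_1 \wedge \cdots \hat{\xi_i} \cdots \wedge \xi_k\}$ of $\bigwedge^{k-1}\mathrm{H}^0(\omega_C \otimes A^\vee)$, one sees $d_1(\omega \otimes a)=0$ if and only if $\mu_0(\xi \otimes a)=0$ for all $\xi$, where $\mu_0 : \mathrm{H}^0(A) \otimes \mathrm{H}^0(\omega_C \otimes A^\vee) \to \mathrm{H}^0(\omega_C)$ is the multiplication (Petri) map. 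Thus it remains to show that $N_A := \{\, a \in \mathrm{H}^0(A) : \mu_0(a \otimes \xi)=0 \text{ for all } \xi \,\}$ vanishes.

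I would prove $N_A=0$ on the surface $X$. Restricting the Lazarsfeld--Mukai bundle $E$ of Lemma \ref{E} and using the identifications $\mathrm{H}^0(E) \simeq \mathrm{H}^0(E_C) \simeq \mathrm{H}^0(A) \oplus \mathrm{H}^0(\omega_C \otimes A^{-1})$ (the first isomorphism because $\mathrm{H}^0(E^\vee)=\mathrm{H}^1(E^\vee)=0$), Lemma \ref{det-petri} identifies $\mu_0$ with the restriction to $C$ of the determinant map $\det_X : \bigwedge^2 \mathrm{H}^0(E) \to \mathrm{H}^0(L)$. Suppose $0 \neq a \in N_A$. Then for every $\xi \in \mathrm{H}^0(\omega_C \otimes A^{-1})$ we have $\det_X(a \wedge \xi)_{|_C}=\mu_0(a \otimes \xi)=0$, so $\det_X(a \wedge \xi)$ vanishes on $C$, i.e.\ it is a scalar multiple of the section $t \in \mathrm{H}^0(L)$ cutting out $C$, since $\mathrm{H}^0(X, L \otimes \mathcal{O}_X(-C))=\mathrm{H}^0(X,\mathcal{O}_X)=\C$. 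For $\xi \neq 0$ the element $a \wedge \xi$ is a nonzero decomposable vector, as $a$ and $\xi$ lie in distinct summands and are nonzero; by Proposition \ref{d} the determinant map does not vanish on nonzero decomposables, so $\det_X(a \wedge \xi)=\lambda(\xi)\, t$ with $\lambda(\xi) \neq 0$. Hence $d(\langle a, \xi\rangle)=[t]=C$, that is, the plane $\langle a, \xi\rangle$ lies in the fibre $d^{-1}(C)$.

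As $\xi$ ranges over $\PP(\mathrm{H}^0(\omega_C \otimes A^{-1}))$ these planes fill out a positive-dimensional subvariety of the fibre: since $\langle a, \xi\rangle \cap \mathrm{H}^0(\omega_C \otimes A^{-1})=\C\langle \xi\rangle$ (using $\mathrm{H}^0(A) \cap \mathrm{H}^0(\omega_C \otimes A^{-1})=0$), the assignment $[\xi] \mapsto [\langle a, \xi\rangle]$ is injective, so its image is a subvariety of $\mathrm{Gr}_2(\mathrm{H}^0(E))$ of dimension $k-1$ contained in $d^{-1}(C)$. But $d$ is finite by Proposition \ref{d}, so every fibre is zero-dimensional, and for $g=2k \geq 4$ we have $k-1 \geq 1$, a contradiction. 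Therefore $N_A=0$, $d_1$ is injective, and $\delta_s$ is injective for every $s \neq 0$. The one genuine obstacle — and the reason the elementary vanishing of Proposition \ref{elementary-van} cannot be applied directly as in the locally free case of Proposition \ref{one-dim-iso} — is that $A$ need not be locally free, so the kernel of the evaluation map on $C$ is merely torsion-free and its higher exterior powers need not vanish. This is exactly what forces the passage to the genuine vector bundle $E$ on the smooth surface $X$, where the finiteness of $d$ takes the place of the rank bound on a kernel bundle.
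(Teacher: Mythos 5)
Your proof is correct for $k \geq 2$, and it takes a genuinely different route from the paper's. Both arguments pass to the K3 surface through the Lazarsfeld--Mukai bundle $E$ and both rely on Lemma \ref{det-petri}, but they diverge after that. The paper fixes $s$, realizes $Z(s)$ as the zero scheme of a section of $E$ via the Serre construction, invokes the determinantal syzygy construction of \cite[\S 3.4.1]{aprodu-nagel} to produce a map $\gamma \colon \bigwedge^{k+1}\mathrm{H}^0(L \otimes I_{Z/X}) \to \mathrm{K}_{k-1,1}(X,L;\mathrm{H}^0(L \otimes I_{Z/X}))$ which is known to be nonzero (hence injective, the source being one-dimensional), and then carries out an explicit computation --- the formula for $\hat{\gamma}$ in a basis adapted to $0 \to \mathrm{H}^0(\mathcal{O}_X) \to \mathrm{H}^0(E) \to \mathrm{H}^0(L\otimes I_{Z/X}) \to 0$, together with the diagram comparing $\mathrm{H}^0(E)$ with $\mathrm{H}^0(E_C)$ --- to identify $\gamma$ with $\delta_s$. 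You instead reduce the statement, quantified over all $s$, to the vanishing of $N_A := \{ a \in \mathrm{H}^0(A) \; : \; \mu_0(a \otimes \xi)=0 \text{ for all } \xi \}$, where $\mu_0 : \mathrm{H}^0(A) \otimes \mathrm{H}^0(\omega_C \otimes A^{\vee}) \to \mathrm{H}^0(\omega_C)$ is the multiplication map; your identification of this with injectivity of the Koszul differential $d_1$ on the two-dimensional space $\bigwedge^{k}\mathrm{H}^0(\omega_C \otimes A^{\vee}) \otimes \mathrm{H}^0(A)$, and of that in turn with injectivity of $\delta_s$ for every $s$, is correct. You then kill $N_A$ geometrically: a nonzero $a \in N_A$ would place the $(k-1)$-dimensional family of planes $\langle a, \xi \rangle$, $[\xi] \in \PP(\mathrm{H}^0(\omega_C \otimes A^{\vee}))$, inside the fibre $d^{-1}(C)$ of the finite morphism $d$ of Proposition \ref{d}, a contradiction. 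All the inputs you use (the splitting $\mathrm{H}^0(E) \simeq \mathrm{H}^0(A) \oplus \mathrm{H}^0(\omega_C \otimes A^{\vee})$, the fact that $\mathrm{det}=(\mathrm{det}_X)_{|_C}$ restricts to the Petri map on the cross term and vanishes on the other summands, nonvanishing of $\mathrm{det}_X$ on nonzero decomposables, finiteness of $d$) are established in Lemmas \ref{E}, \ref{E2}, \ref{det-petri} and Proposition \ref{d} in the torsion-free setting, so the argument is sound. The trade-off: the paper's identification $\gamma = \delta_s$ is longer but explicit and uniform in $k$, in keeping with its programme of explicit formulas for minimal-rank syzygies; your argument is shorter, bypassing the Aprodu--Nagel construction and the diagram chase entirely, but produces no formula for the image of $\delta_s$.

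One discrepancy with the statement as written: the proposition allows $g = 2k \geq 2$, while your contradiction needs $k-1 \geq 1$, so $g=2$ is not covered (you flag this yourself). The case is peripheral --- every application in the paper (Propositions \ref{nondeg-K3curve}, \ref{gen-nondegK3} and the main theorem) involves genus at least $8$ --- and it can be patched in one line: for $k=1$, the sheaf $\omega_C \otimes A^{\vee}$ is torsion-free of degree zero with a nonzero section, hence isomorphic to $\mathcal{O}_C$, so $A \simeq \omega_C$ by reflexivity and $\mu_0$ is injective, giving $N_A=0$ directly. As submitted, however, your proof establishes the proposition only for $g \geq 4$.
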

\begin{proof}
	 Let $E$ be the rank-two Lazarsfeld--Mukai bundle from Lemma \ref{E}. The dual $E^{\vee}$ fits into the exact sequence 
$0 \to {E}^{\vee} \to \mathrm{H}^0(A) \otimes \mathcal{O}_X \to i_*A \to 0.$ Let $Z=Z(s) \seq C$ denote the zero locus of $s$. From the surjective restriction map $\mathrm{H}^0(X,L) \to \mathrm{H}^0(C,\omega_C)$, we have isomorphisms 
		{\small{\begin{align*}
		\mathrm{Coker}\left(\mathrm{H}^0(X,L) \to \mathrm{H}^0(Z,L_{|_Z}) \right)&\simeq \mathrm{Coker}\left(\mathrm{H}^0(C,\omega_C) \to \mathrm{H}^0(Z,L_{|_Z}) \right)\\
		& \simeq \mathrm{Ker}\left(\mathrm{H}^1(C,\omega_C\otimes A^{\vee}) \to \mathrm{H}^1(C,\omega_C) \right),
		\end{align*}}}
		where the given maps are the natural ones, using that $A^{\vee} \simeq I_{Z/C}$. The map $\mathrm{H}^1(C,\omega_C\otimes A^{\vee}) \to \mathrm{H}^1(C,\omega_C)$ is surjective and has one-dimensional kernel by Serre duality. Thus, $ \mathrm{H}^0(X,L) \to \mathrm{H}^0(Z,L_{|_Z}) $ has one dimensional cokernel. Further, $W^1_d(C)=\emptyset$ for all $d<k+1$ by Lemma \ref{E}. Applying the above argument to any proper subscheme $Z' \subsetneq Z$, and using that $\mathrm{H}^0(I_{Z'/C}^{\vee})<2$ (as $W^1_{d(Z'))}(C)=\emptyset$) shows that the natural restriction map 
		{\small{$$\mathrm{H}^0(X,L) \to \mathrm{H}^0(Z',L_{|_{Z'}})$$}}
		is surjective. By Serre's construction, \cite[\S 3]{lazarsfeld-lectures}, we can construct a rank two vector bundle on $X$ associated to $Z \seq X$. This bundle has the same invariants as $E$ and is stable, which implies that it must be isomorphic to $E$, see \cite[\S 2]{V1}. Thus we have an exact sequence
		{\small{$$0 \to \mathcal{O}_X \xrightarrow{s'} E \to L \otimes I_{Z/X} \to 0,$$}}
		where the section $s'\in \mathrm{H}^0(E)$ is uniquely determined up to a nonzero scalar multiple. Since $\mathrm{H}^1(\mathcal{O}_X)=0$, we obtain an exact sequence $0 \to \mathrm{H}^0(\mathcal{O}_X )\xrightarrow{s'} \mathrm{H}^0(E) \xrightarrow{\pi} \mathrm{H}^0(L \otimes I_{Z/X}) \to 0.$
		\smallskip
		
		Following \cite[\S 3.4.1]{aprodu-nagel}, we have a natural, nonzero map 
		{\small{$$\gamma \; : \; \bigwedge^{k+1} \mathrm{H}^0(X, L \otimes I_{Z/X}) \to \mathrm{K}_{k-1,1}(X,L; \mathrm{H}^0(X, L \otimes I_{Z/X}))$$}}
		defined as such: let $U \seq \mathrm{H}^0(E) $ be any subspace with $\pi_{|_U}: U \to  \mathrm{H}^0(L \otimes I_{Z/X})$ an isomorphism. Then, since $\rank(U)=k+1$, we have natural isomorphisms
		{\small{\begin{align*}
		\mathrm{Hom}_k (\wedge^{k+1} \mathrm{H}^0( L \otimes I_{Z/X}), \wedge^{k-1} \mathrm{H}^0( L \otimes I_{Z/X}) \otimes \mathrm{H}^0(L))  &\simeq \wedge^{k-1}U \otimes \wedge^{k+1}U^{\vee} \otimes \mathrm{H}^0(L)\\
		&\simeq\wedge^2 U^{\vee} \otimes \mathrm{H}^0(L)\\
		&\simeq \mathrm{Hom}_k (\wedge^2 U, \mathrm{H}^0(L)).
		\end{align*}}}
		Let $\hat{\gamma} \in \mathrm{Hom}_k (\wedge^{k+1} \mathrm{H}^0( L \otimes I_{Z/X}), \wedge^{k-1} \mathrm{H}^0( L \otimes I_{Z/X}) \otimes \mathrm{H}^0(L))$ denote the element corresponding to $\mathrm{det}_{U} \in \mathrm{Hom}_k (\wedge^2 U, \mathrm{H}^0(L))$, where $\mathrm{det}$ is the natural map $\mathrm{det} : \wedge^2 \mathrm{H}^0(E) \to \mathrm{H}^0(\wedge^2 E) \simeq \mathrm{H}^0(L).$ It is shown in \cite[\S 3.4.1]{aprodu-nagel} that $\hat{\gamma}$ induces a nonzero map $\gamma : \wedge^{k+1} \mathrm{H}^0(L \otimes I_{Z/X}) \to \mathrm{K}_{k-1,1}(X,L; \mathrm{H}^0(L \otimes I_{Z/X}))$, and that this map does not depend on the choice of $U$.\smallskip
		
		Since $\gamma$ is nonzero and $h^0(L \otimes I_{Z/X}))=k+1$, the map $\gamma$ is injective. To complete the proof, it remains to show that we may identify $\gamma$ with $\delta_s$. From the exact sequence
		{\small{$$0 \to I_{C/X} \to I_{Z/X} \to I_{Z/C} \to 0,$$}}
		we obtain, after twisting by $L=I_{C/X}^{\vee}$, a short exact sequence
		{\small{$$0 \to \mathcal{O}_X \xrightarrow{t} L \otimes I_{Z/X} \to \omega_C \otimes A^{\vee} \to 0,$$}}
		where $t \in \mathrm{H}^0( L \otimes I_{Z/X})$ is a section defining the curve $C$. Taking global sections we obtain a (non-canonical) isomorphism
		$\wedge^{k+1} \mathrm{H}^0(L \otimes I_{Z/X}) \simeq \wedge^{k} \mathrm{H}^0(\omega_C \otimes A^{\vee})\otimes {\C \langle t \rangle } .$\smallskip
		
		We will now show that there is a natural isomorphism ${\C \langle t \rangle } \simeq \frac{ \mathrm{H}^0(A)}{\C \langle s\rangle }$. By Remark \ref{rem-EC}, we have a commutative diagram
		{\small{$$\begin{tikzcd}
				&&& 0\arrow[d]&\\
&0\arrow[d]&& \mathrm{H}^0(\mathcal{O}_X )\arrow[d,"t"]&\\
0 \arrow[r] & \mathrm{H}^0(\mathcal{O}_C)\simeq \mathrm{H}^0(\mathcal{O}_X )\arrow[r, "s' "] \arrow[d, "s"] & \mathrm{H}^0(E) \arrow[r, "\pi"] \arrow[d, "\simeq"] &\mathrm{H}^0(L \otimes I_{Z/X}) \arrow[r] \arrow[d, "q"] & 0\\
0 \arrow[r] & \mathrm{H}^0(A) \arrow[r] \arrow[d]  & \mathrm{H}^0(E_C) \arrow[r, "\widetilde{\pi}"] &\mathrm{H}^0(\omega_C \otimes A^{-1}) \arrow[r] \arrow[d] & 0\\
& \frac{ \mathrm{H}^0(A)}{\C \langle s\rangle }\arrow[d]&&0&\\
& 0 &&&
\end{tikzcd}$$}}
with exact rows and columns, which gives the claimed isomorphism ${\C \langle t \rangle } \simeq \frac{ \mathrm{H}^0(A)}{\C \langle s\rangle }$. Now let $\{s', t', v'_1, \ldots, v'_k\}$ be a basis for $\mathrm{H}^0(E)$ with $\pi(t')=t$, and $v_i:=\pi(v_i)$, $1 \leq i \leq k$. By \cite[\S 3.4.1]{aprodu-nagel}, we have the formula (up to sign)
{\small{\begin{align*}
\hat{\gamma}(v_1 \wedge \ldots \wedge v_k \wedge t)&=\sum_{i<j} (-1)^{i+j} v_1 \wedge \ldots \wedge \hat{v_i}\wedge \ldots \wedge \hat{v_j} \ldots \wedge v_k \wedge t \otimes \mathrm{det}(v'_i\wedge v_j)\\
&+\sum_{i\leq k} (-1)^{i+k+1} v_1 \wedge \ldots \wedge \hat{v_i}\wedge \ldots \wedge v_k \otimes \mathrm{det}(v'_i\wedge t).
\end{align*}}}
We have a natural isomorphism $ \mathrm{K}_{k-1,1}(X,L; \mathrm{H}^0( L \otimes I_{Z/X})) \simeq  \mathrm{K}_{k-1,1}(C,\omega_C; \mathrm{H}^0(C \otimes A^{-1}))$ induced by restriction to $C$, \cite[\S 2]{kemeny-voisin}. Let $\widetilde{\gamma}$ be the composition of $\gamma$ with this identification. Since $t$ lies in the kernel of the restriction map $q: \mathrm{H}^0( L \otimes I_{Z/X}) \to \mathrm{H}^0(C \otimes A^{-1}))$, $\widetilde{\gamma}$ takes $v_1 \wedge \ldots \wedge v_k \wedge t$ to the equivalence class of 
{\small{\begin{align*}
&\sum_{i\leq k} (-1)^{i+k+1} q(v_1) \wedge \ldots \wedge \widehat{q(v_i)}\wedge \ldots \wedge q(v_k) \otimes \mathrm{det}_C(v'_i\wedge t')\\
&=\sum_{i\leq k} (-1)^{i+k+1} q(v_1) \wedge \ldots \wedge \widehat{q(v_i)}\wedge \ldots \wedge q(v_k) \otimes \widetilde{\pi}(v'_i)\otimes t',
\end{align*}}}
where we treat $t' \in \mathrm{Ker}(H^0(E_C) \to \mathrm{H}^0(\omega_C \otimes A^{-1}))$ as an element of $\mathrm{H}^0(A)$ and use that $\mathrm{det}_C$ coincides with the natural map $\mathrm{H}^0(A) \otimes \mathrm{H}^0(\omega_C\otimes A^{-1}) \to \mathrm{H}^0(\omega_C)$
on the component $\mathrm{H}^0(A) \otimes \mathrm{H}^0(\omega_C\otimes A^{-1})$, and is zero on all other components, by Lemma \ref{det-petri}.
Comparing with Lemma \ref{natural-injection}, we see that we may identify $\gamma$ with $\delta_s$, as required.
\end{proof}

By Lemma \ref{assumps-OK} and Proposition \ref{definability-map-brill-syz}, if $(X,L)$ is a very general, primitively polarized, K3 surface of even genus $g=2k$ for $g \geq 2$ and $C \in |L|$ be an integral curve, then we have a well-defined morphism $S :  X^1_{k+1}(C)  \to \PP(\mathrm{K}_{k-1,1}(C,\omega_C))$.
\begin{prop} \label{nondeg-K3curve}
Set $\PP:=\PP(\mathrm{K}_{k-1,1}(C,\omega_C))$. With the above notation, the restriction map
$$S^* \; : \; \mathrm{H}^0(\mathcal{O}_{\PP}(1)) \to \mathrm{H}^0(S^*\mathcal{O}_{\PP}(1))$$
is injective.
\end{prop}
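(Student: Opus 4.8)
The plan is to translate the injectivity of $S^*$ into a nondegeneracy statement for $\mathrm{Im}(S)$, to recognise $\mathrm{Im}(S)$ as the full set of minimal rank syzygies, and then to invoke the even genus base case of the Geometric Syzygy Conjecture from \cite{kemeny-voisin}.

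First I would record the dual reformulation. A nonzero $\ell \in \mathrm{H}^0(\mathcal{O}_{\PP}(1)) = \mathrm{K}_{k-1,1}(C,\omega_C)^{\vee}$ lies in $\mathrm{Ker}(S^*)$ exactly when the section $\ell \circ S$ vanishes, i.e. when the hyperplane $\{\ell = 0\}$ contains the image of $S$. Thus $\mathrm{Ker}(S^*)$ is the annihilator of the linear span of $\mathrm{Im}(S)$, and $S^*$ is injective if and only if $\mathrm{Im}(S)$ is nondegenerate, that is, the lines $S([(A,s)])$ span $\mathrm{K}_{k-1,1}(C,\omega_C)$ as $(A,s)$ ranges over $X^1_{k+1}(C)$.

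Next I would identify these lines with the minimal rank syzygies. By Proposition \ref{definability-map-brill-syz} and Lemma \ref{natural-injection}, $S([(A,s)])$ is the image of $\delta_s$, which by Proposition \ref{assumps-OK} is one-dimensional and lies in $\mathrm{K}_{k-1,1}(C,\omega_C; \mathrm{H}^0(\omega_C\otimes A^{\vee}))$, where $\mathrm{H}^0(\omega_C\otimes A^{\vee})$ is embedded into $\mathrm{H}^0(\omega_C)$ via $s$; since this subspace has dimension $k = (k-1)+1$, each such syzygy has the minimal rank $k$. For the reverse inclusion, let $\alpha \neq 0$ have rank $k$, so $\alpha \in \mathrm{K}_{k-1,1}(C,\omega_C;W)$ with $\dim W = k$. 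By Proposition \ref{elementary-van}, $W$ is not base-point free, and by \cite{bothmer-JPAA} its base locus is a divisor $D$ moving in a pencil $A$ with $h^0(A)=2$; writing $W = \mathrm{H}^0(\omega_C\otimes A^{\vee})$, the equality $\dim W = k$ forces $\deg A = k+1$ via Riemann--Roch, so $[A] \in W^1_{k+1}(C)$, a locus that is nonempty precisely because the gonality of $C$ is $k+1$ by Lemma \ref{E}. Since $A$ is a globally generated gonality pencil, $\omega_C\otimes A^{\vee}$ is base-point free, so Proposition \ref{one-dim-iso} shows $\mathrm{K}_{k-1,1}(C,\omega_C;W)$ is one-dimensional, hence equal to $\mathrm{Im}(\delta_s)$ for the section $s$ cutting out $D$; thus $\alpha \in S([(A,s)])$. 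This proves that $\mathrm{span}(\mathrm{Im}(S))$ is precisely the span of the minimal rank syzygies, so $S^*$ is injective if and only if these span $\mathrm{K}_{k-1,1}(C,\omega_C)$.

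Finally I would appeal to that spanning statement, which carries all the content. It is the even genus, last-strand case of the Geometric Syzygy Conjecture proved in \cite[Thm.\ 0.2]{kemeny-voisin}, itself built on Voisin's solution of Green's conjecture \cite{V1}, \cite{V2}; under the Lefschetz isomorphism $\mathrm{K}_{k-1,1}(X,L)\simeq \mathrm{K}_{k-1,1}(C,\omega_C)$ together with Lemma \ref{res-1d-kos}, the syzygies $S([(A,s)])$ match the minimal rank syzygies $\mathrm{K}_{k-1,1}(X,L;\mathrm{H}^0(L\otimes I_{Z(s)}))$ associated with the Lazarsfeld--Mukai bundle $E$ of Lemma \ref{E}. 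I expect this to be the only real obstacle: the two preceding steps are formal manipulations through the identifications already established, whereas the spanning itself is equivalent to the geometric refinement of Green's conjecture in even genus and rests on Voisin's Hilbert-scheme computation.
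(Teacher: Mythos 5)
Your first two steps are fine, but the final step---which you yourself identify as carrying all the content---does not work, for two reasons. First, \cite[Thm.\ 0.2]{kemeny-voisin} is a statement about a \emph{general smooth} curve of genus $2k$, whereas the curve $C$ in this proposition is an arbitrary integral member of $|L|$ on a very general K3 surface; in the intended application (Proposition \ref{gen-nondegK3}) $C$ is \emph{nodal}, produced by Chen's theorem, and such curves are neither smooth nor general in moduli. So the spanning statement for $C$ cannot be quoted: for this class of curves it is essentially the statement being proven. Second, the fallback you suggest---transporting the K3-level result through the Lefschetz isomorphism and Lemma \ref{res-1d-kos}---is genuinely insufficient. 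What \cite[Thm.\ 2]{kemeny-voisin} gives is that $\psi^*$ is an isomorphism, i.e.\ that the minimal-rank syzygies $\mathrm{K}_{k-1,1}(X,L;\mathrm{H}^0(L\otimes I_{Z(s)}))$ span $\mathrm{K}_{k-1,1}(X,L)$ as $[s]$ ranges over \emph{all} of $\PP(\mathrm{H}^0(E))$. But under the factorization $S=\psi \circ q_{|_{p^{-1}(C)}}$, the image of $S$ only sees those $[s]$ lying in the finitely many lines $\PP(\mathrm{H}^0(A)) \seq \PP(\mathrm{H}^0(E))$ with $[A] \in W^1_{k+1}(C)\simeq d^{-1}(C)$ (Proposition \ref{d}). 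A hyperplane of $\mathrm{K}_{k-1,1}(C,\omega_C)\simeq \mathrm{K}_{k-1,1}(X,L)$ could a priori contain the images of these finitely many lines without containing the image of all of $\PP(\mathrm{H}^0(E))$, so spanning by the full family does not descend to spanning by the subfamily attached to $C$.

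Closing exactly this gap is what the paper's proof does: after the identifications $\psi^*$ and $q^*\colon \mathrm{H}^0(\mathcal{O}_{\PP(\mathrm{H}^0(E))}(k-2)) \simeq \mathrm{H}^0(G,\mathrm{Sym}^{k-2}F^{\vee})$, where $G=\mathrm{Gr}_2(\mathrm{H}^0(E))$ and $F$ is the tautological subbundle, the injectivity of $S^*$ becomes the injectivity of the restriction map $\mathrm{H}^0(G,\mathrm{Sym}^{k-2}F^{\vee}) \to \mathrm{H}^0(d^{-1}(C),\mathrm{Sym}^{k-2}F^{\vee})$, and this is established by the Koszul-complex computation in the proof of Proposition 7 of \cite{V1}, using that $d^{-1}(C)$ is a complete intersection of hyperplane (Pl\"ucker) sections of $G$. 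That computation---not the spanning theorem for general curves---is the real ingredient, and it is the step missing from your argument. A smaller point: your ``reverse inclusion,'' that every rank-$k$ syzygy of $C$ arises from a pencil, invokes the \cite{bothmer-JPAA} machinery that is set up for smooth curves; for the nodal curves covered here the minimal pencils may be merely torsion-free sheaves, which is why the paper routes everything through $X^1_{k+1}(C)$ and the Grassmannian picture, and never needs this converse at all.
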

\begin{proof}
Let $G:=\mathrm{Gr}_2(\mathrm{H}^0(E))$, where $E$ is the vector bundle from Lemma \ref{E}. We have the tautological subbundle $F \hookrightarrow \mathrm{H}^0(E) \otimes \mathcal{O}_G$ on the Grassmannian $G$, with $F \otimes k(p)\simeq V \seq \mathrm{H}^0(E)$ for $p=[V \seq \mathrm{H}^0(E)] \in G$. Let $$p \; : \; \PP(F) \to |L|$$
be the composite of the projection $\PP(F) \to G$ with the morphism $d : G \to |L|$ from Proposition \ref{d}. We may naturally identify $p^{-1}(C)$ with $X^1_{k+1}(C)$. We have the natural map, $q: \PP(F) \to \PP(\mathrm{H}^0(E))$ as well as a morphism $\psi  : \PP(\mathrm{H}^0(E)) \to \PP(\mathrm{K}_{k-1,1}(X,L))$
defined by $$\psi([s]):= [\mathrm{K}_{k-1,1}(X,L,\mathrm{H}^0(L \otimes I_{Z(s)}))],$$ \cite[Thm.\ 2]{kemeny-voisin}. By Lemma \ref{res-1d-kos}, for $s \in \mathrm{H}^0(A) \in W^1_{k+1}(C)$, we have an isomorphism
$$\mathrm{K}_{k-1,1}(C,\omega_C;\mathrm{H}^0(\omega_C\otimes A^{-1})) \simeq \mathrm{K}_{k-1,1}(X,L;\mathrm{H}^0(L \otimes I_{Z(s)})),$$
where we view $\mathrm{H}^0(A) $ as a subspace of $\mathrm{H}^0(E)$ via Proposition \ref{d}. In particular, $$\dim_{\C}\mathrm{K}_{k-1,1}(C,\omega_C;\mathrm{H}^0(\omega_C\otimes A^{-1})) =\dim_{\C} \mathrm{K}_{k-1,1}(X,L;\mathrm{H}^0(L \otimes I_{Z(s)}))=1,$$
so the injective map $\delta_s$ from Proposition \ref{assumps-OK} is an isomorphism. Furthermore, $S$ is identified with 
$\psi \circ q_{|_{p^{-1}(C)}}.$ By \cite[Thm.\ 2]{kemeny-voisin}, the map
$\psi^* : \mathrm{H}^0(\mathcal{O}_{\PP(\mathrm{K}_{k-1,1}(X,L))}(1)) \to \mathrm{H}^0(\mathcal{O}_{ \PP(\mathrm{H}^0(E)) }(k-2)),$
is an isomorphism. We further have the isomorphism
$$q^* \; : \; \mathrm{H}^0(\mathcal{O}_{ \PP(\mathrm{H}^0(E)) }(k-2)) \simeq \mathrm{Sym}^{k-2}\mathrm{H}^0(E)^{\vee} \to \mathrm{H}^0(\mathcal{O}_{ \PP(F)}(k-2))\simeq \mathrm{H}^0(G,\mathrm{Sym}^{k-2}F^{\vee}).$$
We further have a natural identification $\mathrm{H}^0(p^{-1}(C),\mathcal{O}_{ \PP(F)}(k-2))\simeq \mathrm{H}^0(d^{-1}(C), \mathrm{Sym}^{k-2}F^{\vee})$.
So it suffices to show that the restriction map 
$$\mathrm{H}^0(G,\mathrm{Sym}^{k-2}F^{\vee}) \to \mathrm{H}^0(d^{-1}(C), \mathrm{Sym}^{k-2}F^{\vee})$$
is injective. This is proven in \cite{V1}, proof of Proposition 7, by a Koszul complex computation, noting that $d^{-1}(C)$ is a complete intersection of hyperplane sections of $G$.
\end{proof}

We now deform to a \emph{general} nodal curve, not necessarily lying on a K3 surface. Set $g=2k\geq 2$, fix $0 \leq n \leq 2k$, and let $C$ be a general, integral, $n$-nodal curve, i.e.\ $C$ has precisely $n$ nodes and no other singularities. By Proposition \ref{gen-node-BN}, $C$ has gonality $k+1$. Further, $W^1_{k+1}(C)$ is zero-dimensional and reduced, and any closed point $[A] \in W^1_{k+1}(C)$ corresponds to a \emph{locally free sheaf} $A$ on $C$. We additionally have $W^2_{k+1}(C)=\emptyset$. By Proposition \ref{one-dim-iso}, the natural map
	{\small{$$ \delta_s \; : \; \wedge^{k}\mathrm{H}^0(\omega_C \otimes M^{\vee}) \otimes \frac{ \mathrm{H}^0(M)}{\C \langle s\rangle } \to \mathrm{K}_{k-1,1}(C,\omega_C; \mathrm{H}^0(\omega_C \otimes M^{\vee}))$$ }}
	for any line bundle $[M] \in W^1_{k+1}(C)$ and $s \neq 0 \in \mathrm{H}^0(C,M)$ is an isomorphism. By Proposition \ref{definability-map-brill-syz}, we have a well-defined morphism $S : X^1_{k+1}(C) \to \PP(\mathrm{K}_{k-1,1}(C,\omega_C)).$ We first need a lemma.
\begin{lem} \label{rel-koszul}
Let $(B,0)$ be a smooth, integral, pointed variety of dimension one. Let $\pi: \mathcal{C} \to B$ be a flat, projective family of integral, nodal curves, with $\mathcal{C}_b:=\pi^{-1}(b)$ for $b \in B$. Set $C:=\mathcal{C}_0$. Suppose $\omega_{C}$ is globally generated and $\mathrm{K}_{k,1}(C,\omega_{C})=0$. Then, after replacing $(B,0)$ by an open set about $0$, we have a vector bundle $\mathcal{K}$ on $B$, with natural isomorphisms $\mathcal{K}_b\simeq \mathrm{K}_{k-1,1}(\mathcal{C}_b,\omega_{\mathcal{C}_b})$ for all $b \in B$. 
\end{lem}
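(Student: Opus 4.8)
The plan is to realize $\mathcal{K}$ as a cohomology sheaf of the relative Koszul complex of $\omega_{\mathcal{C}/B}$ and to prove base change through an Euler characteristic computation; the hypothesis $\mathrm{K}_{k,1}(C,\omega_C)=0$ will serve precisely to annihilate the Koszul groups neighbouring $\mathrm{K}_{k-1,1}$. First I would set up relative bundles. Since each $\mathcal{C}_b$ is an integral Gorenstein curve of arithmetic genus $g=2k$, Riemann--Roch and Serre duality give $h^0(\mathcal{C}_b,\omega_{\mathcal{C}_b}^{\otimes j})$ equal to $1,g,(2j-1)(g-1)$ for $j=0,1,\geq 2$ respectively, independent of $b$; hence, after shrinking $B$ about $0$, Grauert's Theorem produces vector bundles $\mathcal{B}_j:=\pi_*\omega_{\mathcal{C}/B}^{\otimes j}$ with $\mathcal{B}_j\otimes k(b)\cong \mathrm{H}^0(\mathcal{C}_b,\omega_{\mathcal{C}_b}^{\otimes j})$ and $\mathcal{B}_0\cong\mathcal{O}_B$. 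Exactly as in the proof of Proposition \ref{semi-cont}, the relative Koszul differentials assemble into a finite complex of vector bundles
\[
N^\bullet:\quad 0\to\bigwedge^{k}\mathcal{B}_1\xrightarrow{\,D_0\,}\bigwedge^{k-1}\mathcal{B}_1\otimes\mathcal{B}_1\xrightarrow{\,D_1\,}\bigwedge^{k-2}\mathcal{B}_1\otimes\mathcal{B}_2\to\cdots\to\mathcal{B}_k\to 0,
\]
with $N^j=\bigwedge^{k-j}\mathcal{B}_1\otimes\mathcal{B}_j$, whose formation commutes with base change and which specialises over $b$ to the Koszul complex, so that $H^j(N^\bullet\otimes k(b))\cong \mathrm{K}_{k-j,j}(\mathcal{C}_b,\omega_{\mathcal{C}_b})$. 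I would then set $\mathcal{K}:=\mathcal{H}^1(N^\bullet)$, giving a natural map $\mathcal{K}\otimes k(b)\to \mathrm{K}_{k-1,1}(\mathcal{C}_b,\omega_{\mathcal{C}_b})$.

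Next I would invoke the base change criterion for a bounded complex of vector bundles over the reduced (indeed smooth) base $B$: if $b\mapsto \dim H^j(N^\bullet\otimes k(b))$ is locally constant for \emph{every} $j$, then each $\mathcal{H}^j(N^\bullet)$ is locally free and its formation commutes with base change. This follows from Lemma \ref{coker-bundle} together with the constant-rank argument of Proposition \ref{semi-cont}: writing $r_j(b):=\mathrm{rk}(D_j\otimes k(b))$, the identity $\dim H^j(N^\bullet\otimes k(b))=\mathrm{rk}\,N^j-r_j(b)-r_{j-1}(b)$ (with $r_{-1}=0$) shows, by induction on $j$, that local constancy of all the $\dim H^j$ forces every $r_j$ to be locally constant, whence all kernels, images and cokernels are subbundles compatible with base change. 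Thus the entire lemma reduces to showing that each $b\mapsto\dim \mathrm{K}_{k-j,j}(\mathcal{C}_b,\omega_{\mathcal{C}_b})$ is locally constant near $0$.

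To control these dimensions I would argue as follows. For $j=0$ the map $D_0$ is the comultiplication $d$, which is injective because $\wedge\circ d=k\cdot\mathrm{id}$ (as in the proof of Proposition \ref{one-dim-iso}), so $H^0(N^\bullet\otimes k(b))=\mathrm{K}_{k,0}=0$ identically. For $j\geq 2$ I claim $\mathrm{K}_{k-j,j}(C,\omega_C)=0$: by Green's duality on the Gorenstein canonical curve $C$, $\mathrm{K}_{k-2,2}(C,\omega_C)\cong \mathrm{K}_{k,1}(C,\omega_C)^{\vee}=0$ \cite[Thm.\ 2.c.6]{green-koszul}, while the rows $q\geq 3$ of the canonical resolution vanish away from the top corner, giving $\mathrm{K}_{k-j,j}(C,\omega_C)=0$ for $3\leq j\leq k$. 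By Proposition \ref{semi-cont} (applied with $\mathcal{Z}=\emptyset$, $\mathcal{F}=\mathcal{O}_{\mathcal{C}}$, $\mathcal{L}=\omega_{\mathcal{C}/B}$, whose relevant $h^0$'s are constant), each of these vanishings persists on a neighbourhood of $0$, so $H^j(N^\bullet\otimes k(b))=0$ for all $j\neq 1$ and all $b$ near $0$. Finally, the Euler characteristic of $N^\bullet$ is computed on one hand as $\sum_j(-1)^j\,\mathrm{rk}\,N^j$, a constant, and on the other as $\sum_j(-1)^j\dim H^j(N^\bullet\otimes k(b))=-\dim \mathrm{K}_{k-1,1}(\mathcal{C}_b,\omega_{\mathcal{C}_b})$; hence $b\mapsto \dim \mathrm{K}_{k-1,1}(\mathcal{C}_b,\omega_{\mathcal{C}_b})$ is constant near $0$, supplying the one remaining fibre dimension. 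The criterion of the previous paragraph then yields the bundle $\mathcal{K}$ with $\mathcal{K}\otimes k(b)\cong \mathrm{K}_{k-1,1}(\mathcal{C}_b,\omega_{\mathcal{C}_b})$.

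The hard part will be this last step. Proposition \ref{semi-cont} gives only upper semicontinuity of $\dim \mathrm{K}_{k-1,1}$, so the genuine content is the matching \emph{lower} bound, and this is exactly where $\mathrm{K}_{k,1}(C,\omega_C)=0$ is indispensable: through duality it forces the neighbouring group $\mathrm{K}_{k-2,2}$ to vanish, which together with the automatic and higher-row vanishings makes the Euler characteristic of $N^\bullet$ rigidify $\dim \mathrm{K}_{k-1,1}$. The one point I would need to check carefully is the validity of Green duality and of the row $q\geq 3$ vanishing for the merely Gorenstein fibres; these hold because $\omega_{\mathcal{C}_b}$ is the dualizing sheaf of an integral Gorenstein curve and $\mathrm{H}^1(\mathcal{C}_b,\omega_{\mathcal{C}_b}^{\otimes j})=0$ for $j\geq 1$.
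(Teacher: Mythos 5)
Your argument is correct, but it is packaged quite differently from the paper's. The paper never forms the full relative Koszul complex: it first observes that, by Proposition \ref{semi-cont}, the hypotheses persist near $0$, and that $\mathrm{K}_{k,1}(\mathcal{C}_b,\omega_{\mathcal{C}_b})=0$ forces every fibre to have the Betti table of a general canonical curve, so that $\beta_{k-1,1}$ is constant in $b$; it then uses the kernel bundle $M_b$ of the evaluation map $\mathrm{H}^0(\omega_{\mathcal{C}_b})\otimes\mathcal{O}_{\mathcal{C}_b}\to\omega_{\mathcal{C}_b}$ to present $\mathrm{K}_{k-1,1}(\mathcal{C}_b,\omega_{\mathcal{C}_b})$ as the cokernel of the fibrewise-injective map $\bigwedge^{k}\mathrm{H}^0(\omega_{\mathcal{C}_b})\to\mathrm{H}^0(\bigwedge^{k-1}M_b\otimes\omega_{\mathcal{C}_b})$, whose target has constant dimension $\beta_{k-1,1}+\binom{2k}{k}$; finally it relativizes this two-term complex via Grauert and the relative kernel bundle $\mathcal{M}$, and concludes with Lemma \ref{coker-bundle}. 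You instead relativize the whole antidiagonal strand $N^{j}=\bigwedge^{k-j}\mathcal{B}_1\otimes\mathcal{B}_j$, kill all cohomology away from position $1$ (hypothesis plus duality on the special fibre, then Proposition \ref{semi-cont} to spread out), and let the Euler characteristic of $N^{\bullet}$ rigidify $\dim\mathrm{K}_{k-1,1}$, finishing with the constant-rank criterion over the reduced base. The underlying mechanism is identical --- in both proofs the real content is that $\mathrm{K}_{k,1}=0$ together with duality pins down $\beta_{k-1,1}$ --- but you prove this constancy from scratch where the paper quotes it as rigidity of the generic Betti table, and you need only the bundles $\pi_*\omega_{\pi}^{\otimes j}$, at the price of heavier homological bookkeeping; the paper's kernel-bundle presentation (quoted from \cite{aprodu-farkas-clay}) buys a two-term complex to which Lemma \ref{coker-bundle} applies immediately.

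Two small caveats. First, you assume the fibres have arithmetic genus $2k$; this is not in the statement but it (or at least $g\geq 2k$) is genuinely needed --- your duality step $\mathrm{K}_{k-2,2}\simeq\mathrm{K}_{g-k,1}^{\vee}$ and the paper's generic-Betti-table claim both require it, and for $g=2k-1$ the lemma is actually false (a general gonality-$k$ curve of genus $2k-1$ has $\mathrm{K}_{k,1}=0$ but $\mathrm{K}_{k-1,1}\neq 0$, while its general deformation has $\mathrm{K}_{k-1,1}=0$), so making the assumption explicit is a feature, not a gap. Second, your closing justification asserts $\mathrm{H}^1(\mathcal{C}_b,\omega_{\mathcal{C}_b}^{\otimes j})=0$ for $j\geq 1$; this fails for $j=1$ (the group is one-dimensional by Serre duality) and should read $j\geq 2$ --- harmlessly, since on a curve Green's duality \cite[Thm.\ 2.c.6]{green-koszul} requires no such vanishing, only that $\omega_{\mathcal{C}_b}$ be a globally generated line bundle on an integral Gorenstein curve, an extension of the smooth case that the paper itself uses elsewhere for nodal canonical curves.
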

\begin{proof}
By Proposition \ref{semi-cont} we may assume that $\mathcal{C}_b$ is globally generated and that $\mathrm{K}_{k,1}(\mathcal{C}_b,\omega_{\mathcal{C}_b})=0$ for all $b \in B$. In particular, this implies that $(\mathcal{C}_b, \omega_{\mathcal{C}_b})$ has the Betti table of a general canonical curve, for all $b \in B$. In particular, $\beta_{p,1}:=\dim \mathrm{K}_{p,1}(\mathcal{C}_b,\omega_{\mathcal{C}_b})$ is constant and independent of $b \in B$ for any $p$. Let $M_b$ denote the kernel bundle
$0 \to M_b \to \mathrm{H}^0(\omega_{\mathcal{C}_b}) \otimes \mathcal{O}_{\mathcal{C}_b} \to \omega_{\mathcal{C}_b} \to 0.$
We have a natural isomorphism
{\small{\begin{align*}
\text{Ker}\left(d: \wedge^{k-1} \mathrm{H}^0(\omega_{\mathcal{C}_b}) \otimes \mathrm{H}^0(\omega_{\mathcal{C}_b}) \to \wedge^{k-2} \mathrm{H}^0(\omega_{\mathcal{C}_b}) \otimes \mathrm{H}^0(\omega_{\mathcal{C}_b}^{\otimes 2}) \right) \simeq \mathrm{H}^0(\wedge^{k-1}M_b \otimes \omega_{\mathcal{C}_b}),
\end{align*}}}
see \cite[\S 2.3]{aprodu-farkas-clay}. Further, we may naturally identify $\mathrm{K}_{k-1,1}(\mathcal{C}_b,\omega_{\mathcal{C}_b})$ with
{\small{$$\mathrm{Coker}\left(\wedge^k \mathrm{H}^0(\omega_{\mathcal{C}_b}) \to \mathrm{H}^0(\wedge^{k-1}M_b \otimes \omega_{\mathcal{C}_b})\right),$$}}
where the map $\wedge^k \mathrm{H}^0(\omega_{\mathcal{C}_b}) \to \mathrm{H}^0(\wedge^{k-1}M_b \otimes \omega_{\mathcal{C}_b})$ is injective. Since $h^0(\omega_{\mathcal{C}_b})$ and $\beta_{p,1}$ are independent of $b \in B$, $h^0(\wedge^{k-1}M_b \otimes \omega_{\mathcal{C}_b})$ is constant for $b \in B$. We have a vector bundle $\mathcal{M}$ on $\mathcal{C}$ fitting into the exact sequence $0 \to \mathcal{M} \to \pi^* \pi_* \omega_{\pi} \to \omega_{\pi} \to 0,$
with fibre $\mathcal{M}_b \simeq M_b$ on $\mathcal{C}_b$. Consider now the vector bundles $\mathcal{A}:=\wedge^k \pi_* \omega_{\pi}$, $\mathcal{B}:=\pi_*(\wedge^{k-1}\mathcal{M} \otimes \omega_{\pi})$. We have an injective morphism $\mathcal{A} \to \mathcal{B}$ relativizing $\wedge^k \mathrm{H}^0(\omega_{\mathcal{C}_b}) \to \mathrm{H}^0(\wedge^{k-1}M_b \otimes \omega_{\mathcal{C}_b})$, for $b \in B$. By Lemma \ref{coker-bundle}, we have a vector bundle $\mathcal{K}:=\mathrm{Coker}(\mathcal{A} \to \mathcal{B})$. Then $\mathcal{K}_b\simeq \mathrm{K}_{k-1,1}(\mathcal{C}_b,\omega_{\mathcal{C}_b})$ for all $b \in B$, as required.
\end{proof}	
	
We now generalize Proposition \ref{nondeg-K3curve} to the situation of a general nodal curve. 
\begin{prop} \label{gen-nondegK3}
Let $C$ be a general, integral, $m$-nodal curve of even arithmetic genus $g=2k \geq 2$ for $0 \leq m \leq k-1$.  The pull-back map on global sections
$$S^* \; : \; \mathrm{H}^0(\mathcal{O}_{\PP}(1)) \to \mathrm{H}^0(S^*\mathcal{O}_{\PP}(1)),$$
with $\PP:=\PP(\mathrm{K}_{k-1,1}(C,\omega_C))$, is injective.
\end{prop}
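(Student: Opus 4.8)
The proof will be a specialization argument: I will deform a general $m$-nodal curve to an $m$-nodal curve lying on a very general K3 surface, where Proposition \ref{nondeg-K3curve} already gives the conclusion, and then propagate injectivity by semicontinuity. The first point to record is that injectivity of $S^*$ is equivalent to the image of $S$ being projectively nondegenerate in $\PP=\PP(\mathrm{K}_{k-1,1}(C,\omega_C))$, i.e.\ to the statement that the one-dimensional spaces $\mathrm{K}_{k-1,1}(C,\omega_C;\mathrm{H}^0(\omega_C\otimes A^\vee))$ (with the inclusion $\mathrm{H}^0(\omega_C\otimes A^\vee)\seq\mathrm{H}^0(\omega_C)$ given by multiplication by a section $s\in\mathrm{H}^0(A)$) span $\mathrm{K}_{k-1,1}(C,\omega_C)$ as $[A]$ ranges over $W^1_{k+1}(C)$ and $s$ over $\mathrm{H}^0(A)$. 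This is a condition that behaves well in families, and the plan is to verify it on a single K3 member and then spread it out over the irreducible parameter space of $m$-nodal curves.

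Concretely, by \cite{chen-every-rational} a very general primitively polarized K3 surface $(X,L)$ of genus $g=2k$ carries integral $m$-nodal members $C_0\in|L|$ for every $0\leq m\leq 2k$. Choosing $C_0$ general among such members, Proposition \ref{d} identifies $W^1_{k+1}(C_0)\simeq d^{-1}(C_0)$ with a reduced fibre of the finite flat morphism $d$, so the relevant Brill--Noether hypotheses hold at $C_0$ as well (zero-dimensional, reduced, with $W^2_{k+1}(C_0)=\emptyset$ and $h^0(A)=2$ by Lemma \ref{E}), and by Proposition \ref{assumps-OK} the map $\delta_s$ is injective so that $S$ is defined at $C_0$. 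Since the integral $m$-nodal curves of arithmetic genus $2k$ form an irreducible parameter space $T$ and $C_0$ defines a point of $T$, it suffices to prove that the locus $U\seq T$ on which $S^*$ is injective is open: nonemptiness of $U$ is Proposition \ref{nondeg-K3curve} applied to $C_0$, and irreducibility of $T$ then forces the general member into $U$.

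To establish openness of $U$ I would relativize the constructions over a smooth variety mapping to $T$ (passing to an étale cover if needed to obtain a universal sheaf). First, $\omega_{\mathcal{C}_t}$ is globally generated and $\mathrm{K}_{k,1}(\mathcal{C}_t,\omega_{\mathcal{C}_t})=0$ on all relevant fibres --- the vanishing holds by Voisin's Theorem \cite{V2}, valid for general $m$-nodal curves via \cite{chen-every-rational}, and is open by Proposition \ref{semi-cont} --- so the relative form of Lemma \ref{rel-koszul}, built from Lemma \ref{coker-bundle}, produces a vector bundle $\mathcal{K}$ with $\mathcal{K}_t\simeq\mathrm{K}_{k-1,1}(\mathcal{C}_t,\omega_{\mathcal{C}_t})$. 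Next, as the Brill--Noether number $\rho(2k,1,k+1)=0$, each $W^1_{k+1}(\mathcal{C}_t)$ is finite of constant length $N=\tfrac{(2k)!}{k!(k+1)!}$, while $W^2_{k+1}(\mathcal{C}_t)=\emptyset$ and $h^0(A)=2$; these hold at general nodal fibres by Proposition \ref{gen-node-BN}, at the K3 fibre by Lemma \ref{E} and Proposition \ref{d}, and are open, so the relative Brill--Noether scheme $\mathcal{X}^1_{k+1}\to T$ is flat with fibres $X^1_{k+1}(\mathcal{C}_t)$. The injectivity of $\delta_s$ required in Proposition \ref{definability-map-brill-syz} holds fibrewise --- by Proposition \ref{one-dim-iso} on the general nodal fibres and by Proposition \ref{assumps-OK} on the K3 fibre --- so the morphisms $S_t$ glue to a relative morphism $\mathcal{S}:\mathcal{X}^1_{k+1}\to\PP(\mathcal{K})$. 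Setting $\mathcal{L}:=\mathcal{S}^*\mathcal{O}_{\PP(\mathcal{K})}(1)$, whose restriction to each fibre is $\mathcal{O}_{X^1_{k+1}(\mathcal{C}_t)}(k-2)$ with constant $h^0=N(k-1)$ and vanishing $h^1$ (Remark \ref{O(g-d-1)}), cohomology and base change make the pushforward $(p_{\mathcal{X}})_*\mathcal{L}$ locally free of rank $N(k-1)$, and the relative pullback assembles into a map of vector bundles $\phi:\mathcal{K}^\vee\to (p_{\mathcal{X}})_*\mathcal{L}$ whose fibre at $t$ is $S_t^*$. Injectivity of $\phi_t$ is maximality of its rank, an open condition, so $U$ is open and the argument concludes as above.

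The step I expect to be the main obstacle is the relativization of $S$ together with the local freeness of $(p_{\mathcal{X}})_*\mathcal{L}$ of constant rank uniformly across the special K3 fibre and the general nodal fibres. On the K3 fibre the sheaves $A\in W^1_{k+1}(C_0)$ need not be locally free and $W^1_{k+1}(C_0)$ is only reduced after choosing $C_0$ generically, so one must check that the length of $W^1_{k+1}$, the identification $\mathcal{L}_t\simeq\mathcal{O}(k-2)$, and the vanishing $h^1(\mathcal{L}_t)=0$ persist across the degeneration; the constancy of $\chi(\mathcal{L}_t)$ from flatness, combined with the fibrewise vanishing $h^1(\mathcal{L}_t)=0$, is what ultimately secures the constant-rank pushforward.
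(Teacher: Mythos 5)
Your strategy is the same as the paper's: specialize to an $m$-nodal curve $D$ on a very general K3 (Chen's theorem), where Proposition \ref{nondeg-K3curve} gives the conclusion, relativize the constructions (Lemma \ref{rel-koszul} for $\mathcal{K}$, Proposition \ref{definability-map-brill-syz} for $\mathcal{S}$), and spread out injectivity of $S^*$ as an open condition on a map of vector bundles; working over the full irreducible parameter space of $m$-nodal curves instead of a one-parameter family through the K3 point is only a cosmetic difference. However, there is a genuine gap, and it sits exactly at the step you flag as ``the main obstacle'' but do not resolve: flatness of the relative Brill--Noether scheme $\mathcal{W}^1_{k+1}$ over the base across the K3 fibre, equivalently the constancy of the length of $W^1_{k+1}(\mathcal{C}_t)$.

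You assert this constancy ``as the Brill--Noether number $\rho(2k,1,k+1)=0$,'' citing Proposition \ref{gen-node-BN} at general nodal fibres and Lemma \ref{E}/Proposition \ref{d} at the K3 fibre, but those results do not combine to give it. Proposition \ref{gen-node-BN} says $W^1_{k+1}$ of a general $m$-nodal curve is finite and \emph{reduced}, with no statement about its cardinality $c$; Proposition \ref{d} says the K3 fibre has length exactly $N=\frac{(2k)!}{k!(k+1)!}$ (as a fibre of the finite flat morphism $d$), with no statement about reducedness --- the paper never claims $W^1_{k+1}(D)$ is reduced for a K3 curve, and your assertion that it is (``after choosing $C_0$ generically'') is unjustified; consequently Remark \ref{O(g-d-1)}, which you invoke for $\mathcal{L}_t\simeq\mathcal{O}(k-2)$ and for $h^1(\mathcal{L}_t)=0$, is not available at the K3 fibre, since it requires $W^1_{k+1}$ zero-dimensional \emph{and reduced}. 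Nor is ``constant length'' an open condition by itself: for a finite morphism, fibre length is upper semicontinuous, so $\{\mathrm{length}\leq N\}$ is open but $\{\mathrm{length}=N\}$ is not, unless one first proves the lower bound $\mathrm{length}\geq N$ everywhere. The paper closes exactly this gap with a two-sided degeneration argument that your proposal is missing: (i) the flat limit of $q^{-1}(B\setminus\{0\})$ inside $\mathcal{W}^1_{k+1}$ gives $N=h^0(\mathcal{O}_{W^1_{k+1}(D)})\geq c$, and (ii) degenerating a \emph{general smooth} curve of genus $2k$ --- where $W^1_{k+1}$ is reduced of length $N$ by Griffiths--Harris --- to the general $m$-nodal curve and taking flat limits again gives $c\geq N$; hence $c=N$ and $q$ is finite flat. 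Only after this is established do your relative morphism $\mathcal{S}$, the constancy of $h^0(\mathcal{L}_t)$, cohomology and base change, and the bundle map $\phi:\mathcal{K}^{\vee}\to (p_{\mathcal{X}})_*\mathcal{L}$ make sense in a neighborhood of the K3 point, which is where openness is actually needed. Your closing remark that ``the constancy of $\chi(\mathcal{L}_t)$ from flatness \ldots secures the constant-rank pushforward'' is circular: that flatness is precisely what has to be proved.
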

\begin{proof}
A very general, primitively polarized K3 surface $(X,L)$ of genus $g=2k$ contains $n$-nodal curves in the linear system $|L|$ for any $n$ by Chen's Theorem, \cite{chen-every-rational} (see also \cite[Theorem 3.8]{tannenbaum}). So let $(X,L)$ be a very general, primitively polarized K3 surface and let $D \in |L|$ be an integral, $n$-nodal curve. Let $\pi: \mathcal{C} \to B$ be flat family of integral, $n$-nodal curves of genus $2k$ over a smooth, integral, pointed variety $(B,0)$ of dimension one, with $D\simeq \pi^{-1}(0)$ the special fibre and $\mathcal{C}_b:=\pi^{-1}(b)$ a general, integral $n$-nodal curve for $b \neq 0 \in B$. We may also assume that $\pi$ admits a section $B \to \mathcal{C}$, landing in the smooth locus of each fibre. We have a moduli space $\mathcal{W}^1_{k+1} $ and a projective morphism $q: \mathcal{W}^1_{k+1} \to B$ such that $q^{-1}(b)\simeq W^1_{k+1}(\mathcal{C}_b)$ by relativizing the construction of the Brill--Noether loci as in \cite[Ch.\ XXI]{ACG2}.\smallskip

We first show that the morphism $q: \mathcal{W}^1_{k+1} \to B$ is flat. The claim will then follow easily from Proposition \ref{nondeg-K3curve}. Note that $q$ is finite by Proposition \ref{gen-node-BN} and Proposition \ref{d}. For $b \neq 0$, $W^1_{k+1}(\mathcal{C}_b)$ is smooth of dimension zero by Proposition \ref{gen-node-BN}. After replacing the curve $B$ by an open set about the origin $0$, we may assume that $q^{-1}(U)\to U$ is \'etale, for $U=B\setminus \{0\}$, and that $h^0(\mathcal{O}_{W^1_{k+1}(\mathcal{C}_b)})=c$ is constant for $b \neq 0 \in B$. It thus suffices to show $h^0(\mathcal{O}_{W^1_{k+1}(D)})=c$, by \cite[Thm.\ III.9.9]{hartshorne}. Since the closure of $q^{-1}(U)$ in $\mathcal{W}^1_{k+1}$ is flat over $B$ by \cite[Prop.\ III.9.7]{hartshorne}, we have $h^0(\mathcal{O}_{W^1_{k+1}(D)})\geq c$. By Proposition \ref{d}, $h^0(\mathcal{O}_{W^1_{k+1}(D)})=\frac{(2k)!}{k!(k+1)!}$. So it suffices to show that $c \geq \frac{(2k)!}{k!(k+1)!}$. But, if $Y$ is a general, smooth curve of genus $2k$, then $W^1_{k+1}(Y)$ is reduced and zero-dimensional of length $\frac{(2k)!}{k!(k+1)!}$, \cite{griffiths-harris}. It follows that $c \geq \frac{(2k)!}{k!(k+1)!}$, by degenerating a general smooth curve to the $n$-nodal curve $\mathcal{C}_b$ (for some $b \neq 0$) and applying \cite[Prop.\ III.9.7]{hartshorne} again.\smallskip

So $q: \mathcal{W}^1_{k+1} \to B$  is finite and flat. By Proposition \ref{gen-node-BN} and Lemma \ref{E}, for any $b \in B$ and any $[A] \in W^1_{k+1}(\mathcal{C}_b)$, $h^0(A)=2$. We thus have a $\PP^1$ bundle $\mathcal{X}^1_{k+1}$ over $\mathcal{W}^1_{k+2}$, with a morphism $r: \mathcal{X}^1_{k+1} \to B$, with fibre $r^{-1}(b) \simeq X^1_{k+1}(\mathcal{C}_b)$. By Lemma \ref{rel-koszul} we have, after shrinking $B$, a vector bundle $\mathcal{K}$ with natural isomorphisms $\mathcal{K}_b\simeq \mathrm{K}_{k-1,1}(\mathcal{C}_b,\omega_{\mathcal{C}_b})$ for all $b \in B$. Proposition \ref{definability-map-brill-syz} relativizes to give a morphism $$\mathcal{S}: \mathcal{X}^1_{k+1} \to \PP(\mathcal{K})$$ over $B$. By Proposition \ref{nondeg-K3curve}, $\mathcal{S}_b^*$ is injective for all $b \in B$, after shrinking $B$, which gives the claim.
\end{proof}

\begin{prop} \label{last-ruling-prop}
Let $D$ be a integral $n$-nodal curve of even arithmetic genus $g=2k$ with $\omega_D$ very ample and $(D,\omega_D)$ normally generated. Let $A \in W^1_{k+1}(D)$ be a base-point free line bundle with $h^0(A)=2$. Let $\alpha \neq 0 \in K_{k-1,1}(D,\omega_D; \mathrm{H}^0(\omega_D \otimes I_{Z(s)}))$ for $s \neq 0 \in \mathrm{H}^0(A)$. Then the associated syzygy scheme $X_{\alpha}$ is a scroll and $D$ lies in the smooth locus of $X_{\alpha}$, with associated line bundle
$$A \simeq L_{\alpha}:=\mathcal{O}_D(R),$$
where $R$ is the ruling on $X_{\alpha}$.
\end{prop}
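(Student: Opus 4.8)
The plan is to fix the numerics, realise $\alpha$ as the restriction of a last syzygy of the scroll cut out by the pencil $|A|$, and then identify the two scrolls using the structure theorem for minimal rank syzygies. First I would set $W := \mathrm{H}^0(\omega_D \otimes A^{\vee})$ and compute its dimension. Since $\deg(\omega_D \otimes A^{\vee}) = (4k-2)-(k+1) = 3k-3$ and $h^1(\omega_D \otimes A^{\vee}) = h^0(A) = 2$ by Serre duality, Riemann--Roch gives $h^0(\omega_D \otimes A^{\vee}) = k$. As $\alpha \neq 0$ lies in $\mathrm{K}_{k-1,1}(D,\omega_D; W)$ with $\dim W = k = (k-1)+1$, and the general bound gives $\mathrm{rank}(\alpha) \geq (k-1)+1$, the syzygy $\alpha$ has minimal rank $p+1 = k$. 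By the structure theorem for minimal rank syzygies (\cite[Corollary 5.2]{bothmer-JPAA}, \cite[Lemma 3.21]{aprodu-nagel}), the syzygy scheme $X_{\alpha} = \mathrm{Syz}(\alpha)$ is then a rational normal scroll of degree $k$ and codimension $k-1$ in $\PP^{g-1}$ containing $D$; this already gives the first assertion that $X_{\alpha}$ is a scroll.

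Next I would construct the scroll $Y := \mathrm{Scroll}(|A|) \seq \PP^{g-1}$ swept out by the linear spans $\langle Z(s')\rangle$ of the divisors $Z(s') \in |A|$, $[s'] \in \PP(\mathrm{H}^0(A))$. Since $A$ is a base-point-free pencil of degree $k+1$ with $h^0(A)=2$ on a canonical curve of genus $2k$ (a gonality pencil, by Proposition \ref{gen-node-BN}), standard scroll theory \cite{schreyer1}, \cite{eisenbud-harris-minimal} shows that $Y$ is a rational normal scroll of degree $k$ and codimension $k-1$, that $D$ lies in its smooth locus (base-point-freeness of $A$ keeping $D$ off the singular locus), and that on the desingularization $\widetilde{Y} = \PP(\mathcal{E})$ one has $\mathcal{O}_{\widetilde{Y}}(1)|_D \simeq \omega_D$ and $\mathcal{O}_{\widetilde{Y}}(R)|_D \simeq A$ for the ruling class $R$. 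Applying Proposition \ref{one-dim-iso} to $\widetilde{Y}$ with $M = \mathcal{O}_{\widetilde{Y}}(R)$, $h^0(M)=2$, $h^0(\mathcal{O}_{\widetilde{Y}}(1) - R) = k$, produces the one-dimensional space $\mathrm{K}_{k-1,1}(\widetilde{Y}, \mathcal{O}_{\widetilde{Y}}(1); \mathrm{H}^0(\mathcal{O}_{\widetilde{Y}}(1) - R))$, generated by a last syzygy $\widetilde{\alpha}$ whose syzygy scheme is $Y$ (Proposition \ref{rank-scroll-2}).

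The central step is to show that the restriction map $r_D : \mathrm{K}_{k-1,1}(\widetilde{Y}, \mathcal{O}_{\widetilde{Y}}(1)) \to \mathrm{K}_{k-1,1}(D,\omega_D)$ carries $\widetilde{\alpha}$, up to a nonzero scalar, to $\alpha$. For this I would check that restriction identifies $\mathrm{H}^0(\mathcal{O}_{\widetilde{Y}}(1) - R)$ with $W$: the map $\mathrm{H}^0(\mathcal{O}_{\widetilde{Y}}(1)) \to \mathrm{H}^0(\omega_D)$ is an isomorphism (a section restricting to zero would be a hyperplane containing the nondegenerate curve $D$), so the induced map on the $k$-dimensional subspaces $\mathrm{H}^0(\mathcal{O}_{\widetilde{Y}}(1) - R) \to W$ is injective, hence an isomorphism; likewise $\mathrm{H}^0(\mathcal{O}_{\widetilde{Y}}(R)) \xrightarrow{\sim} \mathrm{H}^0(A)$. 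Because the isomorphism of Proposition \ref{one-dim-iso} is built functorially from the differential $d$, these identifications make $r_D$ intertwine the two instances of that construction, so $r_D(\widetilde{\alpha})$ is a generator of $\mathrm{K}_{k-1,1}(D,\omega_D; W)$, which by Proposition \ref{inc-subspace} injects into $\mathrm{K}_{k-1,1}(D,\omega_D)$ and therefore spans $\C\langle \alpha \rangle$.

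Finally, since $\widetilde{\alpha}$ is represented using only quadrics in the ideal of $Y$, the same quadrics represent $\alpha = r_D(\widetilde{\alpha})$, whence $Y \seq \mathrm{Syz}(\alpha) = X_{\alpha}$ by definition of the syzygy scheme; as $Y$ and $X_{\alpha}$ are both rational normal scrolls of degree $k$ and codimension $k-1$, they must coincide. The identification $X_{\alpha} = Y$ then yields the remaining claims: $D$ lies in the smooth locus, and $L_{\alpha} = \mathcal{O}_{X_{\alpha}}(R)|_D = \mathcal{O}_Y(R)|_D \simeq A$. I expect the main obstacle to be precisely the compatibility in the third paragraph --- verifying that $r_D$ genuinely intertwines the two copies of the minimal-rank construction and, in particular, that $r_D(\widetilde{\alpha})$ is nonzero --- rather than the largely bookkeeping numerical and structural inputs elsewhere.
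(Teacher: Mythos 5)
Your proposal is correct and follows essentially the same route as the paper's proof: both realise $\alpha$ as the restriction of the (one-dimensional, by Proposition \ref{one-dim-iso}) last syzygy space of the scroll swept out by the pencil $|A|$, deduce $X_A \seq \mathrm{Syz}(\alpha)$ from the definition of the syzygy scheme, and conclude equality because both are rational normal scrolls of degree $k$ in $\PP^{g-1}$ by von Bothmer's structure theorem. The only cosmetic difference is ordering — the paper invokes the minimal-rank structure theorem at the end rather than up front, and handles your ``intertwining'' step by noting that the injective restriction map $\mathrm{K}_{k-1,1}(X_A,H)\hookrightarrow \mathrm{K}_{k-1,1}(D,\omega_D)$ carries the one-dimensional subspace $\mathrm{K}_{k-1,1}(X_A,H;\mathrm{H}^0(H\otimes I_{Z(s')}))$ isomorphically onto $\mathrm{K}_{k-1,1}(D,\omega_D;\mathrm{H}^0(\omega_D\otimes I_{Z(s)}))$.
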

\begin{proof}
Let $X_A \seq \PP^{g-1}$ be the scroll induced by $A$, \cite[\S 2]{schreyer1}. Then $X_A$ is a rational normal scroll of degree $h^1(A)=k$ and dimension $k$ containing $D \seq \PP^{g-1}$ and further $D$ lies in the smooth locus of $X_{\alpha}$ since $A$ is base-point free, \cite[\S 4]{lin-syz}. We have the injective restriction map
$$i \; : \mathrm{K}_{k-1,1}(X_A,H) \hookrightarrow \mathrm{K}_{k-1,1}(D,\omega_D)$$
where $H$ is the hyperplane. Notice that $(\mathcal{O}_{X_A}(R))_{|_D} \simeq A$ and that the $k$ dimensional vector space $\mathrm{H}^0(X_A, H-R)$ is map isomorphically to $\mathrm{H}^0(D,\omega_D-A)$ under the natural identification $\mathrm{H}^0(X_A, H) \simeq \mathrm{H}^0(D,\omega_D)$. Let $s'\in \mathrm{H}^0(X,R)$ be the unique section with $s'_{|_D}=s$. The space $\mathrm{K}_{k-1,1}(X_A,H; \mathrm{H}^0(H \otimes I_{Z(s')})) $ is one dimensional from Section \ref{rank-section} and injects into the one dimensional space $\mathrm{K}_{k-1,1}(D,\omega_D; \mathrm{H}^0(\omega_D \otimes I_{Z(s)}))$. Hence $i$ induces an isomorphism $$\mathrm{K}_{k-1,1}(X_A,H; \mathrm{H}^0(H \otimes I_{Z(s')})) \simeq \mathrm{K}_{k-1,1}(D,\omega_D; \mathrm{H}^0(\omega_D \otimes I_{Z(s)})).$$ In particular, for $\alpha \neq 0 \in  \mathrm{K}_{k-1,1}(D,\omega_D; \mathrm{H}^0(\omega_D \otimes I_{Z(s)}))$ there exists $\alpha' \in \mathrm{K}_{k-1,1}(X_A,H)$ with $i(\alpha')=\alpha$. By definition of the syzygy scheme $X_{\alpha}$ we therefore have $X_{A} \seq X_{\alpha}$. But $X_{\alpha} \seq \PP^{g-1}$ is also a rational normal scroll of degree $k$, \cite[Cor.\ 5.2]{bothmer-JPAA} and hence $X_{\alpha}=X_A$. This completes the proof.
\end{proof}

We may now prove the main result of this paper.
\begin{thm}
Let $C$ be a general curve of genus $g \geq 8$. Then, for any $p$, $\mathrm{K}_{p,1}(C,\omega_C)$ is spanned by syzygies of rank $p+1$.
\end{thm}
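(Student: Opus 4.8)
The plan is to prove the theorem by a descending double induction on the pair (genus, syzygy index), anchored at the even-genus, last-syzygy case established on K3 surfaces. First I would restrict to the nontrivial range: by Voisin's solution of the generic Green conjecture \cite{V2} together with Koszul duality $\mathrm{K}_{p,1}(C,\omega_C)\cong \mathrm{K}_{g-2-p,2}(C,\omega_C)^{\vee}$, a general curve of genus $g$ has $\mathrm{K}_{p,1}(C,\omega_C)=0$ unless $1\le p\le \lfloor g/2\rfloor-1$, so outside this range the assertion is vacuous. For the induction I would carry the strengthened statement $(\star_{g,m,p})$: for a general integral $m$-nodal curve $C$ of arithmetic genus $g$, the space $\mathrm{K}_{p,1}(C,\omega_C)$ is spanned by syzygies $\alpha$ of minimal rank $p+1$ whose syzygy scheme $X_\alpha$ is a rational normal scroll containing $C$ in its smooth locus, with ruling line bundle $L_\alpha$ of degree $g-p$ satisfying $h^0(\mu^*L_\alpha)=2$ for the partial normalization $\mu$ at a further general pair of points.

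Proposition \ref{induction-step}, fed by Proposition \ref{lin-combo-min-rk} and Proposition \ref{last-ruling-prop}, is precisely the implication $(\star_{g+1,m+1,p})\Rightarrow(\star_{g,m,p-1})$. Indeed, gluing two general smooth points of $C$ produces a curve $D$ of genus $g+1$ with $m+1$ nodes; projecting away from the new node and applying the long exact sequence (\ref{les-projection}) converts a spanning set of minimal-rank syzygies of $D$ at level $p$ into a spanning set of minimal-rank syzygies of $C$ at level $p-1$, transporting the scroll, the smooth-locus condition and the pencil via $\mu^*$. The degree bookkeeping is compatible: $\deg L_\sigma=\deg\mu^*L_\alpha=(g+1)-p=g-(p-1)$, so the conclusion of one step is exactly the hypothesis of the next.

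For the anchor I would use Section \ref{final-section}: for even genus $g=2k$ and $0\le m\le k-1$, Proposition \ref{gen-nondegK3} shows that the pullback $S^{*}$ attached to the morphism $S:X^1_{k+1}(C)\to\PP(\mathrm{K}_{k-1,1}(C,\omega_C))$ is injective, so the image of $S$ is nondegenerate and the minimal-pencil syzygies span $\mathrm{K}_{k-1,1}(C,\omega_C)$; this is reduced to the smooth K3 case (Proposition \ref{nondeg-K3curve}) by a flatness argument over the family, using Chen's theorem to realize the general nodal curve as a degeneration of a curve on a K3 surface. Proposition \ref{last-ruling-prop} then certifies these spanning syzygies to have the scroll, smooth-locus and degree data of $(\star_{2k,m,k-1})$, with pencil $A\in W^1_{k+1}(C)$, $\deg A=k+1=g-(k-1)$ and $h^0(A)=2$. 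A general smooth curve of genus $g$ is the case $m=0$, and applying the induction step $j:=g-2p-2$ times reduces $(\star_{g,0,p})$ to the anchor $(\star_{2(g-p-1),\,g-2p-2,\,g-p-2})$, which has half-genus $k=g-p-1$ and level $k-1$; the numerical constraints $0\le m\le k-1$ and $p\le\lfloor(g-1)/2\rfloor$ of the relevant propositions hold at every stage, being binding only at the anchor.

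The crux, and the step I expect to be hardest, is ensuring that hypothesis $(iii)$ — that the relevant minimal series stay base-point-free pencils with $h^0=2$ and do not jump into $W^2$ — survives along the whole chain of normalizations, where one is forced to work with general torsion-free sheaves rather than line bundles. This is governed by Proposition \ref{gen-node-BN}: a general $m$-nodal curve of even genus $2k$ with $m\le k-1$ has gonality $k+1$, empty $W^2_{k+1}$, and a reduced zero-dimensional $W^1_{k+1}$ consisting of base-point-free line bundles, which is what lets the pullbacks $\mu^*L_\alpha$ remain minimal pencils after gluing general points, and what makes $S$ well defined (Propositions \ref{definability-map-brill-syz} and \ref{assumps-OK}). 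The hypothesis $g\ge 8$ keeps all the node counts and level bounds in force throughout the induction, the complementary range $g\le 8$ being covered by \cite{bothmer-Transactions}; granting the persistence of $(iii)$, the induction then delivers $(\star_{g,0,p})$ for all $p$, and hence the theorem.
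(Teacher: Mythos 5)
Your proposal reconstructs the paper's own strategy essentially verbatim: the same chain of gluings governed by Proposition \ref{induction-step}, anchored at the last linear syzygy space of an even-genus nodal curve via Propositions \ref{gen-nondegK3} and \ref{last-ruling-prop}, with the same numerology ($m=g-2p-2$ identifications landing at genus $2\ell$, $\ell=g-p-1$, level $\ell-1$), and the correct observation that verifying $h^0=2$ for the pullback to the smooth curve controls all intermediate partial normalizations at once.

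However, there is a genuine gap at exactly the step you flag as the crux and then ``grant'': the verification of hypothesis (iii), namely that $h^0(C,\mu^*A)=2$ for every $A\in W^1_{\ell+1}(D_m)$, where $\mu:C\to D_m$ is the normalization. You attribute this to Proposition \ref{gen-node-BN}, but that proposition only controls the Brill--Noether theory of the nodal curve $D_m$ itself ($W^2_{\ell+1}(D_m)=\emptyset$, $W^1_{\ell+1}(D_m)$ reduced, zero-dimensional, consisting of line bundles); it says nothing about how many sections the pullback $\mu^*A$ acquires on $C$. This cannot be waved away by Brill--Noether generality of $C$ alone: $C$ has genus $g$ and $\deg\mu^*A=g-p$, so the relevant Brill--Noether number is $\rho(g,2,g-p)=g-3p-6$, which is nonnegative for much of the range $p\leq\lfloor\frac{g-2}{2}\rfloor$ (e.g.\ $p=1$, $g\geq 9$); hence $W^2_{g-p}(C)\neq\emptyset$ in general, and a pencil on $D_m$ could a priori pull back into a $g^2_{g-p}$. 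The paper closes this with a separate argument your proposal is missing: if $h^0(C,\mu^*A)\geq 3$, then the pair $[\mu^*\mathrm{H}^0(D_m,A)\seq \mathrm{H}^0(C,\mu^*A)]$ lies in the locus $\mathcal{Z}\seq G^1_{\ell+1}(C)$ of pencils contained in a $g^2_{\ell+1}$, and in addition contains, for each $i$, a section vanishing at both $x_i$ and $y_i$ (the pullback of a section of $A$ vanishing at the node $p_i$). Since $G^1_{\ell+1}(C)$ is smooth of dimension $\rho(g,1,\ell+1)=m$ and irreducible for $m>0$ \cite{gieseker}, \cite{fulton-laz-connectedness}, while $\mathcal{Z}$ has codimension at least one and the $m$ vanishing conditions at general pairs $(x_i,y_i)$ impose codimension at least $m$ more, the locus $\mathcal{T}$ of such pairs is empty --- a contradiction. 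Without this dimension count (or a substitute for it), the conclusion of the anchor cannot be matched to the hypotheses of the induction step, and the argument does not close.
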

\begin{proof}
By Voisin's Theorem \cite{V2} (see also \cite{kemeny-voisin} for a simpler proof), $\mathrm{K}_{p,1}(C,\omega_C)=0$ for $p \geq \lfloor \frac{g}{2} \rfloor$. Thus the result is trivial unless $p \leq \lfloor \frac{g-2}{2} \rfloor$. Fix such a $p$ and set $m=g-2p-2$. Let $x_i, y_i \in C$ be general points for $1 \leq i \leq m$. \smallskip

For any $1 \leq j \leq m$, we let $D_j$ be the $j$-nodal curve obtained by identifying $x_i$ to $y_i$ for $1 \leq i \leq j$. We let $p_i \in D_j$ denote the node over $x_i, y_i$ for $1 \leq i \leq j$. Set $D_0:=C$. We let
$$\mu_j \; : \; D_{j-1} \to D_j$$
be the partial normalization map at the node $p_j$. By Proposition \ref{induction-step} it suffices to show that $\mathrm{K}_{p+m,1}(D_m,\omega_{D_m})$ is spanned by syzygies of minimal rank $p+m+1$ such that $D_m$ lies in the smooth locus of the syzygy scheme $X_{\alpha}:=\mathrm{Syz}(\alpha)$ and such that we have $\deg(L_{\alpha})=g-p$, $h^0(C,\mu^*L_{\alpha})=2$, where $\mu: C \to D_m$ is the normalization map (note that this implies that all pullbacks of $L_{\alpha}$ along partial-normalizations at any number of nodes have two sections). \smallskip

Set $\ell=g-p-1$, then $D_m$ has arithmetic genus $2\ell$. By Proposition \ref{gen-nondegK3}, 
$$S^* \; : \; \mathrm{H}^0(\mathcal{O}_{\PP}(1)) \to \mathrm{H}^0(S^*\mathcal{O}_{\PP}(1)),$$
with $\PP:=\PP(\mathrm{K}_{\ell-1,1}(D_{m},\omega_{D_m}))$, is injective since $m=g-2p-2 \leq \ell-1$. Note that $W^1_{\ell+1}(D_m)$ is zero-dimensional and reduced, and each element $[A]\in W^1_{\ell+1}(D_m)$ is a line bundle with exactly two sections by Proposition  \ref{gen-node-BN}. The morphism $S$ is the map
	{\small{\begin{align*}
	S \; : \;  X^1_{\ell+1}(D_m) & \to \PP(\mathrm{K}_{\ell-1,1}(D_m,\omega_{D_m})), \\
	[(A,s)] & \mapsto [\mathrm{K}_{\ell-1,1}(D_m,\omega_{D_m}; \mathrm{H}^0(\omega_{D_m} \otimes I_{Z(s)})].
	\end{align*}}}
By Proposition \ref{gen-nondegK3}, the image of $S$ does not lie in any hyperplane and hence the image $S(X^1_{\ell+1}(D_m))$ spans $\PP(\mathrm{K}_{\ell-1,1}(D_m,\omega_{D_m}))$. Thus $\mathrm{K}_{\ell-1,1}(D_m,\omega_{D_m})$ is spanned by syzygies $\alpha$ which lie in $\mathrm{K}_{\ell-1,1}(D_m,\omega_{D_m}; \mathrm{H}^0(\omega_{D_m} \otimes I_{Z(s)})$ for $s \in \mathrm{H}^0(A)$, $A \in W^1_{\ell+1}(D_m)$. By Proposition \ref{last-ruling-prop}, such syzygies $\alpha$ are of minimal rank, $D_m$ lies in the smooth locus of the syzygy scheme $X_{\alpha}$, and the associated ruling is the line bundle $A \in W^1_{\ell+1}(D_m)$.\smallskip

It remains to show that, for any $A \in W^1_{\ell+1}(D_m)$, we have $h^0(C,\mu^*A)=2$. Assume that $h^0(C,\mu^*A)\geq 3$ for some $A \in W^1_{\ell+1}(D_m)$. Notice that, for each $1 \leq i \leq m$, there exists $s_i \in \mu^*\mathrm{H}^0(D_m,A)$ with $$s(x_i)=s(y_i)=0.$$ Consider the space $G^1_{\ell+1}(C)$ of $g^r_d$'s, i.e.\ pairs $V \seq \mathrm{H}^0(C,L)$, where $L \in W^1_{\ell+1}(C)$ and $V$ is a two dimensional, base-point free subspace of the space of global sections of $L$. As $C$ is general, $G^1_{\ell+1}(C)$ is smooth of dimension $\rho(g,1,\ell+1)=m$, \cite[XXI, Prop.\ 6.8]{ACGH1}. If $m=0$ then $D_m=C$ and there is nothing to prove, so assume $m>0$, in which case $G^1_{\ell+1}(C)$ is further irreducible, \cite{fulton-laz-connectedness}. Let $\mathcal{Z} \seq G^1_{\ell+1}(C)$ be the closed subscheme of pairs $V \seq \mathrm{H}^0(C,L)$ with $L \in W^2_{\ell+1}(C)$. Then $\mathcal{Z}$ has codimension at least one in $G^1_{\ell+1}(C)$, \cite[IV, Lemma 3.5]{ACGH1}. On the other hand, if $(x_i,y_i)$ are general for $1 \leq i \leq m$, then the locus $\mathcal{T}$ of pairs $[V \seq \mathrm{H}^0(C,L)] \in \mathcal{Z}$ satisfying the condition 
$$\text{there exist $s_1, \ldots, s_m \in V$ with $s_i(x_i)=s_i(y_i)=0$ for $1 \leq i \leq m$}$$ has codimension at least $m$. Thus $\mathcal{T}=\emptyset$ as required.

\end{proof}

\begin{remark}
Whilst this paper concerns characteristic zero, one may hope progress in characteristic $p$ is possible, as the starting point has recently been proven by Yi Wei \cite{yi-wei}.
\end{remark}

\end{document}